\renewcommand*{\backrefalt}[4]{\ifcase #1 (Not cited).\or (Cited p.~#2).\else (Cited pp.~#2).\fi} 
\tikzset{snake it/.style={decorate, decoration=snake}}
 \newtheorem{theorem}{Theorem}[section]
  \newtheorem{proposition}[theorem]{Proposition}
  \newtheorem{corollary}[theorem]{Corollary}
  \newtheorem{lemma}[theorem]{Lemma}
  \newtheorem{axiom}[theorem]{Axiom}
  \theoremstyle{definition}
  \newtheorem{definition}[theorem]{Definition}
  \newtheorem{claim}[theorem]{Claim}
  \newtheorem*{claim*}{Claim}
    \newtheorem{notation}[theorem]{Notation}
  \newtheorem*{question*}{Question}
  \newtheorem*{answer*}{Answer}
  \newtheorem*{application*}{Application}
  \theoremstyle{remark}
  \newtheorem{remark}[theorem]{Remark}
  \newtheorem*{remark*}{Remark}
\DeclarePairedDelimiterX{\Norm}[1]{\lVert}{\rVert}{#1}
\theoremstyle{definition}
  \newcommand{\hh}{{\sf h}}
  \newcommand{\mm}{{\sf m}}
 \newcommand{\calA}{\mathcal{A}}
  \newcommand{\calB}{\mathcal{B}}
  \newcommand{\calC}{\mathcal{C}}
  \newcommand{\calD}{\mathcal{D}}
  \newcommand{\calG}{\mathcal{G}}
  \newcommand{\calH}{\mathcal{H}}
  \newcommand{\calN}{\mathcal{N}}
  \newcommand{\calQ}{\mathcal{Q}}
  \newcommand{\calU}{\mathcal{U}}
    \newcommand{\calV}{\mathcal{V}}
  \newcommand{\calW}{\mathcal{W}}
  \newcommand{\calX}{\mathcal{X}}
    \newcommand{\calY}{\mathcal{Y}}
  \newcommand{\calZ}{\mathcal{Z}}
    \newcommand{\hT}{\widehat{T}}
    \newcommand{\hx}{\hat{x}}
     \newcommand{\ha}{\hat{a}}
     \newcommand{\bT}{\mathbb{T}}
      \newcommand{\hb}{\hat{b}}
          \newcommand{\hc}{\hat{c}}
    \newcommand{\hy}{\hat{y}}
    \newcommand{\hd}{\hat{\delta}}
    \newcommand{\hO}{\widehat{\Omega}}
    \newcommand{\hpsi}{\hat{\psi}}
    \newcommand{\hPsi}{\widehat{\Psi}}
    \newcommand{\hf}{\hat{f}}
    \newcommand{\BM}{\calB_{\min}}
    \renewcommand{\hh}{\mathbf{h}}
    \newcommand{\hpi}{\widehat{\pi}}
    \newcommand{\ES}{E_{\mathfrak S}}
    \newcommand{\Rel}{\mathrm{Rel}}
  \newcommand{\dist}{\mathbf{d}}
\newcommand{\diam}{\mathrm{diam}}
\newcommand{\hull}{\mathrm{hull}}
\newcommand{\gate}{\mathfrak{g}}
\newcommand{\nest}{\sqsubset}
\newtheorem{thmi}{Theorem}
\newtheorem{cori}[thmi]{Corollary}
\begin{document}

\title[Bicombing MCG and Teich via stable cubical intervals]{Bicombing the mapping class group and Teichm\"uller space via stable cubical intervals}


 \author   {Matthew Gentry Durham}
 \address{Department of Mathematics and Statistics, CUNY Hunter College, New York, NY }
 \email{matthew.durham@hunter.edu}

\begin{abstract}
In this mostly expository article, we provide a new account of our proof with Minsky and Sisto that mapping class groups and Teichm\"uller spaces admit bicombings.  More generally, we explain how the hierarchical hull of a pair of points in any colorable hierarchically hyperbolic space is quasi-isometric to a finite CAT(0) cube complex of bounded dimension, with the added property that perturbing the pair of points results in a uniformly bounded change to the cubical structure.  Our approach is simplified and new in many aspects.

\end{abstract}

\maketitle

\section{Introduction} \label{sec:intro}

The recent infusion of techniques from cube complexes into the study of mapping class groups of finite-type surfaces has had dramatic consequences, including toward our ability to control top-dimensional quasi-isometrically embedded flats \cite{HHS_quasi}, build bicombings \cite{DMS_bary, HHP}, and understand their geometry at infinity \cite{DZ22, Ham_boundary, Dur_infcube}, among others \cite{ABD, Petyt_quasicube, DZ22, zalloum2025effective}.

The central result facilitating most of the above work was the \emph{cubical model theorem} of Behrstock--Hagen--Sisto \cite{HHS_quasi}.  It says, roughly, that the coarse convex hull of a finite set of points in the mapping class group is quasi-isometrically approximated by a CAT(0) cube complex, which is a higher dimensional analogue of Gromov's tree approximation theorem for hyperbolic spaces.  The cubical model theorem was later generalized by Bowditch \cite{Bowditch_hulls} (to certain coarse median spaces) and the author \cite{Dur_infcube} (to allow for points at infinity).  All versions hold in the broader context of hierarchically hyperbolic spaces \cite{HHS_I}, a framework for studying a wide variety of spaces with blended negative and positive curvature, e.g. Teichm\"uller spaces \cite{Brock, Rafi:hypteich, Dur:augmented}, that built off the seminal work of Masur--Minsky \cite{MM99,MM00} on the mapping class group; see Section \ref{sec:HHS}. 

The cubical approximation theorem allows one to build any number of cubically-defined (and hierarchically well-behaved) paths between a pair of points by importing them from the cubical model of their hull.  This observation was at the core of two recent proofs that the mapping class group is \emph{bicombable}\footnote{Roughly, a bicombing on a metric space is a transitive family of quasi-geodesic paths that are stable under small perturbations of the endpoints; see Definition \ref{defn:bicombing}.}.  The two proofs, due to Haettel--Hoda--Petyt \cite{HHP} and Minsky, Sisto, and the author \cite{DMS_bary}, are very different but both depend crucially on this cubical model machinery.  See also a recent elegant proof of bicombability using wallspace techniques due to Petyt--Zalloum \cite{PZ_walls}.

The purpose of this paper is to explain a new proof of the two-point version of the main technical result of \cite{DMS_bary} via the machinery in \cite{Dur_infcube}, which we state here informally (see Theorem \ref{thm:stable cubes}):

\begin{thmi}\label{thmi:main}
The hierarchical hull of a pair of points in a colorable HHS can be stably approximated by a CAT(0) cube complex.
\end{thmi}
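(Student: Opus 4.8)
The plan is to read off a wallspace directly from the projection data of $x$ and $y$, take its dual cube complex in the sense of Sageev, and then show that this wallspace is essentially canonical, so that perturbing the pair changes it only boundedly. First I would isolate the set $\mathrm{Rel}$ of \emph{relevant} domains for $(x,y)$: those $U\in\mathfrak S$ with $d_U(x,y)$ above a threshold $\theta$ chosen well beyond the HHS constants. For each $U\in\mathrm{Rel}$ fix a geodesic $\gamma_U$ in the $\delta$-hyperbolic coordinate space $\mathcal C(U)$ from $\pi_U(x)$ to $\pi_U(y)$, and for each interior vertex $p$ of $\gamma_U$ declare a wall $\mathfrak w_{U,p}$ whose two halfspaces record, for $z\in\mathrm{hull}(x,y)$, which side of $p$ along $\gamma_U$ the point $\pi_U(z)$ lies on. Since any point of the hull projects uniformly close to $\gamma_U$ (definition of the hull together with bounded geodesic image), these halfspaces are well defined up to bounded ambiguity, and by construction \emph{every} wall separates $x$ from $y$. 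The crossing pattern is governed by the $\mathfrak S$-relations: two walls in the same domain are nested, while two walls $\mathfrak w_{U,p}$ and $\mathfrak w_{V,q}$ with $U,V$ transverse or nested cannot cross, because the Behrstock inequality (consistency) empties one of the four halfspace intersections once $\theta$ is large enough. Consequently any family of pairwise-crossing walls comes from a pairwise-orthogonal family of domains.

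Applying the dual cube complex construction to this wallspace produces a $\mathrm{CAT}(0)$ cube complex $Q=Q(x,y)$ with two distinguished $0$-cubes $\hat x,\hat y$ given by the orientations of $x$ and $y$; since every wall separates them, $Q$ is the combinatorial interval $I(\hat x,\hat y)$ — a \emph{cubical interval}, which is the ``stable cubical interval'' of the title and is special to the two-point case. Its dimension is bounded by the largest pairwise-crossing family of walls, hence by the largest pairwise-orthogonal family of domains, which colorability bounds by a constant depending only on the HHS. To see that $Q$ approximates the hull, send $z\in\mathrm{hull}(x,y)$ to the orientation it induces on all walls: this assignment is coarsely well defined, coarsely surjective by the realization/partial-realization axioms (a consistent orientation specifies consistent coordinates, hence a point of $\mathcal X$ lying in the hull), and a quasi-isometry, since $d_Q(\hat a,\hat b)$ equals the number of walls separating $\hat a$ from $\hat b$, which is $\asymp\sum_{U\in\mathrm{Rel}}d_U(a,b)$, and the HHS distance formula matches this to $d_{\mathcal X}(a,b)$ with uniform constants. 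This already recovers the two-point cubical model of \cite{Dur_infcube}.

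It remains to establish stability, which is where the real content lies. Suppose $(x',y')$ lies within Hausdorff distance $R$ of $(x,y)$. The sets $\mathrm{Rel}$ and $\mathrm{Rel}'$ can differ only among domains with $d_U(x,y)$ within $O(R)$ of $\theta$; I would neutralize these by committing a wall only once its domain is relevant past a second, larger threshold, so that the symmetric difference of the two wall sets lies, in each domain, in an $O(R)$-collar at the two ends of that domain's sub-interval. Geodesic stability in hyperbolic spaces makes $\gamma_U$ and $\gamma'_U$ fellow-travel away from their endpoints, so the surviving walls match canonically; collapsing the unmatched collar walls yields cubical maps $Q(x,y)\to Q(x',y')$ and back that coarsely commute with the quasi-isometries to $\mathcal X$. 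The delicate estimate is that only $O(R)$ collapsed walls can separate any fixed pair of $0$-cubes — a collapsed wall forces the two points to agree in its domain outside the $O(R)$-collar, and the collars over all domains together contribute boundedly by the same orthogonality/colorability bookkeeping that bounded the dimension — so these maps have the multiplicative constant $1$ and additive constant $O(R)$ needed for stability.

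The hard part will be that the construction as described depends on the arbitrary choices of geodesics $\gamma_U$: even two wallspaces built for the \emph{same} pair must be reconciled before wallspaces for different pairs can be compared, so the collar bookkeeping above is only meaningful once the committed part of the wall set is essentially canonical. Making it so — by specifying wall positions through the coarse-median/Gromov-product structure rather than a chosen geodesic, and by a careful ``cleaning'' of near-threshold domains so that the committed data is determined by $(x,y)$ up to bounded error — is the technical heart, and is exactly what the machinery of \cite{Dur_infcube} is designed to supply. Once the wallspace is canonical in this sense the comparison maps are forced, their constants depend only on $R$ and the HHS, and assembling the estimates into the precise statement of Theorem~\ref{thm:stable cubes} becomes routine bookkeeping.
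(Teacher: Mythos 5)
Your construction of the cubical model is broadly in the spirit of the paper, but two of its claims are already off, and the stability half --- which is the actual content of the theorem --- has a genuine gap. First, the assertion that walls in nested domains never cross is false near relative projections: if $V\nest U$, a wall of $\gamma_U$ lying within a bounded neighborhood of $\rho^V_U$ has all four quadrants against a wall of $\gamma_V$ nonempty, since points projecting near $\rho^V_U$ in $\calC(U)$ are unconstrained in $\calC(V)$. This is exactly why the construction you are trying to reproduce first thickens and collapses the clusters of $\rho$-sets (Subsections \ref{subsec:thickenings}--\ref{subsec:collapsing}) before cubulating; without that step your halfspaces are only ``defined up to bounded ambiguity,'' which is not a wallspace, and the crossing pattern is not governed by orthogonality alone. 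Relatedly, the dimension bound comes from the complexity axiom (bounded pairwise-orthogonal families), not from colorability; colorability plays no role in the model for a single pair.

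The real gap is in stability. You argue that $\Rel$ and $\Rel'$ differ only among near-threshold domains and that a second threshold plus ``collar bookkeeping'' bounds the damage, but nothing bounds the \emph{number} of such domains: moving $x$ by distance $1$ perturbs $\pi_U(x)$ in every domain simultaneously, so arbitrarily many domains with $d_U(x,y)$ just above threshold can drop below it at once, and no choice of thresholds prevents this. Bounding $\#(\calU\triangle\calU')$ and the number of involved domains is precisely where colorability enters, via the modified projections of \cite{BBFS} (Theorem \ref{thm:controlling domains}); your proposal never uses colorability for this, so the hypothesis of the theorem is doing no work in your argument. Moreover, fellow-traveling of $\gamma_U$ and $\gamma'_U$ only gives coarse matching of walls, whereas the conclusion requires an exact cubical isomorphism after deleting boundedly many hyperplanes; producing exactly matching, integer-length stable pieces with order-preserving isometries is the content of the Stable Intervals Theorem \ref{thm:stable intervals} together with the trimming machinery of Theorem \ref{thm:simplicial tree trimming}, and it is supplied by the techniques of \cite{DMS_bary}, not by \cite{Dur_infcube}, which only provides the (unstable) cubical model. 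Deferring this to ``routine bookkeeping'' therefore skips the theorem's main point rather than proving it.
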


With this tool in hand, bicombability is a straight-forward application of cubical techniques essentially via the arguments in  \cite[Sections 5 and 6]{DMS_bary}:

\begin{cori}\label{cori:semihyp}
Let $S$ be a finite type surface.  Then its mapping class group and Teichm\"uller space with the Teichm\"uller metric admit bicombings by hierarchy paths.
\end{cori}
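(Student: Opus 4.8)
The plan is to deduce Corollary~\ref{cori:semihyp} from Theorem~\ref{thm:stable cubes} by essentially the cubical arguments of \cite[Sections~5 and~6]{DMS_bary}. First I would recall that $\MCG(S)$ and $\Teich(S)$ with the Teichm\"uller metric are colorable HHSs: the hierarchy structure is the Masur--Minsky subsurface-projection one \cite{MM99,MM00,HHS_quasi}, and colorability comes from a finite coloring of isotopy classes of essential subsurfaces of $S$ by topological type (with the annuli absorbed into one extra color class); see Section~\ref{sec:HHS}. Hence Theorem~\ref{thm:stable cubes} applies to both, and it remains to upgrade its cubical output into a bicombing by hierarchy paths.

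For a pair of points $(x,y)$, Theorem~\ref{thm:stable cubes} supplies a $\CAT$ cube complex $Q_{x,y}$ of uniformly bounded dimension together with a uniform quasi-isometry $\Phi_{x,y}\from Q_{x,y}\to\hull(x,y)$ which is \emph{hierarchical}, i.e.\ monotone in every coordinate projection to a curve graph, and --- the key point --- enjoys the stability property that a bounded perturbation of $(x,y)$ changes $Q_{x,y}$ and $\Phi_{x,y}$ by a uniformly bounded amount. Fix points $p_x,p_y\in Q_{x,y}$ with $\Phi_{x,y}(p_x),\Phi_{x,y}(p_y)$ coarsely equal to $x,y$ (possible since $x,y\in\hull(x,y)$), and let $\gamma_{x,y}$ be the $\Phi_{x,y}$-image of a canonical combing line of $Q_{x,y}$ from $p_x$ to $p_y$. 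For the combing of the cube complex I would use a \emph{coordinatewise} one --- for instance a normal cube path, equivalently the monotone path prescribed wall-by-wall by the distance formula --- rather than the $\CAT$ geodesic, so that the combing line is determined by the set of hyperplanes it crosses. For $\MCG(S)$, everything in this recipe is canonical, so the family $\{\gamma_{x,y}\}$ is equivariant.

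I would then verify the axioms of Definition~\ref{defn:bicombing}. \textbf{(Hierarchy path.)} A hierarchical hull of two points is uniformly hierarchically quasiconvex, hence uniformly quasi-isometrically embedded in the ambient space; a combing line of $Q_{x,y}$ is a geodesic and $\Phi_{x,y}$ is hierarchical, so $\gamma_{x,y}$ is a uniform quasigeodesic that is monotone in every curve graph, i.e.\ a hierarchy path. \textbf{(Coarse symmetry.)} $\hull(y,x)=\hull(x,y)$, the model is the same, and combing lines reverse up to bounded error, so $\gamma_{y,x}$ coarsely coincides with $\gamma_{x,y}$ reversed. \textbf{(Stability.)} If $(x',y')$ lies uniformly close to $(x,y)$, then $\hull(x',y')$ coarsely coincides with $\hull(x,y)$ and, by the stability clause of Theorem~\ref{thm:stable cubes}, $Q_{x',y'}$ differs boundedly from $Q_{x,y}$; since a coordinatewise combing line is stable under bounded changes of the complex and of its endpoints, $\gamma_{x,y}$ and $\gamma_{x',y'}$ uniformly fellow-travel. \textbf{(Transitivity.)} For $z$ in a uniform neighborhood of $\gamma_{x,y}$ one has $\hull(x,z)\cup\hull(z,y)\subseteq\hull(x,y)$ coarsely, and $Q_{x,z},Q_{z,y}$ are, up to bounded error, the convex sub-cube-complexes of $Q_{x,y}$ cut out by the walls separating the respective pairs; a coordinatewise combing line restricts to such subintervals of walls, so $\gamma_{x,z}\cup\gamma_{z,y}$ uniformly fellow-travels $\gamma_{x,y}$.

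The hard part will be the transitivity step, and precisely the compatibility of the \emph{canonically chosen} cubical models for nested hulls: I need $Q_{x,z}$ to sit coarsely inside $Q_{x,y}$ as the ``subinterval between $x$ and $z$'', and this is exactly where the stability clause of Theorem~\ref{thm:stable cubes} --- upgrading ``the hulls are nested'' to ``the cubical structures are compatibly nested'' --- does the real work. It is also what motivates the coordinatewise combing: a $\CAT$ geodesic can be deflected by hyperplanes far from the subinterval, so restricting the set of crossed hyperplanes need not restrict the geodesic, whereas for a normal cube path the restriction is transparent and transitivity reduces to a statement about hyperplane sets that Theorem~\ref{thm:stable cubes} controls. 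Once that compatibility is in hand, the remaining work --- propagating quasi-isometry and fellow-traveling constants across the passage between $\hull(x,y)$ and $Q_{x,y}$, and checking that hierarchy paths in $\MCG(S)$ and in $\Teich(S)$ behave uniformly --- is routine, and Corollary~\ref{cori:semihyp} follows.
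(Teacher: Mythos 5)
Your route is essentially the paper's: colorability of $\MCG(S)$ and $\Teich(S)$, Theorem \ref{thm:stable cubes}, a coordinatewise (Niblo--Reeves/normal cube path) combing of the cubical model pushed forward to the space, fellow-traveling from stability of such paths under bounded changes of the model, and the hierarchy-path property of the images. Two points, however, need correcting. First, the step you single out as ``the hard part'' --- transitivity in the sense that $\gamma_{x,z}\cup\gamma_{z,y}$ fellow-travels $\gamma_{x,y}$, via compatibly nested models $Q_{x,z}\subset Q_{x,y}$ --- is not an axiom of Definition \ref{defn:bicombing}: ``transitive'' there only means the family connects every pair of points, and the only stability required is synchronous fellow-traveling when each endpoint moves by at most $1$. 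Moreover, Theorem \ref{thm:stable cubes} cannot deliver what you ask of it: it compares the models for $(a,b)$ and $(a',b')$ with $d_{\calX}(a,a'),d_{\calX}(b,b')\le 1$ via boundedly many hyperplane deletions, and says nothing about nested hulls or about relating $Q_{x,z}$ to $Q_{x,y}$ for $z$ along the path. So that paragraph both aims at a requirement that is not there and leans on the theorem for a property it does not state; fortunately it is superfluous for the corollary.

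Second, the actual crux is the sentence you dispatch in one line: ``a coordinatewise combing line is stable under bounded changes of the complex and of its endpoints.'' Making this precise --- that Niblo--Reeves paths are stable under deletion of boundedly many hyperplanes, with the index control needed for the \emph{synchronous} fellow-traveling in item (2) of Definition \ref{defn:bicombing}, and are preserved by cubical isomorphisms --- is exactly the content of Theorem \ref{thm:stable moves} and Proposition \ref{prop:NR iso}, which, fed into the commuting diagram of Theorem \ref{thm:stable cubes}, give Theorem \ref{thm:bicombing}; as written you assert it rather than prove it, and this is where the remaining work lies. Two smaller cautions: Theorem \ref{thm:stable cubes} does not state that the model map is ``hierarchical''/coordinatewise monotone --- the hierarchy-path property of the image paths is Proposition \ref{prop:cubical hp}, which is what should be cited --- and equivariance for $\MCG(S)$ is not automatic from ``canonicity'' (choices of geodesics, thickenings, etc.\ are made); the paper deliberately does not claim it here.
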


\begin{remark}
One can upgrade both of these statements to ensure coarse equivariance in the presence of an appropriate group action.  In particular, in \cite{DMS_bary} we showed that the bicombings in Corollary \ref{cori:semihyp} are coarsely equivariant with respect to the mapping class group action, and that the mapping class group is \emph{semi-hyperbolic} in the sense of \cite{AB95}.  This equivariance upgrade is relatively straight-forward once one has the bicombing, but we have omitted it since it adds yet another layer complexity to the statements and proofs in an already technical construction.  We point the reader to \cite[Remark 2]{DMS_bary} for a discussion of how equivariance works for this construction. 
\end{remark}


We have several motivations for writing this paper.  First, we wanted to give a simplified account of the cubical model results in \cite{DMS_bary, DMS_FJ} in less generality.  Roughly speaking, the cubical model for a finite subset $F$ of an HHS from \cite{HHS_quasi} is produced from a wallspace on a family of trees obtained by projecting $F$ to each of the hyperbolic spaces in the hierarchy.  The proof of bicombability in \cite{DMS_bary} proceeds by using hyperbolic geometry techniques---culminating in the Stable Tree Theorem \cite[Theorem 3.2]{DMS_bary}---to produce a family of trees which is coarsely stable under bounded perturbations of $F$.  In this paper, we simplify a number of aspects of this initial portion of the argument by limiting ourselves to modeling only a pair of points instead of arbitrary collections of points and rays.

The second motivation is to explain how to combine the stability techniques from \cite{DMS_bary} with the new cubical model construction from \cite{Dur_infcube}\footnote{This construction has many extra benefits as compared to the construction from \cite{HHS_quasi}.  We point interested readers to the in-depth discussion in the introduction of \cite{Dur_infcube}.}.  This involves a new, more elementary proof of the key Stable Trees Theorem for intervals; see Section \ref{sec:stable intervals} below.  Notably, with Minsky and Sisto, we recently \cite{DMS_FJ} implemented a significant expansion of the techniques from \cite{DMS_bary} utilizing the framework of \cite{Dur_infcube}\footnote{The first version of this expository article appeared before \cite{DMS_FJ}, but we have subsequently adapted it to account for that paper.}.  This allowed us to build an asymptotically CAT(0) metric \cite{Kar_asymp} on any colorable HHS, construct a $\calZ$-boundary (in the sense of \cite{bestvina1996local, Dra:BM_formula}) for any colorable HHG (see also \cite{hamenstadt2025z} for mapping class groups), and establish the Farrell--Jones Conjecture \cite{farrell1993isomorphism} for many HHGs, including extra-large type Artin groups.  This paper can be seen as explaining the simplest possible case of the main technical construction in \cite{DMS_FJ}.

Unsurprisingly, combining the work in \cite{DMS_bary} and \cite{Dur_infcube} involves an exposition of both, though we have mostly focused on where they meet.  We summarize the main features of this (mostly) expository paper as follows:

\begin{itemize}
\item We give an exposition of the basic cubical model construction from \cite{Dur_infcube}, which is significantly simplified from the much more general setting considered therein, (i.e., which allows for modeling points at infinity).

\item We explain how to combine the stability techniques from \cite{DMS_bary} with the cubical model construction of \cite{Dur_infcube}.  In particular, this involves defining a refined notion of a \emph{stable decomposition} (Definition \ref{defn:stable decomp}), and giving a new proof of the Stable Trees Theorem \ref{thm:stable intervals} for intervals.  We note that these stable decompositions also appear in \cite{DMS_FJ}.

\item Using the above, we prove bicombability of mapping class groups (and HHSes) via the stable cubulation techniques from \cite{DMS_bary}, while restricting ourselves to the simplified setting of cubical models for pairs of points.  Notably, the bicombing construction from \cite{HHP} also depends on the cubical models (from \cite{HHS_quasi}), so one can also see this paper as supplying an exposition for some of the underlying technology of their construction.

\item By focusing on the bicombing result (Corollary \ref{cori:semihyp}), we are able significantly simplifly the endgame of the path construction.  In particular, we utilize Niblo--Reeves paths \cite{NibloReeves} in the cubical model instead of the more elaborate version of Niblo--Reeves contractions from our work with Minsky and Sisto \cite[Section 5]{DMS_bary}.
\end{itemize}

\subsection{Background reading}

Our main focus in this article is explaining how to adapt the stable cubulation techniques from \cite{DMS_bary} to the  cubical model setup from \cite{Dur_infcube}, which we note is done in much more general setting in \cite{DMS_FJ}.  As such, the reader will benefit from a variety of contextual information and motivation in and around cube complexes and HHSes.  We provide some suggested reading in the list below:
\begin{itemize}
	\item  See Sisto's notes \cite{Sisto_HHS} and the background in Casals-Ruiz--Hagen--Kazachkov \cite{CRHK} for an introduction to HHSes.  While we do give a proof of the harder bound of the Distance Formula in Corollary \ref{cor:DF lower bound}, we expect the reader to be familiar with the general framework.
	\item  See Farb--Margalit \cite{FM_primer} for an introduction to mapping class groups, and \cite[Section 11]{HHS_II} and there references therein (particular, Masur--Minsky's work \cite{MM99, MM00}) for its hierarchically hyperbolic structure.  Similarly, see Rafi \cite{Rafi:combo, Rafi:hypteich} and the author \cite{Dur:augmented} for the hierarchical structure on Teichm\"uller space with the Teichm\"uller metric.
	\item For an introduction to bicombings and semihyperbolicity, see Alonso--Bridson \cite{AB95}.
	\item For background on cube complexes, see Sageev's notes \cite{Sageev:pcmi} and Schwer's book \cite{schwer2324cat}.
	\item  For a discussion of cubical techniques in HHSes and related objects, see Zalloum's expository article\cite{Zalloum_expo}.  
\end{itemize}

\subsection{Structure of the paper}

The paper begins with a high-level discussion of the endgame for the bicombing construction using cubical approximation techniques.  In particular, in Section \ref{sec:endgame}, we will extract the key facts that we prove about colorable hierarchically hyperbolic spaces, namely the Stable Cubes Theorem \ref{thm:stable cubes} and Stable Moves Theorem \ref{thm:stable moves}, to build a bicombing on any metric space possessing the properties described in the former.  This includes the much simplified construction of the bicombing paths themselves via Niblo--Reeves paths in Subsection \ref{sec:NR stability}.

With this setup as motivation, we then proceed to describe the cubical model construction for HHSes.  Sections \ref{sec:interval systems} through \ref{sec:cubulation} explain the interval version of the cubical model theorem from \cite{Dur_infcube}.  The heart of the paper is in Sections \ref{sec:controlling domains} and \ref{sec:stable intervals}, where we explain how to build the stabilized family of intervals which serve as input, as well as in Section \ref{sec:stable cubes}, where we see how to plug this stabilized input into the cubulation machine.
 
\subsection{Acknowledgements}

This work was supported in part by NSF grant DMS-1906487 and NSF CAREER DMS-2441982.  The main thanks go to Yair Minsky and Alessandro Sisto, as our papers \cite{DMS_bary, DMS_FJ} are at the center of this article.  Thanks to Thomas Koberda for his efforts in coorganizing the first Riverside Workshop on Geometric Group Theory\footnote{\url{https://sites.google.com/view/riggthew-2023/home}} in 2023 (and its subsequent versions), which was supported by NSF grant DMS-2234299.  Thanks also to Harry Petyt for giving many detailed comments which greatly improved the paper.  This paper was written for the conference proceedings and I appreciated Thomas' patience and encouragement while completing it.  Thanks also to the other speakers---Anne Lonjou, Marissa Kawehi Loving, and Jean-Pierre Mutanguha---for their outstanding minicourses.  Finally, thanks to the attendees of RivGGT23 where my minicourse covered some of the material in this paper, and the UCR graduate students who attended my topics course on the construction in \cite{Dur_infcube}.

\section{Beginning at the end} \label{sec:endgame}

The purpose of this section is to prove the main consequence of the technical results we will spend the rest of the paper establishing.  That is, we will begin by explaining how the Stable Cubical Intervals Theorem \ref{thmi:main}  (Theorem \ref{thm:stable cubes}) is enough to prove bicombability of the mapping class groups and colorably hierarchically hyperbolic spaces, more generally.

We begin with a brief primer on CAT(0) cube complexes and a discussion of Niblo--Reeves paths and their stability under hyperplane deletions.  In Subsection \ref{subsec:stable cubes, early}, we give a precise version of our stable cubulations result along with its associated path-building construction.  In Subsection \ref{subsec:semihyp}, we then define and discuss bicombings.  In Subsection \ref{subsec:building bicombing}, we explain how to put these parts together to give a bicombing for any metric space satisfying some version of our stable cubulations setup.

\subsection{Wall spaces, cube complexes, and hyperplane deletions}\label{subsec:CCC background}


This subsection includes a quick set of definitions and references for readers unfamiliar with CAT(0) cube complexes.

Our approach here follows most closely Roller's approach \cite{Roller} to Sageev's cubulation machine \cite{Sageev:cubulation}, which uses the notion of wall-space as introduced by Haglund-Paulin \cite{HP_wallspace}.  See Sageev's notes \cite{Sageev:pcmi}, as well as Hruska-Wise \cite{HW_CCC}, for excellent expositions on this subject.  We note that we will later take an approach using medians in Section \ref{sec:cubulation}.  

A \emph{cube complex} is a CW complex obtained by gluing a collection of unit Euclidean cubes of various dimensions along their faces by a collection of (Euclidean) isometries.  A cube complex $\Sigma$ is \emph{non-positively curved} when it satisfies Gromov's link condition, and every non-positively curved simply connected cube complex admits a unique CAT(0) metric.

A \emph{hyperplane} in a CAT(0) cube complex $\Sigma$ is a connected subspace whose intersection with each cube $\sigma = [0,1]^{n}$ is either $\emptyset$ or the subspace obtained by restricting exactly one coordinate to $\frac{1}{2}$.  In particular, every hyperplane partitions $\Sigma$.  This motivates the definition of a half-spaces and pocsets, following Roller \cite{Roller}.

\begin{definition}[Pocset]\label{defn:pocset}
Let $(\Sigma,<)$ be a partially-ordered set.  A \emph{pocset} structure on $\Sigma$ has the additional structure of an order-reversing involution $A \mapsto A^*$ satisfying
\begin{enumerate}
\item $A \neq A^*$ and $A, A^*$ are $<$-incomparable;
\item $A < B \implies B^* < A^*$.
\end{enumerate}
\begin{itemize}
\item Given $A<B$, the \emph{interval} between $A$ and $B$ is $[A,B] = \{C|A<C<B\}$.
\item The pocset $(\Sigma,<)$ is \emph{locally finite} if every interval is finite.
\item Given $A,B \in \Sigma$, we say $A$ is \emph{transverse} to $B$ if none of $A<B, A<B^*, A^*<B, A^*<B^*$ holds.  The pocset $(\Sigma,<)$ has \emph{finite width} if the maximal size of a pairwise transverse subset is finite.

\end{itemize}
\end{definition}

In a CAT(0) cube complex, every vertex is contained in exactly one half-space associated to each hyperplane.  The following notion abstracts the combinatorial structure of such a family of half-spaces:

\begin{definition}[Ultrafilters and the DCC]\label{defn:ultrafilter}
Given a locally finite pocset $(\Sigma, <)$, an \emph{ultrafilter} $\alpha$ on $\Sigma$ is a subset of $\Sigma$ satisfying
\begin{enumerate}
\item (Choice) Exactly one element of each pair $\{A,A^*\}$ is an element of $\alpha$;
\item (Consistency) $A \in \alpha$ and $A< B \implies B \in \alpha$.
\end{enumerate}
\begin{itemize}
\item An ultrafilter $\alpha$ satisfies the \emph{descending chain condition} (DCC) if every descending chain of elements terminates.
\end{itemize}
\end{definition}

We can now define the dual cube complex to a locally finite, discrete, finite-width pocset:

\begin{definition}[Dual cube complex]\label{defn:dual CCC}
Given a locally finite, discrete, finite-width pocset $(\Sigma,<)$, we define a CW complex $X(\Sigma)$ as follows:
\begin{enumerate}
\item The $0$-cells $X^{(0)}$ of $X$ are the DCC ultrafilters on $(\Sigma, <)$.
\item Two $0$-cells $\alpha, \beta$ are connected by an edge in $X^{(1)}$ if and only if $|\alpha \triangle \beta| = 2$, i.e. $\alpha = (\beta - \{A\}) \cup \{A^*\}$ for some $A \in \Sigma$.
\item Define $X^{(n)}$ inductively by adding an $n$-cube whenever the boundary of one appears in the $X^{(n-1)}$-skeleton.
\end{enumerate}
\end{definition}

The following is a theorem of Roller \cite{Roller}, building on Sageev \cite{Sageev:cubulation}:

\begin{theorem}[Sageev's machine]\label{thm:Sageev}
Given a locally finite, finite-width pocset $(\Sigma,<)$, the CW complex $X(\Sigma)$ is a finite dimensional CAT(0) cube complex, where the dimension is equal to the width of $(\Sigma, <)$.
\begin{itemize}
\item We call $X(\Sigma)$ the \emph{dual cube complex} to $(\Sigma, <)$.
\end{itemize}
\end{theorem}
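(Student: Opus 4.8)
The plan is to verify that $X(\Sigma)$ satisfies the hypotheses needed to run Sageev's construction, and then combine the classical structural theorems about dual cube complexes to get the stated conclusions about CAT(0)-ness, finite-dimensionality, and the dimension equaling the width. First I would note that the three conditions imposed on the pocset---local finiteness, discreteness, and finite width---are exactly the hypotheses under which the dual construction in Definition \ref{defn:dual CCC} is well-behaved: local finiteness guarantees that the DCC ultrafilters exist in abundance (one can flip finitely many coordinates of any given ultrafilter) and that the cube-completion in step (3) actually terminates, while discreteness ensures the $0$-skeleton is nonempty and the edge relation in step (2) is symmetric and connects $X^{(0)}$ into a connected graph. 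I would recall the standard fact (Sageev, Roller) that with these hypotheses $X(\Sigma)$ is connected, simply connected, and satisfies Gromov's link condition, hence is a non-positively curved cube complex; by the uniqueness statement mentioned just before Definition \ref{defn:pocset}, it then carries a unique CAT(0) metric.

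Next I would address the dimension claim, which is the more quantitative half. The key observation is that an $n$-cube in $X(\Sigma)$ corresponds precisely to a collection of $n$ pairwise-transverse elements of $\Sigma$ (the $n$ hyperplanes dual to the edges emanating from a vertex of that cube), together with a choice of an ultrafilter on the remaining elements. So the top dimension of a cube in $X(\Sigma)$ is bounded above by the maximal size of a pairwise-transverse subset, i.e. by the width $w$ of $(\Sigma,<)$; this gives $\dim X(\Sigma) \le w < \infty$. Conversely, given a maximal pairwise-transverse family $\{A_1,\dots,A_w\}$, one checks that the $2^w$ ways of choosing sides among the $A_i$---when completed to ultrafilters using a consistent choice on the complement---are genuine DCC ultrafilters spanning a $w$-cube, so $\dim X(\Sigma) \ge w$. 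The one subtlety here is verifying that the ``consistent choice on the complement'' actually exists and produces DCC ultrafilters; this uses transversality of the $A_i$ to push the order structure onto the complement together with local finiteness to get the descending chain condition, and is the kind of bookkeeping I would attribute to Sageev/Roller rather than redo.

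The main obstacle, and the part that genuinely requires care, is the link condition verification: showing that whenever the boundary $\partial\sigma$ of an $n$-cube appears in the $(n-1)$-skeleton, the cube $\sigma$ is already present by construction, \emph{and} that no ``extra'' gluings occur that would violate non-positive curvature. In the pocset language this amounts to the statement that a set of $n$ hyperplanes, pairwise transverse after passing through all the codimension-one faces, are in fact globally pairwise transverse in $\Sigma$---a Helly-type property for transversality. Roller's treatment handles exactly this, so for the proof I would cite \cite{Roller} (building on \cite{Sageev:cubulation}) for the statement that $X(\Sigma)$ is CAT(0) and for the link condition, and limit the original content to the dimension bookkeeping sketched above. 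In short: invoke the Sageev--Roller machine for the CAT(0) and finite-dimensionality assertions, and give the transversality-to-cube correspondence to pin down the dimension as the width.
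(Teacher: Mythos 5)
The paper does not prove this statement at all: it is quoted as background, attributed directly to Roller \cite{Roller} building on Sageev \cite{Sageev:cubulation}. Your proposal does essentially the same thing---delegating the CAT(0)/link-condition work to the Sageev--Roller machine and only sketching the standard cubes-correspond-to-transverse-families bookkeeping for the dimension---so it is consistent with the paper's treatment and raises no issues.
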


We note that the dual cube complex to the pocset coming from the hyperplanes of a CAT(0) cube complex is (isomorphic to) that cube complex.

The following lemma is useful for obtaining isomorphisms between cube complexes whose wall-spaces we can directly compare.  Its proof is an exercise in the basics of CAT(0) cube complexes which we leave to the reader.

\begin{lemma}\label{lem:halfspaces_bijection}
 Let $\calW$, $\calW'$ be wallspaces, and let $\iota:\calH_{\calW} \to \calH_{\calW'}$ be a bijection of their half-spaces, which preserves
 complements and disjointness.  Then $\iota$ induces a cubical isomorphism
$h:\calD(\calW)\to \calD(\calW')$
between the corresponding dual CAT(0) cube complexes.
\end{lemma}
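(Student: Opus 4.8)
The plan is to produce the cubical isomorphism directly from the combinatorial description of the dual cube complex in terms of ultrafilters on a pocset, using the fact that a wallspace $\calW$ gives rise to a pocset structure on its set of half-spaces $\calH_\calW$, where the involution is the complementation map $\hat A \mapsto \hat A^*$ and the order is reverse inclusion $\hat A < \hat B \iff \hat B \subsetneq \hat A$ (so that $\hat A < \hat B$ encodes the nesting/disjointness data). First I would observe that a bijection $\iota\colon \calH_\calW \to \calH_{\calW'}$ which preserves complements and disjointness automatically preserves the entire pocset structure: since $\hat A < \hat B$ for distinct non-complementary half-spaces is equivalent (in a wallspace) to $\hat A$ and $\hat B^*$ being disjoint, and $\iota$ commutes with $*$ and preserves disjointness, we get $\hat A < \hat B \iff \iota(\hat A) < \iota(\hat B)$. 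Likewise $\iota$ preserves transversality, hence carries pairwise-transverse families to pairwise-transverse families bijectively, so the two pocsets have the same width and $\iota$ is an isomorphism of pocsets in the sense of Definition~\ref{defn:pocset}.

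Next, I would push $\iota$ forward to the level of $0$-cells. By Definition~\ref{defn:dual CCC}, the vertices of $\calD(\calW)$ are the DCC ultrafilters on the pocset $(\calH_\calW,<)$, and similarly for $\calW'$. Given an ultrafilter $\alpha$ on $\calH_\calW$, set $h(\alpha) = \iota(\alpha) = \{\iota(\hat A) : \hat A \in \alpha\}$. The Choice axiom for $h(\alpha)$ follows because $\iota$ is a bijection commuting with $*$: exactly one of $\iota(\hat A), \iota(\hat A)^* = \iota(\hat A^*)$ lies in $\iota(\alpha)$ since exactly one of $\hat A, \hat A^*$ lies in $\alpha$. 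Consistency follows from the order-preservation just established: if $\iota(\hat A) \in \iota(\alpha)$ and $\iota(\hat A) < \iota(\hat B)$, then $\hat A \in \alpha$ and $\hat A < \hat B$, so $\hat B \in \alpha$ and hence $\iota(\hat B) \in \iota(\alpha)$. The DCC condition transfers because $\iota$ is an order isomorphism, so descending chains in $\iota(\alpha)$ pull back to descending chains in $\alpha$, which terminate. Applying the same reasoning to $\iota^{-1}$ (which also preserves complements and disjointness), $h$ is a bijection on $0$-cells.

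Then I would verify that $h$ is a cubical isomorphism. For the $1$-skeleton: by Definition~\ref{defn:dual CCC}(2), $\alpha$ and $\beta$ span an edge iff $|\alpha \triangle \beta| = 2$, and since $\iota$ is a bijection we have $\iota(\alpha) \triangle \iota(\beta) = \iota(\alpha \triangle \beta)$, so $|\iota(\alpha)\triangle\iota(\beta)| = 2$ iff $|\alpha\triangle\beta|=2$; moreover the half-space being flipped across an edge is sent by $\iota$ to the half-space flipped across the image edge. For higher cubes, one invokes Definition~\ref{defn:dual CCC}(3): $h$ is an isomorphism of $1$-skeleta that, by the edge-labelling compatibility, sends the boundary of an $n$-cube to the boundary of an $n$-cube, so by induction on skeleta it extends to a cubical isomorphism $\calD(\calW) \to \calD(\calW')$; the inverse is built identically from $\iota^{-1}$. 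I expect the main (though still routine) obstacle to be the bookkeeping at the higher-skeleta stage: one must check that a collection of hyperplanes bounding a cube in $\calD(\calW)$—equivalently, a pairwise-transverse family realized by a vertex—is carried by $\iota$ to such a family in $\calD(\calW')$, which reduces to the already-established preservation of transversality together with the correspondence between cubes and pairwise-transverse families of hyperplanes through a common vertex. Since the paper explicitly leaves this proof to the reader as ``an exercise in the basics of CAT(0) cube complexes,'' the write-up can legitimately compress these verifications.
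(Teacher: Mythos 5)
Your proof is correct, and it is essentially the argument the paper intends: the lemma is explicitly left as an exercise in the pocset/ultrafilter framework the paper has just set up (Definitions \ref{defn:pocset}--\ref{defn:dual CCC} and Theorem \ref{thm:Sageev}), and your pushforward of DCC ultrafilters along the pocset isomorphism induced by $\iota$, together with the observation that the dual complex is determined by its $1$-skeleton via the inductive cube-filling in Definition \ref{defn:dual CCC}, completes it. The only cosmetic slip is the orientation of the order: with your reverse-inclusion convention, $\hat{A}<\hat{B}$ corresponds to $\hat{B}$ being disjoint from $\hat{A}^{*}$ (equivalently, with inclusion, $A\subseteq B$ iff $A\cap B^{*}=\emptyset$), but either convention yields the same pocset isomorphism and does not affect the argument.
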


Finally, in our stable cubulations theorem, we will want to realize maps between certain cube complexes as a sort of collapsing of a controlled piece of their combinatorial data.  The right notion here is of a \emph{hyperplane deletion}.  The motivating example is where one has a wall-space $\calW(\calH)$ on some set of walls $\calH$, and one removes some collection of the walls $\calH - \calG$, to obtain a \emph{restriction map} $\calD(\calW(\calH)) \to \calD(\calW(\calH- \calG))$ between their dual cube complexes in the sense of \cite{CapraceSageev}.  This map collapses the carriers of the hyperplanes in $\calG$ (recall that a carrier of a hyperplane $\hh$ is naturally isomorphic to $\hh \times [0,1]$).

\subsection{Stable cubes and stable moves}\label{subsec:stable cubes, early}

We will prove Corollary \ref{cori:semihyp} in an ostensibly more general framework than that of (colorable) HHSes\footnote{We actually suspect that coarse median spaces satisfying some version of the property in Definition \ref{defn:stable cubes} will be HHSes, though it is unclear to us what exactly the right property is, e.g. perhaps a factor system in the sense of \cite{HHS_I}.}.  That is, we only need a metric space which satisfies the following definitions.  The first is effectively a weak two-point version of the cubical model property for HHSes from \cite{HHS_quasi}, proven in Theorem \ref{thm:Q CCC} in this paper.

\begin{definition}[Weak local quasi-cubicality]\label{defn:lqc pair}

Let $X$ be a metric space.  A $B_0$-\emph{cubical model} for a pair $a,b \in X$ is a CAT(0) cube complex $\calQ$ of dimension at most $B_0$ satisfying the following:
\begin{enumerate}
\item There is a $(B_0,B_0)$-quasi-isometric embedding $\hO:\calQ \to X$ (the \emph{cubical model map}),
\item There are two \emph{distinguished vertices} $\ha,\hb \in \calQ$ so that any $\ell^{\infty}$-geodesic $\gamma \subset \calQ$ between $\ha,\hb$ is sent to a $(B_0,B_0)$-quasi-geodesic $\hO(\gamma) \subset X$ between $a,b$.
\end{enumerate}
\begin{itemize}
\item We say $X$ is $B_0$-\emph{weakly locally quasi-cubical} (wLQC) if there exists $B_0>0$ so that any pair $a,b\in X$ admits a $B_0$-cubical model.
\end{itemize}
\end{definition}

 While CAT(0) cube complexes have many standard metrics (all of which are quasi-isometric), the convenient one for us is the $\ell^{\infty}$-metric, which, for vertices in a single unit cube, is just the $\sup$-metric. 

The next definition is extracted from Theorem \ref{thm:stable cubes} below.  This property holds for colorable hierarchically hyperbolic spaces, e.g., the mapping class group and Teichm\"uller space; see \cite[Theorem 4.1]{DMS_bary} for the original, more general version, and \cite[Theorem 9.1]{DMS_FJ} for the state-of-the-art.  Part of our purpose here for extracting it from the HHS setting is to highlight the structure of the endgame.

\begin{definition}[Stable weak local quasicubicality]\label{defn:stable cubes}
We say that a metric space $X$ is \emph{$B_0$-stably wLQC} if there is a $B_0$ so that the following holds:  

\begin{itemize}
\item For any $a,b, a',b' \in X$ with $d_X(a,a') \leq 1$ and $d_X(b,b')\leq 1$, there exist $B_0$-cubical models $\calQ, \calQ_0$ and $\calQ',\calQ'_0$ for $a,b$ and $a',b'$ so that the following diagram commutes up to additive error $B_0$:
\begin{equation}\label{Phi diagram, early}
  \begin{tikzcd}
   \calQ \arrow[ddrr,"\hO", bend left=40] \arrow[dr,"\Delta_0 \hspace{.075in}" left] &  \\
    &\calQ_{0} \arrow[dr,"\hO_0 \hspace{.2in}" below]\arrow[dd, "I_0"] \\
    & & X\\
    & \calQ'_{0} \arrow[ur,"\hO'_0"] \\
    \calQ'\arrow[uurr,"\hspace{.2in} \vspace{.1in} \hO'" below, bend right=40] \arrow[ur,"\Delta'_0"] & \\
  \end{tikzcd}
  \end{equation}
 \item In the above,
  \begin{enumerate}
  \item  $\hO, \hO_0, \hO', \hO'_0$ are cubical model maps and hence $(B_0,B_0)$-quasi-isometric embeddings,
  \item the map $I_0$ is a cubical isomorphism,
  \item the maps $\Delta_0, \Delta'_0$ are induced by deleting at most $B_0$-many hyperplanes, and
  \item $\Delta_0, \Delta'_0$ take the distinguished vertices associated to $a,b$ and $a',b'$ in $\calQ,\calQ'$ to those in $\calQ_0,\calQ'_0$, respectively.
  \end{enumerate}
  \end{itemize}
\end{definition}

Requirement (4) says that the way that the cube complexes $\calQ,\calQ_0$ model $a,b$ align, and similarly for $\calQ', \calQ'_0$ and $a',b'$.  Note that if $X$ satisfies Definition \ref{defn:stable cubes} then it satisfies Definition \ref{defn:lqc pair}.  

\begin{remark}
Theorems \ref{thm:Q CCC} and \ref{thm:stable cubes} actually provide a lot more information about these cubical models.  In particular, if $\calX$ is an HHS and $a,b \in \calX$, then the cube complexes $\calQ$ and $\calQ_0$ above model the \emph{hierarchical hull} of $a,b$ (Definition \ref{defn:hier hull}), which is a coarsely convex subspace which encodes the hierarchical geometry between $a,b$.  Moreover, the combinatorial geometry of the cube complex encodes essentially all of this hierarchical data.  See the introduction of \cite{Dur_infcube} for a detailed discussion.
\end{remark}

\subsection{Stable moves}\label{sec:NR stability}

In this subsection, we establish the key path-building portion of our bicombing construction, namely the Stable Moves Theorem \ref{thm:stable moves}.  This involves proving that a well-studied family of paths in a cube complex due to Niblo--Reeves \cite{NibloReeves} behaves well under hyperplane deletion.  These paths were at the center of their construction of a biautomatic structure on an CAT(0) cubical group, and so it is quite agreeable that they play a central role in our construction of a bicombing for our sort of quasi-cubical spaces.  While we have maintained some of the structure and terminology from our work with Minsky and Sisto in \cite[Section 5]{DMS_bary}, our limited setting allows for a much simpler path construction.

Let $X$ be a finite CAT(0) cube complex with $X = \hull_X(a,b)$ the cubical hull\footnote{The  assumption that $X = \hull_X(a,b)$ is not necessary, but is convenient for our purposes.} of $a,b \in X^0$.  Set $\calH = \calH(a,b)$ to be the set of hyperplanes in $X$ separating $a,b$, so that $X = \calX(\calH)$ is the dual cube complex to $\calH$.  Recall that we can think of vertices of $X$ as coherent orientations on hyperplanes in $\calH$.

\begin{definition}
Let $p \in X^0$.  A hyperplane $\hh \in \calH$ is \emph{transitional}, denoted $\hh \in T(p,\calH)$, if
\begin{enumerate}
\item $\hh$ separates $p$ from $b$, and
\item $p$ is \emph{adjacent} to $\hh$, i.e. $\hh$ crosses some cube of which $p$ is a vertex.
\end{enumerate}
\end{definition}

We note that there is a natural partial order $\prec$ on $\calH$, where $\hh \prec \hh'$ when $\hh$ separates $\hh'$ from $a$, or equivalently $\hh'$ separates $\hh$ from $b$.  In our special setting with $X = \hull_X(a,b)$, every vertex $p \in X^0$ occurs on some $\ell^1$-geodesic from $a$ to $b$.  The property of transitionality for $\hh \in T(p,\calH)$ is encoding that $\hh$ is minimal in the partial order on $\calH(p,b) \subset \calH$.

The following lemma says that minimal elements in the partial order must cross:

\begin{lemma}\label{lem:trans cross}
If $\hh_1,\hh_2 \in T(p,\calH)$, then $\hh_1$ crosses $\hh_2$.
\end{lemma}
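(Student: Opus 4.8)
The plan is to argue by contradiction: suppose $\hh_1, \hh_2 \in T(p,\calH)$ but $\hh_1$ does not cross $\hh_2$. Since $\hh_1, \hh_2$ are distinct hyperplanes of a CAT(0) cube complex that do not cross, the half-space relation gives a nesting: one of the four pairs of half-spaces is properly nested, and hence exactly one side of $\hh_1$ is contained in one side of $\hh_2$ (or vice versa). I would first normalize which nesting occurs using the fact that both $\hh_1$ and $\hh_2$ separate $p$ from $b$. Translated into the partial order $\prec$ on $\calH$, non-crossing means $\hh_1$ and $\hh_2$ are $\prec$-comparable; say without loss of generality $\hh_1 \prec \hh_2$, i.e.\ $\hh_1$ separates $\hh_2$ from $a$.

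The key step is to derive a contradiction with transitionality of $\hh_2$ at $p$. Transitionality asks that $p$ be adjacent to $\hh_2$, i.e.\ that $\hh_2$ cross a cube containing $p$; equivalently, some edge at $p$ is dual to $\hh_2$, so that flipping the orientation of $\hh_2$ at $p$ gives another vertex $p'$ of $X$. The point is that if $\hh_1 \prec \hh_2$ and $\hh_1$ also separates $p$ from $b$ (so $p$ lies on the $a$-side of $\hh_1$), then $p$ lies on the $a$-side of $\hh_1$, which is contained in the $a$-side of $\hh_2$; but $\hh_2$ also separates $p$ from $b$, so $p$ is on the $a$-side of $\hh_2$ too, which is consistent so far. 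The contradiction has to come from adjacency: I would show that an edge at $p$ dual to $\hh_2$ would force $p$ to sit on the carrier of $\hh_2$, and then, because $\hh_1$ separates the carrier of $\hh_2$ from $a$ (a consequence of $\hh_1 \prec \hh_2$ and non-crossing, since $\hh_1$ cannot cross $\hh_2$ it cannot meet its carrier's interior in the relevant way), this would put $p$ on the $b$-side of $\hh_1$ — contradicting that $\hh_1$ separates $p$ from $b$. Alternatively, and perhaps more cleanly, I would phrase everything in terms of the partial order: since $p \in X^0 = \hull_X(a,b)^0$, the vertex $p$ lies on an $\ell^1$-geodesic from $a$ to $b$, and the hyperplanes separating $p$ from $b$ are exactly $\calH(p,b) \subset \calH$; transitionality of $\hh_i$ says precisely that $\hh_i$ is $\prec$-minimal in $\calH(p,b)$. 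But $\hh_1 \prec \hh_2$ with both in $\calH(p,b)$ directly contradicts the $\prec$-minimality of $\hh_2$ in $\calH(p,b)$. Hence no such nesting can occur and $\hh_1$ must cross $\hh_2$.

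\textbf{Main obstacle.} The main thing to get right is the dictionary between the geometric notion of transitionality (adjacency of $p$ to $\hh$, i.e.\ an edge of $X$ at $p$ dual to $\hh$) and the order-theoretic statement that $\hh$ is $\prec$-minimal in $\calH(p,b)$. The excerpt asserts this equivalence in the paragraph preceding the lemma (``The property of transitionality for $\hh \in T(p,\calH)$ is encoding that $\hh$ is minimal in the partial order on $\calH(p,b) \subset \calH$''), but to make the proof airtight I would need to verify: (i) if $\hh \in \calH(p,b)$ is not $\prec$-minimal there, witnessed by $\hh' \prec \hh$ with $\hh' \in \calH(p,b)$, then $p$ is \emph{not} adjacent to $\hh$ — because crossing any edge at $p$ dual to $\hh$ would have to stay inside $\hull_X(a,b)$ and would decrease the set of separating hyperplanes by exactly $\{\hh\}$, which is impossible since consistency of orientations (the ultrafilter condition, Definition \ref{defn:ultrafilter}) forces $\hh'$ to still separate the new vertex from $b$ while $\hh'\prec \hh$ would be violated or the new point would leave the hull; and conversely (ii) $\prec$-minimality does give adjacency, via the standard fact that in a finite CAT(0) cube complex a $\prec$-minimal separating hyperplane in $\calH(p,b)$ is dual to an edge at $p$ lying on an $\ell^1$-geodesic to $b$. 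Once this dictionary is in hand, the lemma is immediate from transitivity and incomparability built into the pocset axioms (Definition \ref{defn:pocset}), since two $\prec$-incomparable elements of $\calH$ are by definition transverse, i.e.\ cross.
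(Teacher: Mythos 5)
Your carrier argument in the second paragraph is exactly the paper's proof: after the WLOG nesting $\hh_1 \prec \hh_2$, the cube witnessing adjacency of $p$ to $\hh_2$ cannot be crossed by $\hh_1$ and meets $\hh_2$, which lies on the $b$-side of $\hh_1$, so $p$ ends up on the $b$-side of $\hh_1$, contradicting $\hh_1 \in \calH(p,b)$. The repackaging via $\prec$-minimality is correct but adds nothing: the only direction of the "dictionary" the lemma needs (transitional implies $\prec$-minimal in $\calH(p,b)$) is established by that same carrier (or ultrafilter-consistency) argument, so the somewhat muddled verification sketched in your obstacle paragraph is not actually required.
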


\begin{proof}
If $\hh_1$ and $\hh_2$ do not cross, then, without loss of generality, there is a half-space of $\hh_2$ containing $a, \hh_1$.  On the other hand, $p$ is adjacent to $\hh_2$.  This implies that $p,a$ are in different half-spaces of $\hh_1$, which is a contradiction.
\end{proof}

With this lemma, we can define our \emph{Niblo--Reeves} paths $p_0=a, p_1, \dots, p_n = b$ as follows \cite{NibloReeves}:

\begin{itemize}
\item Let $p_0 = a$.
\item By Lemma \ref{lem:trans cross}, $p_0$ is the corner of a unique cube $C_0$ for which the set of hyperplanes that cross $C_0$ is precisely $T(p,\calH)$.  Set $p_1$ to be the diagonally opposite corner of $C_0$.  Equivalently, $p_1$ is obtained from $p_0$ by flipping orientations on the hyperplanes in $T(p,\calH)$.
\item Define $p_i$ for $i \geq 1$ recursively with $p_i$ in place of $p_0$.
\end{itemize}

Thus, at each step $p_i \mapsto p_{i+1}$, the path crosses the set of $\prec$-minimal hyperplanes in $\calH(p_i,b)$.  The following proposition states the basic properties of the Niblo--Reeves paths:

\begin{proposition}\label{prop:NR basics}
Continuing with the above notation, the following hold:
\begin{enumerate}
\item There exists some $n$ so that $p_n=b$.
\item For all $0 \leq i \leq n$, we have $d^{\infty}_X(p_{i}, p_{i+1}) =1$.
\item Any path obtained by concatenating $\ell^1$-geodesics between $p_i, p_{i+1}$ for $0 \leq i < n$ is an $\ell^1$-geodesic.
\end{enumerate}
\end{proposition}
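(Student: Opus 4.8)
The plan is to establish the three items in order, each building on the partial-order combinatorics of $\calH = \calH(a,b)$ and the cube-complex structure of $X = \hull_X(a,b)$. Throughout I will use that vertices of $X$ correspond to coherent orientations on the hyperplanes of $\calH$, that $p_{i+1}$ is obtained from $p_i$ by flipping exactly the orientations on $T(p_i,\calH)$, and that by Lemma \ref{lem:trans cross} the set $T(p_i,\calH)$ is pairwise-crossing, hence does bound a cube $C_i$ with $p_i$ a corner (so $p_{i+1}$, the opposite corner, is genuinely a vertex of $X$).

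For (1), I would argue that the path strictly decreases the finite quantity $|\calH(p_i,b)|$, the number of hyperplanes separating $p_i$ from $b$. At the step $p_i \mapsto p_{i+1}$ we flip precisely the $\prec$-minimal elements of $\calH(p_i,b)$; flipping a hyperplane $\hh$ separating $p_i$ from $b$ to the $b$-side removes $\hh$ from $\calH(p_i,b)$. The key point is that no new hyperplane is added to the separating set: if $\hh'$ did not separate $p_i$ from $b$, then (since $\hh' \in \calH$ does separate $a$ from $b$) $\hh'$ separates $a$ from $p_i$, i.e. $\hh' \prec \hh$ fails to hold for any minimal $\hh$... more precisely, flipping a $\prec$-minimal $\hh \in \calH(p_i,b)$ cannot flip the orientation of any $\hh'$ with $\hh' \notin \calH(p_i,b)$ into $\calH(p_{i+1},b)$, because such an $\hh'$ lies on the $a$-side of $p_i$ and is therefore $\prec$-below $\hh$ or transverse to it, so flipping $\hh$ leaves $\hh'$ on the $a$-side. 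Hence $\calH(p_{i+1},b) = \calH(p_i,b) \setminus T(p_i,\calH)$, which is a proper subset so long as $p_i \neq b$ (when $p_i \neq b$ there is at least one $\prec$-minimal element). Thus the quantities $|\calH(p_i,b)|$ strictly decrease, and the process terminates at some $p_n$ with $\calH(p_n,b) = \emptyset$, i.e. $p_n = b$.

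Item (2) is essentially definitional: $p_{i+1}$ is the corner of $C_i$ diagonally opposite $p_i$, and in a finite-dimensional CAT(0) cube complex the $\ell^\infty$-distance between two opposite corners of a single cube is exactly $1$, since they lie in a common cube and the $\ell^\infty$-metric restricted to one cube is the sup-metric. For item (3), the cleanest route is to use the standard fact that a concatenation of paths in a CAT(0) cube complex is an $\ell^1$-geodesic iff no hyperplane is crossed twice. Each $\ell^1$-geodesic segment from $p_i$ to $p_{i+1}$ crosses exactly the hyperplanes in $T(p_i,\calH)$, and by the computation in (1) these sets $T(p_0,\calH), T(p_1,\calH), \dots, T(p_{n-1},\calH)$ are pairwise disjoint (they partition $\calH(a,b)$). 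Hence the concatenated path crosses each hyperplane of $\calH$ at most once, so it is an $\ell^1$-geodesic from $a$ to $b$.

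The main obstacle is the bookkeeping in (1): verifying precisely that $\calH(p_{i+1},b) = \calH(p_i,b) \setminus T(p_i,\calH)$, i.e. that flipping the $\prec$-minimal separating hyperplanes neither adds anything to the separating set nor flips a hyperplane that ought to stay fixed. This requires a careful case analysis on the relative position (nested above, nested below, or transverse) of an arbitrary $\hh' \in \calH$ with respect to the flipped hyperplanes, using that all $p_i$ lie on $\ell^1$-geodesics from $a$ to $b$ (so their orientation data is always "coherent with" $\calH(a,b)$) — and this is exactly what makes the partition property, and hence all three conclusions, fall out cleanly.
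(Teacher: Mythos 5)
Your proposal is correct and follows essentially the same route as the paper: termination via the strictly decreasing (finite) separating set $\calH(p_i,b)$, item (2) by the opposite-corners-of-a-cube observation, and item (3) by noting the sets crossed at each step are disjoint so no hyperplane is crossed twice. Your explicit identity $\calH(p_{i+1},b)=\calH(p_i,b)\setminus T(p_i,\calH)$ is just a more carefully spelled-out version of the monotonicity fact the paper invokes, so there is nothing genuinely different to compare.
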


\begin{proof}
For (1), observe that the sequence $p_i$ only crosses hyperplanes separating $a$ from $b$.  Each step $p_i \mapsto p_{i+1}$ of the sequence crosses the collection of $\prec$-minimal hyperplanes in $\calH(p_i, b)\subset \calH$.  Now (1) is equivalent to the claim that $\calH = \bigcup_i \calH(p_i,b)$, which is to say that this process exhausts the set of hyperplanes separating $a$ and $b$, which is clear because $\calH$ is finite.

Item (2) is by construction, since $p_i$ and $p_{i+1}$ are opposite corners of a cube.

Finally, item (3) follows from the fact that $\calH(p_i, b) \subset \calH(p_{i+1},b)$ with $\calH(p_{i+1},b) - \calH(p_i, b) = \calH(p_i,p_{i+1})$.  Hence if $\hh \in T(p_i,b)$, then $\hh$ does not separate $p_i$ from $p_j$ for $j<i$.  This says that the sequence $\{p_i\}$ does not cross any hyperplane twice, and it follows that the concatenation of the $\ell^1$-geodesic segments will not either, making it an $\ell^1$-geodesic.
\end{proof}

The following is a useful observation about this construction:

\begin{proposition}\label{prop:NR iso}
Let $\Phi:\calQ \to \calQ'$ be an isomorphism of cube complexes.  If $\gamma$ is the Niblo--Reeves path between $a,b \in \calQ^0$, then $\Phi(\gamma)$ is the Niblo--Reeves path between $\Phi(a), \Phi(b) \in \left(\calQ'\right)^{0}$.
\end{proposition}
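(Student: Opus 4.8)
The plan is to observe that the Niblo--Reeves path is a function purely of the \emph{combinatorial} data of the cube complex --- its hyperplanes, the separation relation between vertices and between vertices and hyperplanes, the crossing relation among hyperplanes, and the operation of flipping orientations on a pairwise-crossing family --- together with the marked ordered pair $(a,b)$. Since a cubical isomorphism preserves all of this data, the proposition will follow by a routine induction on the index of the vertices along the path. In essence this is the remark that $\Phi$ induces an isomorphism of the associated wallspaces in the sense of Lemma \ref{lem:halfspaces_bijection}, and that the Niblo--Reeves construction is natural with respect to such isomorphisms.

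The first step is to record the preservation properties. A cubical isomorphism $\Phi\colon \calQ\to\calQ'$ carries hyperplanes to hyperplanes and half-spaces to half-spaces, inducing a bijection $\Phi_*$ on the set of hyperplanes such that: (i) $\hh$ separates vertices $u,v$ if and only if $\Phi_*\hh$ separates $\Phi u, \Phi v$; (ii) a vertex $u$ is adjacent to $\hh$ (i.e. $\hh$ crosses a cube of which $u$ is a corner) if and only if $\Phi u$ is adjacent to $\Phi_*\hh$, since $\Phi$ maps the cubes containing $u$ bijectively onto the cubes containing $\Phi u$; and (iii) $\hh_1$ crosses $\hh_2$ if and only if $\Phi_*\hh_1$ crosses $\Phi_*\hh_2$. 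From (i) it follows that $\Phi_*$ restricts to a bijection from $\calH=\calH(a,b)$ onto $\calH'=\calH(\Phi a,\Phi b)$, and since the partial order is phrased entirely in terms of separation ($\hh\prec\hh'$ iff $\hh$ separates $\hh'$ from $a$), this bijection is order-preserving.

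Now the induction. Write $p_0=a,p_1,\dots,p_n=b$ for the Niblo--Reeves path in $\calQ$ and $q_0=\Phi a,q_1,\dots$ for the one in $\calQ'$; I claim $q_i=\Phi(p_i)$ for all $i$, with the same terminal index $n$. The base case is the hypothesis. Assuming $q_i=\Phi(p_i)$, properties (i) and (ii) show that $\Phi_*$ maps the transitional set $T(p_i,\calH)$ onto $T(q_i,\calH')$: a hyperplane $\hh$ separates $p_i$ from $b$ and is adjacent to $p_i$ exactly when $\Phi_*\hh$ separates $q_i$ from $\Phi b$ and is adjacent to $q_i$. By Lemma \ref{lem:trans cross} this is a pairwise-crossing family, so $p_i$ is a corner of a unique cube $C_i$ whose crossing hyperplanes are precisely $T(p_i,\calH)$; applying $\Phi$, the image $\Phi(C_i)$ is then the unique cube of $\calQ'$ with corner $q_i$ whose crossing hyperplanes are exactly $\Phi_*(T(p_i,\calH))=T(q_i,\calH')$, i.e. the cube defining the next Niblo--Reeves step at $q_i$. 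Since $\Phi$ sends the corner of $C_i$ diagonally opposite $p_i$ to the corner of $\Phi(C_i)$ diagonally opposite $q_i$, we get $q_{i+1}=\Phi(p_{i+1})$, closing the induction. As $p_n=b$ we obtain $q_n=\Phi(b)$, and by the exhaustion argument of Proposition \ref{prop:NR basics}(1) applied in $\calQ'$ the $\calQ'$-path terminates exactly there, so it has length $n$ and coincides with $\Phi(\gamma)$.

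I do not expect a genuine obstacle: the entire content is the naturality statements (i)--(iii), which are standard facts about cubical isomorphisms. The only point requiring a word of care is that the construction in the excerpt is phrased under the convention $\calQ=\hull_\calQ(a,b)$, whereas the cube complexes in this proposition need not be hulls of their distinguished vertices; but Lemma \ref{lem:trans cross} and the existence and uniqueness of the cube $C_i$ do not use that convention (one simply reads the construction relative to $\calH(a,b)$ rather than to all hyperplanes), so the argument above applies verbatim --- or, if preferred, one first notes that $\Phi$ restricts to an isomorphism $\hull_\calQ(a,b)\to\hull_{\calQ'}(\Phi a,\Phi b)$ and applies the statement there.
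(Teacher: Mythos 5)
Your proposal is correct: the paper states this proposition without proof, treating it as immediate from the fact that the Niblo--Reeves construction depends only on combinatorial data (separation, adjacency, crossing) that a cubical isomorphism preserves, which is exactly the naturality argument you spell out by induction. Your handling of the hull convention is also the right remark, and nothing further is needed.
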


We can now state the main result of this subsection.

\begin{theorem}[Stable moves]\label{thm:stable moves}
Suppose that $\hh \in \calH$ is hyperplane, and let $\Delta_{\hh}:\calD(\calH) \to \calD(\calH - \{\hh\})$ denote the hyperplane collapse map.  Set $\Delta_{\hh}(a) = a'$ and $\Delta_{\hh}(b) =b'$.  Let $p_0,\dots,p_n$ denote the Niblo--Reeves path in $X = \calD(\calH)$  between $a,b$ and $p'_0, \dots, p'_m$ the path between $a',b'$ in $\calD(\calH- \{\hh\})$.  Then the following hold:
\begin{enumerate}
\item We have $n \leq m+1$.
\item For all $1 \leq i \leq n$, we have $d^{\infty}_{\calD(\calH - \{\hh\})}(\Delta_{\hh}(p_i), p'_i) \leq 1$.
\end{enumerate}
\end{theorem}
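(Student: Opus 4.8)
The plan is to organize the argument around the \emph{layer function} of the partial order $\prec$ on $\calH$. For $\mathfrak{g} \in \calH$, let $\lambda(\mathfrak{g})$ be the length of the longest $\prec$-chain in $\calH$ strictly below $\mathfrak{g}$, and write $L_i = \lambda^{-1}(i)$ for the $i$-th layer. Since at step $p_i \mapsto p_{i+1}$ the Niblo--Reeves path crosses the $\prec$-minimal hyperplanes of $\calH(p_i, b) = \calH \setminus (L_0 \cup \cdots \cup L_{i-1})$, which are exactly $L_i$, it follows that $p_i$ is the unique vertex of $X = \calD(\calH)$ whose orientation agrees with $b$ on $L_0 \cup \cdots \cup L_{i-1}$ and with $a$ on the remaining hyperplanes, and that $n$ equals the length of the longest $\prec$-chain in $\calH$. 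The collapse $\Delta_{\hh}$ realizes $\calD(\calH - \{\hh\}) = \hull(a',b')$ with separating set $\calH(a',b') = \calH - \{\hh\}$, so the identical description governs the downstairs path, with the layer function $\lambda'$ of $(\calH - \{\hh\}, \prec)$, layers $L'_i$, and $m$ the length of the longest $\prec$-chain in $\calH - \{\hh\}$.

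Part (1) is then a one-line statement about chains: a longest $\prec$-chain in $\calH$ either avoids $\hh$, so it survives in $\calH - \{\hh\}$, or meets $\hh$, and removing $\hh$ from it leaves a $\prec$-chain one element shorter; either way $m \geq n - 1$.

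For part (2), set $j = \lambda(\hh)$. I would first check that $\lambda'(\mathfrak{g}) = \lambda(\mathfrak{g})$ for every $\mathfrak{g} \neq \hh$ with $\lambda(\mathfrak{g}) \leq j$: a longest chain below such a $\mathfrak{g}$ cannot contain $\hh$, since $\hh \prec \mathfrak{g}$ would give $j = \lambda(\hh) < \lambda(\mathfrak{g}) \leq j$. This forces $\Delta_{\hh}(p_i) = p'_i$ for $i \leq j$. In general, splicing $\hh$ out of a longest chain shows $\lambda(\mathfrak{g}) - 1 \leq \lambda'(\mathfrak{g}) \leq \lambda(\mathfrak{g})$. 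Comparing the orientations of $\Delta_{\hh}(p_i)$ and $p'_i$ through the description above, the two vertices disagree exactly on $D_i := \{\mathfrak{g} \in L_i : \lambda'(\mathfrak{g}) = i - 1\}$, the set of hyperplanes whose layer strictly drops at step $i$ (this set is empty for $i \leq j$). The crux is that $D_i$ is a family of pairwise-crossing hyperplanes: by the standard trichotomy, two hyperplanes of $\calH$ that fail to cross are $\prec$-comparable, and $\mathfrak{g} \prec \mathfrak{g}'$ forces $\lambda(\mathfrak{g}) < \lambda(\mathfrak{g}')$, which is impossible for two elements of the single layer $L_i$ (this is the same mechanism as in Lemma \ref{lem:trans cross}). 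Hence the interval $[\Delta_{\hh}(p_i), p'_i]$ in $\calD(\calH - \{\hh\})$ is a finite CAT(0) cube complex all of whose hyperplanes pairwise cross, i.e.\ a single cube, so $d^{\infty}_{\calD(\calH - \{\hh\})}(\Delta_{\hh}(p_i), p'_i) \leq 1$. Since $m \leq n \leq m + 1$, the only index that can fall beyond the range of the downstairs path is $i = n = m + 1$; there one reads $p'_n = b'$, while $D_n = \emptyset$ and $\Delta_{\hh}(p_n) = b'$, so the bound is trivial.

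The step I expect to be the real obstacle is the heart of part (2): isolating exactly which hyperplanes change layer when $\hh$ is deleted, and arranging the orientation bookkeeping so that the discrepancy between $\Delta_{\hh}(p_i)$ and $p'_i$ at step $i$ is precisely $D_i$. Once that identification is made, the pairwise-crossing property of $D_i$ --- and hence the $d^{\infty}$-bound --- drops out of the trichotomy, and part (1) is essentially free.
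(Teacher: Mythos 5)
Your proposal is correct and follows essentially the same route as the paper: your layer function $\lambda$ is exactly the paper's transition index, your splicing observation $\lambda(\mathfrak{g})-1\le\lambda'(\mathfrak{g})\le\lambda(\mathfrak{g})$ is Lemma \ref{lem:move control}, and both arguments conclude by showing that all hyperplanes separating $\Delta_{\hh}(p_i)$ from $p'_i$ pairwise cross (you by identifying the disagreement set $D_i$ inside the single layer $L_i$, the paper by deriving a contradiction from two non-crossing separators via transition indices). Your write-up is slightly more explicit in that it also records the chain-length argument for part (1) and the boundary case $i=n=m+1$, which the paper's proof leaves implicit.
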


In other words, Niblo--Reeves paths are stable under hyperplane deletion maps.

The following notation will be useful for us:

\begin{definition}
Let $\hh' \in \calH$.  We set $t_{\hh'}(a, \calH)$, called the \emph{transition index} of $\hh'$, to be the index $i$ at which $\hh' \in \calH(p_i, p_{i+1})$.  Similarly, if $\hh' \in \calH - \{\hh\}$, we define $t_{\hh'}(a', \calH - \{\hh\})$ to be the index $i$ so that $\hh' \in \calH(p'_i, p'_{i+1})$.
\end{definition} 

\begin{lemma}\label{lem:move control}
If $\hh' \in \calH - \{\hh\}$, then $t_{\hh'}(a,\calH) = t_{\hh'}(a', \calH - \{\hh\}) + \delta$ for $\delta \in \{0,1\}$.
\end{lemma}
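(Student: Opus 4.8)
The plan is to reduce the statement to a purely order-theoretic fact by identifying the transition index $t_{\hh'}(a,\calH)$ with the \emph{height} of $\hh'$ in the poset $(\calH,\prec)$; once that identification is in place, the lemma follows from the elementary observation that deleting one element from a finite poset changes the height of every other element by $0$ or $1$. I expect the main obstacle to be establishing the height characterization --- and in particular checking that a hyperplane is crossed by the Niblo--Reeves path neither earlier nor later than the step at which it first becomes $\prec$-minimal among the surviving hyperplanes; both directions rely on Proposition \ref{prop:NR basics}, which guarantees that the path is globally an $\ell^1$-geodesic and hence crosses each hyperplane exactly once. The remaining ingredients are bookkeeping.

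For the height characterization, first note that since the path crosses each hyperplane exactly once, $\calH(p_i,b)=\{\hh''\in\calH : t_{\hh''}(a,\calH)\ge i\}$ for every $i$, and recall that $T(p_i,\calH)$ is precisely the set of $\prec$-minimal elements of $\calH(p_i,b)$. I would then prove that
\[
  t_{\hh'}(a,\calH)=
  \begin{cases}
    0 & \text{if } \hh' \text{ is } \prec\text{-minimal in } \calH,\\
    1+\max\{\, t_{\hh''}(a,\calH) : \hh''\prec\hh' \,\} & \text{otherwise}.
  \end{cases}
\]
The inequality ``$\ge$'' holds because at the step $i=t_{\hh'}(a,\calH)$ the hyperplane $\hh'$ is $\prec$-minimal in $\calH(p_i,b)$, so no $\hh''\prec\hh'$ lies in $\calH(p_i,b)$, i.e.\ $t_{\hh''}(a,\calH)<t_{\hh'}(a,\calH)$ for every such $\hh''$. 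The inequality ``$\le$'' holds because as soon as every $\hh''\prec\hh'$ has been crossed, $\hh'$ is $\prec$-minimal among the surviving hyperplanes and is therefore crossed at the following step (and it cannot have been crossed earlier, again by the minimality description of $T$). Iterating the recursion shows that $t_{\hh'}(a,\calH)$ equals the largest $k$ for which there is a $\prec$-chain $\hh_0\prec\hh_1\prec\cdots\prec\hh_k=\hh'$ in $\calH$, i.e.\ the height $\ell_\calH(\hh')$; the identical argument applied to the Niblo--Reeves path between $a'$ and $b'$ gives $t_{\hh'}(a',\calH-\{\hh\})=\ell_{\calH-\{\hh\}}(\hh')$, where the height is now computed in the poset $\prec$ on $\calH-\{\hh\}$ associated to the basepoint $a'=\Delta_{\hh}(a)$.

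Next I would check that this poset on $\calH-\{\hh\}$ is simply the subposet of $(\calH,\prec)$ induced on $\calH-\{\hh\}$. The collapse map $\Delta_{\hh}\colon\calD(\calH)\to\calD(\calH-\{\hh\})$ is the restriction map associated to removing the wall $\hh$, so it identifies the half-spaces of $\calD(\calH-\{\hh\})$ with the half-spaces of $\calD(\calH)$ coming from walls other than $\hh$, preserving the nesting relation and carrying $a$ to $a'$ (see \cite{CapraceSageev}; compare Lemma \ref{lem:halfspaces_bijection}). Since ``$\hh_1$ separates $\hh_2$ from the basepoint'' is expressed entirely via nesting of half-spaces and the location of the basepoint, this relation is preserved, so for $\hh_1,\hh_2\in\calH-\{\hh\}$ one has $\hh_1\prec\hh_2$ in $\calH-\{\hh\}$ if and only if $\hh_1\prec\hh_2$ in $\calH$.

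Finally, for the poset fact: a $\prec$-chain in $\calH-\{\hh\}$ is also a chain in $\calH$, so $\ell_{\calH-\{\hh\}}(\hh')\le\ell_\calH(\hh')$; conversely, given a maximal chain $\hh_0\prec\cdots\prec\hh_k=\hh'$ realizing $\ell_\calH(\hh')$, either it avoids $\hh$, in which case $\ell_{\calH-\{\hh\}}(\hh')\ge k$, or $\hh=\hh_j$ for some $j<k$ (note $j\ne k$ since $\hh\ne\hh'$), in which case deleting $\hh_j$ and using transitivity of $\prec$ produces a chain in $\calH-\{\hh\}$ of length $k-1$ ending at $\hh'$, so $\ell_{\calH-\{\hh\}}(\hh')\ge k-1$. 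Hence $0\le\ell_\calH(\hh')-\ell_{\calH-\{\hh\}}(\hh')\le 1$, and combining this with the two height identifications gives $t_{\hh'}(a,\calH)=t_{\hh'}(a',\calH-\{\hh\})+\delta$ with $\delta\in\{0,1\}$, as required.
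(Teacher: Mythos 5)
Your proof is correct and follows essentially the same route as the paper: both arguments identify the transition index $t_{\hh'}$ with the maximal length of a chain of hyperplanes separating the basepoint from $\hh'$ (the paper phrases this as $t_{\hh'} = d_\infty(a,\hh')+\tfrac12$, you derive it as the height in $(\calH,\prec)$ via the Niblo--Reeves recursion), and then observe that deleting the single hyperplane $\hh$ changes this maximal chain length by at most one. The only difference is one of detail: you prove from scratch, via the recursion and the compatibility of $\prec$ with the restriction quotient, the chain-length identification that the paper simply asserts through the $\ell^\infty$-distance interpretation.
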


\begin{proof}
This is essentially \cite[Lemma 5.12]{DMS_bary}, though the proof is basically immediate in our setting.  Note that the transition index $t_{\hh} = d_{\infty}(a, \hh) + \frac{1}{2}$.  Since the $\ell^{\infty}$ distance between $a'$ and $\hh'$ measures the maximal length of a chain of hyperplanes separating them (plus $\frac{1}{2}$), that distance can be at most $1$ more than $d_{\infty}(a,\hh)$.  This completes the proof.
\end{proof}

\begin{proof}[Proof of the Stable Moves Theorem \ref{thm:stable moves}]
Suppose $\hh_1, \hh_2$ separate $\Delta_{\hh}(p_i)$ and $p'_i$ in $X' = \calD(\calH - \{\hh\})$ but do not cross, since we are done if all such hyperplanes cross.

Without loss of generality, we may assume that $\hh_1$ separates $\Delta_{\hh}(p_i)$ from $\hh_2$.  Let $j = t_{\hh_1}(a',\calH - \{\hh\})$.  Then $j < t_{\hh_2}(a',\calH - \{\hh\}) < i$, with the latter inequality following because $p'_{i+1}$ has already crossed $\hh_2$ (and $\hh_1$).  Thus $j + 1 < i$.

But then $t_{\hh_1}(a, \calH) \leq t_{\hh_1}(a', \calH - \{\hh\}) + 1 = j+1 < i$, which says that $\hh_1$ separates $p_i$ from $a$, which is a contradiction.
\end{proof}

\subsection{Bicombings}\label{subsec:semihyp}

Our ultimate goal is to prove that mapping class groups and Teichm\"uller spaces admits bicombings (Theorem \ref{thmi:main}).  Roughly, a bicombing on a space $X$ is a family of uniform quasi-geodesic paths that is transitive (i.e., connects any pair of points of $X$), and for which paths in the family between pairs of endpoints that are close to each other uniformly fellow-travel in a parameterized fashion.   The notion of bicombing was first introduced by Thurston in the context of algorithmic problems in groups.  It is a straightforward but informative exercise to prove that geodesics in hyperbolic and CAT(0) spaces satisfy this property, and this is precisely what the definition is intended to generalize.

\smallskip
The following definition is essentially from \cite{AB95}, as restated in \cite[Definition 6.4]{DMS_bary}:  

\begin{definition}[Bicombings]\label{defn:bicombing}
A \emph{discrete, bounded, quasi-geodesic bicombing} of a metric space $X$ consists of a family of discrete paths $\{\Gamma_{x,y}\}_{x,y \in X}$ and a constant $\zeta>0$ satisfying the following:
\begin{enumerate}
\item \emph{Quasi-geodesic}: For any $x,y \in X$ with $d = d_{X}(x,y)$, there exists $n_{x,y} \leq d\zeta + \zeta$ so that the path $\Gamma_{x,y}:\{0, \dots, n_{x,y}\} \rightarrow X$ is a $(\zeta,\zeta)$-quasi-isometric embedding with $\Gamma_{x,y}(0)=x$ and $\Gamma_{x,y}(n_{x,y}) = y$; and
\item \emph{Fellow-traveling}: If $x',y' \in X$ and $d_X(x,x'), d_X(y,y') \leq 1$, then for all $t \in \{0, \dots, \max\{n_{x,y}, n_{x',y'}\}\}$, we have
$$d_X(\Gamma_{x,y}(t), \Gamma_{x',y'}(t)) \leq \zeta.$$
\end{enumerate}

For our purposes, the fellow-traveling condition in item (2) is the key challenge.  At this point, there are many constructions of uniform quasigeodesics in mapping class groups.

\end{definition}

\subsection{Building the bicombing}\label{subsec:building bicombing}

We are now ready to define the bicombing.  Let $X$ be $B_0$-stably wLQC (Definition \ref{defn:stable cubes}).  For any $a,b \in X$, let $\calQ$ be a CAT(0) cube complex provided by Definition \ref{defn:stable cubes} and $\ha,\hb\in \calQ$ the distinguished vertices associated to $a,b$.

We can define a family of paths $\{\Gamma_{a,b}\}_{a,b \in X}$ by taking $\Gamma_{a,b}$ to be the image under the quasi-isometry $\hO:\calQ \to X$ of the Niblo--Reeves path between $\ha,\hb$.  By Definition \ref{defn:lqc pair}, this is a uniform $(B_0,B_0)$-quasi-geodesic between $a,b$.

\begin{theorem}\label{thm:bicombing}
The family of paths $\{\Gamma_{a,b}\}_{a,b \in X}$ forms a discrete bounded quasi-geodesic bicombing on $X$.
\end{theorem}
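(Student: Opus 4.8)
The plan is to verify the two clauses of Definition \ref{defn:bicombing} directly, with a single constant $\zeta$ depending only on $B_0$. For the quasi-geodesic clause (1), the first observation to record is that the Niblo--Reeves path $p_0 = \ha, p_1, \dots, p_n = \hb$ in $\calQ$ is in fact an $\ell^\infty$-geodesic: by Proposition \ref{prop:NR basics}(2) consecutive vertices are at $\ell^\infty$-distance $1$, while each step crosses precisely the $\prec$-minimal hyperplanes of $\calH(p_i,\hb)$, so $n$ equals the length of a longest $\prec$-chain in $\calH(\ha,\hb)$, which is $d^\infty_\calQ(\ha,\hb)$ (one may restrict to the convex subcomplex $\hull_\calQ(\ha,\hb)$, which is $\ell^\infty$-isometrically embedded). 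Definition \ref{defn:lqc pair}(2) — available since $B_0$-stable wLQC implies $B_0$-wLQC — then sends this geodesic to a $(B_0,B_0)$-quasi-geodesic $\hO(\gamma)$ running between bounded neighborhoods of $a$ and $b$, and $n = d^\infty_\calQ(\ha,\hb) \le B_0\,d_X(a,b) + B_0'$ for $B_0' = B_0'(B_0)$, which is the required length bound. Prepending one step from $a$ to $\hO(\ha)$ and appending one from $\hO(\hb)$ to $b$ — each of length $\le B_0$ — makes the endpoints exactly $a$ and $b$ at the cost of enlarging $\zeta$ and shifting the parameter by a uniform $1$; this keeps $\hO(\gamma)$ a uniform quasi-isometric embedding.

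For the fellow-traveling clause (2), I would take $a,b,a',b'$ with $d_X(a,a'),d_X(b,b')\le 1$ and invoke Definition \ref{defn:stable cubes} to obtain the pentagon \eqref{Phi diagram, early}, with its outer models $\calQ,\calQ'$ taken to be the ones defining $\Gamma_{a,b},\Gamma_{a',b'}$. Write $\gamma,\gamma'$ for the Niblo--Reeves paths in $\calQ,\calQ'$ between the distinguished vertices, and $\gamma_0,\gamma'_0$ for those in $\calQ_0,\calQ'_0$ between $\Delta_0(\ha),\Delta_0(\hb)$ and $\Delta'_0(\ha'),\Delta'_0(\hb')$, which by requirement (4) are again the distinguished vertices there. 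Since $\Delta_0$ is a composition of at most $B_0$ single-hyperplane deletions, each $1$-Lipschitz for $d^\infty$, iterating the Stable Moves Theorem \ref{thm:stable moves} along this composition — extending the shorter path at each stage by its constant endpoint value — gives that the combinatorial lengths of $\gamma$ and $\gamma_0$ differ by at most $B_0$ and that $d^\infty_{\calQ_0}(\Delta_0(p_i), q_i) \le B_0$ for every index $i$, where $p_i,q_i$ are the $i$-th vertices of $\gamma,\gamma_0$; symmetrically for $\gamma',\gamma'_0$ along $\Delta'_0$. Finally, since $I_0$ is a cubical isomorphism identifying the distinguished vertices of $\calQ_0$ with those of $\calQ'_0$ (the alignment built into Definition \ref{defn:stable cubes}), Proposition \ref{prop:NR iso} yields $I_0(\gamma_0) = \gamma'_0$ verbatim: same length and $I_0(q_i) = q'_i$ for all $i$.

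It then remains to thread a point through the pentagon. Fixing $t$ in the common parameter range and using the constant extensions of all paths past their endpoints, coarse commutativity of \eqref{Phi diagram, early} up to additive error $B_0$ together with the fact that $\hO_0,\hO'_0$ are $(B_0,B_0)$-quasi-isometric embeddings shows that $\hO(p_t)$ is a bounded distance from $\hO_0(\Delta_0(p_t))$, which is a bounded distance from $\hO_0(q_t)$ (by the $d^\infty$-estimate and the quasi-isometry constants), which is a bounded distance from $\hO'_0(I_0(q_t)) = \hO'_0(q'_t)$, which is a bounded distance from $\hO'_0(\Delta'_0(p'_t))$, which is a bounded distance from $\hO'(p'_t)$. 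Since each bound depends only on $B_0$ and $\Gamma_{a,b}(t) = \hO(p_t)$, $\Gamma_{a',b'}(t) = \hO'(p'_t)$ after the uniform parameter shift of paragraph one, this bounds $d_X(\Gamma_{a,b}(t),\Gamma_{a',b'}(t))$ by a constant absorbed into $\zeta$ (at the extreme values of $t$ one instead simply gets $d_X(a,a')\le 1$ or $d_X(b,b')\le 1$). I expect the only real subtlety to be the \emph{parameterized} nature of clause (2): what makes the chain above compare the two paths at the \emph{same} time $t$ is precisely that the estimate in Theorem \ref{thm:stable moves}(2) is index-by-index and that Proposition \ref{prop:NR iso} matches indices, so the work is the bookkeeping of path lengths, constant extensions, and the accumulation of the $O(B_0)$ additive errors — all of which stay bounded because only boundedly many operations are performed.
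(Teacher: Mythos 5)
Your proposal is correct and follows essentially the same route as the paper's proof: quasi-geodesicity comes from the cubical model property of Definition \ref{defn:lqc pair}, and fellow-traveling comes from chaining the coarse commutativity of diagram \eqref{Phi diagram, early} with the index-by-index estimate of Theorem \ref{thm:stable moves} and the identification $I_0(\Lambda_0)=\Lambda'_0$ from Proposition \ref{prop:NR iso}. The extra bookkeeping you supply (the Niblo--Reeves path being an $\ell^\infty$-geodesic, iterating the single-hyperplane Stable Moves statement over at most $B_0$ deletions, constant extensions and endpoint adjustments) is a welcome elaboration of steps the paper leaves implicit, not a different argument.
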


\begin{figure}
    \centering
    \includegraphics[width=.8\textwidth]{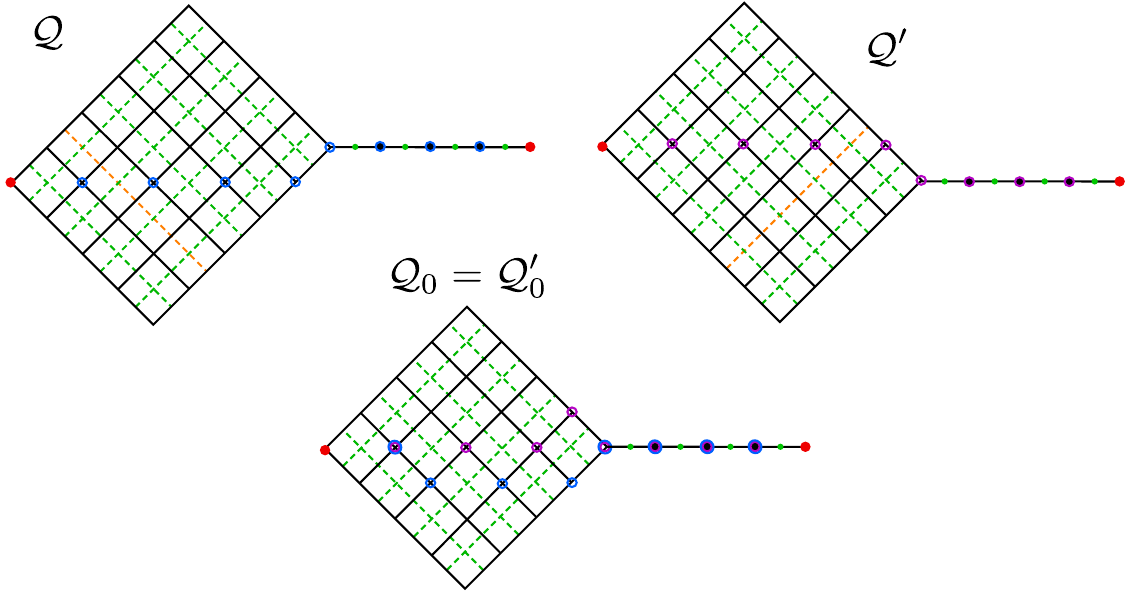}
    \caption{Stable Moves: A simple example of how Niblo--Reeves paths behave under hyperplane collapses.  In each cube complex, the red vertices correspond to $a,b$.  The orange hyperplanes are collapsed when passing to $\calQ_0 = \calQ'_0$.  The Niblo--Reeves path in $\calQ_0$ between $a,b$ travels in a straight line between the red vertices.  The Niblo--Reeves paths in $\calQ$ (blue) and $\calQ'$ (purple) descend to paths which are close to it the Niblo--Reeves path in $\calQ_0$, and hence close to each other.  This example is a modified version of \cite[Figure 25]{DMS_bary}.}
    \label{fig:bicoming_example}
\end{figure}

\begin{proof}
We need to confirm the properties in Definition \ref{defn:bicombing}.  The paths are clearly discrete and are $(B_0,B_0)$-quasi-geodesics by assumption.  So we need to show that they satisfy the fellow-traveling property in item (2) of that definition.  This is a direct application of the properties in Definition \ref{defn:stable cubes} and Theorem \ref{thm:stable moves}.

Let $\ha,\hb \in \calQ$ and $\ha_0,\hb_0 \in \calQ_0$ be the distinguished vertices associated to $a,b$, and the same for $\ha',\hb' \in \calQ'$ and $\ha'_0,\hb'_0 \in \calQ'_0$ with $a',b'$, respectively.  Note that $\Delta_0(\ha) = \ha_0$, etc, by item (4) of Definition \ref{defn:stable cubes}.  Let $\Lambda, \Lambda_0, \Lambda',\Lambda'_0$ be the Niblo--Reeves paths in $\calQ,\calQ_0,\calQ',\calQ'_0$ connecting the respective pairs $(\ha,\hb), (\ha_0,\hb_0), (\ha',\hb'), (\ha'_0,\hb'_0)$ in their respective cube complexes $\calQ,\calQ_0,\calQ',\calQ'_0$.  So in our notation, $\Gamma_{a,b} = \hO(\Lambda)$ and $\Gamma_{a',b'} = \hO'(\Lambda')$.  We have the following estimate
\begin{eqnarray}
d_X(\Gamma_{a,b}(i), \Gamma_{a',b'}(i)) &=& d_X(\hO(\Lambda(i)), \hO'(\Lambda'(i)))\\
&\leq& d_X(\hO_0 \circ \Delta_0(\Lambda(i)), \hO'_0 \circ \Delta'_0 (\Lambda'(i))) + 2B_0\\
&\leq& d_X(\hO_0(\Lambda_0(i)), \hO'_0(\Lambda'_0(i))) + 4B_0\\
&\leq& d_X(\hO'_0 \circ I_0(\Lambda_0(i)), \hO'_0(\Lambda'_0(i))) + 5B_0\\
&=& d_X(\hO'_0(\Lambda'_0(i)), \hO'_0(\Lambda'_0(i)))\\
&=& 5B_0.
\end{eqnarray}
In the above, (3) follows from commutativity of the top and bottom triangles in diagram \ref{Phi diagram, early}, (4) follows from Theorem \ref{thm:stable moves}, (5) follows from commutativity the middle triangle of diagram \ref{Phi diagram, early}, and (6) follows from the fact that the cubical isomorphism $I_0:\calQ_0 \to \calQ'_0$ identifies the paths $\Lambda_0$ and $\Lambda'_0$, i.e. $\Lambda_0(i) = \Lambda'_0(i)$ for each $i$ (Proposition \ref{prop:NR iso}).  This completes the proof.

\end{proof}

With this in hand, we can prove our bicombing result:

\begin{proof}[Proof of Corollary \ref{cori:semihyp}]
Mapping class groups and Teichm\"uller spaces of finite type surfaces are colorable HHSes (see Section \ref{sec:HHS} for references), so they are stably weakly locally quasi-cubical by Theorem \ref{thm:stable cubes}.  Hence Theorem \ref{thm:bicombing} provides a bicombing by pushing forward the Niblo--Reeves path in an appropriate cubical model to the respective space.  Any such $\ell^{\infty}$-geodesic can be completed to an $\ell^1$-geodesic by connecting vertices via $\ell^1$-geodesic segments as in Proposition \ref{prop:NR basics}.  But now Proposition \ref{prop:cubical hp} says that the images of these paths are hierarchy paths (Definition \ref{defn:hp}).  This completes the proof.
\end{proof}

\begin{remark}\label{rem:no medians}
A reader familiar with both \cite{HHS_quasi} and \cite{DMS_bary} will note that one usually uses the fact that the cubical model maps are \emph{quasi-median} maps to get this last hierarchy path property.  However, we have avoided discussions of median structures in this paper because they are unnecessary for our purposes.  See \cite[Section 15]{Dur_infcube} for how our cubical model construction works with medians.
\end{remark}

\section{A brief primer on HHSes}\label{sec:HHS}

Hierarchically hyperbolic spaces were introduced by Behrstock--Hagen--Sisto in \cite{HHS_I, HHS_II}, building off of work of Masur--Minsky \cite{MM99,MM00} on the mapping class group, among others \cite{Brock, Rafi:combo, Rafi:hypteich, Dur:augmented, EMR:rank}.

For our purposes, we will introduce the basic foundational concepts that we need.  It turns out that the cubical model construction is a foundational-level construction, as one can use it to prove key facts like the Distance Formula \ref{thm:DF}; see Bowditch \cite{Bowditch_hulls} and \cite{Dur_infcube}.

The following is a complete list (at the time of writing) of the spaces known to be hierarchically hyperbolic.  With the exception of some cubical examples, all of them are colorable (Definition \ref{defn:colorable}), and so our main results Theorem \ref{thmi:main} and Corollary \ref{cori:semihyp} apply:

\begin{itemize}
\item Hyperbolic spaces.
\item Mapping class groups \cite{MM99, MM00, Aougab_hyp, HHS_I} and Teichm\"uller spaces, with both the Teichm\"uller \cite{Rafi:combo, Dur:augmented} and Weil-Petersson metrics \cite{Brock}.
\item Many \cite{HS:cubical}, but not all \cite{shepherd2025cubulation}, CAT(0) cubical groups,  including all right-angled Artin and Coxeter groups \cite{HHS_I}.
\item Fundamental groups of closed 3-manifolds without Nil or Sol summands \cite{HHS_II, hagen2022equivariant}.
\item Quotients of mapping class groups by large powers of Dehn twists \cite{behrstock2024combinatorial} and pseudo-Anosovs \cite{HHS:asdim}.
\item Surface group extensions of Veech groups \cite{DDLS1, DDLS2, bongiovanni2024extensions} and multicurve stabilizers \cite{russell2021extensions} of mapping class groups.
\item  The genus-2 handlebody group \cite{chesser2022stable}.
\item Artin groups of extra-large type \cite{MMS:Artin}.
\item Many free-by-cyclic groups \cite{bongiovanni2025characterizing}.
\item Admissible curve graphs \cite{calderon2024hierarchical}.
\item The class of HHSes is closed under products and appropriate combinations \cite{HHS_II}, including graph products \cite{BR:graph_prods} and graph braid groups  \cite{berlyne2021hierarchical}.
\end{itemize}

We note that there are non-colorable cubical HHSes \cite{Hagen:non-col} and non-HHS cubical groups \cite{shepherd2025cubulation}.  CAT(0) cube complexes are obviously locally modeled by cube complexes and, moreover, are bicombable \cite{NibloReeves}, so they are not of interest in this paper in any event.

\subsection{Basic setup of an HHS}\label{subsec:basic HHS}

Roughly, a \emph{hierarchically hyperbolic} structure is a pair $(\calX, \mathfrak S)$ where $\calX$ is a uniformly quasigeodesic metric space and $\mathfrak S$ is an index set for a family of uniformly hyperbolic spaces $\{\calC(U)\}_{U \in \mathfrak S}$.  The ambient space $\calX$ admits a family of uniformly coarsely Lipschitz projections $\pi_U:\calX \to \calC(U)$, and those projections, along with certain relations on the family $\mathfrak S$, largely control the coarse geometry of $\calX$.

The following extracts some of the properties we need from the axiomatic setup \cite{HHS_II}, see \cite{Sisto_HHS} for a discussion.

\begin{enumerate}

\item For each $U \in \mathfrak S$ there exist an $\ES$-hyperbolic space $\calC(U
)$ and a $\ES$-coarsely surjective $(\ES,\ES)$-coarsely Lipschitz map $\pi_U: \calX \rightarrow \calC(U),$ with $\ES$ independent of $U$.

    \item The set $\mathfrak S$ has the following relations:
    \begin{itemize}
        \item It is equipped with a partial order $\nest$ called \emph{nesting}. If $\mathfrak S \neq \emptyset,$ the set $\mathfrak S$ contains a unique $\nest$-maximal element $S.$
        
        \item It has a symmetric, anti-reflexive relation $\perp$ called \emph{orthogonality}. If $U \nest V$, then $U,V$ are not related via $\perp$. Furthermore, domains have \emph{orthogonal containers}: if $U \in \mathfrak S$ and there is a domain orthogonal to $U$ then there is a domain $W$ such that $V \nest W$ whenever $V \perp U.$
        
        \item For any distinct $ U, V \in \mathfrak S$ unrelated via $\perp, \nest$, then we say $U,V$ are \emph{transverse} and write $U  \pitchfork V.$ 
     
        \end{itemize}
          
        \item There exists an integer called the \emph{complexity} of $\calX$ and denoted $\xi(\mathfrak S)$ such that whenever $U_1,U_2,\cdots U_n$ is a collection of pairwise non-transverse domains, then $n \leq \xi(\mathfrak S).$
        
        \item If $U  \sqsubset V$ or $U  \pitchfork V,$ then there exists a set $\rho^U_V$ of diameter at most $\ES$ in $\calC(V)$.
        
        \item If $U  \sqsubset V$, there is a map $\rho^V_U:\calC(V) \rightarrow \calC(U)$, called the \emph{relative projection} of $U$ to $V$ which satisfies the following: For $x \in \calX$ with $d_{ V}(\pi_V(x),\rho^U_V)>\ES,$ we have $\pi_U(x) \underset{\ES}{\asymp} \rho^V_U(\pi_V(x)).$

\end{enumerate}

For two points $x,y \in \calX,$ it is standard to use $d_U(x,y)$ to denote $d_{\calC(U)}(\pi_U(x), \pi_U(y)),$ and similarly for subsets of $\calX.$ For a subset $A \subseteq \calX,$ we will also use $\diam_U(A)$ to denote the diameter of the set $\pi_U(A) \subset \calC(U)$.

\subsection{Basic HHS facts}

We now collect some useful facts about HHSes from various places. \medskip

The following definition from \cite{Beh:thesis, BKMM:qirigid, HHS_II}, will be important for us.  The idea here is that we want to combine all of the projections $\pi_U:\calX \to \calC(U)$ for each $U \in \mathfrak S$ into a global projection $\Pi:\calX \to \prod_U \calC(U)$.  The following definition uses the relations on $\mathfrak S$ to constrain tuples in $\prod_U \calC(U)$.

\begin{definition}[Consistent tuple]\label{defn:consistency}
Let $\kappa\geq 0$ and let $(b_U)\in\prod_{U\in\mathfrak S}2^{\calC(U)}$ be a tuple such that for each $U\in\mathfrak S$, 
the $U$--coordinate  $b_U$ has diameter $\leq\kappa$.  Then $(b_U)$ is \textbf{\em $\kappa$--consistent} if for all 
$V,W\in \mathfrak S$, we have $$\min\{\dist_V(b_V,\rho^W_V),\dist_W(b_W,\rho^V_W)\}\leq\kappa$$ whenever $V\pitchfork W$ and 
$$\min\{\dist_W(x,\rho^V_W),\diam_V(b_V\cup\rho^W_V)\}\leq\kappa$$ whenever $V\nest W$.
\begin{itemize}
\item We refer to the above inequalities as the \emph{consistency inequalities}.
\end{itemize}
\end{definition}

\medskip

The following is a key fact for us.  It says that the image of $\Pi:\calX \to \prod_U \calC(U)$ consists of uniformly consistent tuples:

\begin{lemma}\label{lem:base consistency}
There exists $\theta = \theta(\mathfrak S)>0$ so that if $x \in \calX$, then the tuple $(\pi_U(x)) \in \prod_{U \in \mathfrak S} \calC(U)$ is $\theta$-consistent.
\end{lemma}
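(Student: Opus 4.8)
The plan is to verify the consistency inequalities directly from the HHS axioms listed in Subsection~\ref{subsec:basic HHS}, taking the coordinate tuple $(b_U) = (\pi_U(x))$ with each $b_U$ a point (hence diameter $0 \le \kappa$), and extracting a uniform constant $\theta$ depending only on $\ES$ and the relations on $\mathfrak S$. There are two cases to handle, matching the two families of consistency inequalities.

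First I would treat the transverse case: suppose $V \pitchfork W$ and suppose $\dist_V(\pi_V(x), \rho^W_V) > \ES$; I must then show $\dist_W(\pi_W(x), \rho^V_W)$ is uniformly bounded. By axiom (5) (the relative projection property, applied with the roles of the pair as needed), the hypothesis $d_V(\pi_V(x), \rho^W_V) > \ES$ forces $\pi_W(x)$ to be $\ES$-close to the image of $\pi_V(x)$ under the relevant relative projection map, which in turn lands within $\ES$ of $\rho^V_W$ since $\rho^V_W$ has diameter at most $\ES$ by axiom (4); combining the bounds yields $\dist_W(\pi_W(x), \rho^V_W) \lesssim \ES$. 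Thus setting $\theta$ larger than this linear-in-$\ES$ quantity handles the transverse inequality by a case split on whether the $V$-side or $W$-side of the $\min$ is the small one.

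Next I would treat the nested case: suppose $V \nest W$; I must show $\min\{\dist_W(\pi_W(x), \rho^V_W), \diam_V(\pi_V(x) \cup \rho^W_V)\} \le \theta$. If $\dist_W(\pi_W(x), \rho^V_W) \le \ES$ we are done (the first term is small), so assume $\dist_W(\pi_W(x), \rho^V_W) > \ES$; then axiom (5) applied to $V \nest W$ gives $\pi_V(x) \asymp_{\ES} \rho^W_V(\pi_W(x))$, and since $\rho^W_V$ (as a bounded set, by axiom (4)) has diameter at most $\ES$, the set $\pi_V(x) \cup \rho^W_V$ has diameter bounded linearly in $\ES$, so the second term in the $\min$ is small. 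Either way the $\min$ is bounded by a uniform multiple of $\ES$.

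Finally I would collect the constants from the two cases, take $\theta = \theta(\mathfrak S)$ to be their maximum (a fixed linear function of $\ES$), and observe that with this $\theta$ the tuple $(\pi_U(x))$ satisfies both consistency inequalities for all relevant pairs $V, W \in \mathfrak S$, so it is $\theta$-consistent. I do not expect a genuine obstacle here: the statement is essentially a direct unwinding of axioms (4) and (5) together with the definition of consistency, and the only mild care needed is in the bookkeeping of which relative projection applies to which side of each $\min$ and in confirming that the resulting constant depends only on $\mathfrak S$ (through $\ES$) and not on the points or domains involved. The one point worth stating carefully is that axiom~(5) as listed is phrased for $U \sqsubset V$, so in the transverse case one must invoke it in the form guaranteed by axiom~(4) — namely that $\rho^W_V$ is merely a bounded set, not a map — and argue via the coarse-Lipschitz projection $\pi_W$ directly; this is the routine but slightly fiddly bookkeeping step.
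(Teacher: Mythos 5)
There is a genuine gap, and it sits exactly where you deferred it. The paper gives no proof of this lemma because it is, in substance, the \emph{consistency axiom} of a hierarchically hyperbolic space: Subsection \ref{subsec:basic HHS} explicitly only ``extracts some of the properties'' from the full axiomatic setup of \cite{HHS_II}, and in that full setup the transverse inequality $\min\{d_V(\pi_V(x),\rho^W_V),\, d_W(\pi_W(x),\rho^V_W)\}\leq \ES$ for $V\pitchfork W$ is an axiom, not a consequence. Your argument tries to derive it from the listed items (1)--(6), but for transverse domains there is no relative projection map between $\calC(V)$ and $\calC(W)$ --- only the bounded sets $\rho^W_V$ and $\rho^V_W$ of item (4) --- and item (5) applies only to nested pairs. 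The step you dismiss as ``routine but slightly fiddly bookkeeping,'' namely arguing ``via the coarse-Lipschitz projection $\pi_W$ directly,'' cannot be carried out: coarse Lipschitzness of the $\pi$'s plus boundedness of the $\rho$-sets imposes no relation between $d_V(\pi_V(x),\rho^W_V)$ and $d_W(\pi_W(x),\rho^V_W)$. The Behrstock inequality is genuine geometric input (for mapping class groups it is Behrstock's lemma about subsurface projections), and in the abstract framework it must simply be assumed. So the transverse half of your proof is missing an idea, not a bookkeeping step.

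The nested case of your argument is essentially fine: if $d_W(\pi_W(x),\rho^V_W)>\ES$, then item (5) gives $\pi_V(x)\asymp_{\ES}\rho^W_V(\pi_W(x))$, and since this image set coarsely coincides with $\pi_V(x)$, the diameter of $\pi_V(x)\cup\rho^W_V(\pi_W(x))$ is bounded linearly in $\ES$. (Be careful with notation, though: for $V\nest W$ the symbol $\rho^W_V$ denotes a \emph{map}, so the bounded set entering the second term of the $\min$ is $\rho^W_V(\pi_W(x))$, not a diameter-$\ES$ set supplied by item (4).) The correct disposition of the lemma as a whole is the paper's: it is recorded as a known fact imported from the HHS axioms of \cite{HHS_II} (equivalently, one should add the consistency axiom to the extracted list), rather than something to be rederived from the partial list given here.
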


On the other hand, we would like to know whether all consistent tuples in $\prod_U \calC(U)$ essentially come from $\calX$.  That is the content of the following Realization Theorem.  In particular, says that any tuple satisfying Definition \ref{defn:consistency} can be coarsely realized via a point in $\calX$.  It is \cite[Theorem 4.3]{BKMM:qirigid} for mapping class groups, \cite[Theorem 3.2]{EMR:rank} for Teichm\"uller space with the Teichm\"uller metric, and \cite[Theorem 3.1]{HHS_II} for the general HHS case:

\begin{theorem}[Realization]\label{thm:realization}
For each $\kappa \geq 1$ there exists $\theta_e, \theta_u \geq 0$ such that the following holds.  Let $\mathbf{b} = (b_U)_{U \in \mathfrak S} \in \prod_{U \in \mathfrak S} 2^{\calC(U)}$ be $\kappa$-consistent.

Then there exists $x \in X$ so that $d_U(b_U, \pi_W(x)) \leq \theta_e$ for all $U \in \mathfrak S$.  Moreover, $x$ is \emph{coarsely unique} in the sense that 
$$\diam_{\calX} \{x \in \calX| d_W(b_W, \pi_W(x)) \leq \theta_e \textrm{ for all } U \in \mathfrak S\} < \theta_u.$$
\end{theorem}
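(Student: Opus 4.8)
The plan is to establish the two assertions separately: coarse uniqueness, which is quick, and existence, which goes by induction on the complexity $\xi(\mathfrak S)$. For coarse uniqueness, if $x,x'\in\calX$ both realize $\mathbf b$ with error $\theta_e$, then for every $U\in\mathfrak S$ we have $d_U(x,x')\leq 2\theta_e+\diam_U(b_U)\leq 2\theta_e+\kappa$; feeding this into the Distance Formula (Theorem~\ref{thm:DF}) with any threshold $s>2\theta_e+\kappa$ annihilates every summand, so $d_\calX(x,x')$ is bounded by the additive constant of that formula, which yields $\theta_u=\theta_u(\kappa)$. (Equivalently, one invokes the uniqueness axiom of \cite{HHS_II} directly.)

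For existence, when $\xi(\mathfrak S)=1$ the only domain is $S$, so coarse surjectivity of $\pi_S$ already produces a point $x$ with $\pi_S(x)\asymp b_S$. For the inductive step, first choose $x_0\in\calX$ with $\pi_S(x_0)\asymp b_S$. For any $U\nest S$ with $d_S(b_S,\rho^U_S)$ exceeding a constant depending only on $\ES$ and $\kappa$, the consistency inequality for $U\nest S$ forces $b_U\asymp\rho^S_U(b_S)$, while property~(6) of the setup above gives $\pi_U(x_0)\asymp\rho^S_U(\pi_S(x_0))\asymp\rho^S_U(b_S)$; hence such $U$ are already realized by $x_0$. Let $\mathfrak T$ be the set of $\nest$-maximal elements among the remaining domains $U\nest S$ (those with $d_S(b_S,\rho^U_S)$ small); since $\nest$-chains are bounded in length by $\xi(\mathfrak S)$, every still-unrealized domain is nested in some $T\in\mathfrak T$. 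Each $T\in\mathfrak T$ determines a factor region $F_T\subseteq\calX$ which is itself an HHS with index set $\{V\in\mathfrak S:V\nest T\}$ of complexity strictly less than $\xi(\mathfrak S)$, and the restriction of $\mathbf b$ there is still $\kappa$-consistent; the inductive hypothesis then supplies $y_T\in F_T$ realizing it.

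It remains to merge $x_0$ and the points $\{y_T\}_{T\in\mathfrak T}$ into a single $x\in\calX$ realizing $b_S$ together with all of the $b_V$ for $V\nest T\in\mathfrak T$ simultaneously, and this assembly step is where I expect the real difficulty to lie. I would carry it out by processing domains in decreasing order of \emph{level} (the length of the longest $\nest$-chain below a domain) and repeatedly applying the partial realization axiom of \cite{HHS_II}: at each level, the domains whose coordinates are not yet correctly realized and which genuinely carry information not predicted by another coordinate form a pairwise orthogonal family---two transverse such domains would violate the transverse consistency inequality, since one of the pair would necessarily be predicted by the $\rho$-projection of the other, and two domains of a common level cannot be nested---so partial realization fixes exactly those coordinates, after which one checks, using the consistency inequalities, that the coordinates already secured at higher levels are left undisturbed. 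This final verification is the genuine content of the theorem, and it is essentially the argument of \cite{BKMM:qirigid, EMR:rank, HHS_II}. I note finally that the machinery of this paper furnishes an alternative proof: once the cubical model $\calQ$ of the hierarchical hull determined by $\mathbf b$ (Definition~\ref{defn:hier hull}, Theorem~\ref{thm:Q CCC}) is in hand, one obtains $x$ as the image under the cubical model map of a vertex of $\calQ$ whose coordinates match $\mathbf b$, with coarse uniqueness falling out of the quasi-isometric embedding property of that map---this is the viewpoint of Bowditch \cite{Bowditch_hulls} and \cite{Dur_infcube}.
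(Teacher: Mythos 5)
The paper does not actually prove Theorem \ref{thm:realization}: it is quoted from the literature, with the mapping class group case attributed to \cite{BKMM:qirigid}, the Teichm\"uller metric case to \cite{EMR:rank}, and the general HHS case to \cite[Theorem 3.1]{HHS_II}. So there is no in-paper argument to compare against; the relevant comparison is with \cite{HHS_II}, and your outline does follow that route (induction on complexity/level, with the partial realization axiom as the engine), as you yourself acknowledge. Your treatment of the ``far'' domains via property (6) and nested consistency, and the observation that the still-unrealized, unpredicted domains at a given level are pairwise orthogonal, are both in the spirit of that proof.

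Two points keep this from being a proof rather than an outline. First, deducing coarse uniqueness from the Distance Formula \ref{thm:DF} is circular in the standard logical development: the distance formula (including this paper's cubical-model proof of its lower bound, which routes through realization in Subsection \ref{subsec:upgrade to HHS}) sits downstream of Theorem \ref{thm:realization}; your parenthetical fallback---invoking the uniqueness axiom of \cite{HHS_II} directly---is the correct argument and should be the primary one. Second, the appeal to factor regions $F_T \subseteq \calX$ that are themselves HHSes of lower complexity is likewise circular as stated: standard product regions are constructed in \cite{HHS_II} as sets of realization points of consistent tuples supported on the domains nested in $T$, i.e.\ they come after the theorem. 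The induction in \cite{HHS_II} avoids this by strengthening the inductive statement: for $T$ of a given level one produces a point of $\calX$ (not of a pre-existing subspace) realizing the restriction of $\mathbf{b}$ to domains nested in $T$ and, crucially, projecting $\ES$-close to $\rho^T_W$ for every $W$ with $T \nest W$ or $T \pitchfork W$; it is exactly this extra clause, obtained from the partial realization axiom together with bounded geodesic image and consistency, that lets the points $y_T$ be merged with $x_0$ without disturbing coordinates already secured. You correctly identify this verification as ``the genuine content of the theorem,'' but you defer it entirely to the citations, so what you have is a faithful sketch of the known argument---which, to be fair, is no less than the paper itself supplies---rather than a self-contained proof.
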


\begin{remark}
Notably, the proof of Theorem \ref{thm:realization} in \cite{HHS_II} does not involve the hyperbolicity of the associated spaces $\{\calC(U)\}_{U \in \mathfrak S}$, and actually follows from a weak form which is axiomatized into the HHS setup.  In this sense, realization is a purely hierarchical fact which plays a similar role to the existence of a coarse median in Bowditch's analysis \cite{Bowditch_hulls}.

\end{remark}

Perhaps the main technique in HHSes is taking an object in the ambient space $\calX$ and projecting it to all of the hyperbolic spaces, doing hyperbolic geometry, and then reassembling the pieces via hierarchical tools like the Realization Theorem \ref{thm:realization}.  Toward that end, and recognizing that HHSes are fundamentally coarse spaces, we will usually want to restrict ourselves to the \emph{relevant} collection of hyperbolic spaces to which an object has a sufficiently large projection.

\begin{definition}[Relevant domains] \label{defn:relevant}
Given a subset $A \subset \calX$ and $K > 0$, we say that $U \in \mathfrak S$ is $K$-\emph{relevant} for $A$ if $\diam_U(A) \geq K$.  We denote the set of $K$-relevant domains for $A$ by $\Rel_K(A)$.
\end{definition}

The following Passing-Up lemma is a foundational fact about HHSes; in fact, it can be taken to be an axiom \cite[Proposition 4.15]{Dur_infcube}.  Roughly speaking, the passing-up lemma says that if $x,y \in \calX$ have a large number of lower complexity relevant domains, then there must be some higher complexity relevant domain containing at least one of them, thereby allowing us to ``pass up'' the $\nest$-lattice.

\begin{lemma}[Passing-up]\label{lem:passing-up}
Let $\calX$ be an HHS with constant $\ES$.  For every $D>0$ there is an integer $PU=PU(D)>0$ such that if $V \in \mathfrak S$ and $x,y \in \calX$ satisfy $d_{U_i}(x,y)>\ES$ for a collection of domains $\{U_i\}_{i=1}^{PU}$ with each $U_i \in \mathfrak S_V$, then there exists $W \in \mathfrak S_V$ and $i$ with $U_i \sqsubsetneq W$ such that $d_W(x,y)>D.$
\end{lemma}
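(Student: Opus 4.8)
The plan is to prove this by a pigeonhole/Ramsey-type argument on the relation structure of the domains $\{U_i\}$, exploiting the bounded-complexity axiom (3) and the consistency inequalities. First I would fix $V$ and the two points $x,y$, and consider a large collection $\{U_i\}_{i=1}^{N}$ of domains nested in $V$ with $d_{U_i}(x,y)>\ES$, where $N$ is a quantity to be chosen at the end depending only on $D$ and $\ES$. The first reduction is to arrange that the $U_i$ are pairwise transverse: if some subcollection of size larger than $\xi(\mathfrak S)$ were pairwise \emph{non}-transverse, then by axiom (3) this is impossible, so among any $\xi(\mathfrak S)+1$ of them two are transverse; more carefully, I would invoke the fact that a family of domains with no large pairwise-transverse subset and no large pairwise-non-transverse subset is bounded (a two-dimensional Dilworth/Ramsey statement), so after passing to a subcollection of size $N' \gtrsim \sqrt{N}$ (or rather, after a Ramsey-type extraction whose loss depends only on $\xi(\mathfrak S)$) we may assume the $U_i$ are pairwise transverse. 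Alternatively, and more in the spirit of the standard proof, one keeps the collection as is and argues directly on the $\rho$-coordinates.

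The heart of the argument is the following: for each pair $U_i \pitchfork U_j$, the consistency inequalities for the tuple $(\pi_U(x))$ (Lemma 1.29, $\theta$-consistency) and for $(\pi_U(y))$ say that $\min\{d_{U_i}(x,\rho^{U_j}_{U_i}), d_{U_j}(x,\rho^{U_i}_{U_j})\} \le \theta$, and likewise with $y$ in place of $x$. Since $d_{U_i}(x,y) > \ES$ is large (take $\ES$ at least, say, $4\theta+10$ by enlarging the HHS constant if needed — or rather phrase the hypothesis with the correct threshold), for each such pair $x$ and $y$ cannot both be $\theta$-close to the same $\rho$-point in $\calC(U_i)$, so one of the two ``coordinate slots'' must carry the discrepancy: either $\pi_{U_i}(x)$ is far from $\rho^{U_j}_{U_i}$ on one side and near on the other is impossible, forcing instead that $\rho^{U_i}_{U_j}$ is $\theta$-close to \emph{both} $\pi_{U_j}(x)$ and $\pi_{U_j}(y)$ for a definite fraction of the indices $i$. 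This means that, for a fixed $j$, the projections $\rho^{U_i}_{U_j}$ of many of the other domains all land within distance $2\theta$ of the geodesic (or of $\pi_{U_j}(x)$) in $\calC(U_j)$. Now apply this reasoning with a $j$ chosen so that $\pi_{U_j}$ separates $x,y$ by more than $\ES$ as well, and use that in a $\delta$-hyperbolic space the set of $\rho$-points clustered near a point, combined with the bottleneck property, cannot be too numerous without two of the $U_i$ being nested or orthogonal — contradicting transversality — unless one of them is in fact nested into a domain $W$ with $W \sqsubsetneq V$ and $d_W(x,y) > D$. The cleanest packaging: if $PU = PU(D)$ is large, then among the $U_i$ there is a subcollection of size $> \theta_u$-many (the realization bound) whose $\rho$-points to some common $W$ are pairwise $D$-close; realize that cluster by a point and conclude $d_W(x,y)>D$, with $U_i \sqsubsetneq W$ for those $i$. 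This is exactly the structure of the proof of \cite[Lemma 2.5]{HHS_II} or \cite[Proposition 4.15]{Dur_infcube}, so I would cite that and give the above as a sketch.

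The main obstacle I anticipate is making the clustering-to-realization step precise: one needs to extract from ``many transverse $U_i$ with $d_{U_i}(x,y)$ large'' a genuine \emph{new} domain $W$, not just a point in some $\calC(U_j)$, and this requires either a direct appeal to the partial-realization / container axioms (orthogonal containers in axiom (2)) or to the Behrstock inequality's finer consequences. The honest route is to observe that the hypothesis of the lemma — as used downstream — is really an \emph{axiom} in the sense flagged in the text (``it can be taken to be an axiom \cite[Proposition 4.15]{Dur_infcube}''), so in this expository paper the right move is to state the lemma, indicate that it follows from the quantitative Behrstock inequalities together with bounded complexity, and refer to \cite{HHS_II} and \cite{Dur_infcube} for the full combinatorial argument rather than reproduce the pigeonholing in detail. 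Thus my proof would be a one-paragraph sketch of the transversality reduction plus the consistency-inequality clustering, followed by the citation for the realization-of-the-container step.
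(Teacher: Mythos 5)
The paper does not actually prove this lemma: it is stated as a quoted foundational fact, with the remark that it can be taken as an axiom, and the reader is referred to \cite{Dur_infcube} (and implicitly \cite{HHS_I, HHS_II}) for it. So your bottom-line recommendation---state the lemma and defer to \cite{HHS_II} and \cite{Dur_infcube}---coincides exactly with what the paper does, and for the purposes of this article that is the right call.

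That said, the sketch you offer should not be mistaken for a proof, and the gap is precisely the one you flag. The preliminary steps are fine: bounded complexity (axiom (3)) plus Ramsey does let you pass to a large pairwise-transverse subfamily, and for transverse $U_i\pitchfork U_j$ that are both relevant for $\{x,y\}$, $\theta$-consistency of $(\pi_U(x))$ and $(\pi_U(y))$ does force $\rho^{U_i}_{U_j}$ to lie near $\pi_{U_j}(x)$ or $\pi_{U_j}(y)$ (this is the Behrstock partial order the paper later invokes via \cite[Proposition 2.8]{HHS_II}). But the endgame does not work as written: the Realization Theorem \ref{thm:realization} converts a consistent tuple into a \emph{point} of $\calX$, not into a new \emph{domain} $W$ with $U_i\sqsubsetneq W$ and $d_W(x,y)>D$, so ``realize that cluster by a point and conclude $d_W(x,y)>D$'' has no content; clustering of $\rho$-points in some $\calC(U_j)$ by itself produces no container. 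The genuine arguments in the literature obtain $W$ either by induction on complexity combined with the partial realization and uniqueness axioms (as in \cite[Lemma 2.5]{HHS_II}), or by taking passing-up itself as part of the axiom scheme (as in \cite[Proposition 4.15]{Dur_infcube}); none of this is recovered by the consistency-clustering heuristic. So if you include a sketch at all, it should stop at the ordering/clustering observation and be explicit that the extraction of the container $W$ is exactly what the cited proofs supply.
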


As an example of how this lemma is foundational, we observe the following corollary.  The original proof is embedded in the proof of Theorem \ref{thm:realization} in \cite[Theorem 3.1]{HHS_II}, but not explicitly stated.  We give a proof for completeness:

\begin{corollary}\label{cor:rel sets are finite}
For any $x,y \in \calX$, the set $\Rel_{\ES}(x,y)$ is finite.
\end{corollary}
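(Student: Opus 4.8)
The plan is to obtain finiteness directly from the Passing-Up Lemma~\ref{lem:passing-up}, combined with the coarse Lipschitz bound built into axiom~(1). The point is that, for a \emph{fixed} pair $x,y\in\calX$, the projection distances $d_U(x,y)$ are uniformly bounded over all domains: since each $\pi_U\colon\calX\to\calC(U)$ is $(\ES,\ES)$-coarsely Lipschitz,
\[
d_U(x,y)=d_{\calC(U)}(\pi_U(x),\pi_U(y))\le \ES\, d_\calX(x,y)+\ES=:M
\]
for every $U\in\mathfrak S$, where $M=M(x,y)$ does not depend on $U$.

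With this in hand, I would argue by contradiction. Suppose $\Rel_\ES(x,y)$ is infinite, set $D:=M+1$, and let $PU(D)$ be the integer provided by Lemma~\ref{lem:passing-up}. Since $\Rel_\ES(x,y)$ is infinite we may choose distinct domains $U_1,\dots,U_{PU(D)}$ in it, each satisfying $d_{U_i}(x,y)>\ES$; as every domain of $\mathfrak S$ is nested in the maximal domain $S$, we have $U_i\in\mathfrak S_S=\mathfrak S$ for all $i$. Applying Passing-Up with $V=S$ yields a domain $W\in\mathfrak S$ with $d_W(x,y)>D=M+1$. But the displayed estimate gives $d_W(x,y)\le M<M+1$, a contradiction. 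Hence $\Rel_\ES(x,y)$ is finite; in fact this shows $|\Rel_\ES(x,y)|<PU(\ES\, d_\calX(x,y)+\ES+1)$.

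I do not expect any genuine obstacle: the only thing to notice is the right quantity to feed into Passing-Up. The lemma lets one \emph{prescribe} an arbitrarily large target $D$ for the distance at the higher-complexity domain, while the coarse Lipschitz estimate caps every such distance at $M$; choosing $D>M$ therefore turns the conclusion of Passing-Up into an immediate contradiction the moment there are $PU(D)$-many relevant domains. The one piece of bookkeeping is the mild mismatch between the threshold $\ge\ES$ in the definition of $\Rel_\ES$ and the strict inequality $>\ES$ in Lemma~\ref{lem:passing-up}; this is harmless, since Passing-Up holds just as well with the non-strict hypothesis (or one may apply it after shrinking the relevance threshold slightly).
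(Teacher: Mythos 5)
Your proof is correct and is essentially the paper's own argument: bound $d_U(x,y)$ uniformly by the coarse Lipschitz constant of the projections, then apply the Passing-Up Lemma with a target $D$ exceeding that bound to contradict having too many relevant domains. The paper's proof is the same, only stated more tersely (and it silently glosses over the $\geq\ES$ versus $>\ES$ threshold point that you correctly note is harmless).
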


\begin{proof}
Let $C$ be the coarse Lipschitz constant for all of the projections $\pi_U$ for $U \in \mathfrak S$, and set $D > C d_{\calX}(x,y) + C$.  If $\# \Rel_{\ES}(x,y) > P(D)$, then the Passing-Up Lemma \ref{lem:passing-up} provides a domain $W \in \mathfrak S$ with $d_W(x,y)>D$, which is impossible.
\end{proof}

Another fundamental fact about HHSes is the following distance formula, which originated in work of Masur-Minsky \cite{MM00}, and found various iterations elsewhere \cite{Brock, Rafi:combo, Dur:augmented, kim2014geometry} before being generalized to all HHSes in \cite{HHS_II}.  See \cite[Theorem 4.5]{HHS_II} for a proof from the now-standard HHS setup, and also \cite[Theorem 1.2]{Bowditch_hulls} and \cite[Corollary C]{Dur_infcube} from proofs using cubical model techniques, which we also recover here below in Corollary \ref{cor:DF lower bound}.

\begin{theorem}[Distance formula]\label{thm:DF}
There exists $K_0 = K_0(\mathfrak S)>0$ so that if $K>K_0$ and $x,y \in \calX$, then
$$d_{\calX}(x,y) \asymp \sum_{U \in \mathfrak S} [d_U(x,y)]_K$$
where the coarse equality $\asymp$ depends only on $\mathfrak S$ and $K$.
\end{theorem}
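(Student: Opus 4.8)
The plan is not to reprove this via the hierarchical induction of \cite{HHS_II}, but to deduce it directly from the cubical model for the pair $x,y$ built later in the paper (Theorem \ref{thm:Q CCC}); this is the route of \cite{Bowditch_hulls, Dur_infcube}. Fix $x,y\in\calX$ and let $\calQ$ be a $B_0$-cubical model for $\hull_\calX(x,y)$, with cubical model map $\hO\colon\calQ\to\calX$ a $(B_0,B_0)$-quasi-isometric embedding and distinguished vertices $\hx,\hy$ with $\hO(\hx)$ coarsely equal to $x$ and $\hO(\hy)$ coarsely equal to $y$. Since $\hO$ is a quasi-isometric embedding, $d_\calX(x,y)\asymp d_\calQ(\hx,\hy)$ with constants depending only on $B_0$; since $\calQ$ is a CAT(0) cube complex of dimension at most $B_0$ its $\ell^1$ and $\ell^\infty$ metrics are uniformly comparable, and the $\ell^1$-distance $d_\calQ(\hx,\hy)$ equals the number of hyperplanes of $\calQ$ separating $\hx$ from $\hy$. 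Because $\calQ$ is the cubical hull of $\hx,\hy$, \emph{every} hyperplane separates them, so $d_\calX(x,y)$ is coarsely equal to the total number of hyperplanes of $\calQ$.

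The distance formula has thus been reduced to the hyperplane count
\[
\#\{\text{hyperplanes of }\calQ\}\;\asymp\;\sum_{U\in\mathfrak S}\,[d_U(x,y)]_K .
\]
Here I would invoke the structure of the cubical model: its hyperplanes carry a coloring by the domains $U\in\Rel_K(x,y)$ --- a finite set by Corollary \ref{cor:rel sets are finite} --- and for a fixed relevant $U$ the hyperplanes of color $U$ are the walls of a tree $T_U$ modeling a $\calC(U)$-geodesic from $\pi_U(x)$ to $\pi_U(y)$, of which there are coarsely $d_U(x,y)$. Summing over the relevant colors (of which there are at most $K^{-1}\sum_U[d_U(x,y)]_K$, so the per-color additive errors are absorbed) and noting that domains with $d_U(x,y)<K$ contribute nothing yields the displayed coarse equality, once $K$ is taken past the threshold $K_0(\mathfrak S)$ at which the tree models are faithful.

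Splitting the target $\asymp$ into its two halves isolates the work. The upper bound $d_\calX(x,y)\lesssim\sum_U[d_U(x,y)]_K$ needs only that $\hO$ is coarsely Lipschitz together with the inequality $\#\{\text{hyperplanes}\}\lesssim\sum_U[d_U(x,y)]_K$ --- one simply pushes an $\ell^1$-geodesic of $\calQ$ from $\hx$ to $\hy$ forward along $\hO$ --- and is essentially formal. The reverse bound $d_\calX(x,y)\gtrsim\sum_U[d_U(x,y)]_K$ is the harder one, recorded separately as Corollary \ref{cor:DF lower bound}, and I expect it to be the main obstacle: it uses the full quasi-isometric-embedding property of $\hO$, i.e.\ that no region of the cube complex is crushed in $\calX$. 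Its proof ultimately rests on the coarse-uniqueness clause of the Realization Theorem \ref{thm:realization} --- distinct coarse points of $\calX$ realize genuinely different consistent tuples, so a separating hyperplane of $\calQ$, which encodes a real difference in some $\pi_U$-coordinate, must be paid for by actual distance in $\calX$ --- with the Passing-Up Lemma \ref{lem:passing-up} keeping the count of relevant domains finite and uniform throughout.
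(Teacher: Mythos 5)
Your overall route is the one the paper itself takes (it states the theorem with citations and recovers the harder half internally from the cubical model, as in Corollary \ref{cor:DF lower bound}), but as written your argument has a genuine gap at its central step. You claim that for each relevant $U$ the hyperplanes of colour $U$ number ``coarsely $d_U(x,y)$''. In the paper's model the colour-$U$ hyperplanes are exactly the edges of the collapsed interval $\hT_U$, and Lemma \ref{lem:collapsed interval control}(4) only gives $d_{\hT_U}(\ha_U,\hb_U)\leq d_{T_U}(a_U,b_U)\asymp d_U(x,y)$: the collapse of clusters can destroy essentially all of $d_U(x,y)$ in that coordinate (for instance when the $\rho^V_U$ of nested relevant domains lie densely along the geodesic in $\calC(U)$, the whole of $T_U$ is one cluster and $\hT_U$ is short). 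So the per-domain count is false, and only the \emph{total} comparison $\sum_U d_{\hT_U}(\hx_U,\hy_U)\asymp\sum_U[d_U(x,y)]_K$ holds --- and that comparison is precisely Propositions \ref{prop:upper bound} and \ref{prop:lower bound}. The hard direction (that the length lost to collapsing in $U$ is recouped by the $\nest_{\calU}$-minimal domains filling the cluster) is the net/BGI argument of Proposition \ref{prop:lower bound}, together with the Covering Lemma \ref{lem:covering} and Proposition \ref{prop:bounding containers} to prevent overcounting. Your proposal asserts this count as if it were part of the model's structure rather than proving it, so the main content of the theorem is missing.

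There is also a circularity you should confront head-on: you invoke ``$\hO$ is a quasi-isometric embedding, hence $d_\calX(x,y)\asymp d_\calQ(\hx,\hy)$'' as a black box, but in this paper that package is Theorem \ref{thm:Q distance estimate}, whose left-hand coarse equality \emph{is} the distance formula. A non-circular derivation must use only the DF-free ingredients: the right-hand coarse equality (Propositions \ref{prop:upper bound} and \ref{prop:lower bound}), the coarse Lipschitzness of $\hO$ (to push a combinatorial geodesic forward and get the upper bound, via Proposition \ref{prop:cubical hp}), and a direct argument that $d_{\calQ}(\hx,\hy)\prec d_\calX(x,y)$ (e.g.\ that $\hPsi$ moves boundedly under unit moves in $\calX$, using items (5) and (9) of Lemma \ref{lem:collapsed interval control} and the gate of Lemma \ref{lem:gate retract}). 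Relatedly, your stated mechanism for the lower bound --- that realization's coarse-uniqueness forces each separating hyperplane to ``be paid for by actual distance in $\calX$'' --- is not how it works and is not true hyperplane-by-hyperplane: a hyperplane is a single unit edge of some $\hT_U$ and carries no definite $\calX$-cost on its own; the cost only appears in aggregate through the counting arguments above.
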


Philosophically (for our purposes), the distance formula says that if we want to understand how to move between two points in an HHS, it should be enough to focus on the set of relevant domains between them. 

The next definition is of a special kind of path in an HHS.  The proof of their existence is always tied up in the proof of the more difficult bound of the distance formula, as is the case in this paper in Subsection \ref{subsec:HP and DF}.  Otherwise, we will not need this notion.

\begin{definition}\label{defn:hp}
A path $\gamma:I \to \calX$ in an HHS $(\calX, \mathfrak S)$ is a $L$-\emph{hierarchy path} if its projection $\pi_U(\gamma) \subset \calC(U)$ is an unparameterized $(L,L)$-quasi-geodesic for each $U \in \mathfrak S$.
\end{definition}

We end this subsection with a basic observation, which generalizes the observation that if two subsurfaces $U,V$ are not transverse, then their boundary curves must be disjoint up to isotopy.

\begin{lemma}\label{lem:rho control}
Suppose that $U,V,W \in \Rel_{K_0}(a,b)$ for $a,b \in \calX$.
\begin{enumerate}
\item If $U, V \nest W$ and $U,V$ are not transverse, then $d_W(\rho^U_W, \rho^V_W) < \ES$.
\item If $U \pitchfork V$ and $W \nest V$ with $W,U$ not orthogonal, then $d_U(\rho^V_W, \rho^W_U)<\ES$.
\end{enumerate}
\end{lemma}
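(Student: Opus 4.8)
The plan is to recognise each assertion as an instance of the consistency constraints that the HHS axioms place on the relative projections, extracted with the help of the Realization Theorem \ref{thm:realization}, and to use the relevance hypotheses to rule out the degenerate branch of each consistency inequality. One bookkeeping remark first: in (2), since $W\nest V$ the symbol $\rho^V_W$ denotes the \emph{map} $\calC(V)\to\calC(W)$, so the displayed inequality should be read as $d_U(\rho^V_U,\rho^W_U)<\ES$; this is well posed, because $U\pitchfork V$ gives the bounded set $\rho^V_U\subset\calC(U)$, and because $W\nest V$ together with $U\pitchfork V$ excludes $U\nest W$ and $U=W$, so the hypothesis that $W$ is not orthogonal to $U$ forces $W\nest U$ or $W\pitchfork U$, in either case producing the bounded set $\rho^W_U\subset\calC(U)$.

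For (1) I would split on the three possibilities left by ``$U,V$ not transverse'': $U\nest V\nest W$; its mirror $V\nest U\nest W$; and $U\perp V$ with $U,V\nest W$. In the nested case, fix a hierarchy path $\gamma$ from $a$ to $b$ (these exist, their construction accompanying the Distance Formula \ref{thm:DF}; see Subsection \ref{subsec:HP and DF}), so that $\pi_U\gamma,\pi_V\gamma,\pi_W\gamma$ are uniform unparameterised quasigeodesics. Because $U\in\Rel_{K_0}(a,b)$, the quasigeodesic $\pi_W\gamma$ must enter a bounded neighbourhood of $\rho^U_W$: otherwise item (5) applied along $\gamma$ would force $\pi_U\gamma$ to be coarsely constant, contradicting $d_U(a,b)\ge K_0$; and likewise $\pi_W\gamma$ enters a bounded neighbourhood of $\rho^V_W$. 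The crux is that the active intervals of $U$ and of $V$ along $\gamma$ overlap: item (5) shows $\pi_U\gamma$ is coarsely constant off the sub-interval $I_U$ on which $\pi_W\gamma$ is near $\rho^U_W$, and similarly for $I_V$, and feeding the $U\nest V$ consistency inequality back in forces $I_U$ into a bounded neighbourhood of $I_V$; hence $\pi_W\gamma$ visits a point uniformly close to both $\rho^U_W$ and $\rho^V_W$, and the triangle inequality in the $\ES$-hyperbolic space $\calC(W)$ concludes. For the orthogonal case $U\perp V$ the efficient route is partial realization: a single point $z\in\calX$ with prescribed coordinates in $\calC(U)$ and $\calC(V)$ has, since $U\nest W$ and $V\nest W$, its $W$-coordinate within $\ES$ of both $\rho^U_W$ and $\rho^V_W$, so again the triangle inequality applies.

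For (2) the same mechanism runs with $\calC(U)$ now playing the role of the common hyperbolic space. Along a hierarchy path $\gamma$ from $a$ to $b$: when $W\pitchfork U$, since $V,W\in\Rel_{K_0}(a,b)$ with $U\pitchfork V$ and $U\pitchfork W$, the projection $\pi_U\gamma$ sits uniformly close to $\rho^V_U$ throughout the interval on which $V$ is active, and uniformly close to $\rho^W_U$ throughout the interval on which $W$ is active; as $W\nest V$, the latter interval lies (up to bounded error) inside the former, so $\pi_U\gamma$ visits a point close to both $\rho^V_U$ and $\rho^W_U$, whence the claim. The case $W\nest U$ is handled the same way, the interval for $W$ now being located via the $W\nest U$ consistency inequality and the relevance of $W$. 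In all cases ``uniformly close'' means bounded in terms of $\mathfrak S$ alone, which we absorb into $\ES$.

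The step I expect to be the main obstacle is the overlap of the active intervals, both in the nested case of (1) and in (2): this is exactly where one needs a genuine three-domain interaction rather than pairwise consistency, and making precise the claim that a projection $\pi_T\gamma$ is coarsely constant outside the active interval of a nested domain requires upgrading the pointwise relative-projection statement of item (5) to a statement along the quasigeodesic $\pi_{T'}\gamma$ --- i.e. a bounded-geodesic-image property in the $\ES$-hyperbolic spaces $\calC(\cdot)$ --- together with the standard fact that nested relevant domains have nested active intervals. Everything else --- the orthogonal subcase of (1), and the endgame triangle inequalities --- is routine once the relevance hypotheses are used to discard the degenerate branch of each consistency inequality.
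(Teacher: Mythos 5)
You have correctly diagnosed the typo in (2) and your partial-realization argument for the orthogonal subcase of (1) is exactly the intended one. But be aware that the paper offers no proof of this lemma because, apart from that orthogonal subcase, it is not something to be derived at all: it is verbatim part of the consistency axiom of an HHS in the full axiom list of \cite{HHS_II} (if $A\nest B$ then $d_C(\rho^A_C,\rho^B_C)$ is uniformly bounded whenever $B$ is properly nested in $C$, or $B\pitchfork C$ and $C\not\perp A$). Taking $A=U$, $B=V$, $C=W$ gives the nested case of (1), and taking $A=W$, $B=V$, $C=U$ gives all of (2) in your corrected reading $d_U(\rho^V_U,\rho^W_U)<\ES$; the relevance hypotheses are not needed anywhere, and $\ES$ simply absorbs the axiom constants. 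The paper's abridged axiom list in Subsection \ref{subsec:basic HHS} omits this clause, but it defers to \cite{HHS_II} for the full setup.

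The route you propose instead has a genuine gap precisely at the step you flag as the crux. The bounded geodesic image Axiom \ref{ax:BGIA} together with pointwise consistency (Definition \ref{defn:consistency}) does confine the $U$-progress of a hierarchy path $\gamma$ to times when $\pi_V\gamma$ lies near $\rho^U_V$; but when $\rho^U_V$ happens to lie close to $\pi_V(a)$ or $\pi_V(b)$, those times need not meet the $V$-active interval at all, so you cannot conclude that $\pi_W\gamma$ is ever simultaneously close to $\rho^U_W$ and $\rho^V_W$. Concretely, the configuration $\rho^U_V$ near $\pi_V(a)$, $\rho^U_W$ near the $a$-end and $\rho^V_W$ near the $b$-end of $[\pi_W(a),\pi_W(b)]$ (with $U$ transitioning before $V$ along $\gamma$) satisfies every pointwise consistency inequality and every BGI constraint you invoke, and is excluded only by the $\rho$-level clause you are trying to prove; the same issue afflicts the claim in (2) that the $W$-active interval sits inside the $V$-active interval. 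The ``standard fact that nested relevant domains have nested active intervals'' that you lean on is itself proved in the literature from exactly this $\rho$-consistency clause (it underlies the partial order of relevant domains used in Theorem \ref{thm:controlling domains}), so invoking it here is circular. A secondary, architectural point: within this paper hierarchy paths are only produced later (Proposition \ref{prop:cubical hp}) from machinery that uses Lemma \ref{lem:rho control} via Lemma \ref{lem:interval control}, so your use of them here would require an external citation to \cite{HHS_II} rather than fitting the paper's internal development. The fix is simply to cite the consistency and partial realization axioms, as the paper implicitly does.
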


Finally, we end with a key HHS axiom, originally proven for mapping class groups in \cite[Theorem 3.1]{MM_II}:

\begin{axiom}[Bounded geodesic image]\label{ax:BGIA}  If $U  \sqsubset V$ and $\gamma \in \calC(V)$ is a geodesic with $d_{V}(\rho^U_V,\gamma)>\ES$, then diam$(\rho^V_U(\gamma))\leq \ES.$ Furthermore, for any $x,y \in \calX$ and any geodesic $\gamma$ connecting $\pi_V(x),\pi_V(y)$, if $d_{V}(\rho^U_V,\gamma)>\ES$, then $d(\pi_U(x), \pi_U(y)) \leq \ES.$
\end{axiom} 

\subsection{Hierarchical hulls and related basic facts}

In our HHS setup, the basic object we will want to consider is the following hierarchical notion of a coarse convex hull.  These \emph{hierarchical hulls} are built out of consistent tuples of points whose coordinates are contained in the hyperbolic hulls in each domain.  More specifically, given a pair of points $x,y \in \calX$, for each, $U \in \mathfrak S$, we set $H_U = \hull_U(\pi_U(\{x,y\}))$, where $\hull_U$ denotes the hyperbolic hull in $\calC(U)$.

\begin{definition}[Hierarchical hull]\label{defn:hier hull}
Given a constant $\theta>0$, we define the \emph{$\theta$-hull} of $x,y$ to be all $z \in \calX$ so that $d_U(z, H_U)<\theta$ for all $U \in \mathfrak S$.
\begin{itemize}
\item We denote the $\theta$-hull of $x,y$ by $H_{\theta} = \hull_{\theta}(x,y)$.
\end{itemize}
\end{definition}

One of the key facts about hierarchical hulls is that they are \emph{hierarchically quasiconvex} \cite[Lemma 6.2]{HHS_II}, see \cite{russell2023convexity} for more on convexity in HHSes.  We do not need this definition for our purposes, but we do need the following consequence:

\begin{lemma}\label{lem:gate retract}
For any $\theta>0$, there exists is a $(J,J)$-coarse Lipschitz retract $\gate_{H_{\theta}}:\calX \to H_{\theta}$, where $J = J(\theta, \mathfrak S)>0$ .
\end{lemma}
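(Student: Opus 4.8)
The plan is to build the retraction coordinate-wise in the hyperbolic spaces $\calC(U)$ and then reassemble the pieces using the Realization Theorem \ref{thm:realization}; this is the standard construction of gate maps onto hierarchically quasiconvex subsets, specialized to the hull $H_\theta$. Concretely, given $z \in \calX$, for each $U \in \mathfrak S$ I set $b_U$ to be a coarse closest-point projection of $\pi_U(z)$ onto the quasiconvex set $H_U = \hull_U(\pi_U(\{x,y\})) \subset \calC(U)$; since each $\calC(U)$ is $\ES$-hyperbolic and $H_U$ is the hull of two points, such a projection exists, is coarsely well defined, and has $\diam_U(b_U)$ bounded in terms of $\ES$. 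I would then define $\gate_{H_\theta}(z)$ to be a point of $\calX$ realizing the tuple $(b_U)_{U \in \mathfrak S}$ in the sense of Theorem \ref{thm:realization}.

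For this to make sense I must first check that $(b_U)$ is $\kappa$-consistent for a uniform $\kappa = \kappa(\mathfrak S)$, so that Theorem \ref{thm:realization} applies; this is the technical heart of the argument. One starts from the fact that $(\pi_U(z))$ is $\theta$-consistent (Lemma \ref{lem:base consistency}) and shows that coarse closest-point projection onto the $H_U$'s does not destroy the consistency inequalities of Definition \ref{defn:consistency}. Both cases, $V \pitchfork W$ and $V \nest W$, are handled via the Bounded Geodesic Image Axiom \ref{ax:BGIA}: if $\pi_U(z)$ is far from the relevant $\rho^{(\cdot)}_U$, then the geodesic from $\pi_U(z)$ to $H_U$ (equivalently, to $b_U$) stays far from $\rho^{(\cdot)}_U$, so $b_U$ and $\pi_U(z)$ have comparable projection data and the inequality already known for $(\pi_U(z))$ transfers to $(b_U)$; in the remaining case $\rho^{(\cdot)}_U$ lies boundedly inside $H_U$, making its coordinate inequality automatic. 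Once consistency is established, Theorem \ref{thm:realization} produces $\gate_{H_\theta}(z)$ with $d_U(\gate_{H_\theta}(z), b_U) \le \theta_e$ for all $U$; since $b_U \in H_U$ this shows $\gate_{H_\theta}(z) \in H_{\theta}$ provided $\theta \ge \theta_e$, so the map lands in the hull once $\theta$ is large enough (the case of smaller $\theta$ follows since all the hulls $H_\theta$ lie within uniformly bounded Hausdorff distance of one another).

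It remains to check the two defining properties. For the retraction property, if $z \in H_\theta$ then $d_U(\pi_U(z), H_U) < \theta$ for every $U$, hence $d_U(\pi_U(z), b_U) < \theta + C$ for a uniform $C$, so $d_U(z, \gate_{H_\theta}(z)) \le \theta + C + \theta_e$ for all $U$; applying the Distance Formula \ref{thm:DF} with a threshold $K$ exceeding this bound makes the sum vanish, so $d_\calX(z, \gate_{H_\theta}(z))$ is uniformly bounded. For coarse Lipschitzness, given $z, z' \in \calX$ I estimate $d_U(\gate_{H_\theta}(z), \gate_{H_\theta}(z')) \le d_U(b_U, b_U') + 2\theta_e$ and then use that coarse closest-point projection onto a quasiconvex subset of a hyperbolic space is uniformly coarsely Lipschitz, together with the fact that each $\pi_U$ is $(\ES,\ES)$-coarsely Lipschitz, to obtain $d_U(\gate_{H_\theta}(z), \gate_{H_\theta}(z')) \le \ES\, d_\calX(z,z') + C'$; summing over $U$ via the Distance Formula \ref{thm:DF}, and absorbing the additive shift by passing to a larger threshold (using that the number of relevant domains for a pair is controlled, cf. Corollary \ref{cor:rel sets are finite}), yields $d_\calX(\gate_{H_\theta}(z), \gate_{H_\theta}(z')) \le J\, d_\calX(z,z') + J$.

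I expect the main obstacle to be the consistency check for the gated tuple $(b_U)$: one must carefully track how coarse closest-point projection onto $H_U$ interacts with the projections $\rho^V_U$ and $\rho^U_W$ across the various cases of Definition \ref{defn:consistency}, invoking Axiom \ref{ax:BGIA} at each step. Everything afterward is a routine application of the Realization Theorem \ref{thm:realization} and the Distance Formula \ref{thm:DF}.
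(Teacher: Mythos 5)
Your construction is exactly the one the paper has in mind: the paper does not prove this lemma but describes the gate map as coordinate-wise closest-point projection of $\pi_U(z)$ to $H_U$ followed by the Realization Theorem, citing \cite[Proposition 6.3]{HHS_II}, and your consistency check via the Bounded Geodesic Image Axiom plus the Distance Formula arguments for the retraction and coarse Lipschitz properties is the standard way those details are carried out. The only loose point is the case of very small $\theta$ (where one should really work with $\theta$ above a uniform threshold $\theta_0(\mathfrak S)$, as is implicit in the paper's usage), but this does not affect the argument in the regime the paper uses.
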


In the above lemma, the map $\gate_{H_{\theta}}: \calX \to H_{\theta}$ is a hierarchical analogue of a cubical gate map.  When $z \in \calX$, the point $\gate_{H_{\theta}}(z) \in H_{\theta}$ is defined by coordinate-wise closest-point projection of $\pi_U(z)$ to $H_U$ and then applying the Realization Theorem \ref{thm:realization}; see \cite[Proposition 6.3]{HHS_II} and the surrounding text for details.

\subsection{Consequences of the Passing-up Lemma \ref{lem:passing-up}}

The following statements are useful for the sort of distance formula related hierarchical accounting that we need to do the proof of the cubical model theorem.  The first says that any given relevant domain for a pair $x,y \in \calX$ can only nest into boundedly-many other relevant domains for $x,y$.  It first appeared in \cite[Lemma 2.15]{DMS_bary}; see \cite[Lemma 11.13]{CRHK} for a different proof and \cite[Lemma 4.8]{Dur_infcube} for a version that allows $x,y$ to be at infinity.

\begin{lemma}[Covering]\label{lem:covering}
For any HHS $(\calX, \mathfrak S)$, there exists $N = N(\mathfrak S)>0$ so that the following holds.  For any $x,y \in \calX$ and all $U \in \Rel_{50\ES}(x,y)$, there are at most $N$ domains $V \in \Rel_{50\ES}(x,y)$ so that $U \nest V$.
\end{lemma}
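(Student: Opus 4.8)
The plan is to argue by contradiction using the Passing-Up Lemma \ref{lem:passing-up} together with the finiteness of relevant sets (Corollary \ref{cor:rel sets are finite}) and the fact that $\calC(S)$ sees a bounded amount of geometry between $a$ and $b$. Fix $U \in \Rel_{50\ES}(a,b)$ and suppose, for contradiction, that there are ``too many'' domains $V_1, \dots, V_n \in \Rel_{50\ES}(a,b)$ with $U \nest V_i$ for each $i$. The key point is that all of these $V_i$ must be pairwise non-transverse: indeed, each $V_i$ strictly contains the common domain $U$, so for $i \neq j$ we have $U \nest V_i$ and $U \nest V_j$, which by the basic structure of the nesting relation forces $V_i, V_j$ to be nested or orthogonal — they cannot be transverse (a domain cannot transversally cross a domain it contains a common piece with; more precisely, if $V_i \pitchfork V_j$ then no single domain can be nested in both). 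Hmm — I should double-check this: the correct statement is that if $W \nest V_i$ and $W \nest V_j$ then $V_i, V_j$ are non-transverse, which is a standard consequence of the HHS axioms. So $\{V_1, \dots, V_n\}$ is a collection of pairwise non-transverse domains.

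Now the complexity axiom (item (3) in Subsection \ref{subsec:basic HHS}) bounds the size of any pairwise non-transverse collection by $\xi(\mathfrak S)$. But that only gives a bound depending on $\mathfrak S$, which is exactly what we want — so in fact the argument is shorter than I first thought: set $N = \xi(\mathfrak S)$. First I would recall that all the $V_i$ containing $U$ are pairwise non-transverse, hence form a chain-and-orthogonality configuration of size at most $\xi(\mathfrak S)$. However, I want to be careful: the statement as given wants $N = N(\mathfrak S)$, and $\xi(\mathfrak S)$ is indeed a function of $\mathfrak S$, so this suffices. The only subtlety is confirming that $U \nest V_i$ and $U \nest V_j$ with $V_i \neq V_j$ genuinely implies $V_i, V_j$ are non-transverse; this follows because transversality is defined as being unrelated by both $\perp$ and $\nest$, and if $V_i \pitchfork V_j$ one derives a contradiction from the consistency of projections of $\rho^U_{V_i}$ and $\rho^U_{V_j}$ — or more cleanly, it is part of the standard ``partial realization''-type bookkeeping in \cite{HHS_II}.

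The step I expect to require the most care is precisely this non-transversality claim, since it is the one genuinely structural input. If it turns out that the clean statement needs the relevance hypothesis (the ``$50\ES$''), then the alternative route is to invoke the Passing-Up Lemma directly: if $U$ nests into $PU(D)+1$ many relevant $V_i$'s for a suitably large threshold $D > C\, d_\calX(a,b) + C$ (with $C$ the uniform coarse Lipschitz constant of the projections), we would produce $W$ with $U \nest W$ and $d_W(a,b) > D$, contradicting the coarse Lipschitz bound $d_W(a,b) \le C\,d_\calX(a,b) + C$ — but this requires first reducing to a pairwise transverse or pairwise non-transverse situation, so it is really the same obstacle in disguise. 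In the write-up I would lead with the complexity-axiom argument and set $N = \xi(\mathfrak S)$, flagging the non-transversality of nested-over-$U$ domains as the one lemma to justify, and I would cite Lemma \ref{lem:rho control} as the tool that controls the relative projections $\rho^{V_i}_W, \rho^{V_j}_W$ if a finer argument is needed.
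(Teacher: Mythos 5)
Your central step is false: it is \emph{not} a consequence of the HHS axioms that $U \nest V_i$ and $U \nest V_j$ force $V_i$ and $V_j$ to be non-transverse. In the motivating example of the mapping class group, two overlapping (hence transverse) subsurfaces $V_1 \pitchfork V_2$ can easily share a common essential subsurface or annulus $U$ nested in both; nothing in the axioms of Subsection \ref{subsec:basic HHS} rules this out. Consequently the one-line conclusion $N = \xi(\mathfrak S)$ via the complexity axiom does not follow, and your fallback is dismissed too quickly: the Passing-Up route is not ``the same obstacle in disguise,'' and the relevance hypothesis $50\ES$ is genuinely needed, not an optional refinement.

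The standard argument (this lemma is quoted from \cite[Lemma 2.15]{DMS_bary}; see also \cite[Lemma 11.13]{CRHK}) repairs exactly this gap using relevance plus consistency. Since $d_{V_i}(x,y) \geq 50\ES$ while $\rho^U_{V_i}$ has diameter at most $\ES$, for each $i$ at least one of $d_{V_i}(\pi_{V_i}(x), \rho^U_{V_i})$ or $d_{V_i}(\pi_{V_i}(y), \rho^U_{V_i})$ is large, so the $V_i$ split into two families according to whether $x$ or $y$ is far from $\rho^U_{V_i}$. Within one family, if $V_i \pitchfork V_j$, then by Lemma \ref{lem:rho control}(2)-type control one has $\rho^{V_j}_{V_i}$ coarsely equal to $\rho^U_{V_i}$ and $\rho^{V_i}_{V_j}$ coarsely equal to $\rho^U_{V_j}$, so both $d_{V_i}(\pi_{V_i}(x), \rho^{V_j}_{V_i})$ and $d_{V_j}(\pi_{V_j}(x), \rho^{V_i}_{V_j})$ exceed the consistency constant, violating the transverse consistency inequality of Definition \ref{defn:consistency} applied to the tuple $(\pi_W(x))$ (Lemma \ref{lem:base consistency}). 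Hence each family is pairwise non-transverse, and only now does the complexity axiom bound each by $\xi(\mathfrak S)$, giving $N$ on the order of $2\xi(\mathfrak S)$. So the complexity axiom is indeed the finishing tool, but the reduction to pairwise non-transverse collections is the real content, and it is precisely the step your proposal asserts without justification and which is false as stated.
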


Another consequence of Lemma \ref{lem:passing-up} is the following distance formula type statement, which is a less general version of \cite[Proposition 4.14]{Dur_infcube}.  Roughly, it says that the number of medium-sized relevant domains for a pair $x,y$ can be accounted for (i.e., in terms of the domain projections) by large-sized domains higher up the hierarchy in a precise fashion.  

\begin{proposition}[Bounding large containers] \label{prop:bounding containers}
Let $K_1 >50\ES$, $K_2 \geq K_1+47\ES$, and $x,y \in \calX$ with $\calU_1 = \Rel_{K_1}(x,y), \calU_2 = \Rel_{K_2}(x,y),$ and $\calV = \calU_1 - \calU_2$.  Then the following hold:
\begin{enumerate}
    \item For all but $PU(K_2)-1$ domains $V \in \calV$, there exists $W \in \calU_2$ with $V \nest W$.
    \item For any $W \in \calU_2$, if $\calW = \{V \in \calV| V \nest W\}$ then we have $\#\calW \prec d_W(x,y)$ with constants depending only on $\mathfrak S$ and $K_2$.
\end{enumerate}
\end{proposition}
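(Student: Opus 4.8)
The two parts are essentially independent: part (1) is an immediate application of the Passing-Up Lemma \ref{lem:passing-up}, while part (2) is a Bounded Geodesic Image argument combined with the complexity axiom.

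\textbf{Part (1).} I would argue by contradiction. Suppose more than $PU(K_2)-1$ domains of $\calV$ fail to nest into any member of $\calU_2$, and pick such domains $U_1,\dots,U_{PU(K_2)}$. Since each $U_i\in\calV\subseteq\Rel_{K_1}(x,y)$ and $K_1>50\ES>\ES$, we have $d_{U_i}(x,y)>\ES$, and trivially $U_i\in\mathfrak S_S=\mathfrak S$. Applying Lemma \ref{lem:passing-up} with $V=S$ and $D=K_2$ produces $W\in\mathfrak S$ and an index $i$ with $U_i\sqsubsetneq W$ and $d_W(x,y)>K_2$. Then $W\in\Rel_{K_2}(x,y)=\calU_2$ and $U_i\nest W$ (and $U_i\neq W$ since $U_i\notin\calU_2$), contradicting the choice of $U_i$. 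Hence at most $PU(K_2)-1$ domains of $\calV$ can fail, which is (1).

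\textbf{Part (2).} Fix $W\in\calU_2$ and let $\gamma\subset\calC(W)$ be a geodesic from $\pi_W(x)$ to $\pi_W(y)$, so $\gamma$ has length $d_W(x,y)$. For each $V\in\calW$ we have $V\nest W$ and $d_V(x,y)\geq K_1>\ES$, so the ``furthermore'' clause of the Bounded Geodesic Image Axiom \ref{ax:BGIA} forces $d_W(\rho^V_W,\gamma)\leq\ES$ (otherwise $d_V(x,y)\leq\ES$). Thus every relative projection $\rho^V_W$ with $V\in\calW$ lies within $\ES$ of $\gamma$. Cover $\gamma$ by $O(d_W(x,y))$ balls of radius $2\ES$ in $\calC(W)$; each $\rho^V_W$ lies in one of them. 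It therefore suffices to exhibit a constant $C=C(\mathfrak S)$ bounding, for any $c\in\calC(W)$, the number of $V\in\calW$ with $d_W(\rho^V_W,c)\leq 4\ES$. Given such a bound we get $\#\calW\prec d_W(x,y)$ with constants depending only on $\mathfrak S$ and $K_2$, as claimed.

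\textbf{The crux.} The clustering bound is where the gap $K_2\geq K_1+47\ES$ and the threshold choices enter. I would deduce it from the complexity axiom (item (3) of the HHS setup): among the $V\in\calW$ with $\rho^V_W$ near a fixed $c$, I want the $\nest$-maximal ones — automatically pairwise non-nested — to be pairwise \emph{non-transverse}, so that there are at most $\xi(\mathfrak S)$ of them. Ruling out a transverse pair $V_1\pitchfork V_2$ with $\rho^{V_1}_W\approx\rho^{V_2}_W$ would proceed by feeding the consistency inequalities (Definition \ref{defn:consistency}, Lemma \ref{lem:base consistency}) for $x$ and for $y$ through the relations between $V_1,V_2$ and $W$ recorded in Lemma \ref{lem:rho control}: since $d_{V_i}(x,y)\geq K_1$ is large, consistency for $x$ and for $y$ pins $\pi_{V_1}(x)$ near $\rho^{V_2}_{V_1}$ and $\pi_{V_2}(y)$ near $\rho^{V_1}_{V_2}$ (up to relabeling), and comparing these with the positions of $\rho^{V_1}_W,\rho^{V_2}_W$ along $\gamma$ yields a contradiction. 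To pass from maximal elements back to all of $\calW$ I would induct on the complexity of $W$: every $V\in\calW$ lies below a \emph{unique} $\nest$-maximal element $V'\in\calW$ (two distinct maximal elements above $V$ would be orthogonal, impossible as $V$ is nested in each), so applying the inductive hypothesis inside each such $V'$ — together with $d_{V'}(x,y)<K_2$ for $V'\in\calV$ — bounds $\#\calW$ linearly in $\sum_{V'}d_{V'}(x,y)\prec d_W(x,y)$. I expect the transversality step to be the main obstacle: one must keep the chain of consistency and $\rho$-comparison estimates under control so that the resulting constant depends only on $\mathfrak S$; this is exactly the content of the more general \cite[Proposition 4.14]{Dur_infcube}.
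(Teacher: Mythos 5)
Your part (1) is correct and is the standard argument: apply the Passing-Up Lemma \ref{lem:passing-up} with $D=K_2$ to $PU(K_2)$ putative exceptions and contradict the choice of those domains. (Note the paper gives no internal proof of this proposition, deferring to \cite{Dur_infcube}, so the comparison below concerns your argument on its own terms.)

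Part (2) has a genuine gap exactly at the point you flag as the crux, and it is not repairable along the route you propose. The clustering claim --- that the number of $V\in\calW$ with $\rho^V_W$ within $4\ES$ of a fixed $c\in\calC(W)$ is bounded by a constant depending only on $\mathfrak S$ --- is false, and so is the non-transversality step meant to prove it. If $V_1,V_2\nest Y\nest W$ for some proper subdomain $Y$, then Lemma \ref{lem:rho control}(1) gives $d_W(\rho^{V_1}_W,\rho^{V_2}_W)<2\ES$ even when $V_1\pitchfork V_2$; and if $d_Y(x,y)<K_1$ or $d_Y(x,y)>K_2$ then $Y\notin\calW$, so such $V_1,V_2$ can perfectly well be $\nest$-maximal in $\calW$. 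Consistency yields no contradiction here: the inequalities are satisfied by the Behrstock-type order of $V_1,V_2$ recorded inside $Y$ (or in the domains themselves), and the fact that $\rho^{V_1}_W,\rho^{V_2}_W$ occupy the same spot of $\gamma$ carries no information. Concretely, in a mapping class group take a one-holed torus $Y\nest W$ and a pair $x,y$ whose $\calC(Y)$-geodesic has $m$ pivot annuli, each with coefficient in the window $(K_1,K_2]$: all $m$ annuli lie in $\calW$, are pairwise transverse, and have $\rho$-sets within $2\ES+2$ of the single point $\rho^Y_W$ on $\gamma$. The same configuration breaks your closing induction: the count below a maximal $V'\in\calW$ is not bounded merely because $d_{V'}(x,y)<K_2$ (it is governed by $\calU_2$-domains nested inside $V'$, whose projections are not controlled by $d_W(x,y)$), and the claimed uniqueness of the maximal element above $V$ fails, since two maximal elements above $V$ can be transverse.

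In fact, pushing this example (take $d_W(x,y)$ just above $K_2$ while $d_Y(x,y)$ is enormous) shows that statement (2) cannot be read literally: $\#\calW$ is then at least the number of pivot annuli, which vastly exceeds $d_W(x,y)$. The usable version --- and the only thing the application in Proposition \ref{prop:upper bound} needs --- counts for each $W$ only those $V$ for which $W$ is a $\nest$-minimal element of $\calU_2$ containing $V$ (equivalently, it discards the $V$ that nest into a $\calU_2$-domain properly nested in $W$). Proving that version is not a pure complexity/non-transversality bound near a point of $\gamma$; it requires recursing into proper subdomains together with a localized refinement of passing-up, which is the actual content of \cite[Proposition 4.14]{Dur_infcube}.
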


The above proposition is useful for proving the following slight generalization of the Distance Formula \ref{thm:DF}.  It's useful for circumstances, such as in Section \ref{subsec:Q qie}, where we want to make distance formula-type arguments where the summands are only coarsely equal to the standard distance formula terms.  Probably the key example of this is when we want to replace $d_U(x,y)$ with $d_{T_U}(x,y)$, where each $T_U$ is a uniform quasi-geodesic in $\calC(U)$ between $\pi_U(x)$ and $\pi_U(y)$.  This originally appeared in \cite[Lemma 9.6]{Min_ELC} for mapping class groups, and see \cite[Theorem 2.9]{BHMS:kill_twists} (which depends on the distance formula) and \cite[Corollary 4.17]{Dur_infcube} (which does not) for proofs in the HHS setting.

\begin{corollary}\label{cor:DF replace}
Let $x,y \in \calX$.  Suppose that $\sigma:\mathbb R_+ \to \mathbb R_+$ is a function satisfying 
$$\sigma(d_U(x,y)) \asymp d_U(x,y)$$
for each $U \in \mathfrak S$, where the constants in $\asymp$ are independent of $U$.  Then for any $K_1>50\ES$, we have
$$\sum_{U \in \calU} [d_U(x,y)]_{K_1} \asymp \sum_{U \in \calU} [\sigma(d_U(x,y))]_{K_1}$$
where the constants in $\asymp$ depend only on $\mathfrak S$, $K_1$, and $\sigma$.
\end{corollary}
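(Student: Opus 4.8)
The plan is to reduce the claimed coarse equivalence to a domain-by-domain comparison, handling separately the large domains (where the correspondence between $d_U(x,y)$ and $\sigma(d_U(x,y))$ is essentially immediate) and the medium-sized domains (where one side of the threshold is zero and the other is not, which is where the real work lies). First I would fix $x,y \in \calX$ and constants $C,C' \geq 1$ witnessing $\sigma(d_U(x,y)) \asymp d_U(x,y)$ uniformly in $U$, and set $K_1 > 50\ES$. Write $\calU = \Rel_{\ES}(x,y)$, which is finite by Corollary~\ref{cor:rel sets are finite}, so all sums below are finite and there are no convergence subtleties.

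\textbf{Step 1: the easy inclusion-type bound.} For any domain $U$ with $d_U(x,y) \geq K_2$ for a suitably large threshold $K_2 = K_2(K_1, C, C')$, both $[d_U(x,y)]_{K_1}$ and $[\sigma(d_U(x,y))]_{K_1}$ are nonzero and, since $\sigma(d_U(x,y)) \in [C^{-1} d_U(x,y), C d_U(x,y)]$, the two cutoff quantities are multiplicatively comparable with constants depending only on $C$ and $K_1$ (one uses here the standard fact that if $r,s \geq K_1$ with $r \asymp s$ then $[r]_{K_1} \asymp [s]_{K_1}$, absorbing the additive cutoff into the multiplicative constant since $r,s \geq K_1$). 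Summing over such $U$ gives
\begin{equation}
\sum_{U : d_U(x,y) \geq K_2} [\sigma(d_U(x,y))]_{K_1} \asymp \sum_{U : d_U(x,y) \geq K_2} [d_U(x,y)]_{K_1}.
\end{equation}

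\textbf{Step 2: controlling the medium band via Proposition~\ref{prop:bounding containers}.} The remaining contributions come from the band $\calV = \Rel_{K_1}(x,y) \setminus \Rel_{K_2}(x,y)$, i.e. domains with $K_1 \leq d_U(x,y) < K_2$. For such $U$, $[d_U(x,y)]_{K_1}$ is nonzero but bounded above by $K_2$, so $\sum_{U \in \calV}[d_U(x,y)]_{K_1} \asymp \#\calV$ with constants depending on $K_1,K_2$; the same holds with $\sigma$ in place of the identity once $K_2$ is chosen so that $\sigma(d_U(x,y)) \geq K_1$ forces $d_U(x,y) \geq$ (something above $K_1$), which is arranged by taking $K_2$ large relative to $C, K_1$ (and conversely $d_U(x,y) < K_2$ forces $\sigma(d_U(x,y)) < CK_2$). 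The crucial point is then that $\#\calV$ is itself coarsely bounded by $\sum_{W \in \Rel_{K_2}(x,y)} [d_W(x,y)]_{K_1}$: by Proposition~\ref{prop:bounding containers}(1) all but $PU(K_2)-1$ domains of $\calV$ nest into some $W \in \Rel_{K_2}(x,y)$, and by part (2), for each such $W$ the number of $V \in \calV$ with $V \nest W$ is $\prec d_W(x,y)$; summing gives $\#\calV \prec PU(K_2) + \sum_{W \in \Rel_{K_2}(x,y)} d_W(x,y) \asymp 1 + \sum_{W : d_W(x,y)\geq K_2}[d_W(x,y)]_{K_1}$, where the last comparison uses that on these large domains $d_W(x,y) \asymp [d_W(x,y)]_{K_1}$. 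Since $\sigma$ is symmetric in the roles of the two sums (Step 1 gives the large-domain comparison in both directions), the band contributions on both sides are each $\prec$ the large-domain sum on either side, which is already controlled. Assembling Steps 1 and 2 yields the claimed two-sided bound, with constants depending only on $\mathfrak S$, $K_1$, and $\sigma$ (through $C$, and through $PU(K_2)$ where $K_2$ is a function of $K_1$ and $\sigma$).

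\textbf{Main obstacle.} I expect the main subtlety to be Step 2 — specifically, making sure that the medium-band terms, which are individually $O(1)$ but can be numerous, are genuinely absorbed into the large-domain sum rather than dominating it. This is exactly what the "bounding large containers" proposition is designed for, but one has to be careful that the threshold $K_2$ is chosen first (large enough that $\sigma$ can't push a large domain below $K_1$ or a medium domain above $CK_2$), and then all the multiplicative constants — including $PU(K_2)$ and the implied constant in Proposition~\ref{prop:bounding containers}(2) — are allowed to depend on $K_2$, hence ultimately on $K_1$ and $\sigma$, but not on $x,y$. A secondary, purely bookkeeping point is the degenerate case where the large-domain sum is empty or $O(1)$, in which case $d_{\calX}(x,y)$ is bounded by the Distance Formula~\ref{thm:DF} and both sums are $O(1)$, so the comparison is trivial.
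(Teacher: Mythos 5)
Your overall route is the paper's intended one: the paper does not write out a proof of Corollary~\ref{cor:DF replace} (it defers to the cited sources), but it explicitly frames Proposition~\ref{prop:bounding containers} as the tool, and your two-step scheme (compare the genuinely large domains directly, then absorb the medium band via nesting into large containers with a multiplicative penalty) is exactly the argument pattern the paper uses again in Proposition~\ref{prop:upper bound}. Step 1 and the counting of the band $\calV=\{K_1\le d_U(x,y)<K_2\}$ are fine.

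There is, however, a genuine gap in how you handle the band on the $\sigma$-side, i.e. the direction $\sum_U[\sigma(d_U(x,y))]_{K_1}\prec\sum_U[d_U(x,y)]_{K_1}$. The domains that contribute to the left-hand side but escape your Step 1 are those with $\sigma(d_U(x,y))\ge K_1$ and $d_U(x,y)<K_2$, and this set is in general strictly larger than your $\calV$: the hypothesis only gives $\sigma(t)\le Ct+c$, so $\sigma(d_U(x,y))\ge K_1$ forces $d_U(x,y)\ge (K_1-c)/C$, a constant depending on $K_1$ and $\sigma$ that can lie well below $K_1$. Your claim that this can be "arranged by taking $K_2$ large relative to $C,K_1$" is not correct --- no choice of the upper threshold $K_2$ changes which small values of $d_U(x,y)$ get pushed above $K_1$ by $\sigma$. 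The repair is to run your Step 2 with the band $\{(K_1-c)/C\le d_U(x,y)<K_2\}$, i.e. apply Proposition~\ref{prop:bounding containers} with lower threshold $(K_1-c)/C$ instead of $K_1$; this works verbatim when $(K_1-c)/C>50\ES$, but the hypothesis $K_1>50\ES$ does not guarantee this after dividing by $C$, so in the remaining parameter range you need a counting statement whose lower threshold goes below $50\ES$ (as in the more general version of the proposition in the cited source), or you must add such a hypothesis. As written, the domains with $d_U(x,y)\in[(K_1-c)/C,\,K_1)$ are simply not counted by your argument, and they can carry a contribution of order $K_1$ each to the $\sigma$-sum.
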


\section{From hierarchical hulls to collapsed interval systems} \label{sec:interval systems}

In the next three sections, we will give an overview of the cubical model construction for the hierarchical hull of a pair of points in an HHS from \cite{Dur_infcube}.  The original construction is due to Behrstock-Hagen-Sisto \cite{HHS_quasi}, and see also Bowditch \cite{Bowditch_hulls} for a version in the more general setting of coarse median spaces satisfying HHS-like axioms.

The main goal of this section is to explain how to take a pair of points $a,b \in \calX$ in an HHS $(\calX, \mathfrak S)$ and produce a family of intervals which encode the relevant hierarchical data in a fairly efficient manner.  Roughly, the result of this process is a collection of intervals $\{\hT_U\}_{U \in \calU}$ equipped with a family of relative projections satisfying certain HHS-like properties, where the index set $\calU = \Rel_K(a,b)$ is the set of $K$-relevant domains for $a,b$ (Definition \ref{defn:relevant}).  For any such collection of intervals, one can define a notion of $0$-consistency (an exact form of Definition \ref{defn:consistency}), which defines a special $0$-consistent subset $\calQ$ of the product $\prod_{U \in \calU} \hT_U$.  In Section \ref{sec:hPsi qi} below we will explain why this set $\calQ$ is quasi-isometric to the hierarchical hull of $a,b$ in $\calX$.  Finally in Section \ref{sec:cubulation}, we will axiomatize the properties any such family of intervals satisfies, explain how it admits a natural wallspace structure, and show that the associated $0$-consistent set $\calQ$ is isometric to the cubical dual of this wallspace.

\subsection{Basic setup of the section}

For the rest of this section, we fix a constant $K\geq K_0$, where $K_0 = K_0(\mathfrak S)>0$ is the constant from the Distance Formula \ref{thm:DF}.  In this section, we will be able to allow $K$ to be as large as necessary.

Fix also $a, b\in \calX$.  Let $\calU = \Rel_K(a,b) = \{U \in \mathfrak S| d_U(a,b)>K\}$ denote their $K$-relevant set.  To each $U \in \calU$, let $\phi_U:T_U\to \calC(U)$ denote a $(C,C)$-quasi-isometric embedding of an interval between $a_U$ and $b_U$, so that $\phi_U(a_U) = \pi_U(a)$ and $\phi_U(b_U) = \pi_U(b)$.  Here we can assume that $C=C(\mathfrak S)>0$ depends only on the ambient HHS.  Note that we are only taking a quasi-isometric embedding, instead of a geodesic, because that is the level of generality that we will require later.

We also set some terminology for $a_U,b_U$, since they will play a distinguished role below. 

\begin{definition}[Marked points on intervals] \label{defn:marked}
For each $U\in \calU$ the points $a_U, b_U \in T_U$ are called \emph{marked points}.
\end{definition}

We note that in the basic construction explained in this section, marked points will always been the endpoints of the intervals $T_U$.  This will also be true of our collapsed stable trees defined in Subsection \ref{subsec:collapsed stable intervals}, though not necessarily of the stable intervals themselves.

The discussion in this section closely follows \cite[Sections 6 and 7]{Dur_infcube}. Given the technical nature of this part, we will mostly state definitions and important properties, providing sketches where convenient but leaving the details for the reader to verify.

\subsection{Reduced interval systems} \label{subsec:reduced interval systems}

Our first goal is to push the hierarchical data associated to $a,b$ onto the family of intervals $\{T_U\}_{U \in \calU}$.  This involves producing a notion of relative projection between the intervals and verifying some HHS-like properties for them.

These \emph{interval projections} are defined simply by closest-point projecting the relative projections of $(\calX,\mathfrak S)$ to the images of $\phi_U(T_U) \subset \calC(U)$ then pulling back to $T_U$ via $\phi^{-1}_U$.  We write $\delta^V_U$, rather than $\rho^V_U$, for the relative interval projections among the $T_U$.  These interval projections are interval versions of the ``tree projections'' described in \cite[Section 6]{Dur_infcube}.

The following lemma, which summarizes the basic properties that we need, says that the $\delta^V_U$ behave in essentially the same way as the $\rho^V_U$.  Its proof is an exercise in the definitions and the basic properties of an HHS; see \cite[Lemma 6.13]{Dur_infcube}.

\begin{lemma}\label{lem:interval control}
For any $C>0$ there exists $R_C = R_C(C,\mathfrak S)>0$ so that the following holds:

\begin{enumerate}
    \item If $U \pitchfork V \in \calU$, then both $\delta^V_U \subset T_U$ and $\delta^U_V \subset T_V$ are sets of diameter at most $R_C$ which are $R_C$-close to a marked point of $T_U,T_V$, respectively.  Moreover, $d_U(\phi_U(\delta^V_U), \rho^V_U)<R_C$, and similarly for $\delta^U_V$.
    \item If $U \nest V \in \calU$, then $\delta^U_V \subset T_V$ has diameter bounded by $R_C$.  Moreover, $d_V(\phi_V(\delta^U_V), \rho^U_V)<R_C$.
     \item If $U, V \nest W \in \calU$ with $U,V$ not transverse, then $\diam_{T_W}(\delta^U_W \cup \delta^V_W) < R_C$.
    \item If $U \pitchfork V$ and $W \nest V$ and $W,U$ not orthogonal, then $\diam_{T_U}(\delta^V_U \cup \delta^W_U)<R_C$.
    \item(Bounded geodesic image) If $U \nest V \in \calU$, then any component $C \subset T_V - \calN_{R_C}(\delta^U_V)$ satisfies $\diam_{T_U}(\delta^V_U(C))<R_C$.
  
\end{enumerate}

\end{lemma}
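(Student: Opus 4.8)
The plan is to trace each $\delta^V_U$ back to the corresponding $\rho^V_U$ through the two-step recipe by which it is defined: closest-point project $\rho^V_U$ onto $\phi_U(T_U) \subset \calC(U)$, then pull the result back through $\phi_U^{-1}$. Both steps are coarsely controlled because $\phi_U$ is a $(C,C)$-quasi-isometric embedding of an interval into a $\ES$-hyperbolic space: the image $\phi_U(T_U)$ is then a Morse quasi-geodesic, so closest-point projection onto it is coarsely well defined and coarsely Lipschitz, and distances in $T_U$ agree with distances between their images in $\calC(U)$ up to the affine factor $C$. Fix once and for all a constant $R_C = R_C(C,\mathfrak S)$ exceeding every constant that appears below --- the QI constant $C$, the HHS constant $\ES$, the hyperbolicity and Morse constants for $(C,C)$-quasi-geodesics in $\ES$-hyperbolic spaces, and the consistency constant $\theta$ of Lemma \ref{lem:base consistency}. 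With this in place, every assertion of the lemma becomes the image, under these two coarse operations, of a statement about the $\rho^V_U$.

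\emph{Parts (1) and (2).} The diameter bounds are immediate: $\rho^V_U$ has diameter $\leq \ES$ in $\calC(U)$ (resp. $\rho^U_V$ in $\calC(V)$), and neither projecting onto $\phi_U(T_U)$ nor pulling back through $\phi_U^{-1}$ inflates a diameter by more than a bounded amount. For the ``close to a marked point'' clause in (1), apply the consistency inequalities of Lemma \ref{lem:base consistency} to the points $a$ and $b$ at the transverse pair $U \pitchfork V$: were $d_U(a,\rho^V_U)$ and $d_U(b,\rho^V_U)$ both larger than $\theta$, consistency would force $\rho^U_V$ to lie $\theta$-close to each of $\pi_V(a), \pi_V(b)$, hence $d_V(a,b) \leq 2\theta + \ES < K$, contradicting $V \in \calU = \Rel_K(a,b)$. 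So $\rho^V_U$ is $\theta$-close to $\pi_U(a) = \phi_U(a_U)$ or to $\pi_U(b) = \phi_U(b_U)$; projecting and pulling back places $\delta^V_U$ near $a_U$ or $b_U$, and at the same time shows $\rho^V_U$ is close to $\phi_U(T_U)$, giving $d_U(\phi_U(\delta^V_U), \rho^V_U) < R_C$. In (2) this last estimate instead uses the Bounded Geodesic Image Axiom \ref{ax:BGIA}: since $U \in \Rel_K(a,b)$ with $K > \ES$, a $\calC(V)$-geodesic from $\pi_V(a)$ to $\pi_V(b)$ must pass within $\ES$ of $\rho^U_V$, and the $(C,C)$-quasi-geodesic $\phi_V(T_V)$ Morse-fellow-travels it, so $\phi_V(T_V)$ --- hence its closest point to $\rho^U_V$, namely $\phi_V(\delta^U_V)$ --- lies boundedly close to $\rho^U_V$.

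\emph{Parts (3) and (4).} Since $\calU = \Rel_K(a,b) \subseteq \Rel_{K_0}(a,b)$, Lemma \ref{lem:rho control} applies and bounds $d_W(\rho^U_W, \rho^V_W)$ in the configuration of (3), and the corresponding $\calC(U)$-distance in the configuration of (4). Combining with part (2) --- which places each of $\phi_W(\delta^U_W), \phi_W(\delta^V_W)$ within $R_C$ of the matching $\rho$-set --- the images $\phi_W(\delta^U_W)$ and $\phi_W(\delta^V_W)$ are uniformly close in $\calC(W)$, and pulling back through $\phi_W^{-1}$ yields $\diam_{T_W}(\delta^U_W \cup \delta^V_W) < R_C$; part (4) is identical with $\phi_U^{-1}$ in place of $\phi_W^{-1}$.

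\emph{Part (5)} is the interval form of bounded geodesic image, and is the only step not purely a matter of unwinding definitions. A component $C$ of $T_V - \calN_{R_C}(\delta^U_V)$ maps under $\phi_V$ to a sub-quasi-geodesic of $\phi_V(T_V)$ that stays, after accounting for the distortion of $\phi_V$ and the $R_C$-error from part (2), at distance $> \ES$ from $\rho^U_V$; by the Morse lemma the $\calC(V)$-geodesic $\gamma$ joining $\phi_V$ of the two endpoints of $C$ likewise avoids the $\ES$-neighborhood of $\rho^U_V$, so Axiom \ref{ax:BGIA} gives $\diam(\rho^V_U(\gamma)) \leq \ES$, and therefore $\diam(\rho^V_U(\phi_V(C)))$ is bounded; projecting onto $\phi_U(T_U)$ and pulling back then bounds $\diam_{T_U}(\delta^V_U(C)) < R_C$. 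The main obstacle throughout is simply the constant bookkeeping --- choosing $R_C$ so that the QI distortion, the Morse constant, the $\ES$ of the axioms, and the consistency constant $\theta$ all compose correctly --- and, in part (5), remembering to invoke the quasi-geodesic upgrade of Axiom \ref{ax:BGIA} rather than its literal geodesic statement.
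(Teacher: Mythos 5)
The paper does not actually write out a proof of this lemma---it is explicitly left as ``an exercise in the definitions and the basic properties of an HHS'' with a citation to \cite{Dur_infcube}---and your argument is exactly the intended one: push the $\rho$-sets onto the Morse quasi-geodesic $\phi_U(T_U)$ by closest-point projection, pull back through the quasi-isometric embedding, and run consistency (Lemma \ref{lem:base consistency}), Lemma \ref{lem:rho control}, and Axiom \ref{ax:BGIA} through the bounded distortion. The one point to tighten is in part (5): since $\rho^V_U$ is not assumed coarsely Lipschitz, bounding $\rho^V_U$ of the single geodesic joining the endpoints of $\phi_V(C)$ does not by itself bound $\rho^V_U(\phi_V(C))$; instead apply Axiom \ref{ax:BGIA} to the geodesic between each pair of points of $\phi_V(C)$ (all of which avoid $\calN_{\ES}(\rho^U_V)$ by the Morse lemma, once $R_C$ absorbs the Morse constant and the error from part (2)), which is presumably what you intend by the ``quasi-geodesic upgrade'' of the axiom.
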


It will be useful for us to abstract the properties obtained for the intervals $T_U$ in the last subsection.  We are doing this so that we can later plug in the intervals produced by the Stable Interval Theorem \ref{thm:stable intervals} below directly into this setup.

\begin{definition}[Interval system]\label{defn:interval axioms}
Given $R>0$, an $R$-\emph{interval system} for $a,b \in \calX$ and $\calU = \Rel_K(a,b)$ is the following set of data:
\begin{enumerate}
\item For each $U \in \calU$, we have an interval $T_U$ and a $(R,R)$-quasi-isometric embedding $\phi_U:T_U \to \calC(U)$ so that $d^{Haus}_U(\phi_U(T_U), \hull_U(a,b))<R$.
\item For each $U \in\calU$, there exists a marked point $a_U \in T_U$ with $d_{\calC(U)}(\phi_U(a_U),\pi_U(a))<R$, and similarly for $b$.
\item There are $\delta^V_U$ sets which satisfy the conclusions of Lemma \ref{lem:interval control}.

\end{enumerate}
\end{definition}

\begin{remark}
When stabilizing the construction of the cubical models in Section \ref{sec:stable cubes}, we will want to use the stable intervals provided by Theorem \ref{thm:stable intervals} below.  For these, we will induce an interval system structure by a slightly more careful construction than just working directly with Gromov trees, e.g. item (2) in Definition \ref{defn:interval axioms} is immediate in that setting.  But the upshot is the same: the rest of the material in this section also holds for that setup.
\end{remark}

\subsection{Interval consistency}

The goal of this subsection is to define a map $\Psi:\hull_{\calX}(a,b)\to \prod_{U \in \calU} T_U$ and show that its image satisfies the appropriate analogue of Definition \ref{defn:consistency} in this setting.  Toward that end, for any $\alpha>0$, we set $\calZ_{\alpha} \subset \prod_{U \in \calU} T_U$ to be the set of $\alpha$-consistent tuples.

We can now define our map $\Psi:H \to \prod_{U \in \calU} T_U$ for $H = \hull_{\calX}(a,b)$.  Let $x \in H$ and for each $U \in \calU$, let $x_U$ denote a choice of a point in $\psi_U(x) = \phi_{U}^{-1}(p_U(\pi_U(x))) \subset T_U$, where $p_U:\calC(U)\to \phi_U(T_U)$ is closest point projection.  For each $x \in H$, this defines a tuple $(\psi_U(x)) = (x_U) \in \prod_{U \in \calU}T_U$.  That is, the coordinate-wise maps $\psi_U:H \to T_U$ combine to a global map $\Psi:H \to \prod_U T_U$.

\begin{lemma}\label{lem:interval consistency}
For any $R>0$, there exists $\alpha_0 = \alpha_0(\mathfrak S,R)>0$ so that if $\{T_U\}_{U \in \calU}$ is an $R$-interval system for $a,b \in \calX$ and $x \in \hull_{\calX}(a,b)$, then $\Psi(x) = (x_U)$ is $\alpha_0$-consistent.
\end{lemma}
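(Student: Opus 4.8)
The plan is to obtain interval-consistency for $\Psi(x)=(x_U)$ by transporting the ambient base consistency of Lemma~\ref{lem:base consistency} through the quasi-isometric embeddings $\phi_U$ and the coarse closest-point projections $p_U$. Recall that $\alpha$-consistency on $\prod_{U\in\calU}T_U$ is the verbatim analogue of Definition~\ref{defn:consistency} with $\calC(U)$, $d_U$, and $\rho^V_U$ replaced throughout by $T_U$, $d_{T_U}$, and $\delta^V_U$. So I must produce one constant $\alpha_0=\alpha_0(\mathfrak S,R)$ and, for each pair $V,W\in\calU$, verify the appropriate one of the two interval consistency inequalities at $(x_V,x_W)$.

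The key tool is a single elementary transport estimate. Because $\phi_U$ is an $(R,R)$-quasi-isometric embedding, $T_U$-distances between points are controlled affinely (with constants depending only on $R$) by the $\calC(U)$-distances of their $\phi_U$-images; because $p_U$ is coarse closest-point projection onto $\phi_U(T_U)$, its image has diameter at most some $D_0=D_0(\mathfrak S)$ and satisfies $d_U\bigl(p_U(\pi_U(x)),A\bigr)\le 2\,d_U\bigl(\pi_U(x),A\bigr)$ for every $A\subseteq\phi_U(T_U)$ (triangle inequality, since $p_U$ realizes the distance to a superset of $A$); and by the interval-system axioms the set $\phi_U(\delta^V_U)$ is $R$-close to $\rho^V_U$, with the analogous statement for the relative-projection maps. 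Chaining these three facts converts a bound ``$d_V(\pi_V(x),\rho^W_V)\le t$'' into a bound of the form ``$d_{T_V}(x_V,\delta^W_V)\le R(2t+2R)+R^2+RD_0$,'' i.e. a $\calC$-level inequality with bounded constant upgrades to a $T_U$-level one with bounded constant.

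With this in hand the proof is a short case check run off the ``$\min$'' in each consistency inequality. Fix $\theta=\theta(\mathfrak S)$ from Lemma~\ref{lem:base consistency}, so $(\pi_U(x))_{U\in\mathfrak S}$ is $\theta$-consistent and this restricts to $\calU\subseteq\mathfrak S$. If $V\pitchfork W$, one of $d_V(\pi_V(x),\rho^W_V)\le\theta$ or $d_W(\pi_W(x),\rho^V_W)\le\theta$ holds, and the transport estimate turns it into the matching half of the interval transverse inequality (using that both $\delta^W_V$ and $\delta^V_W$ are bounded sets, by Lemma~\ref{lem:interval control}(1)). If $V\nest W$ and $d_W(\pi_W(x),\rho^V_W)\le\theta$, the same estimate bounds $d_{T_W}(x_W,\delta^V_W)$. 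The remaining case --- $V\nest W$ with $\diam_V(\pi_V(x)\cup\rho^W_V)\le\theta$ --- is where the hypothesis $x\in\hull_{\calX}(a,b)$ enters: it places $\pi_V(x)$ uniformly close to $H_V=\hull_V(a,b)$, hence (by Definition~\ref{defn:interval axioms}(1)) uniformly close to $\phi_V(T_V)$, so $p_V(\pi_V(x))=\phi_V(x_V)$ barely moves it; combining this with the partial-realization/bounded-geodesic-image axioms, so that $\delta^W_V(x_W)$ tracks $\pi_V(x)$ up to bounded error, bounds $\diam_{T_V}(x_V\cup\delta^W_V)$. Setting $\alpha_0$ to the maximum of the finitely many explicit constants produced completes the proof.

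I expect the last case of the nesting clause to be the main obstacle: it requires bookkeeping between the bounded \emph{set} $\delta^V_W\subset T_W$ and the relative-projection \emph{map} $x_W\mapsto\delta^W_V(x_W)$, one must confirm that the compatibility ``$\delta$ tracks $\rho$ through $\phi$ up to $R$'' genuinely holds for these maps (it is implicit in the interval-system package built from Lemma~\ref{lem:interval control}), and one must check that the hull hypothesis is really needed precisely here. Everything else is a direct, if slightly tedious, unwinding of definitions, and at no point does any constant depend on the individual domains or on $a,b$ --- only on $\mathfrak S$ and $R$ --- which is what the statement asserts.
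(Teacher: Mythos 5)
Your proposal is correct and follows essentially the same route as the paper's (sketched) proof: start from the $\theta$-consistency of $(\pi_U(x))$ via Lemma~\ref{lem:base consistency}, use $x\in\hull_{\calX}(a,b)$ to place $\pi_U(x)$ uniformly close to $\phi_U(T_U)$, and transport each consistency inequality through $p_U$, $\phi_U^{-1}$ and the $\delta$-versus-$\rho$ comparisons of Lemma~\ref{lem:interval control}; your case check simply supplies the details the paper leaves to the reader. The one spot you rightly flag as delicate is the second nesting clause, where the hull-closeness is really needed in the $W$-coordinate (so that $\phi_W(x_W)=p_W(\pi_W(x))$ stays close to $\pi_W(x)$ and the BGI property forces $\delta^W_V(x_W)$ to track $\rho^W_V(\pi_W(x))$), and with that bookkeeping made explicit the argument closes as you describe.
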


\begin{proof}

Let $x \in \hull_{\calX}(a,b)$.  The corresponding tuple $(\pi_U(x_U))$ is $O(\theta, \mathfrak S)$-consistent by Lemma \ref{lem:base consistency}.  Moreover, by definition of $H = \hull_{\calX}(a,b)$ (Definition \ref{defn:hier hull}), we have $d_U(\pi_U(x_U), H_U) < \theta$, where $H_U = \hull_{\calC(U)}(a,b)$ and $\theta = \theta(\mathfrak S)>0$.  Hence $d_U(\pi_U(x_U), \phi_U(T_U))<\theta'$ for some $\theta' = \theta'(\mathfrak S, \theta)>0$.  At this point, a straight-forward argument using the properties in Lemma \ref{lem:interval control} gives the required consistency constant $\alpha_0 = \alpha_0(\mathfrak S, R)$.  We leave the details to the reader.
\end{proof}

\subsection{Interval thickenings}\label{subsec:thickenings}

Our next goal is to define a way of collapsing the relative projection data associated to intervals $T_U$, i.e. the $\delta^V_U$, in an interval system to remove coarseness from the setup.  The idea is that Definition \ref{defn:interval axioms} roughly encodes the projection data up to a controlled error.  In slightly more detail, observe that each interval $T_U$ contains the relative projections $\delta^V_U$ for $V \nest U$.  We would like to collect close-by relative projections into single intervals called \emph{cluster components}, so that we can later collapse them.  One complication is that the $\delta^V_U$ need not be subintervals of $T_U$, so we need a couple of constants---one to determine what it means for relative projections to be ``close'', and the other to guarantee that cluster components which are not close satisfy a minimum distance bound.

We will use the following slightly more abstract setup, as it will be useful later during our stabilization process:

 \begin{definition}[Thickenings]\label{defn:thickening}
Let $T$ be an interval with a decomposition $T = A \cup B$ into collections of segments.  Given $r_1,r_2>0$, the $(r_1,r_2)$-\emph{thickening} of $T$ \emph{along $B$} defines a decomposition $T = \bT_e \cup \bT_c$ into \emph{edge} $\bT_e$ and \emph{cluster} $\bT_c$ components as follows:
\begin{enumerate}
\item First, take the $r_1$-neighborhoods in $T$ of the components of $B$.  Call the collection of these neighborhood $\calB$, and their complement $T - \calB = \calA$.
\item Second, connect any two subintervals in $\calB$ which come within $r_2$ of each other in $T$ by the geodesic between them.  Call the resulting collection of subintervals $\bT_c$ and set $\bT_e = T- \bT_c$.
\end{enumerate}
\end{definition}

In our current context, we can take $B$ to be the collection of $\delta^V_U \subset T_U$ segments for $V \nest U$ along with the marked points $a_U,b_U$ on $T_U$ (Definition \ref{defn:marked}).  Thus a thickening of $T_U$ along this collection $B$ expands the $\delta^V_U$ segments and $a_U,b_U$ and combines nearby expansions.  For each $U \in \calU$, we denote the resulting decomposition of the thickened interval by $T_U = \bT^e_{U} \cup \bT^c_{U}$.  When we perform an $(r_1,r_2)$-thickening to each interval in an interval system, we refer to the resulting collection of thickened intervals as a $(r_1,r_2)$-\emph{thickening} of the interval system.

\begin{remark}
In our context, we will be able to control the constants $r_1,r_2>0$ in terms of the ambient HHS, and, importantly, we will be able to choose our largeness constant $K$ to be independent of them.
\end{remark}

\subsection{Collapsed intervals and $0$-consistency} \label{subsec:collapsing}

Let $\{T_U\}_{U \in \calU}$ be an $R$-interval system for $a,b \in \calX$.  Fix $r_i = r_i(R, \mathfrak S)>0$ for $i=1,2$ to be determined soon.  For each $U \in \calU$, let $T_U = \bT^e_U \cup \bT^c_U$ denote its $(r_1,r_2)$-thickening, as in Definition \ref{defn:thickening}.

Let $q_U:T_U \to \hT_U$ denote the map which collapses each component of $\bT^c_U$ to a point, each of which we call a \emph{collapsed point}.  The resulting object $\hT_U$ is an interval.  Note that by construction $a_U, b_U \in \bT^c_U$ for each $U \in \calU$.  Hence each of $a_U,b_U$ determines a \emph{marked point} $\ha_U,\hb_U \in \hT_U$, and these marked points $\ha_U,\hb_U$ will be the endpoints of the (collapsed) interval $\hT_U$.  Moreover, the decomposition $T_U = \bT^e_U \cup \bT^c_U$ descends to a decomposition $\hT_U = \hT^e_U \cup \hT^c_U$, where the components of $\hT^c_U$ are the collapsed points corresponding to the components of $\bT^c_U$, and the components of $\hT^e_U$ are isometric images of the components of $\bT^e_U$.

Combining these $q_U$ maps with our coordinate-wise projections $\psi_U:H \to T_U$, we get coordinate-wise maps $\hpsi_U:H \to \hT_U$ to the collapsed intervals, and hence combining these gives a global map $\hPsi:H \to \prod_U \hT_U$.

We can define \emph{collapsed relative projections} $\hd^V_U$ in the obvious way, by composing with the quotient maps $q_U$.  Similarly, consistency in this setting is the appropriate analogue of Definition \ref{defn:consistency}, except now all coarseness has been removed:

\begin{definition} \label{defn:Q defined}
A tuple $(\hx_U) \in \prod_{U \in \calU} \hT_U$ is $0$-\emph{consistent} if the following hold:
\begin{enumerate}
\item For any $U \pitchfork V \in \calU$, either $\hx_U = \hd^V_U$ or $\hx_V = \hd^U_V$.
\item For any $U \nest V \in \calU$, either $\hx_V = \hd^U_V$ or $\hx_U \in \hd^V_U(\hx_V) \subset \hT_U$.
\end{enumerate}
\begin{itemize}
\item We let $\calQ$ denote set of $0$-consistent tuples in $\prod_{U \in \calU} \hT_U$.
\end{itemize}
\end{definition}

We can prove that $\hPsi$ has image in $\calQ$:

\begin{proposition}\label{prop:hPsi defined}
For any $R>0$,  there exists $R_1 = R_1(R,\mathfrak S)>0$ so that for any $r_1 > R_1$ and $r_2>0$, if $\{\hT_U\}_{U \in \calU}$ are the collapsed intervals associated to an $(r_1,r_2)$-thickening of an $R$-interval system for $a,b \in \calX$ and $x \in \hull_{\calX}(a,b)$, then $\hPsi(x) = (\hx_U) \in \calQ$.
\end{proposition}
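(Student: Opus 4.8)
The plan is to start from Lemma \ref{lem:interval consistency}, which gives that $\Psi(x) = (x_U)$ is $\alpha_0$-consistent in the uncollapsed intervals for $\alpha_0 = \alpha_0(\mathfrak S, R)$, and then show that the collapsing procedure $q_U \colon T_U \to \hT_U$ upgrades this coarse $\alpha_0$-consistency to exact $0$-consistency, provided the thickening constant $r_1$ is chosen large enough relative to $\alpha_0$ and the constants $R_C$ from Lemma \ref{lem:interval control}. Concretely, I would set $R_1 = R_1(R,\mathfrak S)$ to be a constant comfortably larger than $\alpha_0 + R_C + (\text{a bounded multiple of } C)$, large enough that all the coarse balls appearing below collapse to points. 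The point is that each relative projection $\delta^V_U$ is, by Lemma \ref{lem:interval control}, a bounded-diameter set, hence gets absorbed into a cluster component $\bT^c_U$ once we thicken by $r_1 > R_1$, and so its image $\hd^V_U$ is a single collapsed point in $\hT^c_U$; likewise $a_U, b_U$ become the endpoints $\ha_U, \hb_U$.

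I would then verify the two clauses of Definition \ref{defn:Q defined} by translating each coarse consistency inequality. For clause (1), take $U \pitchfork V$. The $\alpha_0$-consistency of $\Psi(x)$ says $\min\{d_U(x_U, \delta^V_U), d_V(x_V, \delta^U_V)\} \leq \alpha_0$ (this is the interval analogue of the transverse consistency inequality, using that $d_U(\phi_U(\delta^V_U), \rho^V_U) < R_C$). Without loss of generality $d_U(x_U, \delta^V_U) \leq \alpha_0$; then in $T_U$ the point $x_U$ lies within a bounded distance (depending on $\alpha_0$, $C$, $R_C$) of $\delta^V_U$. Since $r_1 > R_1$ exceeds this bound, $x_U$ lies in the same cluster component of $\bT^c_U$ as $\delta^V_U$, so after collapsing $\hx_U = q_U(x_U) = \hd^V_U$, which is exactly the required equality. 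For clause (2), take $U \nest V$. The $\alpha_0$-consistency gives $\min\{d_V(x_V, \delta^U_V), \text{(a term bounding how far } x_U \text{ is from } \delta^V_U(x_V))\} \leq \alpha_0$, using the bounded geodesic image property, item (5) of Lemma \ref{lem:interval control}, in the same way. In the first case, $x_V$ is in the cluster component of $\delta^U_V$, so $\hx_V = \hd^U_V$. In the second case, $x_V$ lies outside the $R_C$-neighborhood of $\delta^U_V$, so it sits in a single component $C$ of $T_V - \calN_{R_C}(\delta^U_V)$ with $\diam_{T_U}(\delta^V_U(C)) < R_C$; combined with the $\alpha_0$-bound, $x_U$ is within a bounded distance of the (bounded) set $\delta^V_U(x_V)$, and after collapsing this forces $\hx_U \in \hd^V_U(\hx_V)$.

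The one subtlety to be careful about — and I expect this to be the main technical obstacle — is the interaction between the second thickening constant $r_2$ and the structure of $\hd^V_U(\hx_V)$ in clause (2): I need to make sure that when $\hx_V$ is a non-collapsed point, the fiber $\hd^V_U(\hx_V)$ is a well-defined single point (or bounded set) in $\hT_U$, so that the statement "$\hx_U \in \hd^V_U(\hx_V)$" is meaningful, and this requires knowing that the collapsing on $T_V$ and on $T_U$ are compatible under the relative projection — i.e. that $\delta^V_U$ sends components of $\bT^c_V$ into components of $\bT^c_U$ up to the thickening. This should follow from items (3)--(5) of Lemma \ref{lem:interval control} (nearby relative projections in $T_V$ have nearby images in $T_U$, and bounded geodesic image controls the rest), but it is where the choice $r_2 > 0$ arbitrary in the statement is used, together with the fact that all the relevant bounds are uniform in $\mathfrak S$ and $R$. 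Once this compatibility is in place, the collapsed relative projections $\hd^V_U$ are honestly well-defined maps and the two clauses follow as above. I would leave the routine constant-chasing (exactly how large $R_1$ must be as a function of $\alpha_0$, $C$, $R_C$, $N$) to the reader, as is done for the analogous Lemmas \ref{lem:interval control} and \ref{lem:interval consistency} in the text.
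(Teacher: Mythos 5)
Your overall strategy is the same as the paper's: start from the $\alpha_0$-consistency of $\Psi(x)$ (Lemma \ref{lem:interval consistency}), choose $r_1$ large compared to $\alpha_0$ and the constants of Lemma \ref{lem:interval control}, and convert each coarse consistency inequality into the exact clauses of Definition \ref{defn:Q defined}. Your transverse case is correct and matches the paper: $d_{T_U}(x_U,\delta^V_U)<\alpha_0<r_1$ puts $x_U$ in the cluster containing $\delta^V_U$, so $\hx_U=\hd^V_U$ after collapsing.

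There is, however, a genuine gap in the second subcase of the nested case, precisely at the sentence ``combined with the $\alpha_0$-bound, $x_U$ is within a bounded distance of the (bounded) set $\delta^V_U(x_V)$, and after collapsing this forces $\hx_U \in \hd^V_U(\hx_V)$.'' Proximity in $T_U$ does not survive the collapse as an exact relation: $q_U$ only identifies points lying in a common cluster component, and if $x_U$ and $\delta^V_U(x_V)$ both sat in an edge component at distance up to $\alpha_0$ apart, they would remain distinct in $\hT_U$ and the membership $\hx_U\in\hd^V_U(\hx_V)$ could fail. Note also that for $U\nest V$ the image $\delta^V_U(x_V)$ of the projection \emph{map} is not itself part of the thickening data on $T_U$ (the thickening is along the sets $\delta^W_U$ for $W\nest U$ together with the marked points), so you cannot appeal to it lying in a cluster for free. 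The missing ingredient is the marked point: since $d_{T_V}(x_V,\delta^U_V)>\alpha_0>R$, the component of $T_V-\calN_R(\delta^U_V)$ containing $x_V$ also contains one endpoint $f_V$ with $f\in\{a,b\}$, and the BGI property together with consistency of the tuples for $a$ and $b$ gives $\diam_{T_U}\bigl(\delta^V_U(x_V)\cup f_U\bigr)<R$. Hence, once $r_1>\alpha_0+R$, both $x_U$ and $\delta^V_U(x_V)$ are absorbed into the cluster at the marked point $f_U$, so $q_U(x_U)=q_U(\delta^V_U(x_V))=\hf_U$; and since $q_V^{-1}(\hx_V)$ stays in the same component of $T_V-\calN_R(\delta^U_V)$, the same BGI argument shows $\hd^V_U(\hx_V)=\hf_U$, which is what actually yields the required membership. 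Your flagged ``subtlety'' is in the right neighborhood but misdiagnoses it: $r_2$ plays no role in this proposition (it may be arbitrary), and the issue is not well-definedness of $\hd^V_U$ but the identification of the common cluster via the marked point; with that step inserted, your argument coincides with the paper's proof.
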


\begin{proof}
We need to prove $0$-consistency of $\hPsi(x)$ as in Definition \ref{defn:Q defined}.

First suppose that $U \pitchfork V \in \calU$.  Since $\Psi(x) = (x_V)$ is $\alpha_0$-consistent by Lemma \ref{lem:interval consistency} where $\alpha_0=\alpha_0(R,\theta,\mathfrak S)>0$, we have, without loss of generality, that $d_{T_U}(x_U, \delta^V_U)< \alpha_0$.  Hence $q_U(x_U) = \hd^V_U$ as long as $r_1 > \alpha_0$.  

Next suppose that $U \nest V \in \calU$.  If $d_{T_V}(x_V, \delta^U_V)< \alpha_0 < r_1$, then $q_V(x_V) = \hd^U_V$, and we are done.  Otherwise, we have $\diam_{T_U}(x_U \cup \delta^V_U(x_V))<\alpha_0$ while also $d_{T_V}(x_V, \delta^U_V)>\alpha_0$.  By taking $\alpha_0 > R$, we have that $\diam_{T_U}(\delta^V_U(x_V)\cup f_U)< R$ for some $f \in \{a,b\}$ by the BGI property in item (4) of Definition \ref{defn:interval axioms}.  In particular, setting $r_1>\alpha_0 + R$, then both $x_U$ and $\delta^V_U(x_V)$ are within $\alpha_0 + R< r_1$ of $f_U$, and hence $q_U(x_U) = q_V(\delta^V_U(x_V))$.  Finally, observe that $q^{-1}_V(q_V(x_V))$ is contained in the component of $T_V - \mathcal{N}_{R}(\delta^U_V)$ containing $x_V$, and hence $q_V(\delta^V_U(x_V)) = \hd^V_U(q_V(x_V))$.  This completes the proof.
\end{proof}

\subsection{Controlling the collapsed intervals}

In this subsection, we record the following technical lemma, which is in part a collapsed version of Lemma \ref{lem:interval control}, but also contains the conclusion of Proposition \ref{prop:hPsi defined}.  The statement and its proof are basically \cite[Lemma 7.13]{Dur_infcube}.

\begin{lemma}[Collapsed interval control]\label{lem:collapsed interval control}
Let $R>0$ and $r_1 > R_1$ for $R_1 = R_1(R,\theta, \mathfrak S)>0$ as in Proposition \ref{prop:hPsi defined}.  There exists $R_2 = R_2(R,\mathfrak S)>0$ so that for any $r_2>R_2$, the following hold for any $(r_1,r_2)$-thickening of an $R$-interval system for $a,b \in \calX$: 
\begin{enumerate}
    \item For any $x \in H = \hull_{\calX}(a,b)$, the tuple $\hPsi(x) \in \calQ$ is $0$-consistent.
    
    \item If $U \pitchfork V \in \calU$, then both $\hd^V_U \in \hT_U$ and $\hd^U_V \in \hT_V$ are in $\{\ha_U, \hb_U\}$.
    
    \item If $U \nest V \in \calU$, then $\hd^U_V$ coincides with a collapsed point.
    
    \item For any $U \in \calU$ and any $x,y \in H$, we have $d_U(x,y) \asymp d_{T_U}(x_u,y_U)\geq d_{\hT_U}(\hx_U,\hy_U)$, with the constants in $\asymp$ depending only on $\mathfrak S$ and $R$.

    \item If $x,y \in H$ and $U \in \calU$ and $\hpsi_U(x), \hpsi_U(y)$ are distinct collapsed points, then $d_U(x,y) > 50\ES$.
    
    \item If $U \in \calU$ is $\nest_{\calU}$-minimal, then $d_{\hT_U}(\ha_U,\hb_U) \succ K - 2r_1$, with the constants in $\succ$ depending only on $\mathfrak S$ and where $K = K(\mathfrak S)>0$ is chosen independent of the setup.
    
    \item (Bounded geodesic image) Suppose $V \nest U$, $C \subset \hT_U - \hd^V_U$ is a component, then $\hd^U_V(C)$ is a point.  In particular, if $f \in \{a,b\}$ with $\hf_U$ contained in $C$, then $\hd^U_V(C) = \hf_V.$

 \item If $\hx \in \calQ$, then $\{U \in \calU| \hx_U \notin \{\ha_U,\hb_U\}\}$ is pairwise non-transverse, and hence has cardinality bounded in terms of $\mathfrak S$.

    \item If $\hx \in \calQ$, then $E(\hx) = \{U \in \calU| \hx_U \in \hT^e_U\}$ is pairwise orthogonal.

\end{enumerate}
\end{lemma}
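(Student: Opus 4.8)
The plan is to follow the proof of \cite[Lemma 7.13]{Dur_infcube}, grouping the nine items by the mechanism that drives each. First one fixes the thickening scales: since the interval system of this section is built directly from quasi-geodesics, the constant $R$ of Lemma \ref{lem:interval control} depends only on $\mathfrak S$, so $R_1,R_2$ depend only on $\mathfrak S$, while the largeness constant $K\ge K_0$ is chosen afterwards as large as we wish---in particular $K\gg r_1+r_2$. Throughout, enlarging $R_1$ if necessary, we take $r_1$ larger than $R_C$, the base-consistency constant $\theta$, and a fixed multiple of $\ES$; the scale $r_2>R_2$ is only needed for item (5) and is fixed there. Item (1) is literally Proposition \ref{prop:hPsi defined}. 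The pervasive technical point is to make $r_1$ dominate all the error terms below while keeping $K$ free.

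Items (2), (3) and (7) form the geometric core. For (3): by construction the segments $\delta^U_V$ (for $U\nest V$) lie in the collection along which $T_V$ is thickened, so they are collapsed and $\hd^U_V$ is a collapsed point. For (2): if $U\pitchfork V$ then Lemma \ref{lem:interval control}(1) places $\delta^V_U$ within $R_C<r_1$ of a marked point of $T_U$, so $\delta^V_U$ and that marked point lie in a common cluster component, which collapses onto a marked point of $\hT_U$; symmetrically for $\hd^U_V$. Item (7) is the main obstacle. Given $V\nest U$ and a component $C\subset\hT_U-\hd^V_U$, its preimage $q_U^{-1}(C)$ avoids $\calN_{r_1}(\delta^V_U)\supseteq\calN_{R_C}(\delta^V_U)$, so the interval bounded-geodesic-image estimate Lemma \ref{lem:interval control}(5) makes $\delta^U_V(q_U^{-1}(C))\subset T_V$ of diameter $<R_C$; to promote this to a single collapsed point one must locate it. Each component $C$ contains exactly one marked point $\hf_U$, $f\in\{a,b\}$; applying base consistency of the tuple $(\pi_W(f))_{W}$ (Lemma \ref{lem:base consistency}) to the pair $V\nest U$, either $\pi_U(f)$ is $\theta$-close to $\rho^V_U$---which via Lemma \ref{lem:interval control} would put $f_U$ in the cluster of $\delta^V_U$, contradicting $\hf_U\in C$---or $\pi_V(f)$ is $\theta$-close to $\rho^U_V(\pi_U(f))$, so that $\delta^U_V(q_U^{-1}(C))$ lies within $O(R_C+\ES)<r_1$ of $f_V$, hence inside the cluster of $f_V$; therefore $\hd^U_V(C)=\hf_V$, proving both assertions of (7).

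Items (4)--(6) are metric estimates. For (4): $\phi_U$ is an $(R,R)$-quasi-isometric embedding and, since $x\in H$ forces $\pi_U(x)$ to be $O(R+\theta)$-close to $\phi_U(T_U)$, the point $x_U$ is coarsely the nearest point of $\phi_U(T_U)$ to $\pi_U(x)$; hence $d_{T_U}(x_U,y_U)\asymp d_U(x,y)$ with constants depending only on $\mathfrak S$ and $R$, and since the collapse map $q_U$ is $1$-Lipschitz we get $d_{T_U}(x_U,y_U)\ge d_{\hT_U}(\hx_U,\hy_U)$. For (5): distinct collapsed points come from cluster components of $T_U$ that step (2) of the thickening did not merge, hence are at $T_U$-distance $\ge r_2$; pushing this through $\phi_U$ and the uniformly coarsely Lipschitz $\pi_U$ gives $d_U(x,y)\ge r_2/R-O(R+\theta+\ES)$, which exceeds $50\ES$ once $R_2=R_2(R,\mathfrak S)$ is chosen large. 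For (6): when $U$ is $\nest_{\calU}$-minimal there is no $V\in\calU$ with $V\sqsubsetneq U$, so the only data thickened inside $T_U$ are the two marked points $a_U,b_U$; thus $\hT_U$ is $T_U$ with an $r_1$-collar deleted at each marked point, and because $d_{T_U}(a_U,b_U)$ is comparable to $d_U(a,b)>K$ with $K\gg r_1+r_2$ these collars do not merge, giving $d_{\hT_U}(\ha_U,\hb_U)\succ K-2r_1$ (the residual multiplicative distortion being absorbed into $\succ$).

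Finally, items (8) and (9) are formal consequences of $0$-consistency together with (2), (3) and (7). For (8): if $U\pitchfork V$ and $\hx\in\calQ$, Definition \ref{defn:Q defined}(1) forces $\hx_U=\hd^V_U$ or $\hx_V=\hd^U_V$, and by (2) whichever holds lands that coordinate in the marked points; so $\{U:\hx_U\notin\{\ha_U,\hb_U\}\}$ contains no transverse pair, is therefore pairwise non-transverse, and the complexity axiom bounds its size by $\xi(\mathfrak S)$. For (9): if $U,V\in E(\hx)$ are distinct, then $\hx_U\in\hT^e_U$ and $\hx_V\in\hT^e_V$, so neither coordinate is a marked point or a collapsed point (by (2), (3)); the case $U\pitchfork V$ is excluded exactly as in (8), and if $U\nest V$ then Definition \ref{defn:Q defined}(2) gives $\hx_V=\hd^U_V$, impossible since $\hd^U_V$ is a collapsed point, or $\hx_U\in\hd^V_U(\hx_V)$, in which case $\hx_V$ lies in a component $C$ of $\hT_V-\hd^U_V$ containing some marked point $\hf_V$, whence $\hd^V_U(\hx_V)=\hd^V_U(C)=\hf_U$ by (7) (with the roles of $U,V$ interchanged) and $\hx_U=\hf_U$, a contradiction; hence $U\perp V$, so $E(\hx)$ is pairwise orthogonal and in particular bounded in terms of $\mathfrak S$.
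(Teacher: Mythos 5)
Your proposal is correct and takes essentially the same approach as the paper's proof: items (1)--(6), (8), (9) are argued just as in the paper (Proposition \ref{prop:hPsi defined}, the cluster/marked-point structure of the thickening, the $(R,R)$-quasi-isometric embedding plus $1$-Lipschitz collapse, the $r_2$-separation of distinct clusters, the $\nest_{\calU}$-minimality observation, and the $0$-consistency dichotomies). The only cosmetic difference is in item (7), where you run the localization through base HHS consistency (Lemma \ref{lem:base consistency}) rather than the interval consistency constant $\alpha_0$ of Lemma \ref{lem:interval consistency}; the two give the same estimate, so there is no gap.
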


\begin{remark}
Item (5) is notably the first time that the constant $r_2$ from Definition \ref{defn:thickening} plays a substantial role.  This is important later in proving that the map $\hPsi:H \to \calQ$ between  $0$-consistent set $\calQ$ and the hierarchical hull $H = \hull_{\calX}(a,b)$ is a quasi-isometry.  Another notable statement is item (6), which is about $\nest_{\calU}$-minimal domains, namely domains $U$ for which no $V \in \calU$ has $V \nest U$.  Here is where our largeness constant $K$ plays a crucial role.  In particular, since nothing nests into a $\nest_{\calU}$-minimal domain $U$, the map $q_U: T_U \to \hT_U$ only collapses the $r_1$-neighborhood of each endpoint, and hence the diameter of $\hT_U$ is coarsely bounded below by $K$.  This is also important for confirming that $\hPsi:H \to \calQ$ is a quasi-isometry; see Section \ref{sec:hPsi qi} below.
\end{remark}

\begin{proof}
Item (1) is  Proposition \ref{prop:hPsi defined}.  Item (2)follows from item (2) of Definition \ref{defn:interval axioms} by choosing $r_1>2R$.  Similarly, item (3) follows from Definition \ref{defn:interval axioms} (namely, item (1) of Lemma \ref{lem:interval control}) by choosing $r_1 > 2R$.  

Item (4) follows directly from the construction and item (3) of Definition \ref{defn:interval axioms}.

Item (4) is an immediate consequence of the definition of the hull $H$ (Definition \ref{defn:hier hull}) and the construction in this subsection.  In particular, by assumption $\phi_U:T_U \to \calC(U)$ is an $(R,R)$-quasi-geodesic between $\pi_U(a), \pi_U(b)$, giving the coarse equality, while the inequality follows because $q_U:T_U\to \hT_U$ collapses subintervals and therefore is distance non-increasing.

For item (5), if $\hpsi_U(x),\hpsi_U(y)$ are in distinct clusters, then $d_{T_U}(x,y) > r_2$, and hence $d_U(x,y) \succ r_2$ by item (4).  Hence choosing $r_2$ large enough gives the desired lower bound.

For item (6): When $U \in \calU$ is $\nest_{\calU}$-minimal with respect to $\calU$, then $q_U:T_U \to \hT_U$ only collapses the $r_1$-neighborhoods of $a_U,b_U$ in $T_U$.  Since $d_U(a,b)>K$ by assumption, it follows that $d_{T_U}(a_U,b_U) \succ K$, while the map $q_U:T_U \to \hT_U$ only collapses the $r_1$-neighborhoods of $a_U,b_U$.  Thus provided that $K -2r_1 > r_2$ (which we can arrange), the claim follows.

For item (7): By item (1), if $\hf_U$ is a marked point, then $0$-consistency of $(\hf_V) = \hPsi(f)$ implies that $\hd^U_V(\hf_U) = \hf_V$ because $d_{T_U}(\delta^V_U(f_U),f_V)<\alpha_0 < r_1$, where we note that $\alpha_0 = \alpha_0(R, \mathfrak S)>0$ by Lemma \ref{lem:interval consistency}.  On the other hand, since $C \cap \hd^V_U = \emptyset$, we have $d_{T_U}(q^{-1}_U(C), \delta^V_U)>R$.  Hence item (4) of Definition \ref{defn:interval axioms} says that $\diam_{T_V}(\delta^U_V(C))<R$, while also $\delta^U_V(f_U) \subset \delta^V_U(C)$.  Hence $d_{T_V}(\delta^U_V(C), f_V)<\alpha_0 + R<r_1$, where we guarantee the last inequality by increasing $r_1$ as necessary.  It follows then that $\hd^U_V(C) = \hf_V$, as claimed.

For item (8): Suppose $U, V$ are transverse and $\hx_U \notin  \{\ha_U, \hb_U\}$.  Since $\hd^V_U \in \{\ha_U,\hb_U\}$ by item (2) and hence $\hx_U \neq \hd^V_U$, it follows from $0$-consistency of $\hx$ that $\hx_V = \hd^U_V$.  Thus $\hx_V \in \{\ha_V,\hb_V\}$ and so the set in the statement of item (8) is pairwise non-transverse, and hence has bounded complexity by item (2) in Subsection \ref{subsec:basic HHS}.

Finally, for item (9): Recall that $\hT^e_U = \hT_U - \hT^c_U$, where $\hT^c_U$ are the collapsed points.  Hence $E(\hx)$ is the set of domains where $\hx_U$ is not a collapsed point.  Supposing that $U,V$ are neither orthogonal nor (by item (8)) transverse.  When $V \nest U$ and $\hx_U = \hd^V_U$, we are done by items (1) and (3).  When $V \nest U$ and $\hx_U \neq \hd^V_U$, item (7) implies there is some $f \in \{a,b\}$ so that $\hd^U_V(\hx_U) = \hf_V$, while $0$-consistency says that $\hx_V = \hd^U_V(\hx_U)$, giving $\hx_V = \hf_V$.  Hence if $U$ and $V$ are not orthogonal, then one of $\hx_U$ or $\hx_V$ must be a collapsed point, proving this item.  This completes the proof.

\end{proof}

\section{$\hPsi:H \to \calQ$ is a quasi-isometry} \label{sec:hPsi qi}

In this section, we explain why the map $\hPsi:H \to \calQ$ (Proposition \ref{prop:hPsi defined}) between the hierarchical hull $H = \hull_{\calX}(a,b)$ of a pair of points $a,b \in \calX$ and the $0$-consistent set associated to any sufficiently wide $(r_1,r_2)$-thickening of an $R$-interval system for $a,b$ is a quasi-isometry.  There are two main parts to this:

\begin{enumerate}
\item Proving that $\hPsi:H \to \calQ$ is a quasi-isometric embedding, and
\item Proving that $\hPsi$ is coarsely surjective.  This is accomplished by defining a coarse inverse $\hO:\calQ \to H$, so that $\hPsi \circ \hO$ is coarsely the identity on $\calQ$.
\end{enumerate}

As we shall see, these two results have fairly different flavors---(1) is essentially purely hierarchical, while (2) has distinctly cubical elements.  Item (1) essentially says that one can encode the distance formula in $\calX$ as the $\ell^1$-distance in a product of intervals.  Its proof involves mostly distance formula-type arguments and we present it in full.  On the other hand, the proof of item (2), which is contained in \cite[Sections 9--11]{Dur_infcube}, is rather involved and many of the details of the argument are not crucial to our expository purposes in this article.  Thus, in this section, we will mostly explain the basics of the construction, including the points where our simpler setting simplifies things considerably.

The one exception here is the Tree Trimming Theorem \ref{thm:tree trimming}, which is a distance-formula like tool that we need both here and for our stability results in Section \ref{sec:stable cubes}.  We will discuss it in detail and present its proof in Appendix \ref{app:TT}.

For the rest of this section, fix $a,b \in \calX$ and let $H = \hull_{\calX}(a,b)$ to be their hierarchical hull.  Fix a largeness constant $K$, which we will be able to control relative to all other involved constants, and set $\calU = \Rel_K(a,b)$.  Finally, fix $R>0$ and an $R$-interval system $\{T_U\}_{U \in \calU}$ for $a,b$.  Let $\{\hT_U\}_{U \in \calU}$ denote the family of collapsed intervals corresponding to an $(r_1,r_2)$-thickening, where $r_i = r_i(R,\theta,\mathfrak S)>0$ are provided by Proposition \ref{prop:hPsi defined} and Lemma \ref{lem:collapsed interval control}.

\subsection{$\hPsi:H \to \calQ$ is a quasi-isometric embedding}\label{subsec:Q qie}

The content of this subsection is roughly contained in \cite[Section 8]{Dur_infcube}.  

Before we state the result, we set some additional notation: For numbers $A,B$, the notation $[A]_B$ means $0$ when $A<B$ and $A$ when $A\geq B$.  For $x \in H$ and $U \in \calU$, we will write
\begin{itemize}
\item $x = \pi_U(x) \in \calC(U)$;
\item $x_U = \psi_U(x) \in T_U$;
\item $\hx_U = \hpsi_U(x) \in \hT_U$.
\end{itemize}

\begin{theorem}\label{thm:Q distance estimate}
There exists $K_1 = K_1(\mathfrak S)>0$ so that for any $K_2 \geq K_1$ and any $x, y \in H$, we have that
$$d_{\calX}(a,b) \asymp \sum_{U \in \mathfrak S} [d_U(x,y)]_{K_2} \asymp \sum_{U \in \calU} d_{\hT_U}(\hx_U,\hy_U)$$
where the constants in $\asymp$ only depend on $\mathfrak S$ and the choice of $K_2$.
\end{theorem}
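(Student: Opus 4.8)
The plan is to prove the two coarse equalities in turn. The first, $d_{\calX}(a,b) \asymp \sum_{U \in \mathfrak S}[d_U(x,y)]_{K_2}$, is \emph{not} quite the Distance Formula \ref{thm:DF} as stated — that would read $d_{\calX}(x,y) \asymp \sum_U [d_U(x,y)]_{K_2}$ — so the content here is that for $x,y \in H = \hull_{\calX}(a,b)$ the quantity $\sum_U [d_U(x,y)]_{K_2}$ is comparable to $d_{\calX}(a,b)$ rather than just $d_{\calX}(x,y)$. The point is really that $x,y$ are trapped inside a (coarsely convex) hull. One direction, $\sum_U[d_U(x,y)]_{K_2} \prec d_{\calX}(a,b)$, should follow by combining the Distance Formula with the triangle inequality: $d_{\calX}(x,y) \leq d_{\calX}(a,b) + d_{\calX}(a,x) + d_{\calX}(b,y)$ and then bounding $d_{\calX}(a,x), d_{\calX}(b,y)$ by $d_{\calX}(a,b)$ using that $x,y \in H$ and that $H$ is coarsely spanned by $a,b$ — concretely, applying the gate retraction of Lemma \ref{lem:gate retract} together with the Distance Formula to see that $\diam_{\calX}(H) \prec d_{\calX}(a,b)$, since for every $U$ we have $\diam_U(H) \asymp \diam_U(H_U) \asymp d_U(a,b)$. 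For the reverse direction, I would observe that for each relevant $U$, $d_U(a,b) \prec d_U(a,x) + d_U(x,y) + d_U(y,b)$, and then sum the Distance Formula terms; but a cleaner route is: again $\diam_{\calX}(H) \asymp d_{\calX}(a,b)$ gives $d_{\calX}(a,b) \prec \sum_U [d_U(a,b)]_{K_2}$, and one checks term-by-term that a relevant domain $U$ with $d_U(a,b)$ large must also have $d_U(x,y)$ within a bounded factor of it for \emph{most} such $U$ — this is exactly the kind of accounting that Proposition \ref{prop:bounding containers} (Bounding large containers) and the Covering Lemma \ref{lem:covering} are designed for, letting one trade $d_U(a,b)$ for $d_U(x,y)$ up to controlled additive/multiplicative losses.

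For the second coarse equality, $\sum_{U \in \mathfrak S}[d_U(x,y)]_{K_2} \asymp \sum_{U \in \calU} d_{\hT_U}(\hx_U, \hy_U)$, I would proceed as follows. First, restrict the left sum to $\calU = \Rel_K(a,b)$: since $x,y \in H$, any domain $U$ with $d_U(x,y) > K_2$ satisfies $d_U(a,b) \succ K_2$ (because $\pi_U(x),\pi_U(y)$ lie $\theta$-close to $H_U = \hull_U(a,b)$, so $d_U(x,y) \leq d_U(a,b) + 2\theta$), hence $U \in \calU$ once $K$ is chosen appropriately relative to $K_2$. So $\sum_{U \in \mathfrak S}[d_U(x,y)]_{K_2} = \sum_{U \in \calU}[d_U(x,y)]_{K_2}$. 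Now for each fixed $U \in \calU$, item (4) of Lemma \ref{lem:collapsed interval control} gives $d_U(x,y) \asymp d_{T_U}(x_U, y_U) \geq d_{\hT_U}(\hx_U, \hy_U)$, which immediately yields the inequality $\sum_{U \in \calU} d_{\hT_U}(\hx_U,\hy_U) \prec \sum_{U \in \calU}[d_U(x,y)]_{K_2}$ up to threshold adjustments. The genuinely substantive direction is the reverse: I must show the collapsing map $q_U : T_U \to \hT_U$ does not destroy too much distance \emph{in aggregate}. The distance lost in collapsing is concentrated in the cluster components $\bT^c_U$, which are built from $r_1$-neighborhoods of the relative projections $\delta^V_U$ for $V \nest U$ (and the marked points). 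So the total length collapsed in $T_U$ is at most $O(r_1)$ times the number of distinct cluster components met by the segment from $x_U$ to $y_U$ — and by item (5) of Lemma \ref{lem:collapsed interval control}, crossing between distinct clusters forces a relevant sub-domain $V \nest U$ with $d_V(x,y) > 50\ES$, which is a term on the right-hand side. Summing this bookkeeping over all $U \in \calU$ and invoking Proposition \ref{prop:bounding containers} to control how many medium-sized $V$'s nest into each $U$ in terms of $d_U(x,y)$ (together with the Covering Lemma to avoid overcounting a given $V$ across many $U$'s), one gets $\sum_{U \in \calU}[d_U(x,y)]_{K_2} \prec \sum_{U \in \calU} d_{\hT_U}(\hx_U,\hy_U) + (\text{lower-order terms already counted})$, which closes the loop. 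Finally, I would apply Corollary \ref{cor:DF replace} with $\sigma(d_U(x,y)) := d_{T_U}(x_U,y_U)$ (legitimate since item (4) of Lemma \ref{lem:collapsed interval control} gives $\sigma(d_U(x,y)) \asymp d_U(x,y)$ uniformly) to cleanly pass between the $\calC(U)$-distances and the $T_U$-distances before collapsing.

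The main obstacle I anticipate is the reverse direction of the second equality — bounding $\sum_U [d_U(x,y)]_{K_2}$ above by $\sum_U d_{\hT_U}(\hx_U,\hy_U)$. The danger is a domain $U$ with $d_U(x,y)$ large but where almost all of the corresponding interval $T_U$ is swallowed by cluster components, so that $d_{\hT_U}(\hx_U,\hy_U)$ is tiny. The resolution is precisely the interplay between the thickening constants and the largeness constant $K$: each cluster component of appreciable $T_U$-length that separates $x_U$ from $y_U$ either (a) contains a marked point — bounded in number — or (b) is forced by its construction to carry a nested relevant domain $V$ with $d_V(x,y) > 50\ES$, and Proposition \ref{prop:bounding containers}(2) says the number of such $V$ nesting into $U$ is $\prec d_U(x,y)$, so the total length collapsed inside $T_U$ is $\prec r_1 \cdot d_U(x,y)$; choosing $K$ (hence the scale at which domains enter $\calU$) large compared to $r_1$ ensures the leftover $d_{\hT_U}(\hx_U,\hy_U)$ still absorbs a definite fraction of $d_U(x,y)$ — with the genuinely short leftover contributions and the "charged to a sub-domain" contributions both re-absorbed into the global sum via the Covering Lemma. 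This is the one place where the careful ordering of quantifiers (pick $K_1$, then allow $K_2 \geq K_1$, then choose $K = K(\mathfrak S)$ larger than all the thickening constants) does real work, and I would present that bookkeeping carefully while leaving the genuinely routine interval estimates to the reader, in keeping with the expository tone of the surrounding sections.
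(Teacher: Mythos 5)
The central gap is in the hard direction of the second coarse equality (the paper's Proposition \ref{prop:lower bound}), where your accounting of what the collapsing maps destroy fails in two places. First, the claim that the total length collapsed in $T_U$ is at most $O(r_1)$ times the number of cluster components met by $[x_U,y_U]$ is false: clusters are formed by chaining together $r_1$-neighborhoods of $\delta^V_U$-sets that come within $r_2$ of one another, so a single cluster component can have arbitrarily large diameter (the paper notes this explicitly when setting up cluster honing). Second, your fallback in the "main obstacle" paragraph --- total collapsed length $\prec r_1\cdot d_U(x,y)$, so choosing $K\gg r_1$ makes $d_{\hT_U}(\hx_U,\hy_U)$ a definite fraction of $d_U(x,y)$ --- cannot work per domain: a single huge cluster can swallow essentially all of $[x_U,y_U]$, so no choice of $K$ yields a per-domain definite fraction (and a bound of the form $r_1\,d_U(x,y)$ is anyway no better than the trivial bound, since $T_U$ itself has length $\asymp d_U(x,y)$). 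The paper instead charges the swallowed length \emph{globally} to other terms of the right-hand sum: inside each separating cluster, trimmed away from the $(\alpha_0+R)$-balls around $x_U,y_U$, the $\delta^V_U$-sets of $\nest_{\calU}$-minimal $V\nest U$ form a net, so the cluster's length is $\asymp$ the number of such $V$; for each such $V$, interval consistency together with the BGI property forces $\hx_V=\ha_V$ and $\hy_V=\hb_V$, and $\nest_{\calU}$-minimality gives $d_{\hT_V}(\hx_V,\hy_V)\succ K$, so each such $V$ is itself a large term on the right-hand side; the Covering Lemma \ref{lem:covering} then prevents a given $V$ from being charged by more than boundedly many $U$. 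Your sketch never verifies that the nested domains you invoke actually contribute to the right-hand sum (i.e.\ that $\hx_V\neq\hy_V$ after collapsing); that BGI/consistency step is the heart of the proof and is missing.

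Two further points. The direction you call immediate, $\sum_{\calU} d_{\hT_U}(\hx_U,\hy_U)\prec\sum_{\calU}[d_U(x,y)]_{K_2}$, is not just a "threshold adjustment": there may be arbitrarily many domains with $d_U(x,y)<K_2$ but $\hx_U\neq\hy_U$, each contributing to the left side and nothing to the right. The paper's Proposition \ref{prop:upper bound} handles these by showing that, for all but boundedly many, $\hx_U,\hy_U$ lie at distinct cluster points, hence $d_U(x,y)>50\ES$, and then charging them to $K_2$-relevant containers via Proposition \ref{prop:bounding containers}; some such argument is needed in your write-up as well. Finally, the left coarse equality should be read with $d_{\calX}(x,y)$ --- the paper treats it as literally the Distance Formula \ref{thm:DF} --- since with $d_{\calX}(a,b)$ it is false for general $x,y\in H$ (take $x=y$), and your proposed reverse direction (that most relevant $U$ with $d_U(a,b)$ large also have $d_U(x,y)$ comparably large) fails for exactly that reason; this part of your plan should simply be replaced by citing the Distance Formula.
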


We note that the left coarse equality of the above statement is simply the Distance Formula \ref{thm:DF}, hence the content is the right coarse equality.  In fact, one can obtain the (more difficult) lower-bound in the distance formula by combining the above theorem with the construction of hierarchy paths via cubical models, as we do below in Corollary \ref{cor:DF lower bound}.

Before beginning the discussion of the proof properly, we make a couple of simplifying observations, which are all basically distance formula-type manipulations.

First, note that if $x,y \in H$ and $V \in \Rel_K(x,y)$ but $V \notin \calU$, then $d_V(x,y) < K + \epsilon$ for some $\epsilon = \epsilon(\mathfrak S)>0$.  This is because $\pi_V(x), \pi_V(y) \in \calN_{\theta}(\hull_V(a,b))$.  Hence by taking $K_2 \geq K_1 > K + \epsilon$, we may increase the threshold to remove any extraneous domains.  That is:
$$\sum_{U \in \mathfrak S} [d_U(x,y)]_{K_2} = \sum_{U \in \calU} [d_U(x,y)]_{K_2}.$$

We next observe that for any $x,y \in H$, item (4) of Lemma \ref{lem:collapsed interval control} implies that $d_U(x,y) \asymp d_{T_U}(x_U, y_U)$, with constants depending only on $\mathfrak S$.  Thus we can use Corollary \ref{cor:DF replace} to replace the terms $d_U(x,y)$ with $d_{T_U}(x_U,y_U)$ in the sum, namely:

$$\sum_{U \in \calU} [d_U(x,y)]_{K_2} \asymp \sum_{U \in \calU} [d_{T_U}(x_U,y_U)]_{K_2}.$$

The two bounds in Theorem \ref{thm:Q distance estimate} are proven separately.  The upper bound only requires the basic details from the construction of the $\hT_U$ and some distance formula tools.  We have included the full proof of the proposition for clarity but also because it is used in proving the existence of hierarchy paths and the upper bound of the distance formula; moreover, the argument is similar to the proof of Corollary \ref{cor:DF replace}, which actually implies the proposition.

\begin{proposition}[Upper bound]\label{prop:upper bound}
There exists $K_1= K_1(\mathfrak S)>0$ so that for any $K_2 >K_1$ and any $x, y \in H$ we have
\begin{equation}\label{eqn:upper}
    \sum_{U \in \calU} [d_U(x,y)]_{K_2} \succ \sum_{U \in \calU} d_{\hT_U}(\hx_U,\hy_U)
\end{equation}
with the constants in $\succ$ depending only on $\mathfrak S$ and $K_2$.
\end{proposition}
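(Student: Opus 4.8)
The plan is to treat this as the ``easy direction'' of a distance formula: bound each collapsed-interval term by the corresponding domain-projection term, then collect the domains whose projection falls below the threshold $K_2$ into a controlled quantity using the Bounding Large Containers Proposition~\ref{prop:bounding containers}. First I would fix $x,y \in H$ and set $\calD = \{U \in \calU : \hx_U \neq \hy_U\}$, so that $\sum_{U \in \calU} d_{\hT_U}(\hx_U,\hy_U) = \sum_{U \in \calD} d_{\hT_U}(\hx_U,\hy_U)$, a finite sum by Corollary~\ref{cor:rel sets are finite}. Since $q_U$ is distance non-increasing, item~(4) of Lemma~\ref{lem:collapsed interval control} gives $d_{\hT_U}(\hx_U,\hy_U) \le d_{T_U}(x_U,y_U) \prec d_U(x,y)$, with constants depending only on $\mathfrak S$ and $R$; hence it suffices to bound $\sum_{U \in \calD} d_U(x,y)$ by $\sum_{U \in \calU}[d_U(x,y)]_{K_2}$ up to additive error, and for this I would split $\calD$ according to where $\hx_U$ and $\hy_U$ sit in $\hT_U$.

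I would decompose $\calD = \calD_1 \sqcup \calD_2$: let $\calD_1$ be the set of $U$ for which $\hx_U$ or $\hy_U$ lies in an edge component $\hT^e_U$, and $\calD_2$ the set of $U$ for which $\hx_U$ and $\hy_U$ are both (necessarily distinct) collapsed points. Since $\hPsi(x),\hPsi(y) \in \calQ$ by Proposition~\ref{prop:hPsi defined}, item~(9) of Lemma~\ref{lem:collapsed interval control} shows $E(\hPsi(x))$ and $E(\hPsi(y))$ are pairwise orthogonal, hence of cardinality at most $\xi(\mathfrak S)$; as $\calD_1 \subseteq E(\hPsi(x)) \cup E(\hPsi(y))$, we get $\#\calD_1 \le 2\xi(\mathfrak S)$, a constant. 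For $\calD_2$, item~(5) gives $d_U(x,y) > 50\ES$; moreover, since the bound in item~(5) is deduced from $d_U(x,y) \succ r_2$ and we are free to enlarge the thickening constant $r_2$ (this changes only the value of $R_2(\mathfrak S)$, not the validity of the construction), we may in fact assume $\calD_2 \subseteq \Rel_{K_1'}(x,y)$ for any prescribed threshold $K_1' = K_1'(\mathfrak S)$.

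Now I would carry out the counting, taking $K_1 \ge \max\{K_0,\, K_1' + 47\ES\}$ with $K_1' > 50\ES$, so that the Distance Formula~\ref{thm:DF} and Proposition~\ref{prop:bounding containers} both apply. For every $U$ with $d_U(x,y) \ge K_2$ one has $d_{\hT_U}(\hx_U,\hy_U) \prec d_U(x,y) = [d_U(x,y)]_{K_2}$, and the number of such $U$ is at most $\tfrac{1}{K_2}\sum_U [d_U(x,y)]_{K_2}$, so their total contribution is $\prec \sum_U [d_U(x,y)]_{K_2}$ (the additive errors being absorbed). For $U \in \calD_1$ with $d_U(x,y) < K_2$ there are at most $2\xi(\mathfrak S)$ such domains, each contributing $\prec K_2$: a total constant. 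For $U \in \calD_2$ with $d_U(x,y) < K_2$ we have $U \in \Rel_{K_1'}(x,y) - \Rel_{K_2}(x,y)$, and Proposition~\ref{prop:bounding containers} (applied with lower threshold $K_1'$ and upper threshold $\max\{K_2,\, K_1' + 47\ES\}$, then comparing the two truncated distance-formula sums via Theorem~\ref{thm:DF}) bounds the number of such domains by $\lambda\sum_U [d_U(x,y)]_{K_2} + \lambda$ for some $\lambda = \lambda(\mathfrak S,K_2)$; since each contributes $\prec K_2$, their total contribution is again $\prec \sum_U [d_U(x,y)]_{K_2}$ up to additive error. Adding the three pieces yields \eqref{eqn:upper}.

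The main obstacle is precisely the count of ``medium'' relevant domains, those with $50\ES < d_U(x,y) < K_2$: each may contribute a nonzero (but bounded) amount to the left-hand side while contributing nothing to the truncated right-hand sum, so a naive term-by-term comparison fails. Proposition~\ref{prop:bounding containers} is exactly the tool that absorbs them: it charges each medium domain to a large container $W$ higher in the $\nest$-lattice, with the number of medium domains nesting into a given $W$ bounded by $d_W(x,y)$, which does occur on the right-hand side. The only other, purely cosmetic, technicality is reconciling the various real-valued thresholds — the $50\ES$ of item~(5) and the hypotheses $K_1 > 50\ES$, $K_2 \ge K_1 + 47\ES$ of Proposition~\ref{prop:bounding containers} — which is handled by enlarging $r_2$ at the construction stage so that $\calD_2$ sits inside a conveniently chosen $\Rel_{K_1'}(x,y)$.
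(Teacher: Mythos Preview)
Your approach is essentially the same as the paper's: split off the domains where one of $\hx_U,\hy_U$ lies in an edge component (bounded cardinality via item~(9) of Lemma~\ref{lem:collapsed interval control}), observe that the remaining domains with $\hx_U\neq\hy_U$ have $d_U(x,y)$ bounded below via item~(5), and then absorb the resulting medium-threshold domains into the large-threshold sum using Proposition~\ref{prop:bounding containers}. One small remark: your parenthetical invocation of the Distance Formula~\ref{thm:DF} to compare thresholds is both unnecessary (you have already arranged $K_2 > K_1 \ge K_1' + 47\ES$, so $\max\{K_2, K_1'+47\ES\} = K_2$ and Proposition~\ref{prop:bounding containers} applies directly with upper threshold $K_2$) and potentially circular, since the paper uses this proposition as an ingredient in its cubical-model proof of the Distance Formula (Corollary~\ref{cor:DF lower bound}); simply drop that appeal.
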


\begin{proof}

As above, let $K_1 \geq K + \epsilon$ and set $K_2 \geq K_1 + 100\ES$.  Let $\calU_i = \Rel_{K_i}(x,y)$, so that $\calU_2 \subset \calU_1 \subset \calU$.  If $U \in \calU$ satisfies $\hx_U = \hy_U$, then the corresponding term disappears on the right-hand side.  Hence it suffices to account for the domains in $\calV = \{V \in \calU | d_V(x,y) < K_2 \textrm{ and } \hx_U \neq \hy_U\}$, which appear on the right-hand side (RHS) but not the left-hand side (LHS) of equation \eqref{eqn:upper}.

For this, item (9) of Lemma \ref{lem:collapsed interval control} says that $\hx_U$ is not contained in some cluster point of $\hT_U$ for only boundedly-many $U$, and the same for $\hy_U$.  (In fact, we can take $\hx_U = \ha_U$ and $\hy_U = \hb_U$, be this will not simplify things.) This means that, up to ignoring a bounded collection of domains $\calV' \subset \calV$, if $V \in \calV - \calV'$ then $\hx_V$ and $\hy_V$ are at distinct cluster points, and so $d_V(x,y) > 50\ES$ by item (5) of Lemma \ref{lem:collapsed interval control}, i.e., $\calV-\calV' \subset \Rel_{50\ES}(x,y)$, while also $d_{\hT_V}(\hx_V,\hy_V) \geq r_2 \geq 1$.

One finishes the proof with an application of Proposition \ref{prop:bounding containers} as follows.  Set $\calU_i = \Rel_{K_i}(x,y)$ and observe that $\calV - \calV' \subset \calU_1 - \calU_2$.  Proposition \ref{prop:bounding containers} says that up to ignoring a subset $\calV'' \subset \calV - \calV'$ with $\#\calV'' \leq P(K_2)$, for each $V \in \calV_0 = \calV - (\calV' \cup \calV'')$ there exists some domain $W \in \calU_2$ determining a term on the RHS with $V \nest W$.  Moreover, it says that the set of domains $\calW = \{V \in \calV| V\nest W\}$ for which each such $W$ accounts in this way satisfies $\#\calW \leq (1+A)\cdot d_W(x,y) + A$, for some $A = A(K_2, \mathfrak S)>0$.

Combining these observations leads to the computation
\begin{eqnarray*}
\sum_{U \in \calU_2} d_U(x,y) &=& \sum_{U \in \calU_2} d_U(x,y) + \sum_{V \in \calV - \calV'} d_V(x,y)+ \sum_{Z \in \calV'} d_Z(x,y)\\
&\leq& \sum_{U \in \calU_2} ((1+A)\cdot d_U(x,y)+A) + K_2 \cdot P(K_2)\\
&\leq& (1+2A)\cdot \sum_{U \in \calU_2} d_U(x,y) + K_2 \cdot P(K_2)
\end{eqnarray*}
where the last inequality follows from the fact that $d_U(x,y) > K_2$ for each $U \in \calU_2$.  This completes the proof. 

\end{proof}

\begin{figure}[h]
    \centering
    \includegraphics[width=.8\textwidth]{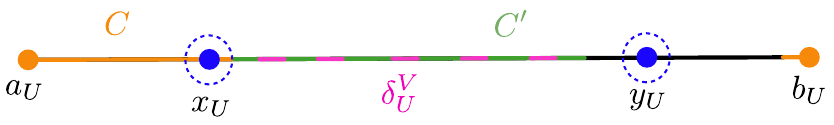}
    \caption{Proof of Proposition \ref{prop:lower bound}: The subcluster $C'$ isolates the portion of $C$ which intersects $[x_U,y_U]$ but avoids the $(\alpha_0+R)$-neighborhoods of $x_U$ and $y_U$.  The (pink) net of $\delta$-sets from $\nest_{\calU}$-minimal domains $V \nest U$ with $\delta^V_U \subset C'$ coarsely encodes the length of $C'$, with the BGIA property guaranteeing that $\hx_V = \ha_V$ and $\hy_V = \hb_V$ and hence $d_{\hT_V}(\hx_V,\hy_V) \geq K - B^2 - 2r_1$ for each such $V$.}
    \label{fig:subcluster}
\end{figure}

\begin{proposition}[Lower bound]\label{prop:lower bound}
There exists $K_1= K_1(\mathfrak S)>0$ so that for any $K_2 >K_1$ and any $x, y \in H$ we have
\begin{equation}\label{eqn:lower}
    \sum_{U \in \calU} [d_U(x,y)]_{K_2} \prec \sum_{U \in \calU} d_{\hT_U}(\hx_U,\hy_U)
\end{equation}
with the constants in $\prec$ depending only on $\mathfrak S$ and $K_2$.
\end{proposition}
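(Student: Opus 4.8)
The plan is to run a distance-formula style accounting in which each surviving left-hand term $[d_U(x,y)]_{K_2}$ is paid for either by $d_{\hT_U}(\hx_U,\hy_U)$ itself or, when the interval $T_U$ collapses badly along $[x_U,y_U]$, by a bundle of $\nest_{\calU}$-minimal subdomains of $U$, each contributing a definite amount on the right. First I would reduce the statement: by item (4) of Lemma \ref{lem:collapsed interval control} we have $d_U(x,y)\asymp d_{T_U}(x_U,y_U)$, so Corollary \ref{cor:DF replace} lets us replace the left-hand sum by $\sum_{U\in\calU}[d_{T_U}(x_U,y_U)]_{K_2}$, which is a sum over $\calU_2':=\{U\in\calU:\ d_{T_U}(x_U,y_U)>K_2\}$. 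Decomposing $[x_U,y_U]\subset T_U$ into the parts lying in edge components and the parts lying in cluster components, write $d_{T_U}(x_U,y_U)=d_{\hT_U}(\hx_U,\hy_U)+L_U$, where $L_U$ is the total length of the collapsed (cluster) part; since $\sum_{U\in\calU}d_{\hT_U}(\hx_U,\hy_U)$ is exactly the right side of \eqref{eqn:lower}, it suffices to prove $\sum_{U\in\calU_2'}L_U\prec\sum_{U\in\calU}d_{\hT_U}(\hx_U,\hy_U)$.

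The core is a local lemma. Fix $U\in\calU_2'$ and a cluster component $C$ of $T_U$ meeting $[x_U,y_U]$, and let $C'$ be $(C\cap[x_U,y_U])$ with the $\rho_0$-neighborhoods of $x_U$ and $y_U$ deleted, for a threshold $\rho_0=\rho_0(\mathfrak S)$. I claim that for every $\nest_{\calU}$-minimal $V\in\calU$ with $V\nest U$ and $\delta^V_U\subset C'$ we have, after possibly swapping the roles of $a$ and $b$, that $d_V(x,a)\leq\ES$ and $d_V(y,b)\leq\ES$. Indeed, since $\phi_U$ is a uniform quasi-isometric embedding with $\phi_U(T_U)$ Hausdorff-close to $\hull_U(a,b)$ and $\phi_U(\delta^V_U)$ close to $\rho^V_U$, the fact that $\delta^V_U$ lies in $[x_U,y_U]$ at distance $\geq\rho_0$ from both endpoints forces the geodesics $[\pi_U(a),\pi_U(x)]$ and $[\pi_U(b),\pi_U(y)]$ in $\calC(U)$ to avoid $\rho^V_U$ by more than $\ES$, once $\rho_0$ is chosen large in terms of the hyperbolicity and HHS constants; the Bounded Geodesic Image Axiom \ref{ax:BGIA} applied to the pairs $(x,a)$ and $(y,b)$ then gives the claim. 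Pulling back through $\psi_V$ and using that $q_V$ collapses the $r_1$-neighborhoods of $a_V,b_V$, this forces $\hx_V=\ha_V$ and $\hy_V=\hb_V$, hence $d_{\hT_V}(\hx_V,\hy_V)=d_{\hT_V}(\ha_V,\hb_V)\succ K-2r_1\geq1$ by item (6) of Lemma \ref{lem:collapsed interval control} and our freedom to take $K$ large. I would stress that this is exactly the step forcing us out of the interval formalism: if $x_U$ (or $y_U$) itself lies in the cluster $C$, then $\hx_U=\hd^V_U$, so interval $0$-consistency says nothing about $\hx_V$, and only the ambient Bounded Geodesic Image Axiom saves the argument. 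This is the main obstacle.

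Next I would turn the local lemma into a count. Each cluster $C$ is a chain of $r_1$-neighborhoods of sets $\delta^V_U$ coming from $K$-relevant $V\nest U$; each such $V$ has a $\nest_{\calU}$-minimal descendant $V_{\min}\in\calU$ whose set $\delta^{V_{\min}}_U$ lies within $R_C$ of $\delta^V_U$ by item (3) of Lemma \ref{lem:interval control} (non-transverse subdomains of $U$ have $R_C$-close interval projections in $T_U$), hence inside a controlled enlargement of $C$; and by the Covering Lemma \ref{lem:covering} each $V_{\min}$ is the descendant of at most $N=N(\mathfrak S)$ of these $V$. Since consecutive neighborhoods in $C$ are within $r_2$ and each has length $O(r_1)$, counting distinct near-positions of the $\delta^{V_{\min}}_U$ along $C'$ gives $\mathrm{len}(C')\prec\#\{V_{\min}\ \nest_{\calU}\text{-minimal}:\ V_{\min}\nest U,\ \delta^{V_{\min}}_U\text{ lies deep in }[x_U,y_U]\}$ up to a bounded additive error. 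Summing over the clusters meeting $[x_U,y_U]$ and using that cluster components are $r_2$-separated with $r_2>2\rho_0$, so that only the at-most-two clusters meeting the $\rho_0$-neighborhoods of $x_U$ and $y_U$ need special handling and their trimmed part has total length $\leq2\rho_0$, we get $L_U\prec\#\calV_U+O(1)$, where $\calV_U$ is the set of $\nest_{\calU}$-minimal $V\in\calU$ with $V\nest U$ and $\delta^V_U$ deep in $[x_U,y_U]$ and $O(1)$ depends only on $\mathfrak S$.

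Finally I would assemble the pieces. Summing over $U\in\calU_2'$: by the Covering Lemma \ref{lem:covering} each $\nest_{\calU}$-minimal $V$ lies in $\calV_U$ for at most $N$ domains $U$, and by the local lemma each such $V$ contributes $\succ1$ to the right side of \eqref{eqn:lower}, so $\sum_{U\in\calU_2'}\#\calV_U\prec\sum_{U\in\calU}d_{\hT_U}(\hx_U,\hy_U)$. The residual term $O(1)\cdot\#\calU_2'$ is controlled because $d_{T_U}(x_U,y_U)>K_2$ for $U\in\calU_2'$ gives $\#\calU_2'\leq K_2^{-1}\sum_{U\in\calU_2'}d_{T_U}(x_U,y_U)=K_2^{-1}\big(\sum_{U\in\calU_2'}d_{\hT_U}(\hx_U,\hy_U)+\sum_{U\in\calU_2'}L_U\big)$, so choosing $K_2$ large lets this contribution be absorbed into the two sums already bounded. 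This yields $\sum_{U\in\calU_2'}L_U\prec\sum_{U\in\calU}d_{\hT_U}(\hx_U,\hy_U)$ and hence \eqref{eqn:lower}. As flagged, the one genuinely delicate point is the local lemma when $x_U$ or $y_U$ sits inside a long collapsed cluster; the rest is the familiar passing-up and Covering-Lemma bookkeeping.
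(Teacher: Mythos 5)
Your proposal is correct and follows essentially the same route as the paper's proof: reduce via Corollary \ref{cor:DF replace} and item (4) of Lemma \ref{lem:collapsed interval control}, express the discrepancy as the total length of collapsed cluster segments along $[x_U,y_U]$, pay for each such segment by a coarsely dense family of $\nest_{\calU}$-minimal domains $V\nest U$ whose coordinates are forced to the opposite marked points (so $d_{\hT_V}(\hx_V,\hy_V)\succ K$), and control multiplicity with the Covering Lemma \ref{lem:covering} while absorbing the additive errors by taking $K_2$ large. The only (cosmetic) difference is that you establish $\hx_V=\ha_V$, $\hy_V=\hb_V$ by applying the ambient Bounded Geodesic Image Axiom \ref{ax:BGIA} directly in $\calC(U)$ to the pairs $(a,x)$ and $(b,y)$, whereas the paper stays at the interval level, combining the BGI property (5) of Lemma \ref{lem:interval control} with $\alpha_0$-consistency of the tuples from Lemma \ref{lem:interval consistency}; the two are equivalent in substance.
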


\begin{proof}

Observe that since $d_U(x,y) \prec d_{T_U}(x_U,y_U)$ for each $U$, it suffices by Corollary \ref{cor:DF replace} to show that 
$$\sum_{U \in \calU} [d_{T_U}(x_U,y_U)]_{K_2} \prec \sum_{U \in \calU} d_{\hT_U}(\hx_U,\hy_U).$$

For this bound, there are possibly domains $U \in \calU$ where $d_{T_U}(x_U,y_U) > K_2$ but there is a discrepancy $d_{T_U}(x_U,y_U) > d_{\hT_U}(\hx_U,\hy_U)$.  Let $\calW$ denote the collection of such domains.

Since $\hT_U$ is obtained from $T_U$ by collapsing clusters, we need to analyze the clusters which separate $x_U$ from $y_U$, i.e. those clusters $C_1, \dots, C_n$ which intersect, in that order, the geodesic $[x_U,y_U]$ in $T_U$.  The proof is mainly a BGIA \ref{ax:BGIA} type argument along with an application of Proposition \ref{prop:bounding containers} and the Covering Lemma \ref{lem:covering}.

Continuing with the above notation, for each $i$, let $C'_i = C_i \cap [x_U,y_U]_{T_U} - \left(B_{\alpha_0 + R}(x_U) \cup B_{\alpha_0 + R}(y_U)\right)$, where $\alpha_0 = \alpha(R, \mathfrak S)>0$ is the interval consistency constant provided by Lemma \ref{lem:interval consistency}; see Figure \ref{fig:subcluster}.

Observe that for each $i$, if we set $\calV_i = \{V \in \calU|V \nest U \text{ and } \delta^V_U \subset C_i\}$ then $\{\delta^V_U| V \in \calV_i\}$ is $(r_2 + 2r_1)$-dense in $C_i$.  Hence we can find a subcollection $\calV_i'$ so that
\begin{itemize}
    \item Each $V \in \calV_i'$ is $\nest_{\calU}$-minimal;
    \item For each $V \in \calV'$, we have $\delta^V_U \subset C_i'$;
    \item $\{\delta^V_U| V \in \calV_i\}$ is a $(2r_2+4r_1, r_2)$-net on $C_i'$ (see Definition \ref{defn:net} below).
\end{itemize}

Note that the increase in the net constants in the third item is required to guarantee the second item.  In particular, we have $\#\calV_i \succ \diam_{T_U}(C_i') \geq \diam_{T_U}(C_i \cap [x_U,y_U])- 2(\alpha_0+R)$, where $\#\calV_i$ denotes the cardinality of $\calV_i$.

Let $V \in \calV_i'$.  Then, by construction, $d_{T_U}(\delta^V_U, x_U)> \alpha + R$ and $d_{T_U}(\delta^V_U, y_U)>\alpha_0 + R$.  Hence, up to switching the roles of $a$ and $b$, the BGI property (5) of Lemma \ref{lem:interval control} says that $d_{T_V}(\delta^U_V(a_V),\delta^U_V(x_V))<R$ and $d_{T_V}(\delta^U_V(b_V),\delta^U_V(y_V))<R$. 

Lemma \ref{lem:interval consistency} says that the tuples $(a_U), (b_U), (x_U), (y_U)$ are all $\alpha_0$-consistent.  Thus since $d_{T_U}(a_U, \delta^V_U)>R + \alpha_0$ and similarly for $b_U,x_U,y_U$, $\alpha_0$-consistency forces that $d_{T_V}(a_V, \delta^U_V(a_U)) < \alpha_0$, and similarly for $b_V,x_V,y_V$.  As such, we get that $d_{T_V}(x_V,a_V) < \alpha_0 + R$ and $d_{T_V}(y_V,b_V) < \alpha_0 + R$, and so that $d_{T_V}(x_V,y_V) \geq d_{T_V}(a_V,b_V) - 2\alpha_0 - 2R$, from which we can conclude that $\ha_V = \hx_V$ and $\hb_V = \hy_V$ by requiring $r_1 > \alpha_0 + R$.

On the other hand, since each $V \in \calV_i'$ is $\nest_{\calU}$-minimal and $d_V(a,b) > K$, item (4) of Lemma \ref{lem:collapsed interval control} says that $d_{\hT_V}(\hat{a}_V,\hat{b}_V) \succ K$.  Hence we obtain that $d_{\hT_V}(\hx_V,\hy_V)  = d_{\hT_V}(\ha_V,\hb_V) \succ K$.

Thus for each $U \in \calW$, we have
\begin{eqnarray*}
d_{T_U}(x_U,y_U) & = & d_{\hT_U}(\hx_U,\hy_U) + \sum_{i=1}^n \diam_{T_U}(C_i \cap [x_U,y_U]_{T_U})\\
&\prec& d_{\hT_U}(\hx_U,\hy_U) + \sum_{i=1}^n \#\calV_i'\\
&\prec& d_{\hT_U}(\hx_U,\hy_U) + \sum_{i=1}^n \left(\sum_{V \in \calV_i'} d_{\hT_V}(\hx_V,\hy_V)\right).
\end{eqnarray*}

Finally, the Covering Lemma \ref{lem:covering} says that any domain $V \in \calU$ can nest into only boundedly-many domains in $\calU$, with bounds determined by $\mathfrak S$.  Hence, each $\nest_{\calU}$-minimal domain $V \in \calU$ must account for at most boundedly-many $U \in \calW$ in the above way.  Thus, using an argument similar to the end of the proof of Proposition \ref{prop:upper bound}, we can account for the distance lost by collapsing clusters in each domain in $\calW$ with these large $\nest_{\calU}$-minimal domains by increasing the multiplicative constant for the estimate in equation \eqref{eqn:upper} by a bounded amount.  This completes the proof.
\end{proof}

We used the following definition in the above proof:

\begin{definition}[Net]\label{defn:net}
Given constants $a,A\geq 0$, a $(a,A)$-\emph{net} on a subspace $Y \subset X$ of a metric space is a collection $Z \subset Y$ of points so that:
\begin{enumerate}
\item $Z$ is $a$-coarsely dense in $Y$, i.e. for every $y \in Y$ there is some $z \in Z$ with $d(z,y)<a$, and
\item For any $z,z' \in Z$, we have $d_X(z,z') \geq A$.
\end{enumerate}
\end{definition}

In particular, an $(a,A)$-net $Z$ on $Y$ satisfies $\#Z \asymp \diam(Y)$ with the constants depending only on $a,A$.

\subsection{The map $\hO:\calQ \to H$ and honing clusters}\label{subsec:hO defined}

With Theorem \ref{thm:Q distance estimate} in hand, in order to prove that $\hPsi:H \to \calQ$ is a quasi-isometry, we need to explain why $\hPsi$ is coarsely surjective.  The proof of this fact is quite involved and beyond the scope of an expository paper.  As such, we will mostly present key ideas and not a full detailed proof because the proof is not greatly simplified versus the most general version, see \cite[Sections 9--11]{Dur_infcube}.

In this subsection, we will construct a map $\Omega:\calQ \to \prod_{U \in \calU} T_U$, and sketch why its image is $\beta$-consistent for some $\beta = \beta(\mathfrak S, R, r_1,r_2)>0$.   Upgrading this to a map $\hO:\calQ \to H$ involves standard HHS techniques.  In Subsection \ref{subsec:tree trimming}, we will prove our Tree Trimming Theorem \ref{thm:tree trimming}, which is a useful distance formula-like tool that will allow us modify our collapsed interval setup while coarsely preserving the geometry of $\calQ$.  This tool is a key ingredient in Subsection \ref{subsec:upgrade to HHS}, where we sketch why $\hO$ is a coarse inverse to $\hPsi$, namely that $d_{\calQ}(\hPsi \circ \hO(\hx), \hx)$ is bounded independently of $\hx \in \calQ$.

We can now set about defining the map $\Omega:\calQ \to \prod_{U \in \calU} T_U$, which is defined coordinate-wise.  To put things more precisely, we want to define $\Omega$ so that:

\begin{itemize}
    \item For each $\hx \in \calQ$, the tuple $\Omega(\hx)$ is $\omega$-consistent for some $\omega = \omega(\mathfrak S)>0$, and
    \item when $\hx  = (\hx_U)\in \calQ$ and $(x_U) = \Omega(\hx)$, then $q_U(x_U) = \hx_U$ for each $U \in \calU$.
\end{itemize}

Unsurprisingly, the cluster subintervals, which are collapsed under each $q_U:T_U \to \hT_U$, play a key role.  In particular, these cluster subintervals can have arbitrarily large diameter, and so if a coordinate $\hx_U\in \hT_U$ lies in a cluster point of $\hT_U$, then its preimage $q_U^{-1}(\hx_U)$ is that whole cluster.  Choosing a point within that cluster then requires utilizing the coordinates for $V \nest U$ with $\hd^V_U = \hx_U$.  We think of this process as \emph{honing} the cluster.

\smallskip

\textbf{\underline{Honing clusters}}: Let $\hx = (\hx_U) \in \calQ$.  When $U\in \calU$ has $\hx_U \in \hT^e_U$, then the definition is simple: $x_U = q_U^{-1}(\hx_U)$, which is a unique point.  For the other domains, we need some more setup.

\begin{remark}
    The reader should be warned here that one has to deal with the case where all coordinates $\hx_U$ are contained in a collapsed point.  When this happens, there is not an obvious choice of collection of minimal domains to work with, and in order to determine coordinates back in the intervals $T_U$, one must use all of the hierarchical data at hand. 
\end{remark}

Suppose that $U \in \calU$ and $\hx_U \notin \hT^e_U$.  Let $C$ denote the cluster in $T_U$ so that $q_U(C) = \hx_U$.  The first step is to associate a marked point $l_V \in \{a,b\}$ to each $\nest_{\calU}$-minimal domain $V \nest U$ with $\delta^V_U \subset C$.  We then use these $l_V$ to hone the clusters of the larger $U \in \calU$ in which such $V$ are involved.

Suppose then that $U \in \calU$ is $\nest_{\calU}$-minimal.  Since $\hT_U$ has no internal cluster points, $\hat{x}_U$ is either a marked point---either $\ha_U$ or $\hb_U$---or a point in $\hT^e_U$.  If the former, let $l_U$ be the corresponding marked point---i.e. either $a,b$.  If the latter, set $l_U = \emptyset$.  We note that $\ha_U \neq \hb_U$ by construction when $U$ is $\nest_{\calU}$-minimal.

Now suppose that $U \in \calU$ is not $\nest_{\calU}$-minimal.  We have that either $\hat{x}_U \in \hT^e_U$, in which case we set $l_U = \emptyset$ and $x_U = q_U^{-1}(\hat{x}_U)$.  Otherwise, $\hat{x}_U$ is contained in a collapsed cluster $q_U(C)$ for the cluster $C \subset T_U$.  Let $\calB_{\min}(C)$ denote the set of $\nest$-minimal domains $V\nest U$ so that $\delta^V_U \subset C$.  Observe that if $\BM(C) = \emptyset$, then $\diam_{T_U}(C)$ is bounded in terms of $r_1,r_2$ and hence in terms of $\mathfrak S$.

We now explain the honing process for domains in $\BM(C)$, which will associate to $\hx_U$ a bounded diameter subset of the cluster $C$ in $T_U$ (Lemma \ref{lem:bdd int total}).

Let $V \in \BM(C)$.  Then we set 
$$C^V_U = \hull_C(C \cap (\calN_{r_2}(\delta^V_U) \cup l_V)),$$
which is a subinterval of $C$, see Figure \ref{fig:C^V_U}.  Finally, let $B_U(\hx) = \bigcap_{V \in \BM(C)} C^V_U$ be the total intersection of the $C^V_U$.

\begin{figure}
    \centering
    \includegraphics[width=.8\textwidth]{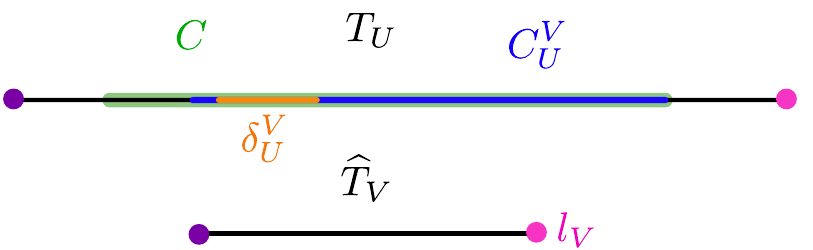}
    \caption{The cluster $C$ (green) in $T_U$ is identified with $\hx_U$ in $\hT_U$.  Since $V \in \BM(C)$, its shadow $\delta^V_U$ (orange) avoids all branches of $T_U$.  The marked points and leaves $l_V$ (pink) chosen by $\hx_V$ determine the half-subtree $C^V_U$ (blue) of $C$ chosen by $V$.} 
    \label{fig:C^V_U}
\end{figure}

In Lemma \ref{lem:bdd int pair} below, we prove that if $V, W \in \BM(C)$, then $C^V_U \cap C^W_U \neq \emptyset$.  Since each $C^V_U$ is a subinterval, it follows from the Helly property for trees that $B_U(x)$ is nonempty.  Finally, in Lemma \ref{lem:bdd int total} we prove that this intersection has diameter bounded in terms of $\mathfrak S$.

Thus we can define a tuple $\Omega(\hx) = (x_U)_{U \in \calU}$ as follows:
\begin{itemize}
    \item For $U \in \calU$ with $U \in E(\hx)$, we set $x_U = q_U^{-1}(\hx_U)$.
    \item For $U \in \calU$ with $U \notin E(\hx)$ and $U \nest_{\calU}$-minimal, we let $x_U$ be any point in $q_U^{-1}(\hx_U)$.
    \item Otherwise, we let $x_U \in B_U(\hx)$.
\end{itemize}

The following is immediate from the construction:

\begin{lemma}\label{lem:B_U in cluster}
For every $\hx \in \calQ$ and $U \in \calU$, we have $q_U(x_U) = \hx_U$.
\end{lemma}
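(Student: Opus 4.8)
The plan is simply to unwind the three-case definition of $\Omega$ given just above, using the basic structure of the collapse maps $q_U \colon T_U \to \hT_U$. The one fact I would isolate first is that each $q_U$ restricts to an isometry on the edge components and collapses each cluster component of $\bT^c_U$ to its corresponding collapsed point; hence, for $\hz \in \hT_U$, the fiber $q_U^{-1}(\hz)$ is a single point when $\hz \in \hT^e_U$, and is exactly the cluster component of $\bT^c_U$ collapsing onto $\hz$ when $\hz \in \hT^c_U$.

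With this in hand, the verification is a case check matching the definition of the coordinate $x_U$ of $\Omega(\hx)$. When $U \in E(\hx)$, i.e. $\hx_U \in \hT^e_U$, we have set $x_U = q_U^{-1}(\hx_U)$ by definition, so $q_U(x_U) = \hx_U$. When $U \notin E(\hx)$ is $\nest_{\calU}$-minimal, $\hx_U$ is a marked point and $x_U$ is chosen to be an arbitrary element of $q_U^{-1}(\hx_U)$, so again $q_U(x_U) = \hx_U$. In the remaining case, $\hx_U \notin \hT^e_U$ is a collapsed point; writing $C$ for the cluster in $T_U$ with $q_U(C) = \hx_U$, each set $C^V_U = \hull_C(C \cap (\calN_{r_2}(\delta^V_U) \cup l_V))$ is by construction a subinterval of $C$, so $B_U(\hx) = \bigcap_{V \in \BM(C)} C^V_U \subseteq C$ (with the convention $B_U(\hx) = C$ when $\BM(C) = \emptyset$). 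Since $x_U$ is chosen in $B_U(\hx) \subseteq C$ and $q_U$ collapses all of $C$ to the point $\hx_U$, we get $q_U(x_U) = \hx_U$, completing all cases.

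I do not expect a genuine obstacle here: the content is entirely in the preceding construction, and the only point one must not conflate with this statement is the well-definedness of $\Omega$ in the last case, namely that $B_U(\hx)$ is nonempty and of bounded diameter — but that is the subject of Lemmas \ref{lem:bdd int pair} and \ref{lem:bdd int total} together with the Helly property for trees, and is logically independent of the identity $q_U(x_U) = \hx_U$ asserted here.
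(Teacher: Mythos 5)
Your case check is correct and is exactly what the paper has in mind: the lemma is stated as "immediate from the construction," and your verification — the fiber over an edge point is a single point, and in the cluster case $B_U(\hx)=\bigcap_{V\in\BM(C)}C^V_U\subseteq C=q_U^{-1}(\hx_U)$ since each $C^V_U$ is a hull taken inside $C$ — is just that construction unwound. You are also right to note that nonemptiness and bounded diameter of $B_U(\hx)$ (Lemmas \ref{lem:bdd int pair} and \ref{lem:bdd int total}) are a separate matter from the identity $q_U(x_U)=\hx_U$.
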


In Proposition \ref{prop:Q-consistent} below, we prove that $(x_U)$ is a $\omega$-consistent tuple for $\omega = \omega(\mathfrak S)>0$.

\begin{remark}[Cubical flavor]
The process of cluster honing is one of the few places where one sees cubical-like features appearing in a non-explicitly cubical part of the construction.  In particular, it is crucial that the two endpoints $\ha_V, \hb_V$ for every $\nest_{\calU}$-minimal $V$ are distinct in $\hT_V$.  Hence $\nest$-minimal domains determine a choice of endpoint between $a,b$, like a wall choosing a half-space.

When $U$ is not $\nest_{\calU}$-minimal and $\hd^V_U = \hx_U$, then the coordinate $\hx_V$ either chooses neither endpoint of $\hT_V$ (if $\hx_V \in \hT^e_V$), or chooses one of $\ha_V,\hb_V$.  This choice then determines a sort of ``half-interval'' $C^V_U$ of $C$, where we can think of $\delta^V_U$ as functioning like a hyperplane here.  In Lemmas \ref{lem:bdd int pair} and \ref{lem:bdd int total} below, we see that the intersection over all such ``half-intervals'' $B_U = \bigcap_{V \in \BM(C)} C^V_U$ is nonempty and has bounded diameter, and this ultra-filter-like behavior is forced by consistency of the tuple $\hx$.
\end{remark}

\begin{remark}[A warning]
    The reader should be warned that one has to deal with the case of $\hx \in \calQ$ where $\hx_U$ is a cluster or endpoint for every coordinate $U$.  While, as in the previous remark, such points determine unambiguous coordinates in $T_U$ for all $\nest$-minimal domains $U \in \calU$, there is no way to leverage this data alone into choosing coordinates in $T_V$ for domains $V$ higher up the $\nest$-lattice.  One must utilize the full set of relations between all of the domains.
\end{remark}

\subsection{Bounding intersections}

In this subsection, we prove the supporting statements we need to define the map $\Omega:\calQ \to \prod_{U \in \calU} T_U$.  Again fix $\hx \in \calQ$.

The first says that the $C^V_U$ for $V \in \BM(C)$ have pairwise nonempty intersection:

\begin{lemma}\label{lem:bdd int pair}
If $V, W \in \BM(C)$, then $C^V_U \cap C^W_U \neq \emptyset$.  Moreover, if $d_{T_U}(\delta^V_U, \delta^W_U)>r_2$, then at least one of $\delta^V_U \subset C^W_U$ or $\delta^W_U \subset C^V_U$ holds.
\end{lemma}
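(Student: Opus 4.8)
The plan is to unwind the definition $C^V_U = \hull_C(C \cap (\calN_{r_2}(\delta^V_U) \cup l_V))$ and use $0$-consistency of $\hx$ to force the two half-intervals to overlap. First I would record the basic picture: $C \subset T_U$ is the cluster with $q_U(C) = \hx_U$, and for $V, W \in \BM(C)$ both $\delta^V_U$ and $\delta^W_U$ are subsets of diameter at most $R_C$ sitting inside $C$ (they are ``close to'' but not necessarily equal to the $r_1$-thickened pieces that got collapsed). The leaves $l_V, l_W \in \{a,b\}$ are the marked points selected by the honing process applied to $\hx_V, \hx_W$; crucially, since $V, W$ are involved in the cluster $C$, there is a partial order coming from the positions of $\delta^V_U, \delta^W_U$ along the interval $C$, so without loss of generality we may assume $\delta^V_U$ is on the $\ha_U$-side of $\delta^W_U$ (or they are within $r_2$ of each other, in which case $C^V_U \cap C^W_U \supset \calN_{r_2}(\delta^V_U)\cap \calN_{r_2}(\delta^W_U) \neq \emptyset$ and we are done; this disposes of the case $d_{T_U}(\delta^V_U,\delta^W_U)\le r_2$).

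So assume $d_{T_U}(\delta^V_U, \delta^W_U) > r_2$, with $\delta^V_U$ nearer to $a_U$ along $C$. The key step is to apply the bounded geodesic image property — item (5) of Lemma \ref{lem:interval control} — to the pair $W \nest U$: since $\delta^W_U$ is separated from $\delta^V_U$ in $T_U$, the whole subinterval of $C$ on the $a_U$-side of $\delta^W_U$ (which contains $\delta^V_U$, hence also $\calN_{r_2}(\delta^V_U)$ once $r_1$ is large enough) maps under $\delta^U_W$ to a set of diameter $< R_C$, and that set must coincide with $a_W$ up to error, by $0$-consistency applied to the tuples $\hPsi(a)$ and $\hx$ (arguing as in item (7) of Lemma \ref{lem:collapsed interval control}). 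Therefore $\hx_W$ picks out $\ha_W$, i.e. $l_W = a$. But $l_W = a$ forces $a_U$-ward orientation on the half-interval $C^W_U$: explicitly $C^W_U = \hull_C(C \cap (\calN_{r_2}(\delta^W_U) \cup \{a_U\}))$ contains the entire segment of $C$ from $a_U$ up to $\delta^W_U$, and in particular contains $\delta^V_U$. This simultaneously proves $\delta^V_U \subset C^W_U$ (the ``moreover'' clause) and gives $C^V_U \cap C^W_U \neq \emptyset$, since $C^V_U$ always contains $\calN_{r_2}(\delta^V_U) \ni \delta^V_U$.

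The main obstacle I anticipate is the bookkeeping in the ``$l_W = a$'' deduction: one must be careful that the honing rule for a non-$\nest_\calU$-minimal $W$ (or the minimality-case rule, if $W$ happens to be minimal) genuinely outputs the marked point dictated by the side of $C$ that $\delta^W_U$ faces, and this requires tracing $0$-consistency of $\hx$ through possibly several layers of the $\nest$-lattice below $W$ — precisely the subtlety flagged in the ``warning'' remark. In the minimal case it is immediate ($\hat x_W \in \{\ha_W, \hb_W\}$ by construction, and BGI pins down which one); in the non-minimal case one invokes the inductive structure of the honing definition together with item (7) of Lemma \ref{lem:collapsed interval control} to conclude that $\hat x_W$ lies in the collapsed cluster nearest $a_W$, forcing $l_W = a$. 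Once that is in hand the geometric conclusion is essentially a one-line statement about intervals, using the Helly/betweenness structure of $C$.
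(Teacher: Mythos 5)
Your reduction to the case $d_{T_U}(\delta^V_U,\delta^W_U)>r_2$ is fine, but the central deduction --- that BGI together with consistency between $W$ and $U$ forces $l_W=a$ --- is a non sequitur, and it is where the argument breaks. Since $W\in\BM(C)$ means $\delta^W_U\subset C$, the collapsed projection satisfies $\hd^W_U=q_U(C)=\hx_U$, so the nesting consistency condition for $W\nest U$ (``$\hx_U=\hd^W_U$ or $\hx_W\in\hd^U_W(\hx_U)$'') is already satisfied by its first disjunct and places \emph{no} constraint on the coordinate $\hx_W$. The BGI statement you invoke only tells you where points of $T_U$ on the $a_U$-side of $\delta^W_U$ would project into $T_W$; it does not tell you where $\hx_W$ is, because $\hx_W$ is an independent coordinate of the tuple --- indeed, recovering the position inside the cluster from the coordinates $\hx_V$, $V\in\BM(C)$, is exactly what the honing process is for. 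Concretely, the tuple could have $\hx_V=\hb_V=\hd^W_V$ and $\hx_W=\hb_W$: this is perfectly $0$-consistent (the transverse condition for the pair $V,W$ is satisfied through the $V$-coordinate), yet $l_W=b\neq a$, so the statement you are trying to prove is strictly stronger than the lemma and is false. Your proposal also never rules out the case $l_V=l_W=\emptyset$, in which both $C^V_U$ and $C^W_U$ would be small, far-apart neighborhoods.

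What the argument actually needs --- and what the paper does --- is the relation between $V$ and $W$ themselves. Because $\calN_{r_2}(\delta^V_U)\cap\calN_{r_2}(\delta^W_U)=\emptyset$ and both domains are $\nest$-minimal, item (3) of Lemma \ref{lem:interval control} forces $V\pitchfork W$. Then $0$-consistency of $\hx$ for this \emph{transverse} pair says $\hx_V=\hd^W_V$ or $\hx_W=\hd^V_W$, and the geometric arrangement ($\delta^V_U$ on the $a_U$-side of $\delta^W_U$) identifies $\hd^W_V=\hb_V$ and $\hd^V_W=\ha_W$ via Lemmas \ref{lem:interval control} and \ref{lem:collapsed interval control}. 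This yields the dichotomy ``$l_V=b$ or $l_W=a$'' --- which is precisely the ``at least one of'' in the moreover clause --- and either alternative gives $\delta^W_U\subset C^V_U$ or $\delta^V_U\subset C^W_U$, hence the nonempty intersection. So the fix is to replace your $W\nest U$ BGI step by the transversality of $V$ and $W$ and the transverse consistency inequality applied to the pair $(\hx_V,\hx_W)$.
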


\begin{proof}[Sketch of proof]

We provide a sketch of the proof, see \cite[Lemma 9.9]{Dur_infcube}.  We shall observe item (1) during the course of the proof of the sketch.

By definition of $C^V_U = \hull_C((\calN_{r_2}(\delta^V_U) \cup l_V)\cap C)$, we may assume that $\calN_{r_2}(\delta^V_U) \cap \calN_{r_2}(\delta^W_U) = \emptyset$, and hence that $V \pitchfork W$ by item (3) of Lemma \ref{lem:interval control}.  The idea now is that $\delta^V_U, \delta^W_U$ must occur separately along the cluster $C$.  Without loss of generality, we can assume that $\delta^V_U$ separates $a_U$ from $\delta^W_U$.  One can then use the consistency of $(\hx_U)$ and Lemma \ref{lem:interval control} and \ref{lem:collapsed interval control} to show that at least one of $l_V = b$ or $l_W = a$, which would force $C^V_U \cap C^W_U \neq \emptyset$, as required; see Figure \ref{fig:cluster_int}.

\begin{figure}
    \centering
    \includegraphics[width=.8\textwidth]{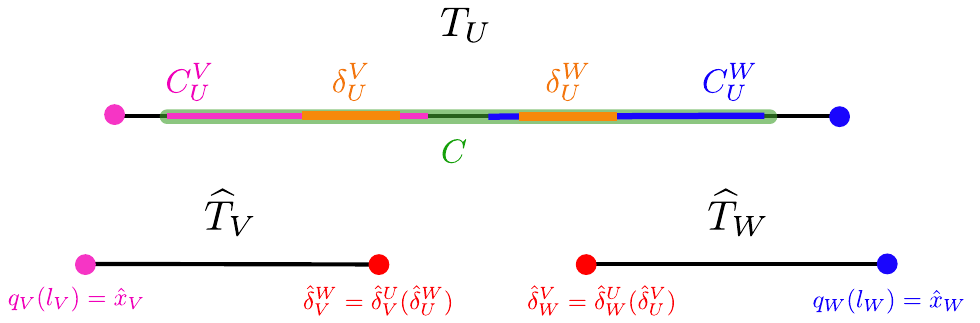}
    \caption{The impossible situation in which $V\pitchfork W \in \BM(C)$ choose non-intersecting sides $C^V_U$ (pink) and $C^W_U$ (blue) of $C$ (green).  This forces inconsistency of $\hx_V$ and $\hx_W$.} 
    \label{fig:cluster_int}
\end{figure}
\end{proof}

As discussed above, we can define $B_U = \bigcap_{V \in \BM(C)}C^V_U$ and it must be non-empty by the Helly property for trees as long as $\BM(C) \neq \emptyset$.

\begin{remark}\label{rem:Helly}
The Helly property for trees says that if one has a pairwise-intersecting collection of subtrees then the collection has nonempty total intersection.  In the general version of our current construction from \cite{Dur_infcube}, the trees and subtrees in question need not be finite diameter.  This is one place where the arguments in our current context are simpler than that considered in \cite{Dur_infcube}.
\end{remark}

The next lemma bounds the diameter of $B_U$:

\begin{lemma}\label{lem:bdd int total}
There exists $B_1 = B_1(\mathfrak S,r_2)>0$ such that for all $U \in \calU$, we have 
$\diam_{T_U}(B_U) < B_1$.
\end{lemma}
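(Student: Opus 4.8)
The plan is to exploit that $B_U=\bigcap_{V\in\BM(C)}C^V_U$ is an intersection of subintervals of the interval $C$, hence itself a subinterval $[p,q]\subseteq C$, together with the fact that each $C^V_U$ can overshoot its defining shadow $\delta^V_U$ by only a bounded amount on one side. Throughout, let $R'=R'(\mathfrak S,R)>0$ be the uniform bound on the diameters of the sets $\delta^V_U$ provided by Lemma~\ref{lem:interval control}, so that $\calN_{r_2}(\delta^V_U)\cap C$ is a subinterval of $C$ of diameter at most $2r_2+R'$. First I would record the shape of the pieces: for $V\in\BM(C)$, the set $C^V_U=\hull_C\big(C\cap(\calN_{r_2}(\delta^V_U)\cup l_V)\big)$ is a subinterval of $C$ which extends at most $r_2+R'$ past $\delta^V_U$ on the side of $\delta^V_U$ \emph{opposite} to the endpoint of $C$ named by $l_V$. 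When $l_V=\emptyset$ one even has $C^V_U\subseteq\calN_{r_2}(\delta^V_U)$, so $\diam_{T_U}(C^V_U)\leq 2r_2+R'$, and since $B_U\subseteq C^V_U$ the lemma follows immediately with $B_1\geq 2r_2+R'$; the degenerate case $\BM(C)=\emptyset$ is handled by the observation made when honing is defined, that then $\diam_{T_U}(C)$ is bounded in terms of $r_1,r_2$ while $B_U\subseteq C$. So I may assume $\BM(C)\neq\emptyset$ and $l_V\in\{a,b\}$ for every $V\in\BM(C)$.

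Next I would invoke the density of minimal shadows in clusters, exactly as in the proof of Proposition~\ref{prop:lower bound}: there is $D_0=D_0(\mathfrak S,r_1,r_2)>0$ such that $\{\delta^V_U : V\in\BM(C)\}$ is $D_0$-dense in the set of points of $C$ at distance greater than $2r_1$ from $\partial C$. This rests on three inputs: that by Definition~\ref{defn:thickening} the thickened cluster $C$ is a union of $r_1$-neighborhoods of shadows and marked points joined by geodesics of length at most $r_2$; that the marked points $a_U,b_U$ lie at the ends of $T_U$, hence at the ends of $C$, so they contribute only near $\partial C$; and that every shadow $\delta^V_U$ with $V\nest U$, $V\in\calU$, lies within a uniform ($\mathfrak S$-dependent) distance of $\delta^{V'}_U$ for some $\nest_{\calU}$-minimal $V'\nest V$, which is a routine consequence of the consistency inequalities.

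With these in hand, suppose towards a contradiction that $\diam_{T_U}(B_U)>B_1$ for a constant $B_1$ to be fixed at the end. Write $B_U=[p,q]$ and let $m$ be its midpoint, so $d_{T_U}(m,p)=d_{T_U}(m,q)>B_1/2$. Since $[p,q]\subseteq C$ and both $p$ and $q$ separate $m$ from $\partial C$, we have $d_{T_U}(m,\partial C)>B_1/2$, which exceeds $2r_1$ once $B_1>4r_1$; hence by the density statement there is $V\in\BM(C)$ with $d_{T_U}(m,\delta^V_U)<D_0$. For $B_1$ large this forces $\delta^V_U$ to lie strictly between $p$ and $q$, at distance at least $B_1/2-D_0-R'$ from each of them. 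Let $e\in\{p,q\}$ be the endpoint lying on the side of $\delta^V_U$ opposite to the endpoint of $C$ named by $l_V$. Since $e\in B_U\subseteq C^V_U$ and $C^V_U$ overshoots $\delta^V_U$ by at most $r_2+R'$ on that side, we get $d_{T_U}(e,\delta^V_U)\leq r_2+R'$, whence $B_1/2-D_0-R'\leq r_2+R'$, i.e.\ $B_1\leq 2D_0+2r_2+4R'$. Taking $B_1:=\max\{\,2D_0+2r_2+4R',\ 2r_2+R',\ 4r_1\,\}+1$ contradicts this, completing the proof.

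The step I expect to be the main obstacle is the density claim of the second paragraph: making precise the sense in which $\BM(C)$-shadows coarsely fill a cluster, and in particular recording the comparison $d_{T_U}(\delta^{V'}_U,\delta^V_U)=O_{\mathfrak S}(1)$ for $V'\nest V\nest U$. Everything after that is elementary one-dimensional geometry.
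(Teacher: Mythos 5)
Your argument is correct, and it reaches the bound by a slightly different mechanism than the paper. The paper's proof uses \emph{two} domains $V,W\in\BM(C)$ whose shadows lie in $B_U$ at moderate separation $r'<d_{T_U}(\delta^V_U,\delta^W_U)<2r'$, feeds them into the ``moreover'' clause of Lemma \ref{lem:bdd int pair} (both $\delta^V_U\subset C^W_U$ and $\delta^W_U\subset C^V_U$ must hold since both shadows lie in $B_U$), and then bounds $C^V_U\cap C^W_U$ by the hull of the two $r_2$-neighborhoods; you instead use a \emph{single} $V\in\BM(C)$ with $\delta^V_U$ near the midpoint of $B_U$ and exploit the one-sided shape of the half-interval $C^V_U$ (it overshoots $\delta^V_U$ by at most $r_2+R'$ on the side away from $l_V$), never invoking Lemma \ref{lem:bdd int pair} at all. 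Both proofs rest on the same underlying input, the coarse density of $\BM(C)$-shadows in the cluster $C$, which the paper simply asserts (``the $\delta$-sets for domains in $\BM(C)$ coarsely cover $C$'') and which you sketch via Lemma \ref{lem:interval control}(3): every $\delta^V_U$ with $V\nest U$ is uniformly close to $\delta^{V'}_U$ for a $\nest_{\calU}$-minimal $V'\nest V$ (do note that $\BM(C)$-membership of $V'$ additionally needs $\delta^{V'}_U\subset C$, which is where $r_1$ being large relative to the $\mathfrak S$-constant enters). Your route is arguably more elementary and makes the role of the labels $l_V$ explicit, at the cost of slightly more constant bookkeeping; the paper's route piggybacks on the pairwise lemma it has already proved. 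Two minor caveats: your phrase ``the endpoint of $C$ named by $l_V$'' presumes the marked point lies at an end of $C$ (true in the basic construction of this section, but Definition \ref{defn:interval axioms} does not force marked points to be endpoints of $T_U$); the fix is to choose $e$ on the side of $\delta^V_U$ not containing the marked point $l_V$, and to absorb the bounded region around an interior marked point into $D_0$ in the density claim. Neither affects the conclusion, and your constant $B_1$ depends on $r_1,R$ as well as $r_2$, which matches the paper's actual (loose) bookkeeping since $r_1,r_2$ are themselves functions of $R$ and $\mathfrak S$.
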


\begin{proof}

If $B_U$ is too wide (relative to $r_2$), then we can find $V, W \in \BM(C)$ with $\delta^V_U, \delta^W_U \subset B_U$ and $r'< d_{T_U}(\delta^V_U, \delta^W_U) < 2r'$ for $r' = r'(r_2)>0$, since the $\delta$-sets for domains in $\BM(C)$ coarsely cover $C$ with coarseness depending only on $r_2$.  By Lemma \ref{lem:bdd int pair}, we must have at least one of $\delta^V_U \subset C^W_U$ or $\delta^W_U \subset C^V_U$, or possibly both.  In fact, both must hold, for if $\delta^V_U \notin C^W_U$, then $\delta^V_U \notin B_U$, which is impossible.  But then $C^V_U \cap C^W_U \subset \hull_C(\calN_{r_2}(\delta^V_U) \cup \calN_{r_2}(\delta^W_U))$, which has diameter bounded by $2r' + 2r_2$ since $d_{T_U}(\delta^V_U, \delta^W_U) < 2r'$.  Since $B_U \subset C^V_U \cap C^W_U$, this completes the proof.
\end{proof}

We are now in place to state the consistency result:

\begin{proposition}\label{prop:Q-consistent}
There exist $\alpha_{\omega}=\alpha_{\omega}(\mathfrak S,r_1,r_2,R)>0$ and a map $\Omega: \calQ \to \calY$ so that for any $\hx \in \calQ$, the tuple $\Omega(\hx)$ is $\alpha_{\omega}$-consistent.  Moreover, if $\Omega(\hx) = (x_U)$, then $q_U(x_U) = \hx_U$ for each $U \in \calU$.
\end{proposition}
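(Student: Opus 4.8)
The map $\Omega$ itself has already been constructed coordinate-wise in Subsection~\ref{subsec:hO defined}: on $E(\hx)$ it inverts the collapse, on the $\nest_{\calU}$-minimal domains outside $E(\hx)$ it is an arbitrary choice of $q_U$-preimage of $\hx_U$, and on the remaining domains it is an arbitrary point of $B_U(\hx)=\bigcap_{V\in\BM(C)}C^V_U$, where $C=q_U^{-1}(\hx_U)$. Two preliminary points are that this is well defined---$B_U(\hx)$ is nonempty by Lemma~\ref{lem:bdd int pair} together with the Helly property for trees (Remark~\ref{rem:Helly}), and has diameter at most $B_1=B_1(\mathfrak S,r_2)$ by Lemma~\ref{lem:bdd int total}---and that $q_U(x_U)=\hx_U$ for every $U$, which is exactly Lemma~\ref{lem:B_U in cluster} and supplies the ``moreover'' clause. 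So the real content is the $\alpha_\omega$-consistency of $(x_U):=\Omega(\hx)$. The plan is to fix $\hx=(\hx_U)\in\calQ$ and check the interval form of the two consistency inequalities of Definition~\ref{defn:consistency} for every pair $V,W\in\calU$, with $\alpha_\omega=\alpha_\omega(\mathfrak S,R,r_1,r_2)$ obtained by combining $R$, the interval-control bounds of Lemma~\ref{lem:interval control}, the thickening radii $r_1,r_2$, the bound $B_1$, and the interval-consistency constant $\alpha_0$ of Lemma~\ref{lem:interval consistency} applied to the marked-point tuples $\Psi(a),\Psi(b)$.

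For a transverse pair $V\pitchfork W$ I would use $0$-consistency of $\hx$ (Definition~\ref{defn:Q defined}) to assume, after relabeling, that $\hx_V=\hd^W_V$, which is a marked point of $\hT_V$ by Lemma~\ref{lem:collapsed interval control}(2); then $x_V$ and $\delta^W_V$ lie in a common cluster $C$, with $\delta^W_V$ within $R$ of an endpoint of $C$ by Lemma~\ref{lem:interval control}(1), and the target inequality becomes $d_{T_V}(x_V,\delta^W_V)\le\alpha_\omega$. For a nesting pair $V\nest W$ I would assume $\hx_W\ne\hd^V_W$; then $\hx_W$ lies in one of the components of $\hT_W-\hd^V_W$, so by the collapsed bounded geodesic image property (Lemma~\ref{lem:collapsed interval control}(7)) the $0$-consistency of $\hx$ forces $\hx_V=\hf_V$ for some $f\in\{a,b\}$, and lifting through $q_W$ and applying the bounded geodesic image property Lemma~\ref{lem:interval control}(5) together with consistency of $\Psi(f)$ places $\delta^W_V(x_W)$ within a controlled distance of $f_V$; the target inequality then reduces to $d_{T_V}(x_V,f_V)\le\alpha_\omega$. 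In both cases, if $V$ (resp.\ $W$) is $\nest_{\calU}$-minimal or the relevant cluster has empty $\BM$, the cluster itself has diameter controlled by $r_1,r_2$ and we are done; otherwise $x_V$ has been placed in $B_V(\hx)=\bigcap_{V'\in\BM(C)}C^{V'}_V$, and we are reduced to the honing step.

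The honing step is the one genuine obstacle: one must show that the honed point cannot sit in the cluster $C$ in a way that is incompatible with the other coordinate of the pair being examined. I would argue by contradiction. If $x_V$ were too far along $C$ from the endpoint it should be near, then---since $\{\delta^{V'}_V:V'\in\BM(C)\}$ is $r_2$-coarsely dense in $C$ (the fact exploited in the proof of Lemma~\ref{lem:bdd int total})---there is a $\nest_{\calU}$-minimal $V'\in\BM(C)$ whose shadow $\delta^{V'}_V$ separates that endpoint from $x_V$ along $C$; membership $x_V\in C^{V'}_V=\hull_C(C\cap(\calN_{r_2}(\delta^{V'}_V)\cup l_{V'}))$ then forces the leaf $l_{V'}$ to be the far endpoint of $C$, i.e.\ pins $\hx_{V'}$ at a definite endpoint of $\hT_{V'}$. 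One then runs this choice back through the $0$-consistency of $\hx$ for the pair relating $V'$ to the other domain $W$---using items (3) and (4) of Lemma~\ref{lem:interval control} (the interval incarnation of the fact that the boundaries of non-transverse subsurfaces are disjoint) to align $\delta^{V'}_W$ with $\delta^V_W$, together with consistency of $\Psi(a),\Psi(b)$ and the bounded geodesic image property---and derives a contradiction with the assumption on $\hx_W$ made at the start of that case. Running this through all the possible relations between $V'$ and $W$ (transverse, nested, orthogonal) is a nontrivial case analysis; at the present level of generality it is precisely the argument of \cite[Sections~9--11]{Dur_infcube}, which I would invoke rather than reproduce.
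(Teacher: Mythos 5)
Your proposal is correct and takes essentially the same route as the paper: the paper gives no detailed argument for this proposition, noting only that it is an exercise in the definition of $\Omega$ and the cluster construction and deferring the case analysis to \cite[Subsection 9.5]{Dur_infcube}, which is exactly what you do after your (sound) sketch of the transverse and nested cases and the honing contradiction. The supporting facts you invoke for well-definedness and the ``moreover'' clause (Lemmas \ref{lem:bdd int pair}, \ref{lem:bdd int total}, and \ref{lem:B_U in cluster}) are the same ones the paper relies on.
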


The proof of this proposition is a somewhat elaborate exercise in the definition of $\Omega$ and the construction of the clusters, in which one utilizes many standard HHS-type arguments.  Unlike other proofs in this paper, the proof of this fact in our current setting is not substantially different from proof in the general case in \cite[Subsection 9.5]{Dur_infcube}, beyond the need for working with $\nest$-minimal \emph{bipartite domains} as defined in \cite[Definition 9.2]{Dur_infcube} instead of all $\nest$-minimal domains.  Thus instead of repeating it here, we leave it to the interested reader to check the details.

\subsection{Tree trimming}\label{subsec:tree trimming}

In this subsection, we introduce a useful tool for modifying collapsed interval systems and their associated $0$-consistent sets.  These ``tree-trimming'' techniques will allow us to perform controlled collapses of subintervals across the various collapsed intervals $\hT_U$ while coarsely preserving the associated $\calQ$-distance.  We think of them as a more flexible version of the distance formula, as they allow us to modify distance terms instead of just throwing them out by increasing a threshold.  For our purposes, they will be used in two different ways: We will need them in the proof of coarse surjectivity of $\hPsi:H \to \calQ$, and they will also be useful later in Subsection \ref{subsec:tt cube} to modify the cubical structure on $\calQ$.

The main idea is simple: Given a family of subintervals $\calB$ across of the family of intervals $\hT_U$, we want to collapse these subintervals $\Delta_U:\hT_U \to \hT_{U,0}$ coordinate-wise and then combine them into a global collapsing map $\Delta:\calQ \to \calQ'$, where $\calQ'$ is the $0$-consistent set of $\prod_{U \in \calU} \hT_{U,0}$, defined the same way as in Definition \ref{defn:Q defined}.  Formally, we use the following definition:

\begin{definition}[Admissible trimming setup]\label{defn:trimming setup}
Given $B>0$, a $B$-\emph{admissible trimming setup} is a collection of \emph{trimming intervals} $\calB$ in the various $\hT_{U,e}$ satisfying the following:
\begin{itemize}
\item For each component  $E \in \pi_0(\hT_{U,e})$, there are at most $B$-many components $D \in \calB$ with $D \subset E$.
\item For each component $D \in \calB$ satisfies $|D|<B$.
\end{itemize}
\end{definition}

Given a $B$-admissible trimming setup $\calB$ on $\{\hT_U\}_{U \in\calU}$, for each $U \in \calU$ let $\Delta_U:\hT_U \to \hT_{U,0}$ denote the map which collapses each trimming interval in $\hT_U$ to a point.  These allow us to induce projection data on the $\hT_{U,0}$ as follows: For $V \nest U$ and $V \pitchfork U$, we set $(\hd^V_U)_0 = \Delta_U(\hd^V_U)$, and if $U \nest V$, we set $(\hd^V_U)_0 = \Delta_U \circ \hd^V_U \circ \Delta^{-1}_V$.  We let $\Delta: \prod_{\calU} \hT_U \to \prod_{\calU} \hT_{U,0}$ denote the product map.

\begin{theorem}[Tree Trimming]\label{thm:tree trimming}
There exists $B_{tt} = B_{tt}(\mathfrak S, K)>0$ so that for any $B < B_{tt}$, there exists $L =L(\mathfrak S, B)>0$ so that the following hold:

\begin{enumerate}
\item We have $\Delta(\calQ) \subset \calQ'$.
\item There is a map $\Xi:\calQ' \to \calQ$ so that $\Delta \circ \Xi = id_{\calQ'}$.  In particular, $\Delta(\calQ) = \calQ'$.
\item The map $\Delta:\calQ \to \calQ'$ is an $(L,L)$-quasi-isometry.
\end{enumerate}
\end{theorem}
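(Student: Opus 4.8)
The plan is to establish the three items in order; items (1) and (2) are essentially formal consequences of the combinatorics of $0$-consistency, while item (3), the quasi-isometry estimate, is where the real work lies. For item (1) I would first check that the collapsed projection data is well defined. For $U\pitchfork V$ or $U\nest V$ the relevant point $\hd^V_U$ (resp. $\hd^U_V$) is a marked point or a collapsed point by items (2)--(3) of Lemma \ref{lem:collapsed interval control}, so its $\Delta$-image is a single point; and since a trimming interval lies in an edge component of $\hT_V$, it misses the collapsed point $\hd^U_V$ and hence lies in a single component of $\hT_V-\hd^U_V$, so the bounded geodesic image statement (item (7) of that lemma) makes $\hd^V_U$ constant on it. Thus $\hd^V_U\circ\Delta_V^{-1}$ descends to an honest map $\hT_{V,0}\to\hT_{U,0}$, and $\Delta(\calQ)\subseteq\calQ'$ follows by applying $\Delta$ coordinatewise to each defining alternative of Definition \ref{defn:Q defined}, using that $\Delta_U(\hd^V_U(\hx_V))\in(\hd^V_U)_0(\Delta_V(\hx_V))$.

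For item (2) I would build the section $\Xi\colon\calQ'\to\calQ$ coordinatewise: where $\Delta_U^{-1}(\hx'_U)$ is a single point, take it; where $\hx'_U$ is the collapse of a trimming interval $D$, take the endpoint of $D$ nearest $\ha_U$. Then $\Delta\circ\Xi=\id_{\calQ'}$ is immediate, and the content is that $\Xi(\hx')\in\calQ$. The structural observation driving this is that an edge-part coordinate $\hx_U$ differs from $\ha_U$, $\hb_U$ and from every $\hd^V_U$ (marked and relative-projection points all being marked or collapsed points); hence $0$-consistency forces $\hx_W=\hd^U_W$ for every $W$ with $U\nest W$, forces $\hx_V=\hd^U_V$ for every $V\pitchfork U$, and for $V\nest U$ leaves only a constraint on $\hx_V$ through $\hd^U_V$, which by item (7) is constant on the edge component containing $D$ and so independent of our choice in $D$. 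Combined with item (9) of Lemma \ref{lem:collapsed interval control}, which says the collapsed-trimming coordinates lie in edge parts and hence form a pairwise orthogonal, mutually unconstrained family (so in particular no $V\nest U$ can itself be a collapsed-trimming coordinate, forcing its preimage to be the single point the constraint demands), this shows $\Xi(\hx')$ is $0$-consistent. In particular $\Delta(\calQ)=\calQ'$.

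The heart is item (3). Using the $\ell^1$ cube-complex metrics, $d_\calQ(\hx,\hy)=\sum_{U\in\calU}d_{\hT_U}(\hx_U,\hy_U)$ and similarly for $\calQ'$, so $d_{\calQ'}(\Delta\hx,\Delta\hy)\le d_\calQ(\hx,\hy)$ since each $\Delta_U$ is $1$-Lipschitz; the work is the reverse bound. I would estimate the deficit $\sum_U\bigl(d_{\hT_U}(\hx_U,\hy_U)-d_{\hT_{U,0}}(\Delta_U\hx_U,\Delta_U\hy_U)\bigr)$ domain by domain: the $U$-term is the total length of trimming-interval pieces inside the geodesic $[\hx_U,\hy_U]\subset\hT_U$, hence at most $B^2(2+m_U)$, where $m_U$ counts the collapsed points of $\hT_U$ strictly inside $[\hx_U,\hy_U]$ (there are at most $m_U+2$ edge components met, each carrying at most $B$ trimming intervals of length $<B$). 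The crucial input is that distinct collapsed points of $\hT_U$ are more than $r_2$ apart --- this is exactly the second step of the thickening, Definition \ref{defn:thickening} --- and remain more than $r_2/2$ apart after deleting trimming intervals, provided $B<B_{tt}$ with $B_{tt}^2<r_2/2$; since the $m_U$ collapsed points persist in $[\Delta_U\hx_U,\Delta_U\hy_U]$, this gives $m_U\le 1+2\,d_{\hT_{U,0}}(\Delta_U\hx_U,\Delta_U\hy_U)/r_2$. Summing: the domains with $d_{\hT_{U,0}}(\Delta_U\hx_U,\Delta_U\hy_U)\ge1$, of which there are at most $d_{\calQ'}(\Delta\hx,\Delta\hy)$, contribute at most a bounded multiple of $d_{\calQ'}(\Delta\hx,\Delta\hy)$; the domains with $\Delta_U\hx_U=\Delta_U\hy_U$ but $\hx_U\ne\hy_U$ have both coordinates in one trimming interval, hence lie in $E(\hx)$, a pairwise orthogonal set of size at most $\xi(\mathfrak S)$, each contributing less than $B$; and the rest contribute $0$. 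Hence $d_\calQ(\hx,\hy)\le L\,d_{\calQ'}(\Delta\hx,\Delta\hy)+L$ for suitable $L=L(\mathfrak S,B)$, which with item (2) makes $\Delta$ an $(L,L)$-quasi-isometry.

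The step I expect to be the main obstacle is item (3): since the total number of collapsed walls grows with $d_\calX(a,b)$, there is no bounded additive error to hope for, and the estimate only closes because the $r_2$-separation built into the thickening keeps each $\hT_{U,0}$ from shrinking by more than a bounded multiplicative factor, with the complexity bound on $E(\hx)$ absorbing the finitely many coordinates that collapse entirely.
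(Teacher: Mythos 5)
Your item (1) and, with one caveat, your item (2) are fine \emph{under the literal reading of Definition \ref{defn:trimming setup}}, in which every trimming interval sits inside an edge component and therefore misses every marked point and every $\hd^V_U$ (all of which are collapsed points by items (2)--(3) of Lemma \ref{lem:collapsed interval control}). With that reading, each marked/cluster point has a singleton $\Delta_U$-preimage, and your consistency check for $\Xi$ does go through: in each alternative of Definition \ref{defn:Q defined} the relevant coordinate of $\hy$ is the image of a marked or cluster point, so the corresponding coordinate of $\Xi(\hy)$ is forced, and item (7) handles the remaining nested case. But this assumption trivializes exactly what the paper calls ``the most involved part'': the paper's proof of item (2) allows $C_U=\Delta_U^{-1}(\hy_U)$ to contain marked points and several cluster points $\hd^V_U$ (it explicitly treats the case where $C_U$ contains one or even both endpoints of $\hT_U$), and that generality is what the theorem is actually used for --- in Proposition \ref{prop:coarse surjectivity} the trimmed segments are geodesics $[\hx_V,\hy_V]$, which can cross cluster points and marked points. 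In that situation ``take the endpoint of $D$ nearest $\ha_U$'' is not consistent in general, and one needs the appendix machinery: coherent choices among marked points via the ``choose one'' claims for transverse and nested families, the Helly-type intersection of the subintervals $C^V_U$, and an induction on nesting level. Separately, your appeal to item (9) of Lemma \ref{lem:collapsed interval control} is circular as written: that item is a statement about tuples already known to lie in $\calQ$, which is what you are trying to prove; the fact you need (that the nested partner of an edge-part coordinate is forced to be a marked point with singleton preimage) should be extracted directly from $0$-consistency of $\hy$ together with item (7), as above.

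Your item (3) is a genuinely different route from the paper's. The paper charges the length lost in a non-minimal domain $U$ to $\nest_{\calU}$-minimal domains sitting under the cluster points that $[\hx_U,\hy_U]$ crosses, using the Covering Lemma \ref{lem:covering} for bounded multiplicity and the largeness constant $K$ to ensure those minimal domains survive trimming (this is where $B_{tt}=B_{tt}(\mathfrak S,K)$, roughly $B_{tt}^2\lesssim K$, comes from). You instead stay inside each $\hT_U$ and use the $r_2$-separation of distinct collapsed points (built into the thickening) to bound the number of edge components crossed by $1+2d_{\hT_{U,0}}(\Delta_U\hx_U,\Delta_U\hy_U)/r_2$; that counting is correct and avoids both the Covering Lemma and any use of $K$. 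Two caveats. First, your trichotomy omits the domains with $0<d_{\hT_{U,0}}(\Delta_U\hx_U,\Delta_U\hy_U)<1$; nothing forces integrality at this stage (simpliciality is arranged afterwards, in Corollary \ref{cor:simplicial version}, using this very theorem). The fix is item (8) of Lemma \ref{lem:collapsed interval control}: for all but boundedly many domains both coordinates are the endpoints of $\hT_U$, in which case $d_{\hT_{U,0}}\geq r_2-B^2>1$, so the leftover domains are boundedly many and are absorbed into the additive constant. Second, your threshold $B_{tt}^2<r_2/2$ caps the trimming allowance by a quantity independent of $K$; this satisfies the statement literally but is strictly weaker than what the paper's argument delivers, and it would not support the application in Proposition \ref{prop:coarse surjectivity}, where $B$ is of order $2\alpha_{\omega}+3R$ and the room is made by taking $K$ large, not $r_2$.
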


While the statement is quite believable in light of the Distance Formula \ref{thm:DF}, the proof of this theorem is quite involved.  While these ideas were first developed in greater generality in \cite[Section 10]{Dur_infcube}---where it also works when the underlying data involves more than two points and also hierarchy rays---the proof these is only slightly more involved than that required in our setting.  Moreover, many of the technicalities from dealing with rays are dealt with in separate arguments, so the interested reader is encouraged to see a full proof there.

Instead, we will present a short sketch below, which will hopefully provide enough intuition to make the statement believable.  We have included a proof of the stated version of Theorem \ref{thm:tree trimming} in Appendix \ref{app:TT}.

\begin{proof}[Sketch of proof]
The fact that $\Delta(\calQ) \subset \calQ'$, namely that $\Delta$ preserves $0$-consistency, is immediate from the fact that the $\Delta_U$ just collapse subintervals, so coordinates which were collapsed projections remain collapsed projections, showing item (1).

Item (2) is the most involved part.  For each $U \in \calU$, let $C_U = \Delta^{-1}_U(\hy_U)$.  The goal is to pick, for each $U \in \calU$, a point $\hx_U \in C_U$ and then prove $0$-consistency of the tuple $\hx = (\hx_U)$.  Note that $\Delta(\hx) =\hy$ for such a tuple, by construction.

The proof is similar in spirit to the cluster-honing procedure from Subsection \ref{subsec:hO defined}.  The main difference is that the collapsing subintervals are not automatically associated to a collection of domains, and this is where the work lies.  The argument is inductive on level in the $\nest$-lattice and requires a careful analysis of the construction and number of hierarchal techniques.

Finally, for item (3), since $\Delta|_{\calQ}$ is surjective and distance non-increasing, it suffices to prove that $d_{\calQ} \prec d_{\calQ'}$, and for this it suffices to bound the distances in each $U \in \calU$.

Given $\hx, \hy \in \calQ$ with $\hx' = \Delta(\hx)$ and $\hy' = \Delta(\hy)$, the concern is that somehow one will have arbitrarily shrunk distances during the trimming process.  However, each edge component of $\hT_U$ through which the geodesic $[\hx_U,\hy_U]$ passes can only contain boundedly-many subintervals collapsed.  However, all but at most two of such edge components have endpoints which are cluster points labeled by $\hd^V_U$ for $V$ $\nest$-minimal.  Since for such $V$, the map $\Delta_V:\hT_V \to \hT'_V$ collapses subintervals of total diameter at most $B^2$, we can use these $\nest$-minimal domains to account for all information lost up the hierarchy.

Making this precise takes care and utilizes Proposition \ref{prop:bounding containers}.  The interested reader can see \cite[Section 10]{Dur_infcube}. 
\end{proof}

\subsection{Converting to simplicial intervals}

One application of the Tree Trimming Theorem \ref{thm:tree trimming} is that we can replace the family of collapsed intervals $\hT_U$ with a family of simplicial intervals, where the cluster and marked points lie at vertices.  The idea here is that one can trim each edge component $E \subset \hT^e_U$ to have a nonnegative integer length simply by collapsing some small segment of $E$ of length less than $1$.  The family of such segments is an $1$-admissible trimming setup, and this provides us a family of intervals $\{\hT'_U\}_{U \in \calU}$ where $\Delta_U:\hT_U\to \hT'_U$ collapses these small segments, and $\Delta:\calQ \to \calQ'$ combines these to a map between their $0$-consistent sets.

\begin{corollary}\label{cor:simplicial version}
Suppose that $\{\hT_U\}_{U \in \calU}$ are the collapsed intervals associated to an $(r_1,r_2)$-thickening of an $R$-interval system for $a,b \in \calX$.  There exists $L_{simp} = L_{simp}(R, \calX)>0$ so that $\Delta:\calQ \to \calQ'$ is an $(L_{simp},L_{simp})$-quasi-isometry.

\begin{itemize}
\item In particular, up to a uniform quasi-isometry of $\calQ$, we can assume that the collapsed intervals $\hT_U$ are all simplicial intervals with cluster and marked points at vertices. 
\end{itemize}
\end{corollary}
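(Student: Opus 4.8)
The plan is to realize $\Delta$ as the Tree Trimming map for an especially trivial trimming setup: the one that shaves off the fractional part of each edge length. Recall that every collapsed interval decomposes as $\hT_U = \hT^e_U \cup \hT^c_U$, where $\hT^c_U$ is a finite set of points---the collapsed cluster points together with the marked points $\ha_U,\hb_U$---and each component $E$ of $\hT^e_U$ is an isometric copy of a subinterval of $T_U$, so an interval of some real length $|E|$ whose two endpoints lie in $\hT^c_U$. By construction of the $(r_1,r_2)$-thickening, distinct cluster components of the thickening lie more than $r_2$ apart in $T_U$ (any two step-(1) subintervals within $r_2$ of one another are merged in step (2) of Definition \ref{defn:thickening}), and collapsing clusters does not change edge lengths; hence $|E| > r_2 \ge 1$ and $\lfloor |E|\rfloor \ge 1$ for every edge component. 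Finally, $\calU = \Rel_K(a,b)$ is finite by Corollary \ref{cor:rel sets are finite}, so there are only finitely many edge components to deal with.

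Now for the construction: for each $U \in \calU$ and each $E \in \pi_0(\hT^e_U)$ with $|E|\notin\ZZ$, choose a subinterval $D_E \subsetneq E$ of length $|E|-\lfloor|E|\rfloor \in (0,1)$ adjacent to one endpoint of $E$, and include no trimming interval in $E$ when $|E|$ is already an integer. Put $\calB = \{D_E\}$. Then each edge component contains at most one member of $\calB$, and every member of $\calB$ has length strictly below $1$, so $\calB$ is a $1$-admissible trimming setup in the sense of Definition \ref{defn:trimming setup}. I would then apply the Tree Trimming Theorem \ref{thm:tree trimming} with $B = 1$; this is legitimate provided $1 < B_{tt}(\mathfrak S, K)$, which we may assume since $K$ is free to be chosen large relative to all other constants throughout this section (cf.\ the remark following Lemma \ref{lem:collapsed interval control}). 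The theorem then furnishes a constant $L_{simp} := L(\mathfrak S,1)$, depending only on $\mathfrak S$ (in particular only on $R$ and $\calX$), such that the induced collapse map $\Delta : \calQ \to \calQ'$ onto the $0$-consistent set $\calQ'$ of $\prod_{U\in\calU}\hT_{U,0}$ is a surjective $(L_{simp},L_{simp})$-quasi-isometry.

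It remains to check that the trimmed intervals have the right combinatorial form. Writing $\hT'_U := \hT_{U,0} = \Delta_U(\hT_U)$, by construction $\Delta_U$ sends each edge component $E$ of $\hT^e_U$ onto a segment of positive integer length $\lfloor|E|\rfloor$, and it sends the cluster and marked points of $\hT_U$ onto the points of $\hT^c_{U,0}$; since the latter are precisely the endpoints of the integer-length edges of $\hT'_U$, they sit at vertices of the evident simplicial structure on $\hT'_U$. Thus $\{\hT'_U\}_{U\in\calU}$ is a family of simplicial intervals with all cluster and marked points at vertices, and $\Delta:\calQ\to\calQ'$ is the claimed uniform quasi-isometry. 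The only step requiring any care is verifying the admissibility bound $B < B_{tt}(\mathfrak S, K)$ (everything else being routine bookkeeping about the trimming construction), and, as indicated, this is handled by choosing the largeness constant $K$ big enough at the outset.
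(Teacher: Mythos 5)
Your proposal is correct and is essentially the paper's own argument: the paper likewise collapses, in each edge component of $\hT^e_U$, a sub-segment of length less than $1$ to make the edge lengths integers, observes that this collection is a $1$-admissible trimming setup, and invokes the Tree Trimming Theorem \ref{thm:tree trimming} to get the uniform quasi-isometry $\Delta:\calQ\to\calQ'$ onto the $0$-consistent set of the resulting simplicial intervals. Your extra care about $1<B_{tt}(\mathfrak S,K)$ and about edge lengths exceeding $r_2$ is consistent with how the constants are handled elsewhere in the paper and does not change the argument.
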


See \cite[Subsection 10.1]{Dur_infcube} for more discussion.  We note that the above corollary will not be used in the proof of the Main Theorem \ref{thmi:main}, because we will carefully construct stable intervals to be simplicial.  However, it is necessary for taking general intervals in each hyperbolic spaces and performing the above constructions to form a hierarchical family of intervals (see Definition \ref{defn:HFI}, which is necessary for our proof of the distance formula below (Corollary \ref{cor:DF lower bound}).

\subsection{From $\Omega:\calQ \to \prod_{U \in \calU} T_U$ to $\hO:\calQ \to H$} \label{subsec:upgrade to HHS}

We return to our main task, namely defining a quasi-isometry $\hO:\calQ \to H$.  So far, we have defined a map $\Omega:\calQ \to \prod_{U \in \calU} T_U$, which was the bulk of the work.  Our next step is to upgrade this to a map $\hO:\calQ \to H$.  Since this part of the argument mostly uses standard HHS techniques, we will just outline it, with appropriate citations to \cite{Dur_infcube}.

For each $\hx \in \calQ$, we first take our tuple $\Omega(\hx) = (x_U) \subset \prod_{\calU} T_U$ and push it forward to a tuple $(y_U) \in \prod_{\calU} \calC(U)$ by $\phi_U(x_U) = y_U$, where $\phi_U:T_U \to \calC(U)$ are the maps provided by the $R$-interval system structure (Definition \ref{defn:interval axioms}).  These combine to define a map $\Phi:\prod_{U \in \calU} T_U \to \prod_{U \in \calU} \calC(U)$.  By Proposition \ref{prop:Q-consistent}, there is some $\alpha_{\omega} = \alpha_{\omega}(\mathfrak S, R,r_1,r_2)>0$ so that the tuple $\Omega(\hx) \in \prod_{U \in \calU}T_U$ is $\alpha_{\omega}$-consistent.  Using just the $R$-system axioms and standard HHS techniques, \cite[Proposition 6.17]{Dur_infcube} implies that $(y_U) \in \prod_{U \in \calU} \calC(U)$ is consistent in the sense of Definition \ref{defn:consistency} for some consistency constant depending only on $\mathfrak S, \alpha_{\omega}$. 

Since $d_V(a,b) < K$ for all $V \in \mathfrak S - \calU$, it follows that by choosing $y_V \in \pi_V(a)$ for all $V \in \mathfrak S - \calU$, then the complete tuple $(y_U) \in \prod_{U \in \mathfrak S} \calC(U)$ is $\beta$-consistent for $\beta = \beta(\mathfrak S, R, K)>0$ (as $\alpha_{\omega}$ only depended on $\mathfrak S$ and $R$).

Now with a complete consistent tuple in hand, we can apply the Realization Theorem \ref{thm:realization} to define a map $\mathrm{Real}_{\beta}:\prod_{U \in \mathfrak S} \calC(U) \to \calX$ which takes each $\beta$-consistent tuple and produces a point $x \in \calX$ whose projections coarsely coincide with the tuple, with error depending on $\beta, \mathfrak S$.

Finally, \cite[Lemma 6.6]{Dur_infcube}, which depends on the more general fact \cite[Proposition 6.3]{HHS_II}, provides a $(J,J)$-coarse retraction map $\mathrm{ret}_H:\calX \to H$, where $H = \hull_{\calX}(a,b)$ where $J = J(\mathfrak S)>0$.  This map is just closest-point projection to $\hull_U(a,b)$ in each $\calC(U)$ plus Theorem \ref{thm:realization}. 

Combining these, we get a map $\hO:\calQ \to H$ given by $\hO = \mathrm{ret}_H \circ \mathrm{Real}_{\beta} \circ \Phi \circ \Omega$.  Our main goal is to show that $\hPsi \circ \hO:\calQ \to \calQ$ is coarsely the identity, where $\hPsi:H \to \calQ$ is the map defined in Proposition \ref{prop:hPsi defined}.

Before analyzing this total composition, we want to look at an intermediate step.  For convenience, if $\alpha>0$, we let $\calZ_{\alpha}$ denote the collection of $\alpha$-consistent tuples in $\prod_{U \in \calU} T_U$.  Define
$$\Phi_{\alpha}:\calZ_{\alpha} \to H$$ by $\mathrm{ret}_H \circ \mathrm{Real}_{\beta} \circ \Phi$.

The following proposition says that this map behaves coordinate-wise as one might expect upon composition with the coordinate maps $\psi_U:\calX \to T_U$, which we recall are defined by $\psi_U = \phi^{-1}_U\circ p_U \circ \pi_U$, where $p_U:\calC(U) \to \phi_U(T_U)$ is closest-point projection.  

\begin{proposition}\label{prop:interval coordinate control}
There exists $D_1 = D_1(K, \mathfrak S)>0$ so that if $x = (x_V) \in \calZ_{\alpha_{\omega}}$, then we have
\begin{enumerate}
\item $d_{T_U}(x_U, \psi_U \circ \Phi_{\alpha_{\omega}}(x))<D_1$ for all $U \in \calU$, and
\item $\#\{U \in \calU| d_{T_U}(x_U, \psi_U \circ \Phi_{\alpha_{\omega}}(x)) > 2\alpha_{\omega} + 3R\} < D_1$.
\end{enumerate}
\begin{itemize}
\item In particular, by choosing $L>2\alpha_{\omega} + 3R$, we get
$$\sum_{U \in \calU} [d_{T_U}(x_U, \psi_U \circ \Phi_{\alpha_{\omega}}(x)]_L < D_1^2.$$
\end{itemize}
\end{proposition}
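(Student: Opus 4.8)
The plan is to unwind the definition of $\Phi_{\alpha_\omega} = \mathrm{ret}_H \circ \mathrm{Real}_\beta \circ \Phi$ and track the $U$-coordinate through each of the three maps, comparing the result against $x_U \in T_U$. First I would push the tuple $x = (x_V) \in \calZ_{\alpha_\omega}$ forward via $\Phi$ to a tuple $(y_V) = (\phi_V(x_V)) \in \prod_U \calC(U)$, which is $\beta'$-consistent for some $\beta' = \beta'(\mathfrak S, \alpha_\omega, R)$ by \cite[Proposition 6.17]{Dur_infcube}; after filling in the $\mathfrak S - \calU$ coordinates with $\pi_V(a)$ this becomes a genuine $\beta$-consistent tuple. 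Applying the Realization Theorem \ref{thm:realization} gives a point $z \in \calX$ with $d_U(\pi_U(z), y_U) \leq \theta_e$ for all $U$, where $\theta_e$ depends only on $\beta$ (hence on $K, \mathfrak S$). Since each $y_U$ is $R$-close to $\phi_U(T_U) \subset \calN_{R}(\hull_U(a,b))$ by the $R$-interval-system axioms, $\pi_U(z)$ is within $\theta_e + R$ of $\hull_U(a,b)$ for every $U$, so $\mathrm{ret}_H$ moves $z$ by a bounded amount in every coordinate: $d_U(\pi_U(\mathrm{ret}_H(z)), \pi_U(z)) \leq J'$ for some $J' = J'(\mathfrak S)$ coming from the coarse-closest-point description of $\mathrm{ret}_H$ in \cite[Lemma 6.6]{Dur_infcube}. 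Composing these bounds, $d_U(\pi_U(\Phi_{\alpha_\omega}(x)), y_U) \leq \theta_e + J'$ for all $U$; applying the closest-point projection $p_U : \calC(U) \to \phi_U(T_U)$ and then $\phi_U^{-1}$ (each coarsely Lipschitz with constants depending only on $\mathfrak S, R$) and recalling $\phi_U^{-1}(p_U(y_U)) = x_U$, this yields $d_{T_U}(x_U, \psi_U \circ \Phi_{\alpha_\omega}(x)) < D_1$ for a uniform $D_1 = D_1(K, \mathfrak S)$, which is item (1).

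For item (2), the point is that the "$\theta_e$" and "$J'$" errors above are crude worst-case bounds, but the realization point and the hull-retraction actually coincide with the tuple coordinate on all but boundedly-many domains — this is a standard distance-formula-type observation. Concretely, the set of $U$ where $d_U(\pi_U(z), y_U)$ exceeds the relevant HHS threshold (the $\theta_e$ from realization genuinely needed for a given domain to be "witnessed") is bounded by a distance-formula argument, since otherwise one could apply the Passing-Up Lemma \ref{lem:passing-up} or Proposition \ref{prop:bounding containers} to produce a domain with impossibly large projection; similarly $\mathrm{ret}_H$ only changes coordinates of domains already within the threshold of $\hull_U(a,b)$, and for $U \in \calU$ the projections $\pi_U(a), \pi_U(b)$ are already $R$-close to the endpoints of $T_U$, so the retraction changes $\psi_U$ only on boundedly-many $U$. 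Combining with the fact that $\phi_U, p_U$ are $(R,R)$-quasi-isometric embeddings — so a discrepancy of more than $2\alpha_\omega + 3R$ in $T_U$ forces a discrepancy of more than (say) $\ES$ in $\calC(U)$ — we conclude $\#\{U : d_{T_U}(x_U, \psi_U \circ \Phi_{\alpha_\omega}(x)) > 2\alpha_\omega + 3R\} < D_1$ after enlarging $D_1$. The final displayed inequality $\sum_U [d_{T_U}(x_U, \psi_U \circ \Phi_{\alpha_\omega}(x))]_L < D_1^2$ with $L > 2\alpha_\omega + 3R$ is then immediate: by (1) each surviving term is at most $D_1$, and by (2) at most $D_1$ terms survive the threshold.

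I expect the main obstacle to be item (2) — specifically, extracting the "bounded number of bad domains" statement cleanly. The difficulty is that this is exactly the place where one needs a distance-formula accounting argument (à la the proof of Corollary \ref{cor:DF replace} or Proposition \ref{prop:upper bound}) rather than a pointwise estimate, and one must be careful that the thresholds line up: the domains where realization "fails to witness" $y_U$, the domains where $\mathrm{ret}_H$ moves a coordinate, and the domains where the interval-level discrepancy exceeds $2\alpha_\omega + 3R$ must all be controlled by the same kind of Passing-Up/bounded-containers bookkeeping. A clean way to organize this is to first prove the weaker statement that for $x \in \calZ_{\alpha_\omega}$ the tuple $(\psi_U \circ \Phi_{\alpha_\omega}(x))$ is itself $O(\alpha_\omega, \mathfrak S)$-consistent and coarsely agrees with $x$ in the sense of (1), and then invoke a uniqueness-type statement: two $O(1)$-consistent tuples that are pointwise $O(1)$-close must in fact be coordinatewise-equal on all but boundedly-many domains, since the locations where they differ substantially would accumulate distance-formula contributions bounded by $d_{T_U}$-terms already accounted for, contradicting coarse uniqueness in Theorem \ref{thm:realization}. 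Item (1) and the final inequality should be routine once the realization/retraction constants are assembled.
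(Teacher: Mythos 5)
Your treatment of item (1) and of the final displayed inequality matches the paper's: item (1) really is the definition chase you describe (push forward by $\Phi$, realize, retract to the hull, pull back through $p_U$ and $\phi_U^{-1}$, with all errors controlled by $\theta_e$, the retraction constant, and $R$), and the last bullet is immediate from (1) and (2).

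The gap is in item (2), and it is exactly where you predicted trouble. The paper proves (2) by a \emph{passing-up argument run directly on the two interval tuples} $x=(x_U)$ and $y=(\psi_U\circ\Phi_{\alpha_{\omega}}(x))$, both of which are uniformly consistent in the interval sense: the threshold $2\alpha_{\omega}+3R$ is calibrated so that, for $U\nest W$, if $x_W$ and $y_W$ both lie far from $\delta^U_W$ and on the same side, then interval consistency (two $\alpha_{\omega}$'s) plus the interval BGI axiom (an $R$) forces $d_{T_U}(x_U,y_U)\le 2\alpha_{\omega}+R$; hence every domain counted in (2) pins $\delta^U_W$ near the geodesic $[x_W,y_W]$ in $T_W$, and too many such domains pass up to a domain $W$ with $d_{T_W}(x_W,y_W)>D_1$, contradicting (1). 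Neither of the two mechanisms you propose delivers this. The vanilla Passing-Up Lemma \ref{lem:passing-up} compares the projections of a \emph{pair of points of} $\calX$, whereas the quantity to be counted is the discrepancy between an abstract consistent tuple and (the $\psi$-image of) a realization point; to reduce to a two-point count you would need to know that the per-domain realization error $d_U\bigl(\pi_U(\mathrm{Real}_{\beta}(\Phi(x))),\phi_U(x_U)\bigr)$ is below (a definite fraction of) $2\alpha_{\omega}+3R$ in all but boundedly many domains, and the Realization Theorem \ref{thm:realization} only gives the uniform bound $\theta_e$, which in general exceeds that threshold; coarse uniqueness likewise only bounds the $\calX$-distance between realization points, not per-domain discrepancies. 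Your proposed "uniqueness-type" lemma --- two $O(1)$-consistent tuples that are pointwise $O(1)$-close agree in all but boundedly many coordinates --- is not a true general principle when the pointwise-closeness constant (here of size $\theta_e$) exceeds the counting threshold: one can perturb the tuple of a point by an amount between the threshold and the consistency constant in arbitrarily many domains while keeping consistency, so any valid version of that lemma must tie the threshold to the consistency constant via exactly the consistency/BGI dichotomy above, i.e.\ it \emph{is} item (2), and invoking it here is circular. The fix is to abandon the reduction through realization error and run the consistency-plus-BGI passing-up argument on the tuples $x$ and $y$ themselves, using (1) as the contradiction target, which is the paper's route.
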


\begin{proof}[Sketch]
This proposition is a special case of \cite[Proposition 11.4]{Dur_infcube}, and the proof is almost entirely the same.  The proof of item (1) is a definition chase.  It is basically saying that consistency in the interval setting is roughly consistency in the hierarchical setting, and that flowing from the former to the latter and back only moves coordinates a bounded amount.

On the other hand, the proof of (2)---which says that only boundedly-many coordinates move more than a bounded amount---is more involved, but also fairly standard.  It uses a variation on the Passing-Up property from Lemma \ref{lem:passing-up}, and the BGI property from Definition \ref{defn:interval axioms}.  Set $\psi_U \circ \Phi_{\alpha_{\omega}}(x) = y = (y_U)$.  The idea here is that if there are too many coordinates where $d_{T_U}(x_U, y_U) > 2\alpha_{\omega} + 3R$, then we can force the existence of some coordinate $T_W$ where $d_{T_W}(x_W,y_W)>D_1$, which is impossible by (1).
\end{proof}

\subsection{The map $\hO:\calQ \to H$ and coarse surjectivity of $\hPsi$}

We are ready to analyze the map $\hO = \Phi_{\alpha_{\omega}} \circ \Omega:\calQ \to H$.    Since $\hPsi(H) \subset \calQ$ by Proposition \ref{prop:hPsi defined}, we see that $\hPsi \circ \hO:\calQ \to \calQ$.  We prove that this map is coarsely the identity:

\begin{proposition}\label{prop:coarse surjectivity}
There exists $M_0 = M_0(R, \mathfrak S)>0$ so that for any $\hx \in \calQ$, we have $d_{\calQ}(\hx, \hPsi \circ \hO(\hx)) < M_0$. 
\end{proposition}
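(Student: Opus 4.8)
The plan is to reduce Proposition~\ref{prop:coarse surjectivity} to the coordinatewise estimate already supplied by Proposition~\ref{prop:interval coordinate control}, and then to bound the number of coordinates on which $\hx$ and $\hPsi(\hO(\hx))$ disagree. Fix $\hx \in \calQ$ and put $(x_U)_{U \in \calU} = \Omega(\hx)$; by Proposition~\ref{prop:Q-consistent} this tuple is $\alpha_\omega$-consistent, so $(x_U) \in \calZ_{\alpha_\omega}$, and $z := \hO(\hx) = \Phi_{\alpha_\omega}(\Omega(\hx)) \in H$. Since $\hPsi(z) \in \calQ$ by Proposition~\ref{prop:hPsi defined}, and $\calQ$ carries the $\ell^1$ metric, the quantity to bound is
$$d_\calQ\big(\hx, \hPsi(z)\big) \;=\; \sum_{U \in \calU} d_{\hT_U}\big(\hx_U, \hpsi_U(z)\big).$$
First I would invoke Proposition~\ref{prop:interval coordinate control}(1) with $x = \Omega(\hx)$, giving $d_{T_U}(x_U, \psi_U(z)) < D_1$ for every $U \in \calU$. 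The collapse map $q_U \colon T_U \to \hT_U$ only collapses subintervals, hence is distance non-increasing, and $q_U(x_U) = \hx_U$ by Lemma~\ref{lem:B_U in cluster} while $q_U(\psi_U(z)) = \hpsi_U(z)$ by definition of $\hpsi_U$; therefore $d_{\hT_U}(\hx_U, \hpsi_U(z)) < D_1$ for all $U$. Consequently $d_\calQ(\hx,\hPsi(z)) < D_1 \cdot \#\calW$, where $\calW := \{U \in \calU : \hx_U \neq \hpsi_U(z)\}$, and the whole problem reduces to bounding $\#\calW$ in terms of $\mathfrak S$ and $R$.

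To bound $\#\calW$, I would split it as $\calW = \calW_1 \sqcup (\calW \setminus \calW_1)$, with $\calW_1 := \{U \in \calU : d_{T_U}(x_U,\psi_U(z)) > 2\alpha_\omega + 3R\}$; Proposition~\ref{prop:interval coordinate control}(2) gives $\#\calW_1 < D_1$. For $U \in \calW \setminus \calW_1$ one has $\hx_U \neq \hpsi_U(z)$ while $d_{T_U}(x_U, \psi_U(z)) \le 2\alpha_\omega + 3R$, and the key point is that such a $U$ must lie in $E(\hx) \cup E(\hPsi(z))$. Indeed, if both $\hx_U$ and $\hpsi_U(z)$ were collapsed (cluster) points of $\hT_U$, they would be \emph{distinct} collapsed points, forcing $x_U$ and $\psi_U(z)$ to lie in distinct cluster components of $T_U$, hence (by step~(2) of the $(r_1,r_2)$-thickening, which merges cluster pieces within $r_2$) at $T_U$-distance $> r_2$; choosing $r_2 > 2\alpha_\omega + 3R$ — compatible with the constraints already imposed on the thickening constants, as in \cite{Dur_infcube} — this contradicts $d_{T_U}(x_U,\psi_U(z)) \le 2\alpha_\omega + 3R$. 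Likewise, once $r_1 > 2\alpha_\omega + 3R$ a geodesic of length $\le 2\alpha_\omega + 3R$ in $T_U$ cannot traverse an entire interior cluster component, so $x_U$ and $\psi_U(z)$ lie in a common edge component, or one of them lies in a cluster and the other in an adjacent edge component; in either surviving case at least one of $\hx_U, \hpsi_U(z)$ lies in $\hT^e_U$, i.e.\ $U \in E(\hx) \cup E(\hPsi(z))$. By Lemma~\ref{lem:collapsed interval control}(9) each of $E(\hx)$ and $E(\hPsi(z))$ is pairwise orthogonal, hence of cardinality at most $\xi(\mathfrak S)$, so $\#(\calW \setminus \calW_1) \le 2\xi(\mathfrak S)$. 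Combining, $\#\calW \le D_1 + 2\xi(\mathfrak S)$ and $d_\calQ(\hx,\hPsi(z)) < D_1\big(D_1 + 2\xi(\mathfrak S)\big) =: M_0$, with $M_0 = M_0(R,\mathfrak S)$.

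I expect the main subtlety to be the bookkeeping with the thickening constants in the middle paragraph — arranging $r_1, r_2$ large relative to the coordinate-control threshold $2\alpha_\omega + 3R$ of Proposition~\ref{prop:interval coordinate control} — together with the verification that collapsing clusters cannot manufacture genuinely new disagreements between nearby interval coordinates; this is precisely where the combinatorial content of $\calQ$ (separation of distinct collapsed points, orthogonality of edge-component coordinates) enters. The genuinely hard estimate, namely that only boundedly many coordinates can have large $T_U$-discrepancy, has already been carried out in Proposition~\ref{prop:interval coordinate control}, so beyond the assembly above I would simply follow the bookkeeping of \cite[Section~11]{Dur_infcube}, noting as elsewhere that restricting to a pair of points removes several technicalities (in particular there is no need to treat hierarchy rays or $\nest$-minimal bipartite domains).
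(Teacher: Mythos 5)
Your reduction to Proposition \ref{prop:interval coordinate control} and the $1$-Lipschitz collapse maps is fine, and it matches the first half of the paper's argument; the gap is in the claim that $\#\calW$ is bounded. Your exclusion of the case where $\hx_U$ and $\hpsi_U(z)$ are distinct collapsed points hinges on arranging $r_2 > 2\alpha_\omega + 3R$ (and similarly $r_1 > 2\alpha_\omega+3R$). But $\alpha_\omega$ is the constant of Proposition \ref{prop:Q-consistent}, and it depends on $r_1$ and $r_2$: the honing sets $C^V_U = \hull_C(C\cap(\calN_{r_2}(\delta^V_U)\cup l_V))$ and the intersections $B_U$ have diameter comparable to $r_2$ (Lemma \ref{lem:bdd int total}), so the slack in where $\Omega(\hx)$ sits inside a cluster is itself of order $r_2$, and one expects $\alpha_\omega \gtrsim r_2$. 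The thickening constants are chosen \emph{before} $\alpha_\omega$ is determined, so the inequality you need is circular, and nothing in the paper (or in \cite{Dur_infcube}) establishes it. This is not a cosmetic issue: without it, there can be unboundedly many domains $U$ where $x_U$ and $\psi_U(z)$ lie in \emph{adjacent distinct clusters} at $T_U$-distance just above $r_2 \le 2\alpha_\omega+3R$; for such $U$ one has $\hx_U \neq \hpsi_U(z)$, both coordinates are collapsed points, so $U \notin E(\hx)\cup E(\hPsi(z))$, and Proposition \ref{prop:interval coordinate control}(2) says nothing about their number. (Contrast this with the $\nest_{\calU}$-minimal endpoint case, which genuinely is controlled, because there the relevant comparison is with $K$, and $K$ \emph{is} chosen after $R, r_1, r_2, \alpha_\omega$ are fixed.)

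This is exactly the difficulty the paper's proof is built around: it does \emph{not} bound the number of disagreement coordinates. Instead, after bounding $\#\calV_{\min}$ and isolating the boundedly many coordinates with $d_{T_V}(x_V,y_V) > 2\alpha_\omega+3R$, it observes that in all remaining disagreement coordinates the geodesics $[\hx_V,\hy_V]\subset \hT_V$ are uniformly short (independently of $K$), so the family of these geodesics — possibly unboundedly many of them, but at most one per interval — forms an admissible trimming setup; collapsing them via the Tree Trimming Theorem \ref{thm:tree trimming} makes the images of $\hx$ and $\hPsi\circ\hO(\hx)$ agree in all but boundedly many coordinates, hence uniformly close in $\calQ'$, and the uniform quasi-isometry $\Delta:\calQ\to\calQ'$ pulls this back to $\calQ$. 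To repair your argument you would either have to prove an upper bound of the form $\alpha_\omega \le \tfrac{1}{2}(r_2-3R)$ for suitably chosen thickening constants (which the honing construction does not give), or replace the counting step by the tree-trimming step as in the paper.
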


\begin{proof}[Sketch]
This proposition is a special case of the proof contained in \cite[Subsection 11.2]{Dur_infcube}.  Since the details of the proof are very similar to the original, we will again only provide a sketch.

Fix $\hx \in \calQ$ and let $\hy \in \hPsi \circ \hO(\hx)$. We will argue coordinate-wise since that is how the $\ell^1$ distance on $\calQ$ is defined.  The difference from the usual HHS style argument here is that $d_{\calQ}$ is an $\ell^1$ sum not involving a threshold.  To get around that issue, we will make an argument using Proposition \ref{prop:interval coordinate control} and Tree Trimming \ref{thm:tree trimming} as follows.  We show that all coordinate distances $d_{\hT_U}(\hx_U, \hy_U)$ are bounded in terms of $\mathfrak S$ and $K$, with all but boundedly-many (in terms of $\mathfrak S$) being bounded in terms of $\mathfrak S$.  We can then collapse the subintervals between $\hx_U$ and $\hy_U$ in the latter domains while only incurring a bounded error globally, resulting in images for $\hx,\hy$ which are bounded distance in the quotient, showing that $\hx,\hy$ were bounded distance in $\calQ$.

Let $\Omega(\hx) = (x_U)$ and $\Psi \circ \hO(\hx) = (y_U)$, and observe that since $q\circ \Omega(\hx) = \hx$, the tuples $(x_U), (y_U)$ are exactly the type in the statement of Proposition \ref{prop:interval coordinate control}.  Moreover, since $q \circ \Omega(\hy) = \hy$ by construction, while $q$ is $1$-Lipschitz by item (4) of Lemma \ref{lem:collapsed interval control}, it will suffice to control the coordinate-wise distances of the $d_{T_U}(x_U,y_U)$ in order to utilize the above argument.

Let $\calV = \{U \in \calU| \hx_U \neq \hy_U\}$.  One proceeds inductively on the $\nest_{\calU}$-level.  Let $\calV_{min} \subset \calV$ denote the $\nest_{\calU}$-minimal domains in $\calV$.

By item (8) of Lemma \ref{lem:collapsed interval control}, for all but $A$-many $U \in \calV_{min}$ with $A = A(\mathfrak S)>0$, we have $\hx_U,\hy_U \in \{\ha_U,\hb_U\}$, and hence for all but at most $A$ domains in $\calV_{min}$, we have that $\hx_U,\hy_U$ are the (opposite) endpoints of $\hT_U$.  By item (6) of that lemma, $d_{T_U}(a_U,b_U) > d_{\hT_U}(\ha_U,\hb_U) \succ K$, while $K$ can be taken to be as much larger than $2\alpha_{\omega} + 3R$ as we need, as $\alpha_{\omega}$ and $R$ depend only on $\mathfrak S$.  Since Proposition \ref{prop:interval coordinate control} bounds the number of domains $U \in \calU$ where $d_{T_U}(x_U,y_U)> 2\alpha_{\omega} + 3R$, we can conclude that $\#\calV_{min} < D_1 + A$.

Now let $\calV' \subset \calV - \calV_{min}$ denote the set of non-minimal domains $V$ where $\hx_V \neq \hy_V$ and $d_{T_V}(x_V,y_V) < 2\alpha_{\omega} + 3R$, which we note only excludes boundedly-many domains by item (2) of Proposition \ref{prop:interval coordinate control}.

Since the maps $q_V:T_V \to \hT_V$ are $1$-Lipschitz (item (4) of Lemma \ref{lem:collapsed interval control}), we have $d_{\hT_V}(\hx_V,\hy_V) < 2\alpha_{\omega} + 3R$.  The idea now is to use the Tree Trimming Theorem \ref{thm:tree trimming} to collapse the geodesics $[\hx_V,\hy_V]$ between $\hx_V,\hy_V$ in $\hT_V$ to points.  Each interval $\hT_V$ contains at most one such geodesic, and each such geodesic has a length bounded independently of $K$.  Hence the set of such intervals across both $\nest$-minimal and non-minimal domains with this distance bound is an admissible trimming set in the sense required for Theorem \ref{thm:tree trimming}.

If we take $\hT'_V$ to be the interval obtained from $\hT_V$ post-trimming and $\calQ'$ the corresponding $0$-consistent set, then the images $\hx', \hy'$ of $\hx,\hy$ now satisfy $\hx'_V = \hy'_V$ for all but boundedly-many $V$, and the remaining distances are all bounded by some function of $D_1$ by item (1) of Proposition \ref{prop:interval coordinate control}.  In particular, $d_{\calQ'}(\hx,\hy)$ is bounded.  But the map $\Delta:\calQ \to \calQ'$ is a uniform quasi-isometry, so that means that $d_{\calQ}(\hx,\hy)$ is also bounded, which is what we wanted.  We leave checking the details to the reader.
\end{proof}

\section{Building a cube complex from a hierarchical family of intervals}\label{sec:cubulation}

The goal of this section is to explain how to take a special collection of intervals satisfying certain HHS-like properties, and to produce from them a CAT(0) cube complex.  The details of this construction motivate the stabilization techniques we develop later in this article.

\subsection{Hierarchical families of intervals}

In this subsection, we axiomatize the properties we need from a family of intervals in order to build a cube complex.  The purpose of this axiomatization is to allow us to modify our construction in various ways while preserving the axioms, as any setup which satisfies the axioms produces a cube complex.  The properties themselves are motivated by the HHS axioms and the properties in Lemma \ref{lem:collapsed interval control}.

\begin{definition}[Hierarchical family of intervals] \label{defn:HFI}
A \emph{hierarchical family of intervals (HFI)} is the following collection of objects and properties:

\begin{enumerate}
\item A finite index set $\calU$ of \emph{domains} with \emph{relations} $\nest, \pitchfork, \perp$ so that
\begin{itemize}
    \item $\nest$ is anti-reflexive and anti-symmetric, and gives a partial order with a bound on the length of chains;
    \item $\pitchfork$ is symmetric and anti-reflexive;
    \item There is a unique $\nest$-maximal element $S$;
    \item Any pair of domains $U,V \in \calU$ satisfies exactly one of the relations. 
\end{itemize}
\item There is a bound on the size of any pairwise $\perp$-comparable subset of $\calU$. \label{item:finite dim}
\item To each $U \in \calU$ there is an associated finite simplicial interval $\hT_U$. \label{item:simplicial}
\item A pair of labels $F = \{x,y\}$ which determined the endpoints, called \emph{marked points} $\hx_U, \hy_U$, of the interval $\hT_U$ for each $U \in \calU$.
\item A family of \emph{relative projections} determining:
\begin{itemize}
    \item For each $U, V \in \calU$ with $V \nest U$ or $V \pitchfork U$, there is a vertex $\hd^V_U \in \hT^{(0)}_U$ called a \emph{cluster point}.
 
\end{itemize}
    \item There exist projection maps $\hd^U_V: \hT_U \to \hT_V$ when $V \nest U$, satisfying the following properties:\label{item:BGI HFI}
    \begin{enumerate}
    	\item (Consistency) For each $f \in F$, we have $\hd^U_V(\hf_U) = \hf_V$.
        \item (BGI) If $C \subset \hT_U - \hd^V_U$ is a component, then $\hd^U_V(C)$ coincides with $\hf_V$ for $f \in F$ with $\hf_U \in C$. 
    \end{enumerate}

\item If $U,V,W \in \calU$ with $V,W \nest U$, $V \pitchfork W$, and $\hd^V_U \neq \hd^W_U$, then $\hd^U_V(\hd^W_U) = \hd^W_V \in \hT_V$. \label{item:delta control HFI}
\end{enumerate}
\end{definition}

\begin{remark}
In \cite[Definition 12.1]{Dur_infcube}, the definition of a \emph{hierarchical family of trees} allows for the trees to have more than $2$ leaves.  This is only needed for modeling tuples of $3$ or more points.
\end{remark}

The following is an immediate consequence of our work in Section \ref{sec:interval systems}, especially Lemma \ref{lem:collapsed interval control}:

\begin{lemma}\label{lem:HHS to HFI}
Let $R>0$, $r_1 > R_1=R_1(R,\mathfrak S)>0$ as in Proposition \ref{prop:hPsi defined} and $r_2 > R_2=R_2(R,\mathfrak S)>0$ as in Lemma \ref{lem:collapsed interval control}.  For any $(r_1,r_2)$-thickening of an $R$-interval system for $a,b \in \calX$, the associated family of collapsed intervals and their collapsed projections form a hierarchical family of intervals.
\end{lemma}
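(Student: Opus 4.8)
The plan is to check the defining properties of a hierarchical family of intervals (Definition \ref{defn:HFI}) one at a time against the data produced by an $(r_1,r_2)$-thickening of an $R$-interval system, invoking Lemma \ref{lem:collapsed interval control} for the substantive points and the HHS axioms of Section \ref{subsec:basic HHS} for the bookkeeping about the relations. First I would take the index set $\calU = \Rel_K(a,b)$, which is finite by Corollary \ref{cor:rel sets are finite}; the relations $\nest,\pitchfork,\perp$ on $\calU$ are inherited from the ambient HHS, so anti-reflexivity/anti-symmetry of $\nest$, symmetry/anti-reflexivity of $\pitchfork$, existence of a unique $\nest$-maximal element, and the trichotomy ``exactly one relation'' are immediate from the HHS axioms (item (2) of Subsection \ref{subsec:basic HHS}); the bound on $\nest$-chains comes from the complexity $\xi(\mathfrak S)$, which also gives item (\ref{item:finite dim}) (a bound on pairwise-$\perp$ subsets). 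Note $S$ itself lies in $\calU$ provided $K$ is large enough that $d_S(a,b)>K$, but one should really only require the $\nest$-maximal element \emph{of $\calU$}, which exists since $\calU$ is finite and any two non-transverse relevant domains have a common container relevant domain by Proposition \ref{prop:bounding containers}; alternatively one adjusts the definition as in \cite[Definition 12.1]{Dur_infcube}. I would remark on this point rather than belabor it.

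Next, item (\ref{item:simplicial}), that each $\hT_U$ is a finite simplicial interval, follows from Corollary \ref{cor:simplicial version}: after the ``converting to simplicial intervals'' trimming, the collapsed intervals $\hT_U$ are simplicial with marked and cluster points at vertices, and finiteness is clear since $\phi_U:T_U\to\calC(U)$ is a quasi-isometric embedding of a bounded interval (bounded because $d_U(a,b)$ is finite). Item (4), the marked points $\hx_U,\hy_U$ being the endpoints, is exactly the construction in Subsection \ref{subsec:collapsing}: $a_U,b_U$ lie in cluster components, hence descend to $\ha_U,\hb_U$, which are the endpoints of $\hT_U$ by construction. Item (5), the existence of cluster-point vertices $\hd^V_U\in\hT_U^{(0)}$ for $V\nest U$ or $V\pitchfork U$, is items (2) and (3) of Lemma \ref{lem:collapsed interval control} (the transverse case lands on $\{\ha_U,\hb_U\}$, the nested case on an internal collapsed point), together with the simplicial normalization placing these at vertices.

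The remaining items (\ref{item:BGI HFI}) and (\ref{item:delta control HFI}) are the content-carrying ones, and I expect item (\ref{item:BGI HFI}) — the bounded-geodesic-image behavior of the projection maps $\hd^U_V$ — to be the main obstacle, though ``obstacle'' here means ``the place where one actually cites the hard lemma'' rather than anything genuinely difficult in this reduced setting. The maps $\hd^U_V:\hT_U\to\hT_V$ for $V\nest U$ are the collapsed relative projections defined just before Definition \ref{defn:Q defined}; consistency $\hd^U_V(\hf_U)=\hf_V$ for $f\in F=\{a,b\}$ is precisely item (7) of Lemma \ref{lem:collapsed interval control} (applied with $C$ the component containing $\hf_U$, using $\hf_U$ a marked point), and the BGI clause ``if $C\subset\hT_U-\hd^V_U$ is a component then $\hd^U_V(C)=\hf_V$ for the $f$ with $\hf_U\in C$'' is the same item (7), which was proved there by combining $0$-consistency of $\hPsi(f)$ with the bounded-geodesic-image property (item (4) of Definition \ref{defn:interval axioms}, i.e. item (5) of Lemma \ref{lem:interval control}). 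Finally item (\ref{item:delta control HFI}), that for $V,W\nest U$ transverse with $\hd^V_U\neq\hd^W_U$ one has $\hd^U_V(\hd^W_U)=\hd^W_V$, is the collapsed version of item (4) of Lemma \ref{lem:interval control} (``if $U\pitchfork V$ and $W\nest\cdots$'') packaged into Lemma \ref{lem:collapsed interval control}; concretely, $\hd^W_U$ lies in a component $C$ of $\hT_U-\hd^V_U$ (since $\hd^V_U\ne\hd^W_U$ and transversality forces $\hd^V_U\in\{\ha_U,\hb_U\}$ by item (2) of Lemma \ref{lem:collapsed interval control}), so BGI gives $\hd^U_V(\hd^W_U)=\hf_V$ for the appropriate $f$, and a short consistency argument using Lemma \ref{lem:interval consistency} together with item (3) of Lemma \ref{lem:interval control} identifies this $\hf_V$ with $\hd^W_V$. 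I would present these verifications tersely, pointing to Lemma \ref{lem:collapsed interval control} and \cite[Section 12]{Dur_infcube} for the details, since the lemma is explicitly flagged in the text as an immediate consequence of the work in Section \ref{sec:interval systems}.
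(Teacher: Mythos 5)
Your proposal is correct and takes essentially the same route as the paper: the paper states Lemma \ref{lem:HHS to HFI} without proof, flagging it as an immediate consequence of Lemma \ref{lem:collapsed interval control} (together with the HHS relations on $\calU$ and the simplicial normalization of Corollary \ref{cor:simplicial version}), and your axiom-by-axiom verification against Definition \ref{defn:HFI} is exactly that consequence spelled out, with items (\ref{item:BGI HFI}) and (\ref{item:delta control HFI}) traced to the correct parts of Lemma \ref{lem:collapsed interval control}.
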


As above in Definition \ref{defn:Q defined}, we can define a $0$-consistent subset of the product $\prod_U \hT_U$.  Our next goal is the following theorem:

\begin{theorem}\label{thm:Q CCC}
The $0$-consistent subset of an HFI is a CAT(0) cube complex.  Moreover, there is a bijective correspondence between its hyperplanes and the edges of the factors. 
\end{theorem}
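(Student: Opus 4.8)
The plan is to realize $\calQ$ as the dual cube complex of a pocset of half-spaces built directly from the intervals $\hT_U$, apply Roller's machine (Theorem~\ref{thm:Sageev}), and read off the hyperplane correspondence. For each $U\in\calU$ and each edge $e$ of the simplicial interval $\hT_U$, deleting the midpoint of $e$ splits $\hT_U$ into two half-intervals; write $e^-$ for the one containing the marked point $\hx_U$ and $e^+$ for the one containing $\hy_U$, and put
$$\mathfrak{h}^{\pm}_{U,e}=\{\,\hz=(\hz_V)_V\in\calQ:\hz_U\in e^{\pm}\,\}.$$
Let $\Sigma$ be the set of all such half-spaces that are proper (nonempty and not all of $\calQ$), ordered by inclusion of subsets of $\calQ$, with involution $\mathfrak{h}^{\pm}_{U,e}\mapsto\mathfrak{h}^{\mp}_{U,e}$. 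First I would check $(\Sigma,\subseteq)$ is a pocset as in Definition~\ref{defn:pocset}: a vertex of $\hT_U$ lies strictly on one side of each edge-midpoint, so $\mathfrak{h}^+_{U,e}$ and $\mathfrak{h}^-_{U,e}$ partition $\calQ$ and, being proper and complementary, are incomparable, while the involution reverses inclusion. One also notes that \emph{every} edge of every factor yields a \emph{proper} wall: the endpoint tuples $(\hx_V)_V$ and $(\hy_V)_V$ lie in $\calQ$ --- in the HHS-derived setting of Lemma~\ref{lem:HHS to HFI} they are $\hPsi(a)$ and $\hPsi(b)$ --- and sit in $\mathfrak{h}^-_{U,e}$, $\mathfrak{h}^+_{U,e}$ respectively. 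This makes $e\mapsto\{\mathfrak{h}^+_{U,e},\mathfrak{h}^-_{U,e}\}$ a bijection from the edges of the factors onto $\Sigma$ modulo the involution.

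Next I would verify the hypotheses of Theorem~\ref{thm:Sageev}. Local finiteness is automatic: $\calU$ and each $\hT_U$ are finite, hence $\Sigma$ is finite. For finite width, let $(U_1,e_1),\dots,(U_n,e_n)$ index a pairwise transverse family of half-spaces. Distinct edges of a single interval give inclusion-comparable half-spaces, so the $U_i$ are distinct; I claim they are pairwise orthogonal, bounding $n$ by item~(\ref{item:finite dim}) of Definition~\ref{defn:HFI}. If $U_i\pitchfork U_j$, let $p=\hd^{U_j}_{U_i}\in\hT_{U_i}$ and $q=\hd^{U_i}_{U_j}\in\hT_{U_j}$, and let $e_i^s,e_j^t$ be the sides containing $p,q$. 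The $0$-consistency condition (Definition~\ref{defn:Q defined}(1)) forces $\hz_{U_i}=p$ or $\hz_{U_j}=q$ for every $\hz\in\calQ$, which fails on the quadrant $\mathfrak{h}^{\bar s}_{U_i,e_i}\cap\mathfrak{h}^{\bar t}_{U_j,e_j}$ cut out by the sides opposite to $e_i^s,e_j^t$; so that quadrant is empty and the walls are not transverse. If $U_i\nest U_j$, then Definition~\ref{defn:Q defined}(2) together with the BGI axiom~(\ref{item:BGI HFI}) forces $\hz_{U_i}=\hx_{U_i}$ whenever $\hz_{U_j}$ lies strictly on the $\hx$-side of $q=\hd^{U_i}_{U_j}$ and $\hz_{U_i}=\hy_{U_i}$ whenever it lies strictly on the $\hy$-side; since $e_i$ separates $\hx_{U_i}$ from $\hy_{U_i}$ while one of the two sides of $e_j$ lies entirely on one side of $q$, again one of the four quadrants is empty. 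Hence transverse walls come only from orthogonal domains, $\Sigma$ has finite width, and Theorem~\ref{thm:Sageev} produces a finite-dimensional CAT(0) cube complex $X(\Sigma)$ whose hyperplanes biject with $\Sigma$ modulo the involution, i.e.\ with the edges of the factors.

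It remains to identify $\calQ$ with $X(\Sigma)$. Each $\hx\in\calQ$ determines $\alpha(\hx)=\{\mathfrak{h}\in\Sigma:\hx\in\mathfrak{h}\}$, which is a DCC ultrafilter: Choice and the DCC are immediate since $\Sigma$ is finite and $\hx_U$ lies on exactly one side of each edge, and Consistency holds because $\mathfrak{h}\subseteq\mathfrak{k}$ is the pocset order. The map $\hx\mapsto\alpha(\hx)$ is injective, for if $\hx_U\neq\hy_U$ then any edge on $[\hx_U,\hy_U]\subseteq\hT_U$ separates the two tuples. \textbf{Surjectivity onto $X^{(0)}(\Sigma)$ is the step I expect to be the main obstacle.} Given a DCC ultrafilter $\alpha$, for each $U$ the half-spaces $\mathfrak{h}^{\pm}_{U,e}\in\alpha$ make a coherent choice of side for every edge of the linearly ordered interval $\hT_U$, hence cut out a unique vertex $\hx_U\in\hT_U^{(0)}$; the content is that the tuple $(\hx_U)_U$ is then $0$-consistent. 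This is where the pocset order does the real work: inclusions among half-spaces from distinct domains encode the $0$-consistency relations --- for instance, when $U\pitchfork V$ one checks that the half-space $\mathfrak{h}^{\bar s}_{U,e}$ on the side of $e$ not containing $\hd^V_U$ is contained in $\{\hz:\hz_V=\hd^U_V\}$, which is itself an intersection of half-spaces of $\hT_V$, so membership $\mathfrak{h}^{\bar s}_{U,e}\in\alpha$ forces $\hx_V=\hd^U_V$ by Consistency; the nesting case runs similarly using axioms~(\ref{item:BGI HFI}) and~(\ref{item:delta control HFI}), and the awkward configuration where every coordinate of $\hx$ already sits at a cluster point is handled by an induction on $\nest$-level. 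Once surjectivity is secured, $\hx\mapsto\alpha(\hx)$ carries the edges of $\calQ$ (tuples differing by moving one coordinate across one edge) to the edges of $X(\Sigma)$ (ultrafilters differing by flipping one half-space pair), so by Definition~\ref{defn:dual CCC} it is a cubical isomorphism --- alternatively one may conclude directly from Lemma~\ref{lem:halfspaces_bijection}. Therefore $\calQ$ is a finite CAT(0) cube complex whose hyperplanes correspond bijectively to the edges of the factors. In line with the median viewpoint flagged earlier, one can also argue that each $\hT_U$ is a finite path, hence a median graph, so $\prod_U\hT_U$ with the $\ell^1$-metric is a median graph, and a direct check shows the $0$-consistency conditions are exactly closure under the coordinatewise median; thus $\calQ$ is a connected finite median subgraph of bounded cube-dimension, the $1$-skeleton of a finite CAT(0) cube complex whose hyperplanes are indexed by the edges of the factors.
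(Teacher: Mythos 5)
Your proposal is correct, but it takes a genuinely different route from the paper. The paper observes that $\prod_{U\in\calU}\hT_U$ is itself a CAT(0) cube complex, checks that $\calQ$ is a subcomplex whose vertex set is a median subalgebra (closed under the coordinatewise median), and then invokes Chepoi's theorem that a median graph is the $1$-skeleton of a unique CAT(0) cube complex; the hyperplane statement is read off by intersecting product hyperplanes with $\calQ$. You instead build the pocset of half-spaces cut out by the factor edges and run Roller--Sageev duality (Theorem \ref{thm:Sageev}). What your route buys: the edge--hyperplane bijection and the dimension bound come out of the wall combinatorics directly, via your observation that transverse walls force $\perp$-related domains (your quadrant arguments for $\pitchfork$ and $\nest$ are exactly right). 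What the paper's route buys: since $\calQ$ already sits inside the product complex, there is no ultrafilter-realization problem at all, which is why its proof is a paragraph. The step you flag as the main obstacle in fact goes through just as you sketch, and more easily than you fear: for the tuple $(\hx_U)$ cut out by an ultrafilter, if $\hx_U\neq\hd^V_U$ with $U\pitchfork V$, any edge separating them has its $\hx_U$-side contained (by $0$-consistency of points of $\calQ$) in every half-space of $\hT_V$ containing $\hd^U_V$, so Consistency forces $\hx_V=\hd^U_V$; the nested case is identical using axiom (\ref{item:BGI HFI}). No induction on $\nest$-level and no special treatment of the all-cluster-point configuration is needed---that difficulty belongs to inverting the collapse maps (the map $\Omega$), not to this theorem. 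One caveat: your properness of every wall, hence the ``moreover'' bijection, uses that the marked-point tuples lie in $\calQ$; this holds in the HHS-derived setting (Proposition \ref{prop:hPsi defined}), but for transverse pairs it is not literally forced by Definition \ref{defn:HFI}. The paper's one-line deduction of the ``moreover'' clause carries the same implicit assumption, so you are no worse off, but it merits a sentence.
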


\subsection{Median graphs and dual cube complexes}

The proof of Theorem \ref{thm:Q CCC} that we give here will use the notion of median spaces.  The connection between median graphs and CAT(0) cube complexes goes back to Roller's \cite{Roller} reframing of Sageev's construction \cite{Sageev:cubulation}, see also \cite{CN05, Nica}.  Notably, Chepoi \cite[Theorem 6.1]{Che00} showed that every median graph is the $1$-skeleton of a unique CAT(0) cube complex.  Bowditch explains these connections in detail in \cite[Section 11]{bowditch2022median}.

Let $X$ be a space and $\mm:X^3\to X$ a ternary operator.  We call $(X,\mm)$ a \emph{median algebra} if for all $a,b,c,d,e \in X$ we have
$$\mm(a,b,c) = \mm(b,a,c) = \mm(c,a,b), \hspace{.1in} \mm(a,a,b) = a, \hspace{.1in} \mm(a,b,\mm(c,d,e)) = \mm(\mm(a,b,c),\mm(a,b,d),e).$$

A graph $\Gamma$ is \emph{median} if all choices of $v_1,v_2,v_3 \in \Gamma^{(0)}$, there exists a unique point lying on some geodesic from $v_i,v_j$ for all $i,j$; then $\mm(v_1,v_2,v_3)$ is that unique point.

Given $a,b \in X$, the (median) \emph{interval} between $a,b$ is $I(a,b) = \{x \in X| \hspace{.01in} \mm(a,x,b) = x\}$.  We say a median algebra $(X, \mm)$ is \emph{discrete} when $I(a,b)$ is finite for every $a,b \in X$.  A \emph{subalgebra} is a subset $S \subset X$ which is closed under $\mm$.  In particular, locally finite median graphs are discrete median algebras.

Every median algebra $(X,\mm)$ determines a graph called its \emph{adjacency graph} $\Gamma(X)$ as follows.  We say that $a,b \in X$ are \emph{adjacent} when $I(a,b) = \emptyset$.  The vertices of $\Gamma(X)$ are the points of $X$, with $a,b \in X$ connected by an edge in $\Gamma(X)$ exactly when $a,b$ are adjacent.  This graph is median exactly when $(X,\mm)$ is a discrete median algebra \cite[Lemma 16.1.2]{bowditch2022median}.

Every CAT(0) cube complex $X$ admits a canonical median structure as follows.  For any $a,b \in X^{(0)}$, let $J(a,b)$ denote the set of $\ell^1$-geodesics between $a,b$.  Then given $a,b,c \in X^{(0)}$, we have $\mm(a,b,c) = J(a,b) \cap J(a,c) \cap J(b,c)$ is a single point, and $(X, \mm)$ is a median structure with $X^{(1)}$ a median graph, with notably $J(a,b) = I(a,b)$ for all $a, b \in X^{(0)}$.  Moreover, it is immediate that $X^{(1)} = \Gamma(X^{(0)})$, since median adjacent vertices in $X^{(0)}$ are exactly those spanning an edge in $X^{(1)}$.

\begin{proof}[Proof of Theorem \ref{thm:Q CCC}]
   First, observe that $\calY = \prod_{U \in \calU} \hT_U$ is a product of simplicial trees and hence a CAT(0) cube complex.
   
   Next, observe that $\calQ \subset \calY$ is a cubical subcomplex as follows.  For any $\hx \in \calQ$, let $C \subset \calY$ be the minimal cube containing $\hx$ in its interior.  Presuming $C$ is not a vertex, minimality says that $\hx_U$ is in the interior of an edge in each of the defining coordinate intervals of $C$, whose domain labels are pairwise orthogonal by item (9) of Lemma \ref{lem:collapsed interval control}.  This means that we can freely alter those coordinates of $\hx$ to be any choice of points in those edges, meaning $C \subset \calQ$.  Hence $\calQ$ is a cubical subcomplex of $\calY$.

   It is enough then to show that $\calQ^{(0)} \subset \calY$ is a discrete median subalgebra.  For once that is done, it will follow that $\Gamma(\calQ)$ is a median graph, but evidently $\Gamma(\calQ) = \calQ^{(1)}$, as $\calQ$ is a cubical subcomplex of $\calY$.  Since every median graph is the $1$-skeleton of a unique CAT(0) cube complex \cite[Theorem 6.1]{Che00}, it follows that $\calQ$ must be that cube complex, completing the proof.

   The claim boils down to three exercises that we leave for the reader:
   \begin{itemize}
       \item Any (finite) product of median spaces admits the structure of a median space, where the median on the product is the tuple of medians in the coordinates.
       \item For any $\ha,\hb,\hc \in \calQ^{(0)}$, we have $\left(\mm_U(\ha_U,\hb_U,\hc_U)\right)_U \in \calQ$.  This can be proven using a basic consistency argument (see \cite[Lemma 15.12]{Dur_infcube}).
       \item With this median structure, $a,b \in \calQ^{(0)}$ are median adjacent exactly when they span an edge in $\calQ^{(1)}$.
   \end{itemize}

   Finally, observe that the hyperplanes of $\calY$ are of the form $\hh \times \prod_{V\neq U} \hT_V$ for $\hh$ some hyperplane in $\hT_U$.  Since $\calQ$ is a median subalgebra of $\calY$, the hyperplanes of $\calQ$ are just hyperplanes of $\calY$ intersected with $\calQ$, so the moreover statement follows.  This completes the proof.
\end{proof}

\subsection{More tree trimming and hyperplane deletions}\label{subsec:tt cube}

In Subsection \ref{subsec:tree trimming}, we introduced our tree trimming techniques, which allowed us to take a family of collapsed intervals and collapse a further family of subintervals while maintaining control over their associated $0$-consistent subsets.  In this subsection, we need a further refinement applied to HFIs and their associated cube complexes.

To set up the statement, let $\{\hT_U\}_{U \in \calU}$ be a hierarchical family of intervals, as in Definition \ref{defn:HFI}.  Fix a constant $B>0$, which will control the size and number of subintervals we will collapse across all of the intervals.  In our applications, we will be able to control the size of $B$, but notably not with respect to the largeness constant $K$, unlike in the statement of Theorem \ref{thm:tree trimming}.

\begin{definition}[Admissible simplicial trimming setup]\label{defn:simplicial trimming setup}
Given $B>0$, a $B$-\emph{admissible simplicial trimming setup} is a collection of \emph{trimming intervals} $\calB$ in the various $\hT_{U,e}$ satisfying the following:
\begin{enumerate}
\item For each $D \in \calB$ with $D \subset E \in \pi_0(\hT_{U,e})$, we have \label{item:trimming integer}
\begin{enumerate}
\item $D$ is a subinterval of $E$ with endpoints at vertices, and
\item the length $l(D)$ of $D$ is a positive integer whose length is bounded above by $B$.
\end{enumerate}
\item The number of subintervals in $\calB$ is bounded by $B$, that is $\#\calB < B$.
\end{enumerate}
\end{definition}

\begin{remark}
We note that the conditions in Definition \ref{defn:simplicial trimming setup} guarantee that each quotient interval $\hT_{U,0}$ is a simplicial interval of the usual type we consider.  The requirement that each segment $D \in \calB$ has endpoints at vertices guarantees that when we collapse such a segment, then it only affects the interval-hyperplanes it contains, preserving those in its complement.
\end{remark}

We use the same associated notation as in Subsection \ref{subsec:tree trimming}.  Furthermore, we note that if $\hT^e_U \subset \calB$, then $\diam (\hT_U) < B^2$ and $\hT_{U,0}$ is a point.  Let $\calU_0 \subset \calU$ denote the subset of domains for which $\hT_{U,0}$ is not a point.  By ignoring the domains in $\calU - \calU_0$, we can consider the set of collapsed intervals $\{\hT_{U,0}\}_{U \in \calU_0}$, where we remove any labels $\hd^V_U$ from $\hT_U$ from any domain $V \nest U$ with $V \in \calU - \calU_0$.

The following theorem expands on Theorem \ref{thm:tree trimming} in the HFI setting:

\begin{theorem}\label{thm:simplicial tree trimming}
Let $B>0$ and $\{\hT_U\}_{U \in \calU}$ be an HFI.  For any $B$-admissible simplicial trimming setup $\calB$, the following hold:

\begin{enumerate}
\item The collections $\{\hT_{U,0}\}_{U \in \calU}$ and $\{\hT_{U,0}\}_{U \in \calU_0}$ are HFIs.  We let $\calQ', \calQ_0$ denote their corresponding $0$-consistent subsets.
\item We have $\Delta(\calQ) = \calQ'$, where $\Delta:\prod_U \hT_U \to \prod_U \hT'_U$ is the global collapsing map.
\item There exists a map $\Xi:\calQ' \to \calQ$ with $\Delta \circ \Xi = id_{\calQ'}$.
\item The map $\Delta|_{\calQ}: \calQ \to \calQ'$ is a $(1, B^2)$-quasi-isometry.
\item At the level of cube complexes, the map $\Delta:\calQ \to \calQ'$ is a hyperplane deletion map deleting at most $B^2$-many hyperplanes.
\end{enumerate}

\end{theorem}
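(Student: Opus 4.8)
The plan is to dispatch items (1), (2), (4), (5) as quick consequences of the construction and of Theorem~\ref{thm:Q CCC}, and to concentrate the real work on item (3).

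\textbf{Items (1) and (2).} For (1) I would check the axioms of Definition~\ref{defn:HFI} for $\{\hT_{U,0}\}_{U\in\calU}$ directly. The index set and the relations $\nest,\pitchfork,\perp$ are unchanged, so axioms (1)--(2) are inherited; simpliciality of each $\hT_{U,0}$ is exactly what the integer-length requirement of Definition~\ref{defn:simplicial trimming setup} buys, and a quotient of a finite interval is finite, giving axiom (3); the marked points of $\hT_{U,0}$ are the $\Delta_U$-images of $\hx_U,\hy_U$, and these remain the endpoints, giving (4). For the projection axioms (5)--(7) the only real point is that $(\hd^V_U)_0$ is well-defined \emph{as a map} when $U\nest V$: this uses that every trimming interval lies in an edge component of the relevant interval and so meets the designated cluster points only possibly at its endpoints, which is precisely what is needed for $\Delta_U\circ\hd^V_U\circ\Delta_V^{-1}$ to descend and for the consistency and bounded-geodesic-image clauses and the delta-control clause (7) to pass to the quotient. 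The passage to $\{\hT_{U,0}\}_{U\in\calU_0}$ only discards domains whose collapsed interval is a point; the one axiom requiring attention is the existence of a unique $\nest$-maximal element, i.e.\ that $S\in\calU_0$, which holds because $\hT_S$ is not collapsed to a point (in the applications $B$ is chosen independently of $K$, so $\hT_S$ has diameter $\succ K\gg B^2$; in the abstract statement one assumes this). For item (2), the inclusion $\Delta(\calQ)\subseteq\calQ'$ is immediate: a coordinate of $\Delta(\hx)$ equals an induced collapsed projection exactly when the corresponding coordinate of $\hx$ equals a projection, so $0$-consistency is preserved (the same observation as in the sketch of Theorem~\ref{thm:tree trimming}); the reverse inclusion is supplied by item (3).

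\textbf{Item (3), the substantive step.} I would build the section $\Xi\colon\calQ'\to\calQ$ coordinate-wise: given $\hy\in\calQ'$ and $U\in\calU$, the fiber $C_U:=\Delta_U^{-1}(\hy_U)$ is a subinterval of $\hT_U$ of length $<B^2$ --- a single point unless $\hy_U$ sits where a union of adjacent trimming intervals was collapsed --- and we must pick $\hx_U\in C_U$ so that $\hx=(\hx_U)$ is $0$-consistent; then $\Delta_U(\hx_U)=\hy_U$ automatically, so $\Delta\circ\Xi=\mathrm{id}_{\calQ'}$. The choice proceeds by induction on $\nest$-level, in the spirit of the cluster-honing procedure of Subsection~\ref{subsec:hO defined} and of the proof of Theorem~\ref{thm:tree trimming}. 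The structural facts that make it run: when the relevant cluster point does not lie in the corresponding fiber, $0$-consistency of $\hy$ forces the \emph{other} coordinate of the pair to be a collapsed projection, which translates into a constraint we are free to satisfy when choosing that coordinate later; a constraint imposed on $\hx_U$ by a \emph{larger} domain $W$ (i.e.\ $U\nest W$, $\hx_W\ne\hd^U_W$) is, by the BGI clause of Definition~\ref{defn:HFI}, the requirement that $\hd^W_U(\hx_W)$ be a marked point of $\hT_U$, so such constraints always pin $\hx_U$ to an endpoint of $C_U$, and two such constraints could disagree only if both endpoints of $\hT_U$ lie in $C_U$ --- but then $\hT_{U,0}$ is a point and that coordinate carries no information; and obligations of the form ``$\hx_U=\hd^V_U$'' coming from transverse partners are reconciled using consistency of $\hy$ together with the delta-control axiom. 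Organizing these obligations into a single coherent top-down choice and then verifying every $0$-consistency inequality (including the transverse--transverse interactions) is where essentially all the difficulty lies, and I expect it to be the main obstacle. It is, however, not new: it is the argument of \cite[Section 10]{Dur_infcube} specialized to the two-point, simplicial-trimming setting, so I would cite it rather than reproduce it.

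\textbf{Items (4) and (5).} Item (4) is then formal. $\Delta|_{\calQ}$ is onto by (2)--(3), and it is $1$-Lipschitz because each $\Delta_U$ collapses subintervals and $d_{\calQ},d_{\calQ'}$ are the $\ell^1$-sums of the factor distances (as $\calQ,\calQ'$ are median subcomplexes of the respective products, by Theorem~\ref{thm:Q CCC}); conversely, for $\hx,\hy\in\calQ$ each geodesic $[\hx_U,\hy_U]$ in $\hT_U$ is shortened under $\Delta_U$ by exactly the total length of the trimming intervals it crosses, so
\[
  d_{\calQ}(\hx,\hy)-d_{\calQ'}(\Delta\hx,\Delta\hy)\ \le\ \sum_{D\in\calB}l(D)\ <\ B\cdot B\ =\ B^2 ,
\]
using $\#\calB<B$ and $l(D)<B$; hence $\Delta|_{\calQ}$ is a $(1,B^2)$-quasi-isometry. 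For item (5), Theorem~\ref{thm:Q CCC} identifies the hyperplanes of $\calQ$ with the edges of the factors $\hT_U$; collapsing the trimming intervals removes precisely the edges they contain, at most $\sum_{D\in\calB}l(D)<B^2$ of them, and leaves every other edge together with its half-space structure untouched, so by Lemma~\ref{lem:halfspaces_bijection} (the half-space families of the two HFIs differ exactly by the walls dual to the deleted edges, with complements and disjointness preserved) the map $\Delta\colon\calQ\to\calQ'$ is the hyperplane deletion map of Subsection~\ref{subsec:CCC background} for this collection of at most $B^2$ hyperplanes.
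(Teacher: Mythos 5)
Your proposal is correct and takes essentially the same route as the paper: items (2)--(3) are delegated to the Tree Trimming argument (Theorem \ref{thm:tree trimming}, i.e.\ the \cite[Section 10]{Dur_infcube} induction, with the HFI axioms playing the role of Lemma \ref{lem:collapsed interval control}), and item (5) comes from the edge--hyperplane correspondence in the ``moreover'' part of Theorem \ref{thm:Q CCC} together with the bound $\sum_{D\in\calB}l(D)<B^2$. The only cosmetic differences are that you prove (4) directly by the $\ell^1$ estimate rather than deducing it from (5), and you are somewhat more careful than the paper about the $\nest$-maximal element when passing to $\calU_0$ in item (1).
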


\begin{proof}
Item (1) is straightforward, since we are just collapsing simplicial subintervals.  Items (2)--(3) follow from Theorem \ref{thm:tree trimming}, where in the argument we just replace the HFI axioms (Definition \ref{defn:HFI}) with the appropriate properties from Lemma \ref{lem:collapsed interval control}.  The fact that $\Delta$ deletes at most $B^2$-many hyperplanes from $\prod_U \hT_U$, so item (5) follows from the ``moreover'' part of Theorem \ref{thm:Q CCC}.  Finally, item (4) follows immediately from item (5).  This completes the proof.

\end{proof}

\subsection{Hierarchy paths and the distance formula}\label{subsec:HP and DF}

With Theorem \ref{thm:Q CCC} in hand, we can make some observations about the cubical structure of $\calQ$ and how it relates to its hierarchical nature.  In particular, we get a proof of the (more difficult) lower bound of the Distance Formula \ref{thm:DF} and the existence of hierarchy paths.

The following lemma is \cite[Lemma 14.7]{Dur_infcube}, and we have included its proof here for completeness.

\begin{lemma}\label{lem:NR hp in Q}
Let $\hx,\hy \in \calQ$.  Let $\gamma$ be any combinatorial geodesic in $\calQ$ from $\hx$ to $\hy$.  Then for all $U \in \calU$, the projection $\hpi_U(\gamma) \subset \hT_U$ is an unparameterized geodesic between $\hx_U$ and $\hy_U$.
\end{lemma}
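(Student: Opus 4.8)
The plan is to show that a combinatorial geodesic in $\calQ$, projected to each factor interval $\hT_U$, cannot backtrack, and then that it must actually traverse the full interval between the endpoint-coordinates. The key structural input is the ``moreover'' part of Theorem \ref{thm:Q CCC}: each hyperplane of $\calQ$ is (the restriction to $\calQ$ of) a hyperplane of the form $\hh \times \prod_{V \neq U} \hT_V$ for $\hh$ an edge/hyperplane of the tree $\hT_U$. So crossing a hyperplane of $\calQ$ changes exactly one coordinate, by moving across exactly one edge of the corresponding tree $\hT_U$.

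First I would recall the standard fact about CAT(0) cube complexes: a combinatorial path from $\hx$ to $\hy$ is a geodesic if and only if it crosses each hyperplane at most once, and a geodesic crosses exactly the set of hyperplanes separating $\hx$ from $\hy$. Now fix $U \in \calU$ and consider the projection $\hpi_U\colon \calQ \to \hT_U$ to the $U$-coordinate. Since each edge of the geodesic $\gamma$ crosses a single hyperplane of $\calQ$, it either fixes the $U$-coordinate (if the hyperplane is ``carried'' by a factor $V \neq U$) or moves the $U$-coordinate across a single edge of $\hT_U$ (if the hyperplane comes from an edge of $\hT_U$). The hyperplanes of $\calQ$ coming from edges of $\hT_U$ are precisely those whose union of carriers, projected to $\hT_U$, covers the edges of $\hT_U$ separating $\hx_U$ from $\hy_U$; and each such hyperplane is crossed exactly once by $\gamma$. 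Hence $\hpi_U(\gamma)$ crosses each edge of $\hT_U$ lying between $\hx_U$ and $\hy_U$ exactly once, crosses no other edge, and therefore is an unparametrized geodesic (i.e., a monotone edge-path) from $\hx_U$ to $\hy_U$ in the interval $\hT_U$. The one point needing a small argument is that a hyperplane of $\calQ$ coming from an edge $e$ of $\hT_U$ separates $\hx$ from $\hy$ in $\calQ$ if and only if $e$ separates $\hx_U$ from $\hy_U$ in $\hT_U$; this follows because the coordinate map $\hpi_U$ is a median morphism onto the tree (so it sends $\ell^1$-geodesics of $\calQ$ to geodesics of $\hT_U$, collapsing steps in other coordinates), which in turn is immediate from the product median structure identified in the proof of Theorem \ref{thm:Q CCC}.

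The main obstacle, such as it is, is simply making the bookkeeping precise: verifying that ``the $U$-coordinate changes by at most one edge per step and no edge of $\hT_U$ between $\hx_U,\hy_U$ is crossed twice.'' The no-double-crossing part is exactly the geodesic-crosses-each-hyperplane-once fact transported along $\hpi_U$; the ``covers all the in-between edges'' part is because the set of hyperplanes of $\calQ$ separating $\hx$ from $\hy$ and carried by $\hT_U$ is in bijection with the edges of the tree-geodesic $[\hx_U,\hy_U]_{\hT_U}$. Assembling these, $\hpi_U(\gamma)$ is an edge-path in $\hT_U$ that visits each edge of $[\hx_U,\hy_U]_{\hT_U}$ once and no other edge, hence is (after deleting stationary steps) a geodesic. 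I expect the proof to be quite short given Theorem \ref{thm:Q CCC}; it is essentially the observation that projection to a factor in a cubical subcomplex of a product sends combinatorial geodesics to unparametrized geodesics.
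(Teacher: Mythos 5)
Your proposal is correct and follows essentially the same route as the paper: the paper's proof likewise combines the hyperplane--edge correspondence from Theorem \ref{thm:Q CCC} (via the restriction quotient $\hpi_U$) with the fact that a combinatorial geodesic crosses each hyperplane at most once, concluding that $\hpi_U(\gamma)$ cannot backtrack. Your additional bookkeeping about which hyperplanes separate $\hx$ from $\hy$ is fine but not needed, since monotonicity of a path from $\hx_U$ to $\hy_U$ in the interval $\hT_U$ already makes it an unparameterized geodesic.
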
 

\begin{proof}
Since combinatorial geodesics do not cross a hyperplane twice and the restriction quotient $\hpi_U:\calQ \to \hT_U$ is $1$-Lipschitz, it follows that $\hpi_U(\gamma)$ does not backtrack either.  This completes the proof.

\end{proof}

This lemma plays two roles: it's useful for proving that an isomorphism of HFIs induces an isomorphism of their cubical duals (Proposition \ref{prop:HFI isomorphism}), and, in the next proposition, we use it directly to show that combinatorial geodesics in a cubical model get sent to hierarchy paths (Definition \ref{defn:hp}) in the ambient HHS.  That is the content of the next proposition, which is simplified version of \cite[Proposition 14.8]{Dur_infcube}:

\begin{proposition}\label{prop:cubical hp}
Let $a,b \in \calX$, $K \geq K_0$ where $K_0 = K_0(\mathfrak S)>0$ is from Theorem \ref{thm:DF}, $\calU = \Rel_K(a,b)$, and let $H = \hull_{\calX}(a,b)$.  Let $\{T_U\}_{U \in \calU}$ be any $R$-interval system for $a,b$ and $\{\hT_U\}_{U \in \calU}$ the HFI associated to any $(r_1,r_2)$-thickening for $r_1 \geq R_1, r_2 \geq R_2$  with $r_1$ a positive integer.  Let $\calQ$ be the corresponding $0$-consistent set and $\hO:\calQ \to H$ the quasi-isometry constructed in Section \ref{sec:hPsi qi}.
\begin{itemize}
\item If $\ha, \hb$ are the points $\calQ$ corresponding to $a, b$ and $\gamma$ is a combinatorial geodesic between them, then $\hO(\gamma) \subset H$ is an $(L_{hp},L_{hp})$-hierarchy path between $a,b$ for $L_{hp} = L_{hp}(R,r_1,r_2,K, \mathfrak S)>0$.
\end{itemize} 
\end{proposition}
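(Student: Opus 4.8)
The plan is to verify Definition~\ref{defn:hp}, i.e.\ that for each $U \in \mathfrak S$ the projection $\pi_U(\hO(\gamma))$ is an unparameterized $(L,L)$-quasi-geodesic in $\calC(U)$. The natural split is between domains $U \in \calU = \Rel_K(a,b)$ and domains $U \in \mathfrak S - \calU$, since the cubical model only ``sees'' the relevant domains.

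First I would handle the relevant domains $U \in \calU$. By Lemma~\ref{lem:NR hp in Q}, the combinatorial geodesic $\gamma$ in $\calQ$ has the property that $\hpi_U(\gamma) \subset \hT_U$ is an unparameterized geodesic between $\hx_U$ and $\hy_U$ (here $\hx,\hy = \ha,\hb$). Composing with the construction of $\hO$ from Section~\ref{sec:hPsi qi}, each vertex $\hz$ of $\gamma$ is sent to a point $\hO(\hz) \in H$ whose $U$-projection coarsely agrees with $\phi_U(z_U)$, where $z_U \in q_U^{-1}(\hz_U) \subset T_U$ is the coordinate chosen by $\Omega$, and $\phi_U:T_U \to \calC(U)$ is the $(R,R)$-quasi-isometric embedding from the interval system. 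The key points to assemble here: (i) the collapsing map $q_U:T_U \to \hT_U$ is distance non-increasing (item (4) of Lemma~\ref{lem:collapsed interval control}), and the preimages $q_U^{-1}(\hz_U)$ for consecutive $\hz_U$ along $\hpi_U(\gamma)$ are intervals that concatenate in order (so $\Omega$ produces points $z_U$ that trace out $T_U$ monotonically, up to the bounded backtracking allowed by the honing procedure, which is bounded by $B_1$ from Lemma~\ref{lem:bdd int total}); (ii) $\phi_U$ then sends this monotone-up-to-bounded-error path in $T_U$ to an unparameterized quasi-geodesic in $\calC(U)$ with constants depending only on $R, r_1, r_2, \mathfrak S$. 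One must be slightly careful that $\pi_U \circ \hO(\hz)$ coarsely equals $\phi_U(z_U)$: this follows by tracing the definition $\hO = \ret_H \circ \Real_\beta \circ \Phi \circ \Omega$ through the Realization Theorem~\ref{thm:realization} and the fact that $\ret_H$ is coordinate-wise closest-point projection to $\hull_U(a,b)$, together with the $R$-interval system axiom $d^{Haus}_U(\phi_U(T_U), \hull_U(a,b)) < R$.

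Second, for $U \in \mathfrak S - \calU$ one has $d_U(a,b) \le K$, and since $\hO(\gamma) \subset H = \hull_{\calX}(a,b)$, every point $\hO(\hz)$ satisfies $d_U(\hO(\hz), H_U) < \theta$ where $H_U = \hull_{\calC(U)}(a,b)$ has diameter at most $K + O(\theta)$. Hence $\diam_U(\hO(\gamma)) \le K + O(\theta, \mathfrak S)$, and any path with uniformly bounded $U$-projection is trivially an unparameterized $(L,L)$-quasi-geodesic for $L$ larger than that bound. Combining the two cases gives a uniform $L_{hp} = L_{hp}(R, r_1, r_2, K, \mathfrak S) > 0$, which is the claim.

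The main obstacle I anticipate is the first case, specifically controlling the backtracking of the $U$-projection of $\hO(\gamma)$ caused by the cluster-honing procedure inside the map $\Omega$: a priori, when $\gamma$ moves through a collapsed point $\hz_U \in \hT^c_U$, the honed representatives $z_U \in B_U(\hz) \subset C$ could jump around within the cluster $C$. The resolution is that $\diam_{T_U}(B_U) < B_1$ is bounded (Lemma~\ref{lem:bdd int total}), and $\gamma$ passes through each cluster point of $\hT_U$ only once (as it is a combinatorial geodesic, by Lemma~\ref{lem:NR hp in Q}), so the total ``wasted'' motion in $\calC(U)$ attributable to honing is at most $B_1$ per cluster and the clusters are encountered in order; this is absorbed into the quasi-geodesic additive constant. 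The only other mild technical point is confirming that $\hO$ really does realize $\phi_U(z_U)$ up to bounded error rather than some other point of $\hull_U(a,b)$, which is exactly the content threaded through Proposition~\ref{prop:interval coordinate control} and the construction in Subsection~\ref{subsec:upgrade to HHS}; I would cite those rather than re-derive them.
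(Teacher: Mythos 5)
Your overall frame (the coordinate-wise reduction, the observation that $\pi_U\circ\hO$ coarsely agrees with $\phi_U$ applied to the honed coordinate, and the trivial bound for domains $U \in \mathfrak S - \calU$) is consistent with the paper's proof, but the step where you control backtracking inside a cluster has a genuine gap. Lemma \ref{lem:bdd int total} bounds $\diam_{T_U}(B_U(\hz))$ for a \emph{fixed} $\hz \in \calQ$; it does not bound how far the honed representatives travel within a cluster $C$ as $\hz$ varies along $\gamma$ while the projection $\hpi_U(\gamma)$ sits at the collapsed point $q_U(C)$. During that (possibly long) interval of times the nested coordinates $\gamma_V$ for $V \in \BM(C)$ keep changing, so the honed point $B_U(\gamma(t))$ sweeps through $C$ --- and $C$ can have diameter far larger than any constant depending on $\mathfrak S, R, r_1, r_2$. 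So ``at most $B_1$ of wasted motion per cluster'' is false as stated: $B_1$ controls a single honed set, not the excursion over the whole interval. What is actually needed is that this sweep through $C$ is coarsely \emph{monotone}, and that is the heart of the proposition rather than a consequence of the diameter bound.

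The paper supplies exactly this missing step via an induction on $\nest_{\calU}$-level. For $\nest_{\calU}$-minimal $U$ the collapse $q_U$ only affects the $r_1$-neighborhoods of the endpoints, so Lemma \ref{lem:NR hp in Q} already gives monotonicity up to error about $2r_1$. For non-minimal $U$, if the honed path backtracked inside a cluster $C$ by much more than $100r_2+100r_1$, there is a $\nest_{\calU}$-minimal $V \nest U$ with $\hd^V_U = q_U(C)$ whose $\delta^V_U$ separates the two honed positions with room to spare; consistency together with the BGI property of Lemma \ref{lem:interval control} then forces $\gamma_V$ at the earlier time to be at $\hb_V$ and at the later time at $\ha_V$, contradicting the already-established monotonicity in the minimal domain $V$. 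To repair your argument you would need to add this (or an equivalent) hierarchical step; the purely cubical facts you invoke --- combinatorial geodesics cross hyperplanes once, Lemmas \ref{lem:NR hp in Q} and \ref{lem:bdd int total} --- do not by themselves rule out large within-cluster backtracking.
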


\begin{proof}
The following is a fairly complete sketch of the proof, which is simpler than the more general version from \cite[Proposition 14.8]{Dur_infcube}.

Since the map $\hO:\calQ \to H$ is effectively defined coordinate-wise, its enough to consider the coordinate-wise maps $B_U:\hT_U \to T_U$ as described in Subsection \ref{subsec:hO defined}, as the map $\phi_U:T_U\to \calC(U)$ is an $(R,R)$-quasi-isometric embedding by assumption.  For this, we can let $\gamma_U$ denote the coordinate-wise projection of $\gamma$ to each $\hT_U$, and we want to show that $B_U(\gamma_U)$ is an unparameterized quasi-geodesic in $T_U$.

By construction, $B_U(\gamma_U) \subset [a_U,b_U] = T_U$, so it suffices to show that this path never backtracks an unbounded amount.  Proceeding by induction in the $\nest_{\calU}$-level in $\calU$, if $U \in \calU$ is $\nest$-minimal, then the amount of collapsing in $q_U:T_U \to \hT_U$ is bounded by $2r_1$, and we are done by Lemma \ref{lem:NR hp in Q}.  For non-minimal domains $U \in \calU$, note that by Lemma \ref{lem:NR hp in Q}, backtracking in $T_U$ is only possible if $\gamma_U(t), \gamma_U(s)$ are at the same collapsed point corresponding to some cluster $C$.  If there are times $s < t$ so that $d_{T_U}(B_U(\gamma(s), b_U) - d_{T_U}(B_U(\gamma(s)),b_U) \gg 100r_2 + 100r_1$, then there is some $\nest_{\calU}$-minimal domain $V \nest U$ with $\hd^V_U = q_U(C)$ for which $B_U(\gamma_U(t))$ is on the side of $a_U$ while $B_U(\gamma_U(s))$ is on the side of $b_U$, with both $(50r_2 + 50r_1)$-away from $\delta^V_U$.  But then consistency and the BGI properties from Lemma \ref{lem:interval control} implies that $B_V(\gamma_V(t)), a_V$ and $B_V(\gamma_V(s)),b_V$ are $R$-close, and so $\gamma_V(t) = \ha_V$ and $\gamma_V(s) = \hb_V$, which contradicts the base case of this induction.  This completes the proof.
\end{proof}

\begin{remark}
     Since the map $\hO: \calQ \to \calX$ is uniformly quasi-median \cite[Section 15]{Dur_infcube}, one could alternatively use the fact that median paths in HHSes are hierarchy paths \cite[Lemma 1.37]{HHS_quasi}.
\end{remark}

As a consequence of Propositions \ref{prop:cubical hp} and \ref{prop:lower bound}, we obtain the lower bound of the Distance Formula \ref{thm:DF}:

\begin{corollary}\label{cor:DF lower bound}
There exists $K_0 = K_0(\mathfrak S)>0$ such that for all $K>K_0$ and any $x, y \in \calX$, we have
$$\sum_{U \in \mathfrak S} \left[d_U(x,y)\right]_K \prec d_{\calX}(x,y).$$
\end{corollary}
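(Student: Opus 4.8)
The plan is to feed the cubical model of the pair $\{x,y\}$ into the distance estimate of Subsection~\ref{subsec:Q qie} and the hierarchy-path property of Proposition~\ref{prop:cubical hp}. Fix $x,y\in\calX$, set $a:=x$, $b:=y$, and choose the constant $K_0$ large enough that every construction of Sections~\ref{sec:interval systems}--\ref{sec:cubulation} applies (in particular $K_0\ge K_1$ from Proposition~\ref{prop:lower bound}). For $K>K_0$ let $\calU=\Rel_K(x,y)$, build an $R$-interval system $\{T_U\}_{U\in\calU}$ for $x,y$, pass to an $(r_1,r_2)$-thickening with $r_1$ a positive integer and $r_1,r_2$ as required by Propositions~\ref{prop:hPsi defined} and \ref{prop:cubical hp}, and let $\calQ$ be the resulting $0$-consistent set, a CAT(0) cube complex by Theorem~\ref{thm:Q CCC}, with the quasi-isometry $\hO:\calQ\to H$, $H=\hull_{\calX}(x,y)$, from Section~\ref{sec:hPsi qi}. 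Write $\ha,\hb\in\calQ$ for the vertices $\hPsi(x),\hPsi(y)$; since $a=x$ and $b=y$, $\ha_U$ and $\hb_U$ are precisely the two endpoints of the interval $\hT_U$ for every $U\in\calU$.

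The first step is the identity $d_{\calQ}(\ha,\hb)=\sum_{U\in\calU}d_{\hT_U}(\ha_U,\hb_U)$. By the ``moreover'' part of Theorem~\ref{thm:Q CCC} the hyperplanes of $\calQ$ biject with the edges of the factors $\hT_U$; since $\ha_U,\hb_U$ are the endpoints of $\hT_U$, every edge of $\hT_U$ --- hence every hyperplane of $\calQ$ --- separates $\ha$ from $\hb$. Thus a combinatorial geodesic $\gamma$ in $\calQ$ from $\ha$ to $\hb$ crosses each hyperplane exactly once, and $\mathrm{length}(\gamma)=d_{\calQ}(\ha,\hb)=\sum_{U\in\calU}d_{\hT_U}(\ha_U,\hb_U)$. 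Next, applying Proposition~\ref{prop:lower bound} to the defining pair $x,y$ (so $\hx_U=\ha_U$, $\hy_U=\hb_U$) with threshold $K$, and discarding the domains outside $\calU$ (harmless after a bounded adjustment of the threshold, exactly as in the proof of Proposition~\ref{prop:upper bound}), one gets
\[
\sum_{U\in\mathfrak S}[d_U(x,y)]_K=\sum_{U\in\calU}[d_U(x,y)]_K \;\prec\; \sum_{U\in\calU}d_{\hT_U}(\ha_U,\hb_U)=d_{\calQ}(\ha,\hb).
\]

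It remains to bound $d_{\calQ}(\ha,\hb)$ above by $d_{\calX}(x,y)$. By Proposition~\ref{prop:cubical hp}, $\hO(\gamma)$ is an $(L_{hp},L_{hp})$-hierarchy path in $\calX$ between $x$ and $y$, hence a uniform quasi-geodesic of $\calX$; since $\hO$ is a $(B_0,B_0)$-quasi-isometry this forces $d_{\calX}(x,y)\asymp d_{\calX}(\hO(\ha),\hO(\hb))\asymp d_{\calQ}(\ha,\hb)$, so in particular $d_{\calQ}(\ha,\hb)\prec d_{\calX}(x,y)$. (Alternatively, avoiding any appeal to hierarchy paths being quasi-geodesics: the coarse Lipschitz retraction $\gate_H:\calX\to H$ of Lemma~\ref{lem:gate retract} forces $H$ to be uniformly quasi-isometrically embedded in $\calX$, so $\hO:\calQ\to\calX$ is itself a quasi-isometric embedding and $d_{\calX}(\hO(\ha),\hO(\hb))\succ d_{\calQ}(\ha,\hb)$, while $\hO(\ha)\asymp x$ and $\hO(\hb)\asymp y$.) Chaining this with the previous display yields $\sum_{U\in\mathfrak S}[d_U(x,y)]_K\prec d_{\calX}(x,y)$ with constants depending only on $\mathfrak S$, as required.

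The main obstacle is precisely this last step, $d_{\calQ}(\ha,\hb)\prec d_{\calX}(x,y)$: one must compare the length of a combinatorial geodesic in the cubical model with the distance in $\calX$ without circularly re-deriving the full Distance Formula. The cleanest route uses only material already in place --- the quasi-isometry $\hO$ of Section~\ref{sec:hPsi qi} together with the retraction of Lemma~\ref{lem:gate retract} --- while the hierarchy-path route is equivalent but additionally invokes the standard fact that hierarchy paths in an HHS are quasi-geodesics. Everything else is bookkeeping: the first identity is immediate from Theorem~\ref{thm:Q CCC}, the middle inequality is Proposition~\ref{prop:lower bound} with the usual threshold adjustment, and the freedom to enlarge $K_0$ removes any concern about small thresholds.
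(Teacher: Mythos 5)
Your skeleton is the right one, and the routine parts are fine: $\ha_U,\hb_U$ are indeed the endpoints of $\hT_U$, so $d_{\calQ}(\ha,\hb)=\sum_{U\in\calU}d_{\hT_U}(\ha_U,\hb_U)$, and Proposition \ref{prop:lower bound} plus the threshold bookkeeping gives $\sum_{U\in\mathfrak S}[d_U(x,y)]_K\prec d_{\calQ}(\ha,\hb)$. The gap is in the one step carrying real content, $d_{\calQ}(\ha,\hb)\prec d_{\calX}(x,y)$, and neither of your routes closes it. In the first route, the inference ``hierarchy path, hence a uniform quasi-geodesic'' is not available: Definition \ref{defn:hp} only asks that the projections be unparameterized quasi-geodesics, and this is satisfied, e.g., by a constant path, so hierarchy paths in this paper's sense need not be quasi-geodesics. (In the references where hierarchy paths \emph{are} quasi-geodesics, that is part of the definition, not a theorem you can use to upgrade the weaker notion; what you would need is that $\hO$ does not crush long subsegments of $\gamma$ to bounded subsets of $\calX$, which is exactly the inequality at stake.) In both routes you then lean on ``$\hO$ is a $(B_0,B_0)$-quasi-isometry'' (resp.\ ``a quasi-isometric embedding into $\calX$''); but the only comparison between $d_{\calQ}$ and $d_{\calX}$ established in Section \ref{sec:hPsi qi} is the left coarse equality of Theorem \ref{thm:Q distance estimate}, which the paper explicitly identifies as the Distance Formula \ref{thm:DF} itself. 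Since the corollary is billed as a proof of the harder DF bound, quoting that quasi-isometry is circular; and note that if it were legitimately available, the corollary would follow in one line and Proposition \ref{prop:cubical hp} would be doing no work at all. The gate retraction of Lemma \ref{lem:gate retract} alone does not repair this: it shows chains inside $H$ are $d_{\calX}$-efficient, but you still must show that points of $H$ at bounded $d_{\calX}$-distance have $\hPsi$-images at bounded $d_{\calQ}$-distance, and you never supply that.

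The missing ingredient is Proposition \ref{prop:upper bound}. If $p,q\in H$ with $d_{\calX}(p,q)\le 2J+1$, then $d_U(p,q)$ is bounded in terms of $\mathfrak S$ for every $U$ (the projections are $(\ES,\ES)$-coarsely Lipschitz), so after choosing the threshold $K_2$ above that bound, all terms $[d_U(p,q)]_{K_2}$ vanish and Proposition \ref{prop:upper bound} yields $d_{\calQ}(\hPsi(p),\hPsi(q))=\sum_{U\in\calU}d_{\hT_U}(\hpsi_U(p),\hpsi_U(q))\le C(\mathfrak S,K)$. Now subdivide a geodesic from $x$ to $y$ into roughly $d_{\calX}(x,y)$ unit steps, push each subdivision point into $H$ with $\gate_{H}$ (Lemma \ref{lem:gate retract}), and sum over consecutive gated points to get $d_{\calQ}(\ha,\hb)\prec d_{\calX}(x,y)$ with constants depending only on $\mathfrak S$ and $K$. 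Combined with your first display this proves the corollary, using only Propositions \ref{prop:upper bound} and \ref{prop:lower bound}, the gate map, and coarse Lipschitzness of the projections --- with no appeal to hierarchy paths and no use of the quasi-isometry statement whose $d_{\calX}$-comparison already presupposes the Distance Formula.
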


\subsection{Isomorphisms of HFIs and their associated cube complexes}

In this final subsection, we will discuss when two HFIs to determine isomorphic cube complexes.  This statement, while unsurprising, motivates what happens in the subsequent sections, hence its placement here.

Let $\{\hT_U\}_{U \in \calU}$ and $\{\hT'_U\}_{U \in \calU}$ be two HFIs over a common index set $\calU$ and common relations, i.e. $U,V \in \calU$ have the same relation in both HFIs.  Any family of maps $i_U:\hT_U \to \hT'_U$ induces a global map $I:\prod_{U \in \calU} \hT_U \to \prod_{U \in \calU} \hT'_U$, where $I((\hx_U)) = (i_U(\hx_U))$.

The following proposition, which is an interval version of \cite[Proposition 14.12]{Dur_infcube}, gives sufficient conditions for the induced map to be an isometry $I:\calQ \to \calQ'$ which induces a cubical isomorphism between their dual cube complexes.

\begin{proposition}\label{prop:HFI isomorphism}
Let $\{\hT_U\}_{U \in \calU}$ and $\{\hT'_U\}_{U \in \calU}$ be two HFIs over a common index set $\calU$ with marked points $F = \{a,b\}$, and let $\calQ, \calQ'$ be their corresponding $0$-consistent sets.  Suppose that for each $U \in \calU$, there is a simplicial isometry $i_U:\hT_U \to \hT'_U$ which also satisfies:
\begin{enumerate}
	\item We have $i_U(\ha_U) = \ha'_U \in \hT'_U$ and $i_U(\hb_U) = \hb'_U \in \hT'_U$.
	\item For all $V  \in \calU$ with $V \nest U$ or $V \pitchfork U$, we have $i_U(\hd^V_U) = \left(\hd^V_U\right)'$.
	\item For all $V \in \calU$ with $U \nest V$ and $\hx_V \in \hT_V$, we have $\left(\hd^V_U\right)'(i_V(\hx_V)) = i_U(\hd^V_U(\hx_V))$.
\end{enumerate}
Then the corresponding map induces a cubical isomorphism $I:\calQ \to \calQ'$.
\end{proposition}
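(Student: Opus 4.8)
The plan is to show that the induced map $I \colon \prod_U \hT_U \to \prod_U \hT'_U$ restricts to a bijection $\calQ \to \calQ'$ that is a median isomorphism, and then invoke Theorem~\ref{thm:Q CCC} (via Chepoi's theorem, since a median isomorphism of the $1$-skeleta extends uniquely to a cubical isomorphism). Since each $i_U$ is a simplicial isometry, $I$ is automatically a cellular isometry of the ambient products $\prod_U \hT_U \to \prod_U \hT'_U$; the entire content is that $I$ carries the $0$-consistent subset onto the $0$-consistent subset.

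First I would verify $I(\calQ) \subseteq \calQ'$. Let $\hx = (\hx_U) \in \calQ$, and set $\hx' = I(\hx) = (i_U(\hx_U))$. Check the two conditions of Definition~\ref{defn:Q defined} for $\hx'$. If $U \pitchfork V$, then $0$-consistency of $\hx$ gives (WLOG) $\hx_U = \hd^V_U$; applying $i_U$ and using hypothesis (2), $\hx'_U = i_U(\hd^V_U) = (\hd^V_U)'$, which is exactly condition (1) for $\hx'$. If $U \nest V$, then either $\hx_V = \hd^U_V$ — apply $i_V$ and hypothesis (2) to get $\hx'_V = (\hd^U_V)'$ — or $\hx_U \in \hd^V_U(\hx_V)$; in the latter case apply $i_U$ and hypothesis (3), which says $(\hd^V_U)'(i_V(\hx_V)) = i_U(\hd^V_U(\hx_V))$, so $\hx'_U = i_U(\hx_U) \in i_U(\hd^V_U(\hx_V)) = (\hd^V_U)'(\hx'_V)$, giving condition (2). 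Hence $\hx' \in \calQ'$. Running the identical argument with the roles of the two HFIs swapped and the maps $i_U^{-1}$ (which are again simplicial isometries and satisfy the mirror versions of (1)--(3) — for (3) one precomposes and postcomposes with the appropriate inverses) shows $I^{-1}(\calQ') \subseteq \calQ$, so $I$ restricts to a bijection $\calQ \to \calQ'$.

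Next I would record that this bijection is an isometry and a median isomorphism. By Theorem~\ref{thm:Q CCC}, $\calQ$ (resp.\ $\calQ'$) is a cubical subcomplex of $\prod_U \hT_U$ (resp.\ $\prod_U \hT'_U$), and its median is the coordinatewise median of the tree factors. Since each $i_U$ is an isometry of simplicial intervals, it preserves the median on $\hT_U$ (the median of three points on an interval is just the middle one), so $I$ intertwines the coordinatewise medians; restricting to $\calQ$, and using that the median on $\calQ$ is inherited from the ambient product (as $\calQ$ is a median subalgebra, by the proof of Theorem~\ref{thm:Q CCC}), we conclude $I\colon \calQ \to \calQ'$ is a median isomorphism, hence a graph isomorphism $\calQ^{(1)} \to (\calQ')^{(1)}$ of the adjacency graphs. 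Since a median graph is the $1$-skeleton of a unique CAT(0) cube complex \cite[Theorem 6.1]{Che00}, the graph isomorphism $I$ extends uniquely to a cubical isomorphism $\calQ \to \calQ'$, completing the proof.

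The step I expect to carry the most weight is the stability of the bijection under $I^{-1}$, specifically checking that the inverse maps $i_U^{-1}$ satisfy hypothesis (3) in the reversed direction: one needs $(\hd^V_U)(i_V^{-1}(\hx'_V)) = i_U^{-1}((\hd^V_U)'(\hx'_V))$, which follows formally from hypothesis (3) applied to $\hx_V = i_V^{-1}(\hx'_V)$ together with the bijectivity of $i_U$, but it is the one spot where the asymmetric-looking hypothesis (3) has to be massaged. Everything else is a direct unwinding of definitions; no genuine obstacle arises because the simplicial isometries make all the metric and median bookkeeping automatic, and the HFI axioms (Definition~\ref{defn:HFI}) are not even needed beyond ensuring that the relevant $\hd$'s and projection maps exist.
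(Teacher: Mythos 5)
Your proposal is correct: the consistency check that $I$ carries $\calQ$ into $\calQ'$, and the symmetric argument with the $i_U^{-1}$ giving bijectivity, is exactly the content the paper's proof gestures at, and your handling of hypothesis (3) under inversion is the right (and only mildly delicate) point. Where you diverge is the endgame. The paper upgrades the bijection to a cubical isomorphism via Lemma \ref{lem:halfspaces_bijection}: since the $i_U$ are simplicial, the edge--hyperplane correspondence from the ``moreover'' part of Theorem \ref{thm:Q CCC} gives a bijection of half-spaces preserving complements and disjointness, and that lemma does the rest. You instead observe that each $i_U$ preserves the median of the interval, so $I$ intertwines the coordinatewise medians, is a median isomorphism of the $0$-skeleta, hence a graph isomorphism of the adjacency graphs, and then Chepoi's theorem \cite{Che00} (already used to prove Theorem \ref{thm:Q CCC}) forces a cubical isomorphism. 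Both routes are sound; the wallspace route makes the hyperplane bookkeeping explicit (which is convenient later when hyperplane deletions are tracked in Theorem \ref{thm:simplicial tree trimming} and Definition \ref{defn:stable cubes}), while your median route stays entirely inside the machinery established in the proof of Theorem \ref{thm:Q CCC} and avoids re-introducing half-spaces. One small point to make explicit if you write this up: since $\calQ$ and $\calQ'$ are cubical subcomplexes of the ambient products and the $i_U$ are simplicial, $I$ restricts to a bijection of vertex sets, which is what the median/adjacency-graph argument actually needs.
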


\begin{proof}[Sketch of proof]

The proof of this version below is the same as the original, which is, in turn, an exercise in the construction from this section plus Lemma \ref{lem:NR hp in Q}.  The main idea is that items in the statement guarantee that the global map $I$ induced from the coordinate-wise isometries preserves $0$-consistency and distances, i.e. we have $I:\calQ \to \calQ'$ is an isometric embedding, with surjectivity following from a consistency argument.  Since the interval-wise maps respect the simplicial structure, we have that $I$ is also a cubical isomorphism via Lemma \ref{lem:halfspaces_bijection}.  We leave the details to the reader.

\end{proof}

\section{Controlling domains} \label{sec:controlling domains}

In this section, we turn toward the proof of our main technical result, namely the Stable Cubical Interval Theorem \ref{thm:stable cubes}, which is a stabilized version of the cubical model construction for a pair of points $a,b$ in any HHS $\calX$.  The goal is to show that if we perturb $a,b$ a small amount to a pair of points $a',b'$ with $d_{\calX}(a,a') \leq 1$ and $d_{\calX}(b,b') \leq 1$, then we can produce cubical models $\calQ$ for $a,b$ and $\calQ'$ for $a',b'$ which are isomorphic up to deleting boundedly-many hyperplanes.  Doing so requires carefully controlling the main inputs into the cubulation machine---namely the families of intervals we construct from the projections of the points.

The first step in the process involves explaining how to gain control over the set of relevant domains (Definition \ref{defn:relevant}) for a pair of points $F = \{a,b\} \subset \calX$ and its perturbation $F' = \{a',b'\}$  as above.  In the next sections, we explain how to take this projection data to build a family of stable intervals that we can plug into the cubulation machine.

The main bit of work for this first part has already been done for us by Bestvina--Bromberg--Fujiwara \cite{BBF} and Bestvina--Bromberg--Fujiwara--Sisto \cite{BBFS}.  The work we did with Minsky and Sisto in \cite{DMS_bary} mostly adapts their setup to the HHS setting, and makes some straight-forward but important observations.

In \cite[Subsections 2.3 and 2.4]{DMS_bary}, we were dealing with the situation where $F,F'$ were finite sets of points with cardinality possibly larger than $2$.  Nonetheless, all of the arguments there reduce to considering pairs of points, and so the proofs for our current setting are essentially the same.  As such, we will mainly explain the setup and key statements, only giving minimal sketches of proofs.

\subsection{Colorability}

The hierarchical setting in which \cite{BBFS} works is for the following class, which was first defined in \cite[Definition 2.8]{DMS_bary} and later used in \cite{HP_proj,Petyt_quasicube}:

\begin{definition}[Colorable HHS]\label{defn:colorable}
We say that an HHS
$(\calX, \mathfrak S)$ is \emph{colorable} if there exists a decomposition of
$\mathfrak S$ into finitely many families $\mathfrak S_i$, so that each $\mathfrak S_i$ is
pairwise-$\pitchfork$.

\begin{itemize}
\item We refer to the $\mathfrak S_i$ as \emph{BBF families}.
\end{itemize}
\end{definition}

\begin{remark}
The notion of colorability was inspired by work of Bestvina-Bromberg-Fujiwara \cite{BBF}, who proved that the curve graph is finitely-colorable.  As a consequence of this, the set of subsurfaces of a given finite-type surface $S$ can be decomposed into finitely-many families of pairwise-interlocking subsurfaces, which the above definition directly generalizes.  See \cite[Subsection 2.3]{DMS_bary} for a detailed discussion of this connection.
\end{remark}

\subsection{Stable projections}

Our next goal is to state a theorem which equips any colorable HHS with a set of projections which satisfy certain key stability properties.  We need some definitions first.

Given a pair of points $F = \{x,y\} \subset \calX$ and $K>0$, we let $\calU(F) = \Rel_K(F)$.  If $V \in \calU(F)$, then we let $\calU^V(F) = \{W \in \calU(F) | W \sqsubsetneq V\}$.

The following definition \cite[Definition 2.13]{DMS_bary} describes the domains we want to control when perturbing $F$:

\begin{definition}[Involved domains]\label{defn:involved}
Let $F = \{x,y\}, F' = \{x', y'\} \subset \calX$ be so that $d_{\calX}(x,x') \leq 1 $ and $d_{\calX}(y,y') \leq 1$.  For $K>0$, we say that $V \in \calU(F) \cup \calU(F')$ is \emph{involved in the transition} between $F$ and $F'$ if one of the following holds:

\begin{enumerate}
\item $\pi_V(F) \neq \pi_V(F')$.
\item $\calU^V(F) \neq \calU^V(F')$.
\end{enumerate}
\end{definition}

The following, which combines the statements of \cite[Theorem 2.9 and Proposition 2.14]{DMS_bary}, states that, for any colorable HHS $\calX$, we can replace its given projections with coarsely equivalent projections with which we can control involved domains up to a bounded error:

\begin{theorem}\label{thm:controlling domains}
Let $(\calX, \mathfrak S)$ be a colorable HHS with standard projections $\hat{\pi}_{-}, \hat{\rho}^{-}_{-}$.  Then there exists $\theta>0$ and new projections $\pi_{-}, \rho^{-}_{-}$ with the same domains and ranges, respectively, such that the following hold:

\begin{enumerate}
\item The new projections $\pi_{-}, \rho^{-}_{-}$ coarsely coincide with the old ones $\hat{\pi}_{-}, \hat{\rho}^{-}_{-}$.  More precisely:
\begin{enumerate}
\item If $X,Y$ lie in different $\mathfrak S_j$ and $\hat{\rho}^X_Y$ is defined, then $\rho^X_Y = \hat{\rho}^X_Y$.
\item If $X,Y \in \mathfrak S_j$, then $d^{Haus}_{Y}(\rho^X_Y, \hat{\rho}^X_Y)<\theta.$
\item If $x \in \calX$ and $Y \in \mathfrak S$, then $d^{Haus}_Y(\pi_Y(x),\hat{\pi}_Y(x))<\theta$.
\end{enumerate}
\item There exists $K_0 = K_0(\calX)>0$ and $N_0(\calX)>0$ so that if $K>K_0$, then the following holds.  Let $x,y,x',y' \in \calX$ with $d_{\calX}(x,x') \leq 1 $ and $d_{\calX}(y,y') \leq 1$, and let $\calU = \Rel_K(\{x,y\})$ and $\calU' = \Rel_K(\{x',y'\})$ be the relevant sets for the new projections $\pi_-$.  Then:
\begin{enumerate}
\item $\#(\calU \triangle \calU') < N_0$, and if $V \in \calU \triangle \calU'$, then $d_V(x,y) \prec K$ and $d_V(x',y') \prec K$. \label{item:bounded symdiff}
\item  There are at most $N_0$ domains $V \in \calU(F) \cup \calU(F')$ involved in the transition between $F$ and $F'$. \label{item:bounded involved}
\end{enumerate}
\end{enumerate}
\end{theorem}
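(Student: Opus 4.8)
The plan is to reduce the whole statement to the Bestvina--Bromberg--Fujiwara--Sisto projection machinery inside pairwise-transverse families, to import from it both the definition of the new projections and a single ``local constancy'' estimate, and then to read off the domain bounds in part (2) by short counting arguments with the Passing-Up and Covering Lemmas.

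First I would use Definition \ref{defn:colorable} to write $\mathfrak S=\bigsqcup_{j=1}^{m}\mathfrak S_j$ with each $\mathfrak S_j$ pairwise-transverse, and observe that inside a fixed $\mathfrak S_j$ the relative projections $\{\hat\rho^X_Y\}$ satisfy the projection-complex axioms of \cite{BBF} after enlarging the projection constant; this is the consistency inequalities of Definition \ref{defn:consistency} together with the bounded geodesic image Axiom \ref{ax:BGIA} and the finiteness of relevant sets (Corollary \ref{cor:rel sets are finite}), exactly as in \cite[Subsection 2.3]{DMS_bary}. Running the construction of \cite{BBFS} separately in each $\mathfrak S_j$ then yields modified projections $\rho^X_Y$ (for $X,Y$ in a common family) and point-projections $\pi_Y$ satisfying the \emph{strong} projection axioms and lying within a uniform Hausdorff distance $\theta=\theta(\calX)$ of the originals, with $\rho^X_Y=\hat\rho^X_Y$ left unchanged across distinct families; this gives (1)(a) by construction and (1)(b)--(1)(c) by the estimates of \cite{BBFS}, with $\theta$ uniform since there are only finitely many families. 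Since the new projections coarsely coincide with the old ones, all of the HHS tools used below — consistency, bounded geodesic image, the Distance Formula \ref{thm:DF}, and the Passing-Up Lemma \ref{lem:passing-up} — remain valid after adjusting constants. The one genuinely new feature I would record (following \cite[Subsection 2.4]{DMS_bary}) is the \emph{local constancy} of the modified point-projections: there is $N_1=N_1(\calX)$ with $\#\{V\in\mathfrak S:\pi_V(x)\neq\pi_V(x')\}\le N_1$ whenever $d_{\calX}(x,x')\le 1$. This fails for the standard coarsely-Lipschitz projections, for which the set is merely finite, and it is exactly what honest ``step-function'' projections buy.

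Next I would deduce part (2). Fix $K_0=K_0(\calX)$ to be at least the Distance Formula constant $K_0(\mathfrak S)$, at least $50\ES$, and large enough that the strong-projection estimates of \cite{BBFS} apply, and take $K>K_0$, with $F=\{x,y\}$, $F'=\{x',y'\}$ as in the statement. If $V\in\calU\triangle\calU'$ then $d_V(x,y)$ and $d_V(x',y')$ lie on opposite sides of $K$; since the $\pi_V$ are honest maps this forces $\pi_V(F)\neq\pi_V(F')$, hence $\pi_V(x)\neq\pi_V(x')$ or $\pi_V(y)\neq\pi_V(y')$, so local constancy gives $\#(\calU\triangle\calU')\le 2N_1$. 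Coarse Lipschitzness (constant $C$) then gives $d_V(x,y)\le d_V(x',y')+2C$ and the reverse, so for $V$ in the symmetric difference both quantities are $\le K+2C\prec K$; this is \ref{item:bounded symdiff}. For \ref{item:bounded involved}, a domain $V$ involved in the transition (Definition \ref{defn:involved}) either has $\pi_V(F)\neq\pi_V(F')$ — at most $2N_1$ of these by local constancy — or has $\calU^V(F)\neq\calU^V(F')$, in which case some $W\sqsubsetneq V$ lies in $\calU(F)\triangle\calU(F')$; there are at most $2N_1$ such $W$ by the bound just proved, each is $50\ES$-relevant for at least one of $F,F'$, and the Covering Lemma \ref{lem:covering} (together with the $2N_1$ bound again) shows each such $W$ is strictly nested in at most $N+2N_1$ domains of $\calU(F)\cup\calU(F')$, where $N=N(\mathfrak S)$. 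Taking $N_0$ larger than $2N_1+2N_1(N+2N_1)$ finishes the count.

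The hard part is the local-constancy bound in the second paragraph: controlling relevant and involved sets amounts to controlling how many domains ``detect'' a unit-size move of a point, and this is genuinely false for the honest HHS projections one starts with. Replacing them by the \cite{BBFS} projections while keeping coarse equivalence, and checking the replacement still interacts correctly with the nesting relation so that the counting above goes through, is where all the real work sits — and it is exactly the content being imported from \cite{BBF,BBFS} and \cite[Subsections 2.3--2.4]{DMS_bary}. Everything else is bookkeeping with passing-up and covering.
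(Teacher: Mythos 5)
Your part (1) and your overall framing (reduce to the \cite{BBFS} modification inside each BBF family, then count) match the paper, and your counting for item (2)(\ref{item:bounded involved}) via the Covering Lemma \ref{lem:covering} is essentially the paper's. But the pivot of your part (2) is a genuine gap: the ``local constancy'' statement you import --- that there is $N_1=N_1(\calX)$ with $\#\{V\in\mathfrak S:\pi_V(x)\neq\pi_V(x')\}\le N_1$ whenever $d_{\calX}(x,x')\le 1$, quantified over \emph{all} of $\mathfrak S$ --- is not what \cite{BBFS} or \cite[Subsection 2.4]{DMS_bary} provide, and it should fail in general. The \cite{BBFS} modification only forces exact equalities where some third object of the same family ``shields'' the domain: if $W\pitchfork V$ lies in $V$'s family and both $x,x'$ project far from $\rho^V_W$ in $\calC(W)$, the strong Behrstock inequality pins $\pi_V(x)=\rho^W_V=\pi_V(x')$. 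For unshielded domains (think of an annulus about a curve far from both markings with no large intermediate projections) the new projections must coarsely agree with the old ones, so they remain merely coarsely equal, and there are infinitely many such domains; no uniform bound on the set where $\pi_V(x)\neq\pi_V(x')$ is available. Since your derivation of (2)(\ref{item:bounded symdiff}) and of the type-(1) involved-domain bound rests entirely on this claim, the central step of part (2) is unsupported; restricted to relevant domains the claim is true, but that restricted statement \emph{is} the thing to be proved, so citing it as a black box is circular.

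The mechanism that actually closes this gap, and the one the paper uses, is the Behrstock partial order: by \cite[Proposition 2.8]{HHS_II} the order $\prec$ is total on each $\Rel^i(x,y)=\Rel_K(x,y)\cap\mathfrak S_i$, and the additional property of the new projections from \cite[Theorem 2.9]{DMS_bary} is that whenever $X\prec Y$ along both $\Rel(x,y)$ and $\Rel(x',y')$ one has the exact equalities $\pi_Y(x)=\pi_Y(x')=\rho^X_Y$ and $\pi_X(y)=\pi_X(y')=\rho^Y_X$. Hence, within each family, the two relevant sets can only disagree at the first or last elements of their respective total orders, giving $\#\bigl(\Rel^i(x,y)\triangle\Rel^i(x',y')\bigr)\le 4$ and, summing over the finitely many families, item (2)(\ref{item:bounded symdiff}); the same ordering argument bounds the involved domains of type (1) in Definition \ref{defn:involved}, after which your Covering Lemma count for type (2) goes through as written. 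If you replace your ``local constancy'' lemma by this order-theoretic argument (or cite \cite[Lemma 2.11, Propositions 2.12 and 2.14]{DMS_bary} for exactly these statements rather than for the global constancy claim), the rest of your proposal stands.
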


\begin{proof}[Sketch of proof]
The existence of new projections satisfying the conditions of item (1) is essentially an immediate consequence of the construction in \cite[Theorem 4.1]{BBFS}.  Item (2), which regards stability of the relevant sets $\calU, \calU'$, requires a bit more work, but the key ideas are as follows:

Using \cite[Proposition 2.8]{HHS_II} (which generalizes \cite{CLM12, BKMM:qirigid, BBF}), there is a partial order $\prec$ on $\Rel(x,y)$, which becomes a total order on $\Rel^i(x,y) = \Rel_K(x,y) \cap \mathfrak S_i$ for each $i$.  In particular, we can think of each $\Rel^i(x,y)$ as an interval $X_1 \prec \cdots \prec X_n$.  One of the additional properties that the new projections $\pi_{-}, \rho^{-}_{-}$ satisfy (see \cite[Theorem 2.9]{DMS_bary}) is that if $X \prec Y$ along both $\Rel(x,y)$ and $\Rel(x',y')$, then $\pi_Y(x) = \pi_Y(x') = \rho^X_Y$ and $\pi_X(y) = \pi_X(y') = \rho^Y_X$.  Using this, one can show that $\Rel(x,y)$ and $\Rel(x',y')$ can only differ at their first or last elements in the order $\prec$.  Hence $\#\left(\Rel^i(x,y) \triangle \Rel^i(x',y')\right) \leq 4$; see \cite[Lemma 2.11 and Proposition 2.12]{DMS_bary}.  Since there are only boundedly-many BBF families $\mathfrak S_i$, item \eqref{item:bounded symdiff} follows easily.

A similar argument can now be used to control the number of involved domains of type (1) of Definition \ref{defn:involved}, see the proof of \cite[Proposition 2.14]{DMS_bary}.  For involved domains of type (2), this is an immediate consequence of item \eqref{item:bounded symdiff} and the Covering Lemma \ref{lem:covering}, which bounds the number of domains $W \in \calU$ into which a given $V \in \calU$ can nest.
\end{proof}

\section{Stable Intervals}\label{sec:stable intervals}

With the new HHS projections provided by Theorem \ref{thm:controlling domains}, we now turn toward converting stabilized projection data into stable families of intervals, in partivuvlar stabilizing the input into the cubical model construction from Section \ref{sec:interval systems}.

Our ultimate goal is to produce cubical models for pairs of nearby points that differ by a controlled number of hyperplane deletions (Theorem \ref{thm:stable cubes}). Since the hyperplanes in the cubical models come from edges of the simplicial intervals in our HFIs (Theorem \ref{thm:Q CCC}), it will suffice to build HFIs where the intervals only differ by boundedly-many bounded-size collections of edges.  That is the content of the Stable Intervals Theorem \ref{thm:stable intervals}.

We first explain the build-up to this theorem and defer the explicit statement.  We will work in the setting of a hyperbolic space with a hierarchical-like configuration.  The construction is a somewhat long and delicate exercise in hyperbolic geometry and we will mainly provide a detailed sketch.

\subsection{Basic setup and assumptions}\label{subsec:stable setup}

For the rest of this section, let us assume that $\calZ$ is a $\delta$-hyperbolic geodesic space.  The first bit of notation we want to set is a choice of geodesic between any pair of points.  That is, for any $x,y \in \calZ$, we fix (once and for all) a geodesic $\lambda(x,y)$.  More generally, if $C,C' \subset \calZ$ are finite subsets, then we let $\lambda(C,C')$ denote a choice of geodesic between them whose length satisfies $|\lambda(C,C')| = d_{\calZ}(C,C')$, and we require symmetry, namely $\lambda(C,C') = \lambda(C',C)$.

It will also be necessary to perform a slight modification on the geodesic function $\lambda$, so that its resulting length is an integer.  To fix this, we let $\lambda_0(C,C')$ denote the restriction of $\lambda(C,C'):[0, d_{\calZ}(C,C')]\to \calZ$ to the interval $[0, \lfloor{d_{\calZ}(C,C')}\rfloor]$.  The effect of this is to trim $\lambda(A,B)$ so that it has positive integer length and their lengths satisfy $l(\lambda(C,C')) -l(\lambda_0(C,C')) < 1$.  While not necessary, one could make this symmetric by trimming both the sides by half of $l(\lambda(C,C')) -l(\lambda_0(C,C'))$.  In the course of the arguments, however, one can usually work with $\lambda$ instead of $\lambda_0$.

Now, for the rest of this section, we fix a pair of points $\{a,b\} \subset \calZ$.  Furthermore, let $\calY \subset \calZ$ be a finite (but possibly arbitrarily large) set of points of $\calZ$ and let $\epsilon>0$ so that $d_{\calZ}(\lambda(a,b), y) < \epsilon/2$ for all $y \in \calY$.  We call such an arrangement $(a,b;\calY)$ an $\epsilon$-\emph{setup} in $\calZ$.

In the hierarchical setting, $\calZ$ will be one of the hyperbolic spaces $\calC(U)$ for $U \in \calU$, and the points $\calY$ will be the points in $\rho^V_U$ for $V \nest U \in \calU$.  The proximity condition of the points in $\calY$ to $\lambda(a,b)$ comes from BGIA \ref{ax:BGIA}.

\begin{notation}[$O(\epsilon)$-notation]
In this section, we will often want to get controlled bounds over various distance quantities, where the bounds will depend on our choice of $\epsilon$ and the hyperbolicity constant $\delta$ for $\calZ$.  When a number $A$ is bounded above as a function of $\epsilon, \delta$, we will simply write 
$$A < O(\epsilon),$$
since $\delta$ is given to us, while we need some flexibility to choose $\epsilon$.
\end{notation}

The following lemma, which is essentially a consequence of the Morse Lemma for hyperbolic spaces, gives us control over how far $\lambda(C,C')$ can get from $\lambda(a,b)$ when $C, C' \subset \calY$, and are thus $\epsilon$-close to $\lambda(a,b)$:

\begin{lemma}[Fellow-traveling, $\delta$-hyperbolic]\label{lem:Morse close}
Let $\calZ$ be $\delta$-hyperbolic and geodesic, and $\gamma$ a geodesic between $a,b \in \calZ$.  For every $\epsilon>0$ there exists $\epsilon'=O(\epsilon)>0$ so that the following holds:

\begin{itemize}
\item Suppose $\gamma'$ is a geodesic connecting points $x,y \in \calN_{\epsilon}(\gamma)$, and that $x',y'$ are closest points to $x,y$ on $\gamma$, respectively.  Then any geodesic $[x,y]$ satisfies $d_{\calZ}^{Haus}([x,y], [x',y']) < \epsilon'$.
\end{itemize}
\end{lemma}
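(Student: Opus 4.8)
\textbf{Proof plan for Lemma \ref{lem:Morse close}.}

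The plan is to reduce everything to a standard thin-quadrilateral/Morse-type estimate in $\calZ$. First I would record the two basic tools: (i) in a $\delta$-hyperbolic space any geodesic quadrilateral is $2\delta$-thin, so every side lies in the $2\delta$-neighbourhood of the union of the other three; and (ii) nearest-point projection to a geodesic is coarsely well-defined and coarsely Lipschitz, with the usual constant depending only on $\delta$. Fix the geodesic $\gamma = \lambda(a,b)$ and points $x,y \in \calN_{\epsilon}(\gamma)$ with nearest-point projections $x',y' \in \gamma$, so $d_{\calZ}(x,x'), d_{\calZ}(y,y') < \epsilon$. I want to bound $d^{Haus}_{\calZ}([x,y],[x',y'])$.

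The key steps, in order: (1) Build short geodesics $[x,x']$ and $[y,y']$ of length $<\epsilon$, and consider the geodesic quadrilateral with sides $[x,y]$, $[y,y']$, $[y',x'] \subset \gamma$, and $[x',x]$. By $2\delta$-thinness of quadrilaterals, every point of $[x,y]$ lies within $2\delta$ of $[x,x'] \cup [x',y'] \cup [y',y]$; since $[x,x']$ and $[y',y]$ each have length $<\epsilon$, this already gives that $[x,y] \subset \calN_{\epsilon + 2\delta}([x',y'])$. (2) For the reverse inclusion, i.e. $[x',y'] \subset \calN_{\epsilon'}([x,y])$, apply thinness to the same quadrilateral from the other side: every point $p \in [x',y'] \subset \gamma$ lies within $2\delta$ of $[x,x'] \cup [x,y] \cup [y,y']$. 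If $p$ is within $2\delta$ of $[x,y]$ we are done; if $p$ is within $2\delta$ of $[x,x']$ then $d_{\calZ}(p, x) < 2\delta + \epsilon$ and $x \in [x,y]$, so $d_{\calZ}(p,[x,y]) < 2\delta+\epsilon$; symmetrically for $[y,y']$. Hence $[x',y'] \subset \calN_{\epsilon + 2\delta}([x,y])$ as well. (3) Set $\epsilon' = \epsilon + 2\delta = O(\epsilon)$ and combine the two inclusions to conclude $d^{Haus}_{\calZ}([x,y],[x',y']) < \epsilon'$.

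I expect no serious obstacle here — this is a routine consequence of quadrilateral thinness, and one does not even need the full Morse Lemma since both $[x,y]$ and $[x',y']$ are genuine geodesics rather than quasigeodesics. The only mild subtlety is the degenerate/near-degenerate cases where $x'$ and $y'$ are very close on $\gamma$ (so $[x',y']$ is short) or where $x = x'$, $y = y'$; in those cases the quadrilateral collapses to a triangle or bigon and the same thinness estimates apply a fortiori, so one should just remark that the bound $\epsilon' = \epsilon + 2\delta$ is uniform across all configurations. If one wanted the cleaner constant it suffices to note that any geodesic $[x,y]$ between two points each $\epsilon$-close to $\gamma$ is a $(1, 4\epsilon)$-quasigeodesic relative to the segment $[x',y']$, but the quadrilateral argument above is the most direct route and I would present it in that form.
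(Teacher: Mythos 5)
Your argument is correct and complete. The paper does not actually write out a proof of this lemma; it simply remarks that it is "essentially a consequence of the Morse Lemma" and moves on. Your route via $2\delta$-thinness of the geodesic quadrilateral with vertices $x,y,y',x'$ is the natural direct argument: the inclusion $[x,y]\subset\calN_{\epsilon+2\delta}([x',y'])$ and the reverse inclusion $[x',y']\subset\calN_{\epsilon+2\delta}([x,y])$ both follow exactly as you describe, since the two short sides $[x,x']$ and $[y,y']$ have length less than $\epsilon$ and their points are within $\epsilon$ of the endpoints $x',y'$ (resp.\ $x,y$) of the relevant side. Your observation that the full Morse Lemma is not needed is accurate --- both $[x,y]$ and the subsegment $[x',y']\subset\gamma$ are genuine geodesics, so quadrilateral thinness suffices and yields the clean uniform constant $\epsilon'=\epsilon+2\delta=O(\epsilon)$, which is exactly what the paper's $O(\epsilon)$ convention (constants depending on $\epsilon$ and $\delta$) requires; the degenerate configurations you flag indeed cause no trouble since the bound only improves when the quadrilateral collapses.
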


As a consequence, if $C, C' \subset \calY$ are finite subsets, then $\lambda(C,C') \subset \calN_{\epsilon'}(\lambda(a,b))$.  Finally, it will be convenient for us to assume that $\epsilon' > \epsilon$.

\subsection{Clusters and the cluster graph} \label{subsec:clusters}

Our first goal is to group the points of $\calY$ into \emph{clusters} of nearby sets.  To that end, let $E = E(\epsilon)> \epsilon$ be sufficiently large, to be determined below.

Let $\calC_E$ denote the graph whose vertices are points of $\calY \cup \{a,b\}$, with two points $x,y \in \calY \cup \{a,b\}$ connected by an edge exactly when $d_{\calZ}(x,y) < E$.  The connected components of $\calC_E$ are called $E$-\emph{clusters}, or simply \emph{clusters} when $E$ is fixed.

For any $E$-cluster $C$, let $s(C) = \hull_{\lambda(a,b)}(p_{\lambda(a,b)}(C))$ denote the hull (in $\lambda(a,b)$) of the closest point projection of $C$ to $\lambda(a,b)$.  The set $s(C)$ is called the \emph{shadow} of $C$ on $\lambda(a,b)$.

The following lemma controls how shadows of clusters are arranged along $\lambda(a,b)$. Its proof is straightforward (see \cite[Lemma 3.6]{DMS_bary}), so we leave it to the reader to check (:

\begin{lemma} \label{lem:cluster arrangement}
If $E > 5\epsilon$ and $C \neq C'$ are distinct $E$-clusters, then $s(C) \cap s(C') = \emptyset$.
\end{lemma}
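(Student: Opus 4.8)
\textbf{Proof plan for Lemma \ref{lem:cluster arrangement}.}

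The plan is to argue by contradiction: suppose $C \neq C'$ are distinct $E$-clusters whose shadows on $\lambda = \lambda(a,b)$ overlap, and derive a violation of the defining property of clusters, namely that any two points in distinct clusters are at distance at least $E$. First I would fix the key geometric input: every point $y \in \calY$ satisfies $d_{\calZ}(\lambda, y) < \epsilon/2$ by the $\epsilon$-setup hypothesis (and the endpoints $a,b$ lie on $\lambda$), so for each such point $y$ I choose a closest-point projection $p(y) \in \lambda$ with $d_{\calZ}(y, p(y)) < \epsilon/2 \le \epsilon$. The shadow $s(C) = \hull_{\lambda}(p(C))$ is then a subsegment of $\lambda$; similarly for $s(C')$.

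The main step: if $s(C) \cap s(C') \neq \emptyset$, pick a point $m$ in the intersection. Since $m \in s(C) = \hull_\lambda(p(C))$, it lies on the subsegment of $\lambda$ spanned by the projected points of $C$, so there exist $u, v \in C$ with $m$ between $p(u)$ and $p(v)$ along $\lambda$; hence $d_\lambda(m, \{p(u),p(v)\})$ need not be small, but what I actually want is a point of $p(C)$ near $m$ --- that isn't automatic, so instead I use convexity differently. The cleaner route: because $m \in s(C)$, the nearest point of $p(C)$ to $m$, call it $p(x)$ for some $x \in C$, satisfies the property that $m$ lies in $\hull_\lambda(p(C))$, and I want to bound $d_\lambda(m, p(x))$. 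This is where I expect the real content to sit, and the honest fix is that one does not bound $d_\lambda(m,p(x))$ at all; rather, one picks $x \in C$ and $x' \in C'$ to be points whose projections $p(x), p(x')$ are as close as possible in $\lambda$ among all such pairs, and since the shadows overlap these projections can be taken on "the same side" so that $d_\lambda(p(x), p(x'))$ is small --- in fact, if the closed subsegments $s(C), s(C')$ share a point, then some endpoint-type argument gives a pair with $d_\lambda(p(x),p(x')) \le$ (something like $0$, after choosing $x,x'$ realizing projections bracketing the shared point). Concretely: let $p(x)$ be the point of $p(C)$ closest to $m$ and $p(x')$ the point of $p(C')$ closest to $m$; since both hulls contain $m$, one checks $d_\lambda(p(x), m)$ and $d_\lambda(p(x'), m)$ are each at most the "gap" in the respective projected point sets, but for the contradiction I only need: the two nearest projected points, one from each cluster, bracket $m$, so $d_\lambda(p(x),p(x')) \le d_\lambda(p(x),m) + d_\lambda(m,p(x'))$, and a finer analysis (or simply taking $x,x'$ to be the cluster points whose projections are the two nearest to $m$ from the left and right) shows this can be made $0$ or handled by noting the projected point sets of $C$ restricted near $m$ are $E$-chains.

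Rather than belabor that, the robust argument is: since $p(C)$ is the projection of a connected graph $C$ in $\calC_E$, consecutive projected points along $\lambda$ are within $E + \epsilon$ of each other (two $E$-adjacent points of $\calY$ project to points within $E + \epsilon$), so $s(C)$ is covered by the $(E+\epsilon)/2$-neighborhoods (in $\lambda$) of points of $p(C)$; same for $C'$. Thus if $m \in s(C) \cap s(C')$, there are $x \in C$, $x' \in C'$ with $d_\lambda(p(x), m) \le (E+\epsilon)/2$ and $d_\lambda(p(x'), m) \le (E+\epsilon)/2$, whence $d_\lambda(p(x), p(x')) \le E + \epsilon$, and therefore
$$d_\calZ(x, x') \le d_\calZ(x, p(x)) + d_\lambda(p(x), p(x')) + d_\calZ(p(x'), x') \le \epsilon/2 + (E+\epsilon) + \epsilon/2 = E + 2\epsilon.$$
Hmm, this gives $E + 2\epsilon$, not $< E$, so to get a contradiction with "$C, C'$ distinct clusters $\Rightarrow d_\calZ(x,x') \ge E$" I actually need the projected points to be much closer, which forces revisiting the covering constant. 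The fix is the standard one: since $m$ lies strictly inside \emph{both} hulls, I can choose $x, x'$ with $p(x)$ and $p(x')$ on \emph{opposite} sides of $m$ and arbitrarily close --- more precisely, pick $x \in C$ with $p(x)$ the nearest projected point of $C$ to the left of $m$ and $y \in C$ nearest to the right, so $m$ is within $(E+\epsilon)/2$ of one of them; doing the same for $C'$ and comparing the four points, a pigeonhole gives a pair $x \in C, x' \in C'$ with $d_\lambda(p(x),p(x')) < \epsilon$ (the slack being absorbed by choosing $E$ large relative to $\epsilon$, which is exactly the role of the hypothesis $E > 5\epsilon$). Then $d_\calZ(x,x') < \epsilon/2 + \epsilon + \epsilon/2 = 2\epsilon < E$, contradicting that $x, x'$ lie in distinct $E$-clusters.

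\textbf{Main obstacle.} The genuine difficulty is bookkeeping the constants so that the bound on $d_\calZ(x,x')$ for the witnessing pair comes out strictly below $E$; this is precisely why one needs $E > 5\epsilon$ (or some such linear bound) rather than merely $E > \epsilon$, and it requires being careful that the closest-point projection to $\lambda$ is coarsely Lipschitz only up to the hyperbolicity constant --- so in fact one should work with $\epsilon'$ from Lemma \ref{lem:Morse close} and the $O(\epsilon)$ conventions rather than $\epsilon$ itself. I would carry out the estimate with the $O(\epsilon)$-notation, invoking $\epsilon' = O(\epsilon)$ and choosing $E = E(\epsilon)$ large enough, and refer the reader to \cite[Lemma 3.6]{DMS_bary} for the identical computation in the multi-point setting.
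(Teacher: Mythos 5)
Your overall strategy (exhibit a cross-cluster pair at distance $<E$ and contradict the definition of the $E$-cluster graph $\calC_E$) is the right one, and you correctly diagnose that the naive covering bound only yields $d_{\calZ}(x,x')\le E+2\epsilon$, which proves nothing. But the step you then lean on --- ``a pigeonhole gives a pair $x\in C$, $x'\in C'$ with $d_{\lambda}(p(x),p(x'))<\epsilon$'' --- is precisely the missing content, and as stated it is not justified: nothing in your argument forces projections coming from the two different clusters to approach within $\epsilon$ of one another, and enlarging $E$ does not ``absorb the slack'' (making $E$ larger does not move projected points closer together; the hypothesis $E>5\epsilon$ is used only at the very end of the correct argument, in the final comparison). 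So the proposal has a genuine gap at its decisive step, papered over by an assertion.

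The missing idea is to use the \emph{edge} structure of the cluster graph, not merely that clusters are $E$-chains. If $m\in s(C)\cap s(C')$, connectivity of $C$ in $\calC_E$ produces $x,y\in C$ with $d_{\calZ}(x,y)<E$ whose projections weakly straddle $m$; writing $a_1=d(p(x),m)$, $b_1=d(p(y),m)$ this gives $a_1+b_1=d(p(x),p(y))\le d_{\calZ}(x,y)+\epsilon<E+\epsilon$, and similarly one gets $x',y'\in C'$ with $a_2+b_2<E+\epsilon$. Since $C\neq C'$, every cross pair is at distance $\ge E$, so after projecting (each point moves at most $\epsilon/2$) one has $|a_1-a_2|=d(p(x),p(x'))\ge E-\epsilon$. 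If $a_1\ge a_2+E-\epsilon$, then $0\le b_1<E+\epsilon-a_1\le 2\epsilon-a_2$ forces both $b_1<2\epsilon$ and $a_2<2\epsilon$, whence $d_{\calZ}(y,x')\le b_1+a_2+\epsilon<5\epsilon<E$, contradicting that $y\in C$ and $x'\in C'$ lie in distinct clusters; the case $a_2\ge a_1+E-\epsilon$ is symmetric. Note this is pure triangle-inequality bookkeeping: because all cluster points already lie within $\epsilon/2$ of $\lambda(a,b)$, no hyperbolicity enters, which is why the statement involves only $\epsilon$ and not $\delta$ or the constant $\epsilon'$ of Lemma \ref{lem:Morse close} --- your closing suggestion to route the estimate through $\epsilon'$ and $O(\epsilon)$ conventions points at a difficulty that is not actually there. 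Two smaller inaccuracies: $m$ need not lie strictly inside either shadow (weak straddling suffices), and ``consecutive projected points along $\lambda$'' from a chain need not be projections of $E$-adjacent points, since the chain's projections need not be monotone along $\lambda$; the straddling-edge argument above avoids both issues.
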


In what follows, we will need to adjust the cluster constant $E$ a number of times, but we will usually omit the constant and refer to them simply as clusters.

We now can define our notion of cluster separation using Lemma \ref{lem:Morse close}.  The idea here is that clusters are $\epsilon$-close to $\lambda(a,b)$, and hence geodesics connecting them stay $\epsilon'$-close to $\lambda(a,b)$ by Lemma \ref{lem:Morse close}.

\begin{definition}
Let $C_1, C_2, C_3$ be clusters.  Given $\epsilon'>0$, we say $C_2$ $\epsilon'$-\emph{separates} $C_1$ from $C_3$ if there exists a minimal length geodesic connecting $C_1$ to $C_3$ which passes within $2\epsilon'$ of $C_2$.

\begin{itemize}
\item Given $E>0$, the \emph{$E$-cluster separation graph} for $(a,b; \calY)$, denoted $\calG_E = \calG_{E,\epsilon'}(a,b;\calY)$ is defined as follows: The vertices of $\calG_E$ are clusters, and two vertices are connected by an edge if and only if they are not $2\epsilon'$-separated by another cluster.
\end{itemize}
\end{definition}

The following is essentially a consequence of Lemma \ref{lem:cluster arrangement}.  It states that the cluster graph is an interval with the order of vertices corresponding to the order of the cluster shadows along $\lambda(a,b)$.  We omit its proof, which uses only hyperbolic geometry and Lemma \ref{lem:cluster arrangement}.

\begin{proposition}\label{prop:cluster sep graph}
Let $\calZ$ be $\delta$-hyperbolic, $\lambda(a,b)$ a geodesic connecting $a,b \in \calZ$, and $\calY \subset \calZ$ finite with $\calY \subset \calN_{\epsilon}(\lambda(a,b))$.  Also let $\epsilon' = \epsilon'(\epsilon, \delta)>0$ as in Lemma \ref{lem:Morse close}, with also $\epsilon' > \epsilon$.

There exists $E = E(\epsilon, \delta)>0$ so that the $E$-cluster separation graph $\calG_{E,\epsilon'}(a,b;\calY)$ is an interval whose order coincides with the order of cluster shadows along $\lambda(a,b)$, with the endpoint clusters containing $a,b$, respectively. 
\end{proposition}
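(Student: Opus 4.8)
\textbf{Proof plan for Proposition \ref{prop:cluster sep graph}.}

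The plan is to first establish that the $E$-clusters are linearly ordered in a way compatible with their shadows on $\lambda(a,b)$, and then show that the cluster separation graph is exactly the path graph on this ordered set. For the first part, fix $E = E(\epsilon,\delta)$ large enough — in particular larger than $5\epsilon'$ (so Lemma \ref{lem:cluster arrangement} applies with $\epsilon'$ in place of $\epsilon$, recalling $\epsilon' > \epsilon$), and larger than some multiple of $\epsilon'$ to be pinned down during the argument. By Lemma \ref{lem:cluster arrangement}, for distinct clusters $C \neq C'$ the shadows $s(C), s(C')$ are disjoint subintervals of $\lambda(a,b)$, so they inherit a linear order $<$ from the orientation of $\lambda(a,b)$ running from $a$ to $b$. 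Since $a$ and $b$ are vertices of $\calC_E$ and the endpoints of $\lambda(a,b)$, the clusters $C_a \ni a$ and $C_b \ni b$ are the $<$-minimal and $<$-maximal clusters, respectively (their shadows contain the endpoints of $\lambda(a,b)$). Label the clusters $C_a = D_0 < D_1 < \cdots < D_n = C_b$.

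The heart of the argument is to show that $D_i$ and $D_j$ are joined by an edge in $\calG_{E,\epsilon'}$ if and only if $|i - j| = 1$. First I would verify that if $|i-j| \geq 2$, then $D_i$ and $D_j$ are $2\epsilon'$-separated by any cluster $D_k$ with $i < k < j$: indeed, a minimal geodesic $\lambda(D_i, D_j)$ stays within $\epsilon'$ of $\lambda(a,b)$ by Lemma \ref{lem:Morse close}, and since the shadow $s(D_k)$ lies strictly between $s(D_i)$ and $s(D_j)$ on $\lambda(a,b)$, this geodesic must pass near the portion of $\lambda(a,b)$ over $s(D_k)$; combined with the fact that points of $D_k$ are $\epsilon$-close to their shadow, one gets that $\lambda(D_i,D_j)$ comes within $O(\epsilon) < 2\epsilon'$ of $D_k$ — here one must choose $\epsilon'$ (equivalently, the ambient constants) so that this bound is genuinely below $2\epsilon'$, using the explicit fellow-traveling constant from Lemma \ref{lem:Morse close}. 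Conversely, if $j = i+1$, I would show no cluster $D_k$ can $2\epsilon'$-separate them: any $D_k$ with $k \neq i, i+1$ has shadow disjoint from and outside the interval spanned by $s(D_i) \cup s(D_{i+1})$, while a minimal geodesic between $D_i$ and $D_{i+1}$ — which runs $\epsilon'$-close to the sub-arc of $\lambda(a,b)$ over that spanned interval — stays far (at least distance $E - O(\epsilon') > 2\epsilon'$, by choosing $E$ large) from $D_k$, since otherwise $D_k$ and $D_i$ (or $D_{i+1}$) would be within $E$ and hence the same cluster. This establishes $\calG_{E,\epsilon'}$ is the path graph $D_0 - D_1 - \cdots - D_n$, whose vertex order matches the shadow order, with endpoints the clusters of $a$ and $b$.

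The main obstacle I anticipate is the quantitative bookkeeping of the constants: one needs a single choice of $E = E(\epsilon,\delta)$ (and the already-fixed $\epsilon' = \epsilon'(\epsilon,\delta)$) that simultaneously makes (i) the $O(\epsilon)$ proximity bound in the separation direction strictly less than $2\epsilon'$ and (ii) the lower bound $E - O(\epsilon')$ in the non-separation direction strictly greater than $2\epsilon'$. These pull $E$ in opposite directions, so one must track the implied constants in Lemma \ref{lem:Morse close} and in the closest-point-projection estimates carefully enough to see there is a consistent window; this is exactly the kind of hyperbolic-geometry estimate the paper flags as "straightforward" but tedious, and it is where a careful reader would want to redo the $\delta$-thin-triangle computations. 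The remaining steps — disjointness and ordering of shadows, identification of the endpoint clusters — follow directly from Lemma \ref{lem:cluster arrangement} and the definitions.
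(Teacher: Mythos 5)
Your plan is correct and is essentially the argument the paper intends (the paper omits the proof, describing it as straightforward hyperbolic geometry plus Lemma \ref{lem:cluster arrangement}): order the clusters by their pairwise-disjoint shadows and verify that the $2\epsilon'$-separation relation makes the graph exactly the path on consecutive clusters, with the $a$- and $b$-clusters at the ends. The one imprecision is in the non-separation direction, where the constant tension you anticipate is not real: the proximity bound in the separation direction is automatically $\epsilon+\epsilon'<2\epsilon'$ (Hausdorff fellow-traveling from Lemma \ref{lem:Morse close} plus $\epsilon<\epsilon'$), while for consecutive clusters one should argue that if some $D_k$ came within $2\epsilon'$ of $\lambda(D_i,D_{i+1})$ then, locating the geodesic's endpoints near the facing ends of $s(D_i),s(D_{i+1})$ (as in Lemma \ref{lem:cluster close}) and comparing projections along $\lambda(a,b)$ — including the case where $s(D_i)$ is very short — $D_k$ would be within $O(\epsilon')$ of $D_i$ or $D_{i+1}$, contradicting the fact that distinct clusters are at distance at least $E$ once $E$ exceeds a fixed multiple of $\epsilon'$.
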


\subsection{Stable intervals defined}

We are now ready to describe the stable intervals.

By Proposition \ref{prop:cluster sep graph}, the cluster separation graph is an interval.  Our stable intervals admit a decomposition into two families of intervals, namely of \emph{edge} components and \emph{cluster} components.  An edge component connects adjacent clusters at their nearest points, while a cluster component connects the endpoints of  whatever edge components are connected to a given cluster.

\begin{definition}[Stable interval]\label{defn:stable interval}
Let $\calZ$ be $\delta$-hyperbolic, $a,b \in \calZ$ with $\lambda(a,b)$ a geodesic connecting them, and $\calY \subset \calZ$ finite with $\calY \subset \calN_{\epsilon/2}(\lambda(a,b))$.  The \emph{stable interval} for this setup is the union of the following two families of geodesic segments:

\begin{enumerate}
\item (Edge components) For each adjacent pair of $C, C'$ of clusters in $\calG = \calG_{E,\epsilon'}(a,b;\calY)$, the \emph{edge component} between them is $\lambda_0(C,C')$.
\begin{itemize}
\item The edge components comprise the \emph{edge forest}, which we denote by $T_e$.
\end{itemize}
\item (Cluster components) For each cluster $C$, we let $C_1,C_2$ be the clusters adjacent to $C$ in $\calG$.  Let $r(C) = C \cap (\lambda(C_1,C) \cup \lambda(C,C_2) \cup \{a,b\})$, where $\lambda(C_i,C) = \emptyset$ if $C$ is an endpoint cluster, for the appropriate $i \in \{1,2\}$.  The cluster $C$ then determines the \emph{cluster component} $\mu(C) = \lambda(r(C))$.
\begin{itemize}
\item The cluster components comprise the \emph{cluster forest}, which we denote by $T_c$.
\end{itemize} 
\end{enumerate}

\begin{itemize}
\item The \emph{stable interval} is  $T(a,b; \calY) = T_e \sqcup T_c$, by which we mean the abstract union of these two forests, where we attach $\lambda_0(C_1,C_2)$ to the corresponding endpoints of $\mu(C_1), \mu(C_2)$ for adjacent $C_1, C_2$ as determined by $\calG$.
\item We let $\phi:T(a,b;\calY) \to \calZ$ denote the map induced from the fact that its pieces are geodesics in $\calZ$. 
\end{itemize}
\end{definition}

\begin{remark}
    We note that stable intervals are abstractly intervals, but not necessarily as subsets of $\calZ$.  In particular, the points $a,b$ need not be the ``endpoints'' of the stable interval $T$, either abstractly or in $\calZ$.  However, their images in the collapsed stable interval will be the endpoints, see Subsection \ref{subsec:collapsed stable intervals}.
\end{remark}

The following lemma is the key to controlling the structure of $T(a,b;\calY)$.  We omit its proof since it again uses only hyperbolic geometry, though see Figure \ref{fig:cluster close} for a schematic:

\begin{lemma}\label{lem:cluster close}
Let $C_1, C_2$ be distinct clusters along $\lambda(a,b)$.  Suppose that $p_1 \in C_1$ and $p_2 \in C_2$ are closest points, while $x_1 \in s(C_1)$ and $x_2 \in s(C_2)$ are closest points.  Then $d_{\calZ}(x_i, p_i) \leq 7\epsilon$ for $i=1,2$.

As a consequence, the following hold:
\begin{enumerate}
\item For distinct clusters $C_1,C_2$, we have $|d(C_1,C_2) - d(s(C_1),s(C_2))| < 14\epsilon.$
\item For any $p \in T_e$, we have that $d_{\calZ}(p, \calY \cup \{a,b\}) \leq d_{\calZ}(p, \partial T_e) + O(\epsilon).$
\item For any components $V,W$ of $T = T_e \cup T_c$, we have $\diam_{\calZ}(s(V)\cap s(W)) < O(\epsilon)$.
\end{enumerate}
\end{lemma}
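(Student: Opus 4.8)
The plan is to prove the displayed distance estimate $d_{\calZ}(x_i,p_i)\leq 7\epsilon$ first, and then deduce the three numbered consequences from it using only the triangle inequality and basic thin-triangle geometry. The main point is that $C_1$ and $C_2$ are $\epsilon$-close to $\lambda(a,b)$, so the nearest-point pair $p_1,p_2$ between them and the nearest-point pair $x_1,x_2$ between their shadows $s(C_1),s(C_2)$ cannot be far apart.

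\medskip

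\textbf{Step 1: the core estimate.} Fix $p_i\in C_i$ with $d_{\calZ}(p_1,p_2)=d_{\calZ}(C_1,C_2)$, and let $p_i'$ be a closest point to $p_i$ on $\lambda(a,b)$, so $d_{\calZ}(p_i,p_i')<\epsilon$ by the setup hypothesis (recall $\calY\subset \calN_{\epsilon}(\lambda(a,b))$, and clusters are built from points of $\calY\cup\{a,b\}$, which lie on $\lambda(a,b)$). Note $p_i'\in s(C_i)$ by definition of the shadow. Likewise fix $x_i\in s(C_i)$ realizing $d_{\calZ}(s(C_1),s(C_2))$. The key observation is a standard hyperbolic-geometry fact: since $s(C_1),s(C_2)$ are subintervals of the geodesic $\lambda(a,b)$ and (by Lemma~\ref{lem:cluster arrangement}, with $E>5\epsilon$) are disjoint and hence linearly ordered along $\lambda(a,b)$, the nearest points $x_1,x_2$ are simply the adjacent endpoints of $s(C_1)$ and $s(C_2)$ facing each other. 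I would then argue that $p_i'$ cannot be on the ``far side'' of $x_i$ within $s(C_i)$ by more than $O(\epsilon)$: if it were, then the geodesic $[p_1,p_2]$ would be forced to pass $\epsilon$-close to a point of $\lambda(a,b)$ strictly between $x_1$ and $x_2$, hence $O(\epsilon)$-close to the other cluster's shadow, contradicting minimality of $d_{\calZ}(p_1,p_2)$ (one uses that $[p_1,p_2]$ fellow-travels $[p_1',p_2']\subset\lambda(a,b)$ within $\epsilon'$ by Lemma~\ref{lem:Morse close}, and that passing near the intervening shadow would give a shortcut). Concretely one tracks constants: $d_{\calZ}(p_i,p_i')<\epsilon$, the position of $p_i'$ relative to $x_i$ along $\lambda(a,b)$ is within a small multiple of $\epsilon$, giving $d_{\calZ}(x_i,p_i)\leq d_{\calZ}(x_i,p_i')+d_{\calZ}(p_i',p_i)\leq 6\epsilon+\epsilon=7\epsilon$ after bookkeeping.

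\medskip

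\textbf{Step 2: the consequences.} Item (1) is then immediate: $|d_{\calZ}(C_1,C_2)-d_{\calZ}(s(C_1),s(C_2))| = |d_{\calZ}(p_1,p_2)-d_{\calZ}(x_1,x_2)| \leq d_{\calZ}(p_1,x_1)+d_{\calZ}(p_2,x_2)\leq 14\epsilon$ by the triangle inequality. For item (2), given $p\in T_e$, say $p$ lies on the edge component $\lambda_0(C,C')$ between adjacent clusters, let $q\in\partial T_e$ be the nearer of its two endpoints (which lie in $C\cup C'\subset\calY\cup\{a,b\}$); then $d_{\calZ}(p,\calY\cup\{a,b\})\leq d_{\calZ}(p,q)=d_{\calZ}(p,\partial T_e)$, and the $O(\epsilon)$ slack absorbs the discrepancy between $\lambda_0$ and $\lambda$ and any case where the true nearest point of $\calY$ is in a third cluster—here one invokes Step~1 and Lemma~\ref{lem:Morse close} to see no cluster other than $C,C'$ comes within $O(\epsilon)$ of the interior of $\lambda_0(C,C')$, since $\calG_E$ is an interval (Proposition~\ref{prop:cluster sep graph}) and $C,C'$ are adjacent in it. For item (3), if $V,W$ are components of $T=T_e\cup T_c$ (each a cluster component $\mu(C)$ or an edge component $\lambda_0(C,C')$), their shadows $s(V),s(W)$ are—up to $O(\epsilon)$ using Step~1 and Lemma~\ref{lem:Morse close}—contained in the shadows of the clusters they touch, and distinct clusters have disjoint shadows by Lemma~\ref{lem:cluster arrangement}; a component of $T$ touches at most two clusters arranged consecutively, so two distinct components can share at most the $O(\epsilon)$-fuzz around one common cluster endpoint, giving $\diam_{\calZ}(s(V)\cap s(W))<O(\epsilon)$.

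\medskip

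\textbf{Main obstacle.} The delicate part is Step~1, specifically pinning down that the nearest-point pair $p_1,p_2$ between the clusters projects to \emph{essentially} the adjacent facing endpoints $x_1,x_2$ of the shadows rather than to interior points. This requires a careful thin-triangle argument combining (a) the $\epsilon$-proximity of clusters to $\lambda(a,b)$, (b) the Morse/fellow-traveling control from Lemma~\ref{lem:Morse close}, and (c) a minimality/shortcut argument to rule out $[p_1,p_2]$ dipping near an intervening shadow; all the subsequent items are routine triangle-inequality bookkeeping once this is in hand. Since the statement explicitly says its proof ``uses only hyperbolic geometry,'' I would present Step~1 in moderate detail and treat items (1)–(3) briskly.
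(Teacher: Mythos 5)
First, note that the paper does not actually supply a proof of this lemma (it is explicitly omitted as a hyperbolic-geometry exercise, with only Figure \ref{fig:cluster close} as a hint), so your proposal has to be judged on its own merits. Your global strategy is the right one: use Lemma \ref{lem:cluster arrangement} to order the disjoint shadows along $\lambda(a,b)$, identify $x_1,x_2$ as the facing endpoints, and force the projections $p_i'$ of $p_i$ to lie $O(\epsilon)$-close to $x_i$ by minimality; item (1) then follows by the triangle inequality exactly as you say. However, the justification of the crucial step does not hold up as written. You argue that if $p_1'$ sat far behind $x_1$ inside $s(C_1)$, then $[p_1,p_2]$ would pass close to points of $\lambda(a,b)$ between $x_1$ and $x_2$, ``hence $O(\epsilon)$-close to the other cluster's shadow, contradicting minimality.'' That is neither true with $O(\epsilon)$ constants (a point of a shadow need only be within roughly $E/2$ of its cluster, and $E\gg\epsilon$) nor a contradiction: $[p_1,p_2]$ must cross the region between the shadows in any case. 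The missing idea is that the facing endpoint $x_1$ of $s(C_1)$ is itself (within $\epsilon/2$) the projection of an actual cluster point $q_1\in C_1$, since $s(C_1)$ is the hull of a finite projection set; this $q_1$ is the shortcut that minimality rules out. Once you have it, no Morse/thin-triangle input is needed at all: $d_{\calZ}(C_1,C_2)\le d_{\calZ}(q_1,q_2)\le d_{\calZ}(x_1,x_2)+\epsilon$, while additivity along the geodesic (the shadows are disjoint, linearly ordered subsegments, and $p_i'\in s(C_i)$) gives $d_{\calZ}(C_1,C_2)=d_{\calZ}(p_1,p_2)\ge d_{\calZ}(p_1',x_1)+d_{\calZ}(x_1,x_2)+d_{\calZ}(x_2,p_2')-\epsilon$; hence $d_{\calZ}(p_1',x_1)+d_{\calZ}(p_2',x_2)\le 2\epsilon$ and $d_{\calZ}(p_i,x_i)\le \tfrac{\epsilon}{2}+2\epsilon<7\epsilon$. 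Lemma \ref{lem:Morse close} is only needed for the later items, not for the core estimate.

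Two smaller slips in the consequences. In (2) you assert $d_{\calZ}(p,q)=d_{\calZ}(p,\partial T_e)$ for $q$ the nearer endpoint of $p$'s own edge component; this equality can fail (an endpoint of a different component may be closer in $\calZ$), but (2) is immediate without any separation argument: take the point of $\partial T_e$ realizing $d_{\calZ}(p,\partial T_e)$ and note that every point of $\partial T_e$ lies within $1$ of $\calY\cup\{a,b\}$, because the endpoints of $\lambda(C,C')$ lie in clusters and $\lambda_0$ trims less than $1$. In (3), your claim that an edge component's shadow is contained, up to $O(\epsilon)$, ``in the shadows of the clusters it touches'' is backwards: by the core estimate and Lemma \ref{lem:Morse close}, $s(\lambda_0(C_1,C_2))$ is $O(\epsilon)$-Hausdorff close to the complementary gap $[x_1,x_2]$ between $s(C_1)$ and $s(C_2)$, which may be long and is disjoint from those shadows, while a cluster component satisfies $s(\mu(C))\subset \calN_{O(\epsilon)}(s(C))$. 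With these corrected containments the conclusion still follows by your bookkeeping: shadows of distinct clusters and the gaps between consecutive shadows are pairwise disjoint intervals on $\lambda(a,b)$ (using Proposition \ref{prop:cluster sep graph} to rule out an intervening shadow), and $O(\epsilon)$-neighborhoods of disjoint subintervals of a geodesic intersect in diameter $O(\epsilon)$.
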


\begin{figure}
    \centering
    \includegraphics[width=.75\textwidth]{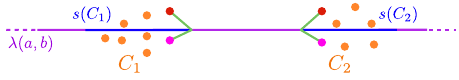}
    \caption{Proof of Lemma \ref{lem:cluster close}: Clusters can have multiple pairs of closest points.  In this example, the red and pink points on $C_1$ are equidistant from their matching point on $C_2$, while the red and pink cluster points in $C_1$ are both within $7\epsilon$ of the endpoint of the shadow $s(C_1)$ nearest $s(C_2)$, and the same for the roles of $C_1,C_2$ switched.  In subsequent arguments, this will allow us to conclude that connecting points between clusters are close even after we alter the clusters.}
    \label{fig:cluster close}
\end{figure}

\begin{figure}
    \centering
    \includegraphics[width=1\textwidth]{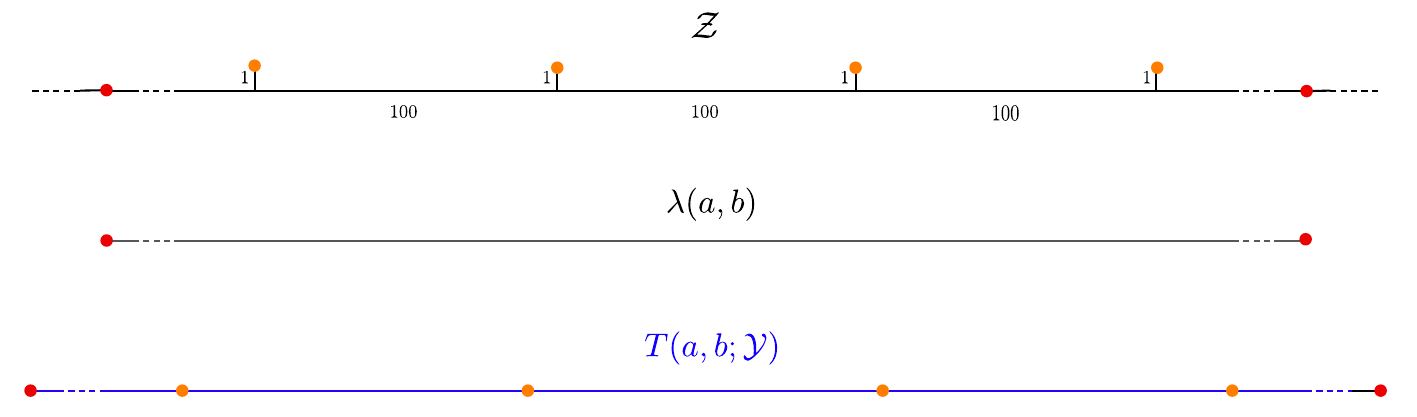}
    \caption{This figure, which is essentially \cite[Figure 11]{DMS_bary}, is an example of two phenomena.  First, the edge components of a stable interval can have bounded overlap in the space $\calZ$.  This can lead to a bounded multiplicative error in the quasi-isometric embedding $\phi:T(a,b;\calY) \to \calZ$; see item (3) of Proposition \ref{prop:stable interval structure}.  In the figure, the multiplicative constant is at least $\frac{102}{100}$.  Second, the image of $T(a,b;\calY)$ in $\calZ$ need not itself be an interval, as the clusters can form leaves on the image.}
    \label{fig:pathologies}
\end{figure}

With the above lemma in hand, we can prove the following structural proposition:                                                                                                                                                                                                                      
 
 \begin{proposition}\label{prop:stable interval structure}
 
There exists $L_0 = L_0(\delta, \epsilon)>0$ so that the following hold:

\begin{enumerate}
\item $T(a,b;\calY)$ is an interval with endpoints contained in $\calY \cup \{a,b\}$ whose images in $\calZ$ are $O(\epsilon)$ of $a,b \in \calZ$.
\item For every cluster $C$,we have that $d_{\calZ}^{Haus}(\mu(C), C) < L_0$ and $\phi(T_C) \subset \calN_{L_0}(\calY \cup \{a,b\})$. 
\item The map $\phi:T(a,b;\calY) \to \calZ$ is an $(L_0,L_0)$-quasi-isometric embedding with image $L_0$-Hausdorff close to $\lambda(a,b)$.
\item Each edge component of $T$ has positive integer length.
\end{enumerate}
 \end{proposition}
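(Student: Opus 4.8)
The plan is to construct $T = T(a,b;\calY)$ as a concatenation dictated by the cluster separation graph, and then measure every piece against $\lambda(a,b)$ via closest-point projection, invoking only $\delta$-hyperbolicity together with Lemmas \ref{lem:Morse close} and \ref{lem:cluster close}.

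First I would handle items (1) and (4). By Proposition \ref{prop:cluster sep graph} we may label the clusters so that $\calG = \calG_{E,\epsilon'}(a,b;\calY)$ is a path $C_0 - C_1 - \cdots - C_n$ with $a \in C_0$, $b \in C_n$, and shadows occurring in this order along $\lambda(a,b)$. By construction $T$ is then the end-to-end concatenation
$$\mu(C_0)\quad \lambda_0(C_0,C_1)\quad \mu(C_1)\quad \lambda_0(C_1,C_2)\quad \cdots\quad \lambda_0(C_{n-1},C_n)\quad \mu(C_n),$$
where each edge component $\lambda_0(C_i,C_{i+1})$ is glued to $\mu(C_i)$ and $\mu(C_{i+1})$ at the closest points of $C_i$ to $C_{i+1}$ and of $C_{i+1}$ to $C_i$, which are exactly the relevant endpoints of $\lambda(r(C_i))$ and $\lambda(r(C_{i+1}))$. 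As each piece is an interval and consecutive pieces share an endpoint, $T$ is an interval, and its two free endpoints are the unglued endpoints of $\mu(C_0)$ and $\mu(C_n)$; these lie in $r(C_0) \cup r(C_n) \subset \calY \cup \{a,b\}$ and, being extreme points of the endpoint clusters, are within $O(\epsilon)$ of $a$ and $b$ in $\calZ$ by Lemma \ref{lem:cluster close}. For item (4): distinct clusters satisfy $d_\calZ(C,C') \geq E$ by definition of $\calC_E$, so, taking $E\geq 1$ (harmless), $\lambda_0(C,C')$ has the positive integer length $\lfloor d_\calZ(C,C')\rfloor$.

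Next I would prove item (2). Write $\mu(C) = \lambda(r(C))$ with $r(C) \subset C$. Each point of $r(C)$ is $\epsilon$-close to $\lambda(a,b)$, and by Lemma \ref{lem:cluster close} its projection to $\lambda(a,b)$ lies within $7\epsilon$ of an extreme point of $s(C)$; hence Lemma \ref{lem:Morse close} forces $\mu(C)$ to stay within $O(\epsilon)$ of the segment of $\lambda(a,b)$ between those projections, which is $s(C)$ up to $O(\epsilon)$. Every point of $C$ projects into $s(C)$, hence is within $O(\epsilon)$ of $\mu(C)$; conversely, since $C$ is $E$-connected its projections are $E$-dense in $s(C)$, so every point of $\mu(C)$ is within $E + O(\epsilon)$ of a point of $C$. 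Thus $d_\calZ^{Haus}(\mu(C), C) < L_0$ for $L_0 = E + O(\epsilon)$, and therefore $\phi(T_c) = \bigcup_C \mu(C) \subset \bigcup_C \calN_{L_0}(C) \subset \calN_{L_0}(\calY \cup \{a,b\})$.

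The main obstacle is item (3): as Figure \ref{fig:pathologies} shows, $\phi$ need not be injective — edge components can overlap in $\calZ$, and cluster components can bulge off $\lambda(a,b)$ — so a crude length count fails. The fix is to push everything through $\sigma \colon T \to \lambda(a,b)$, the closest-point projection of $\phi$. Each component of $T$ maps under $\phi$ to a geodesic whose endpoints are $O(\epsilon)$-close to $\lambda(a,b)$ (the $\epsilon$-setup hypothesis for edge components; item (2) and Lemma \ref{lem:cluster close} for cluster components), so by Lemma \ref{lem:Morse close} the restriction of $\sigma$ to each component is a $(1,O(\epsilon))$-quasi-isometry onto a sub-interval of $\lambda(a,b)$ — essentially onto the shadow $s(C_i)$ for a cluster component, and onto the gap between consecutive shadows for an edge component. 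By Lemma \ref{lem:cluster arrangement} the shadows are pairwise disjoint and linearly ordered, and by Lemma \ref{lem:cluster close}(3) the shadows of any two distinct components overlap in diameter $< O(\epsilon)$, so $\sigma$ is coarsely monotone and coarsely onto $\lambda(a,b)$. Then for $t, t' \in T$ one obtains $d_{\lambda(a,b)}(\sigma(t),\sigma(t')) \geq d_T(t,t') - O(\epsilon)\, n_{t,t'}$, where $n_{t,t'}$ is the number of components met strictly between $t$ and $t'$; since edge components have length $\geq E - 1$ and components alternate, $n_{t,t'} \leq 2\, d_T(t,t')/E + O(1)$, and choosing $E$ large relative to $\epsilon$ and $\delta$ yields $d_{\lambda(a,b)}(\sigma(t),\sigma(t')) \geq \tfrac{1}{2} d_T(t,t') - O(\epsilon)$. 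Combined with $d_\calZ(\phi(t),\phi(t')) \geq d_{\lambda(a,b)}(\sigma(t),\sigma(t')) - 2L_0$ and the trivial bound $d_\calZ(\phi(t),\phi(t')) \leq d_T(t,t')$, this makes $\phi$ an $(L_0,L_0)$-quasi-isometric embedding; that its image is $L_0$-Hausdorff close to $\lambda(a,b)$ then follows from Lemma \ref{lem:Morse close} componentwise together with the coarse ontoness of $\sigma$. The delicate point, where I expect to spend the most care, is verifying the coarse monotonicity of $\sigma$ across the junctions — precisely where a bulging cluster component meets an edge component — which is exactly what Lemma \ref{lem:cluster close} is designed to supply.
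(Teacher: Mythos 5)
Your proof is correct and follows essentially the same route as the paper: items (1), (2), (4) are handled exactly as in the text, and for (3) you use the same strategy of showing the shadow/closest-point projection is a $(1,O(\epsilon))$-quasi-isometry on each component of $T_e \cup T_c$ and that distinct components have $O(\epsilon)$-overlapping shadows (Lemmas \ref{lem:Morse close} and \ref{lem:cluster close}). Your explicit bookkeeping with $n_{t,t'}$ and the requirement that $E$ be large relative to $\epsilon,\delta$ just makes precise the counting the paper leaves implicit, and is legitimate since the paper explicitly allows the cluster constant $E$ to be enlarged as needed.
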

 
 \begin{proof}
 
 For (1), observe that $T(a,b;\calZ)$ is an interval by construction and Proposition \ref{prop:cluster sep graph}, and the end clusters contain $a,b$, respectively, also by that proposition.  Since each cluster $C \subset \calN_{E}(\lambda(a,b))$, if $a \in C$, then the outer-most endpoint of $\mu(C)$ must be within $E$ of $a$, as required.
 
 For (2), observe that each cluster $C$ is within Hausdorff distance $O(\epsilon)$ of $s(C)$, while Lemma \ref{lem:cluster close} implies that $\mu(C)$ is within Hausdorff distance $O(\epsilon)$ of $s(C)$.  The conclusion follows.
 
 Item (3) is essentially a consequence of Lemma \ref{lem:cluster close} and the construction as follows.  By Lemma \ref{lem:Morse close}, the image under the map $\phi:T \to \calZ$ of each component of $T_e$ and $T_c$ is within $O(\epsilon)$ of its shadow on $\lambda(a,b)$.  In other words, for any component $V$ of $T_e$ or $T_c$, the shadow map induces a $(1,O(\epsilon))$-quasi-isometry $s:\phi(V) \to s(\phi(V))$.  On the other hand, Lemma \ref{lem:cluster close} says every pair of components of $T$ have shadows on $\lambda(a,b)$ with $O(\epsilon)$-bounded overlap.  It follows that $\phi:T \to \calZ$ is an $(O(\epsilon), O(\epsilon))$-quasi-isometric embedding, as required.
 
 Finally, item (4) follows because we have used the function $\lambda_0$ as our connectors in Definition \ref{defn:stable interval}, which performs a small surgery at one of the ends of $\lambda$ so that the result has integer length.  This completes the proof.
\end{proof}

\subsection{Stable decompositions}\label{subsec:stable decomp}

Having defined stable intervals above, we are almost ready to state the Stable Intervals Theorem \ref{thm:stable intervals}.  The motivation for the definitions in this section comes from our desire to plug things into our cubical model machinery.  We describe how this works in Subsection \ref{subsec:perturbed to stable} below, and we point the reader to it for motivation for what follows.

A stable decomposition involves decomposing a pair of stable intervals for a pair of admissible setups into subintervals, with some maps which identify and organize the various pieces.  The purpose of the next definition is to give shorthand for requiring that the various maps involved respect how the various pieces are oriented towards the endpoints of the ambient intervals.

\begin{definition}[Order-preserving maps]\label{defn:order preserving}
Let $T_1, T_2$ be two intervals with the endpoints of $T_i$ labeled by $a_i,b_i$.  Suppose that $T_i = T_{i,1} \sqcup T_{i,2}$ are subdivisions of $T_i$ into finite collections of subintervals with $\# T_{1,1} = \#T_{2,1}$ (and hence $\#T_{1,2} = \#T_{2,2}$).

\begin{itemize}
\item We say an isometry $i_{E,E'}:E \to E'$ between components $E \subset T_{1,1}$ and $E' \subset T_{2,1}$ is \emph{order preserving} if $i_{E,E'}$ sends the endpoint of $E$ closest to $a_1$ to the endpoint of $E'$ closest to $a_2$.
\item We say a bijection $\alpha:\pi_0(T_{1,1}) \to \pi_0(T_{2,1})$ is \emph{order-preserving} if it coincides with the bijection obtained by recording the orders of appearance of the components of $T_{i,1}$ along $T_i$ from $a_i$ to $b_i$.
\end{itemize}
\end{definition}

\begin{remark}\label{rem:order preserving}
Using the above notation, note that an order-preserving bijection $\alpha:\pi_0(T_{1,1}) \to \pi_0(T_{2,1})$ automatically induces an order-preserving bijection $\beta:\pi_0(T_{1,2}) \to \pi_0(T_{2,2})$.  Moreover, in this paper, we will only be working with intervals between a pair of points, which are usually denoted by (some form of) $a,b$.  We will always take order-preserving to mean as one moves from the $a$-side to the $b$-side of the given intervals.
\end{remark}

We also set one last bit of notation: Given a stable tree $T = T_e \cup T_c$ for an $\epsilon$-setup $(\calY; \{a,b\})$, if $y \in \calY \cup \{a,b\}$, we let $C_y$ denote the cluster containing $y$, and $\mu(C_y)$ the corresponding component of $T^c$.

The following definition contains the stability properties we want:

\begin{definition}[Stable decomposition] \label{defn:stable decomp}
Let $\calZ$ be $\delta$-hyperbolic and geodesic.  
\begin{enumerate}
\item Given $N>0$, two $\epsilon$-setups $(a,b; \calY)$ and $(a',b'; \calY')$ are $(N,\epsilon)$-\emph{admissible} if
\begin{enumerate}
\item $d_{\calZ}(a,a'), d_{\calZ}(b,b') \leq \epsilon$,
\item $\calY, \calY' \subset \calN_{\epsilon/2}(\lambda(a,b)) \cap \calN_{\epsilon/2}(\lambda(a',b'))$,
\item $|\calY \triangle \calY'| < N$.
\end{enumerate}
\item Given an $\epsilon$-setup $(a,b;\calY)$, an \emph{edge decomposition} of its stable interval $T = T_e \cup T_c$ is a collection of subintervals $T_s \subset T_e$.
\item Given $L>0$, $\calY_0 \subset \calY \cap \calY'$, and two $(N,\epsilon)$-admissible setups $(a,b;\calY)$ and $(a',b'; \calY')$, we say that two edge decompositions $T_s \subset T_e$ and $T'_s \subset T'_e$ are $\calY_0$-\emph{stably} $L$-\emph{compatible} if
\begin{enumerate}
\item Each component of $T_s$ and $T'_s$ has positive integer length with endpoints at vertices of $T_e$ and $T'_e$, respectively. \label{item:integer length}
\item There is an order-preserving bijection $\alpha:\pi_0(T_s) \to \pi_0(T'_s)$ between the sets of stable components. \label{item:stable bijection}
\item For each \emph{stable pair} $(E, E')$ identified by $\alpha$, there exists an order-preserving isometry $i_{E,E'}:E \to E'$. \label{item:stable pairs}
\item For all but at most $L$ pairs of stable components $(E,E')$, we have $\phi(E) = \phi(E')$ are identical with $\phi(x) = \phi'(i_{E,E'}(x))$ for all $x \in E$. \label{item:identical pairs}
\item For the (at most) $L$-many remaining stable pairs $(E,E')$, we have $d_{\calZ}(\phi(x), \phi'(i_{E,E'}(x)))< L$ for all $x \in E$. \label{item:close pairs}
\item The complements $T_{e} - T_s$ and $T'_{e} - T'_s$ consist of at most $L$-many \emph{unstable components} of diameter at most $L$.\label{item:unstable components}
\item The induced order-preserving bijection $\beta:\pi_0(T-T_s) \to \pi_0(T' - T'_s)$ satisfies: \label{item:adjacency}
\begin{enumerate}
\item (Endpoints) Let $D_a$ denote the component of $T-T_s$ containing $\mu(C_a)$, and define $D_b,D_{a'},D_{b'}$ similarly.  Then $\beta(D_a) = D_{a'}$ and $\beta(D_b) = D_{b'}$. \label{item:endpoint condition}
\item (Identifying clusters) For any $y \in \calY_0$, let $D_y, D'_y$ denote the components of $T-T_s$ and $T' - T'_s$ containing $\mu(C_y), \mu(C'_y)$, respectively.  Then $\beta(D_y) = D'_y$. \label{item:cluster identify}
\end{enumerate}
\end{enumerate}
\end{enumerate}
\end{definition}

\begin{remark}
The above definition is a refinement of the properties contained in \cite[Stable Tree Theorem 3.2]{DMS_bary}, where we have set up these stable decompositions to plug nicely into the cubical model machinery described earlier in the paper. How this process works is described below in more detail in Subsection \ref{subsec:perturbed to stable}, but for now we motivate some parts of the definition.  Roughly speaking, the points $\calY$ and $\calY'$ stand in for the relative projection data on each interval, with each component of $T - T_s$ and $T' - T'_s$ being labeled by some such point.  Eventually, we will collapse each of the components of $T - T_s$ and $T' - T'_s$ to points.  Item \eqref{item:adjacency} says that the common labels $\calY \cap \calY'$ are identified, as well as the endpoint pairs $a,a'$ and $b,b'$.  The rest of the statement now says that the components of $T_s$ and $T'_s$ are in bijection with each other, where identified components are isometric, and the order-preserving properties explain how to glue these components to obtain an isometry between collapsed intervals.  Finally, the proximity properties in items \eqref{item:identical pairs} and \eqref{item:close pairs} are necessary for controlling the maps back into the ambient HHS; see the proof of Theorem \ref{thm:stable cubes}.  A more general version can be found in \cite[Definition 10.18]{DMS_FJ}. 
\end{remark}

\subsection{The stable interval theorem}

With Definition \ref{defn:stable decomp} in hand, we can now state the main result of this section, Theorem \ref{thm:stable intervals} below.  It says that the stable intervals for a pair of admissible $\epsilon$-setups admit compatible stable decompositions---that is, decompositions into subintervals which are in bijective correspondence with idenitified pairs uniformly close, up to ignored a bounded collection of bounded-length subintervals.  We first state this technical result, and then prove a corollary which gives the main upshot for our purposes of building stable cubical models.  

For a bit of setup, recall the notion of a thickening of an interval from Subsection \ref{subsec:thickenings}.  This was a way of taking an interval with a decomposition $T = A \cup B$ into collections of segments, and expanding and combining one collection of the segments.  Our stable intervals come with such a decomposition $T = T_e \cup T_c$, and we will always mean a thickening of a stable interval $T$ to be a thickening of $T$ along the components of the cluster forest $T_c$.  Finally, we denote the components of the resulting thickening by $T = \bT_{e} \cup \bT_c$, and we note that $\bT_e \subset T_e$ and $T_c \subset \bT_c$. 

\begin{theorem}[Stable intervals]\label{thm:stable intervals}
Let $\calZ$ be $\delta$-hyperbolic and geodesic.  For any $\epsilon, N>0$ and positive integers $r_1,r_2>0$, there exist $L_1 = L_1(\delta, \epsilon, N)>0$ and $L_2 = L_2(\delta, \epsilon, N,r_1,r_2)>0$ so that the following holds.  Suppose $(a,b;\calY)$ and $(a',b';\calY')$ are $(N,\epsilon)$-admissible $\epsilon$-setups and let $T = T_e \cup T_c$ and $T' = T'_e \cup T'_c$ denote their stable intervals.  Then
\begin{enumerate}
\item There exist $(\calY \cap \calY')$-stable $L_1$-compatible decompositions $T_s \subset T_e$ and $T'_s \subset T'_e$, and
\item There exist $(\calY \cap \calY')$-stable $L_2$-compatible edge decompositions $\bT_{s} \subset \bT_{e}$ and $\bT'_{s} \subset \bT'_{e}$ of the $(r_1,r_2)$-thickenings of $T, T'$ along $T_c,T'_c$.
\begin{itemize}
\item Moreover, we have $\bT_s \subset T_s$ and $\bT_s \subset T'_s$.
\end{itemize}
\end{enumerate}

\end{theorem}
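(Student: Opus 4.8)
The plan is to compare the two stable intervals combinatorially, exploiting that $d_{\calZ}(a,a'),d_{\calZ}(b,b')\le\epsilon$ forces the geodesics $\lambda(a,b)$ and $\lambda(a',b')$ to $O(\epsilon)$-fellow-travel (thin triangles plus Lemma \ref{lem:Morse close}), while $|\calY\triangle\calY'|<N$ means the two proximity graphs differ in only a bounded number of vertices. First I would fix the common backbone. Write $\calY_0=\calY\cap\calY'$ and call a point of $(\calY\triangle\calY')\cup\{a,b,a',b'\}$ a \emph{seed}; there are at most $N+4$ seeds. The connected components of the $E$-proximity graph $\calC_E$ restricted to $\calY_0$ — the \emph{atoms} — are the same for both setups, since the metric of $\calZ$ and the threshold $E$ are identical. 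By Lemma \ref{lem:cluster arrangement} and Lemma \ref{lem:cluster close}, shadows of distinct clusters (hence of distinct atoms) are disjoint intervals with gaps at least $E-14\epsilon$, and each seed's shadow lies within $E+O(\epsilon)$ of at most three atom shadows (choosing $E=E(\delta,\epsilon)$ as in Proposition \ref{prop:cluster sep graph}, large compared with $\epsilon$). Hence only at most $3(N+4)$ atoms — the \emph{bad} atoms — lie within $E$ of a seed, and every other atom is simultaneously a cluster of $T$ and of $T'$, with the \emph{good clusters} of $T$ literally equal, as subsets of $\calZ$, to those of $T'$ (an atom whose $T$-cluster absorbs a seed must be within $E$ of one, hence bad). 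The $O(\epsilon)$ wobble of the two geodesics cannot reverse the linear order of good-cluster shadows, since consecutive shadows are $\gg\epsilon$ apart, so $\calG_E(a,b;\calY)$ and $\calG_E(a',b';\calY')$ induce the same order on good clusters.

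Next I would build the edge decomposition for part (1). Declare an edge component $\lambda_0(C_1,C_2)$ of $T_e$ \emph{stable} when $C_1,C_2$ are good clusters adjacent in both cluster separation graphs (equivalently, no bad $T'$-cluster separates them); then $T$ and $T'$ contain the literally identical geodesic $\lambda_0(C_1,C_2)$ — the same point sets force $\lambda$ to select the same geodesic and $\lambda_0$ to truncate it to the same positive integer length — so the identity isometry gives items \eqref{item:integer length}, \eqref{item:stable pairs}, \eqref{item:identical pairs}. The order-preserving bijections $\alpha,\beta$ come from the common order of good clusters, and for a good $y$-atom ($y\in\calY_0$) with good neighbours the component $\mu(C_y)$ is literally the same in $T$ and $T'$, giving the cluster-identifying and endpoint conditions of item \eqref{item:adjacency}. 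The step I expect to be the main obstacle is controlling the \emph{bad regions}: a bad atom can have an arbitrarily long shadow, so one cannot simply declare bad clusters unstable and still satisfy the bounded-diameter requirement \eqref{item:unstable components}. The resolution is that a long bad cluster is, away from $O(N)$ short ``bridges'' concentrated near the finitely many seeds, assembled from good clusters joined by edge components of length at most $(N\!+\!1)E$ (consecutive good clusters merged in the other setup are linked there by a chain of at most $N$ seeds, each jump at most $E$); after carving each bad cluster at these bridges and absorbing bridges, near-cluster edge fragments, and $\mu$-components of bad clusters into $T-T_s$, only $\le L_1=L_1(\delta,\epsilon,N)$ unstable components of diameter $\le L_1$ remain, together with $\le L_1$ further stable pairs that are \emph{close but not identical} — geodesics one of whose endpoints was moved by $O(\epsilon)$ because a neighbouring cluster absorbed a seed, hence $O(\epsilon)$-fellow-travelling over their whole length by Lemma \ref{lem:Morse close} and Lemma \ref{lem:cluster close}; trimming each such pair to a common integer length supplies the isometry and the bound in item \eqref{item:close pairs}. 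Proposition \ref{prop:stable interval structure} guarantees the ambient structure (integer-length edges, $\mu(C)$ close to $C$) used throughout.

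Finally, for part (2) I would push the decomposition of (1) through an $(r_1,r_2)$-thickening along the cluster forests. Thickening only moves $r_1$-neighbourhoods of cluster components from the edge forest into the cluster forest and merges cluster components lying within $r_2$ in the tree metric; both operations depend only on the cluster structure and on the $T$-distances between consecutive clusters, which agree exactly between $T$ and $T'$ along stable (identical) edges. Therefore the thickened structures again differ only near the $O(N)$ bad clusters and the $\le L_1$ exceptional edges, at most $L_2=L_2(\delta,\epsilon,N,r_1,r_2)$ places in all. Setting $\bT_s$ to consist of the stable edge components of (1) with their two $r_1$-end-neighbourhoods removed (discarding those that become shorter than $2r_1$, which only adds boundedly many bounded-diameter unstable components because $r_1,r_2$ are fixed positive integers), the surviving pieces are identical in $\bT$ and $\bT'$ and have positive integer length, so they form $(\calY\cap\calY')$-stable $L_2$-compatible edge decompositions $\bT_s\subset\bT_e$ and $\bT'_s\subset\bT'_e$; and $\bT_s\subset T_s$, $\bT'_s\subset T'_s$ (indeed $\bT_s$ is a common subinterval system) because we only ever deleted subintervals from the stable edges of (1). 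The remaining bookkeeping — verifying items \eqref{item:integer length} and \eqref{item:adjacency} survive thickening, and that merge-ambiguous spots near the $\le L_1$ close pairs are boundedly many — is routine given the finiteness established above.
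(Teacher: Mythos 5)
Your overall route is genuinely different from the paper's: you compare the two stable intervals in one shot via the common ``atoms'' of $\calY\cap\calY'$ and a bounded set of ``seeds'', whereas the paper (Appendix \ref{app:SI}) changes one datum at a time (Proposition \ref{prop:add cluster}) and then composes decompositions through an iteration/refinement argument (Proposition \ref{prop:stable iteration}), finishing with a six-step endpoint swap that your seed framework would handle natively. However, as written your construction has a concrete gap: the \emph{split cluster} phenomenon. Suppose $z\in\calY'-\calY$ lies more than $E$ from every atom, in the middle of a long gap between consecutive good clusters $C_1,C_2$ of $(a,b;\calY)$. Then the cluster of $T'$ containing $z$ consists only of new seeds, so your good/bad dichotomy attaches no bad atom or bridge to it; by your stated rule, $\lambda_0(C_1,C_2)\subset T_e$ is not stable ($C_1,C_2$ are good but not adjacent in both separation graphs), and neither are the flanking edges $\lambda_0(C_1,\{z\})$, $\lambda_0(\{z\},C_2)\subset T'_e$ (one endpoint cluster is not good). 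All three segments can be arbitrarily long, so leaving them outside $T_s,T'_s$ violates item \eqref{item:unstable components} of Definition \ref{defn:stable decomp}, and your description of the exceptional stable pairs---whole edges ``one of whose endpoints was moved by $O(\epsilon)$ because a neighbouring cluster absorbed a seed''---does not supply the needed surgery: one must cut the old edge at the shadow of the new cluster, declare the short shadow piece and the integer-length trimming leftovers unstable, and pair the two resulting subsegments with the trimmed flanking edges of $T'$. That is exactly case (a) of Proposition \ref{prop:add cluster} (and its mirror for isolated points of $\calY-\calY'$), and it is missing from your proposal.

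A secondary issue: your claim that ``a long bad cluster is \ldots assembled from good clusters joined by edge components of length at most $(N+1)E$'' is false under your own definitions, since every atom lying in a cluster that contains a seed is within $E$ of a seed (any proximity edge leaving an atom must end at a seed), hence bad. The conclusion you need does survive---atoms are common to both setups, and consecutive new clusters inside an old cluster's footprint are separated by gaps bridged by at most $N$ deleted seeds, so the new edge components appearing there have length bounded in terms of $N,\epsilon,\delta$---but the argument must be run through atoms rather than good clusters. With the split-cluster surgery added, and with the bijections $\alpha,\beta$ built by an explicit left-to-right sweep so that item \eqref{item:adjacency} can be verified region by region (the bookkeeping the paper's refinement formalism in Appendix \ref{app:SI} is designed for), your direct approach looks repairable; part (2) then follows from part (1) essentially as you describe.
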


\begin{remark}
In the HHS setting, we will be able to control each of the constants $\delta, \epsilon, N, r_1,r_2$ in terms of the ambient HHS structure.  We have written the statement with two conclusions because the first conclusion is a mild reformulation of the original theorem \cite[Theorem 3.2]{DMS_bary}, while the second is what we actually need for the cubulation machine discussed in this paper.  Notably, the second statement will be an easy consequence of the first.
\end{remark}

Before we move onto the proof, we observe the following corollary, whose statement and proof motivate the statement of Theorem \ref{thm:stable intervals}.  In particular, this corollary will allow us to verify the interval-wise condition in Proposition \ref{prop:HFI isomorphism} and give us the cubical isomorphism we need for the Stable Cubulations Theorem \ref{thm:stable cubes}; see Subsection \ref{subsec:perturbed to stable}.

\begin{corollary}\label{cor:interval isometry}
Let $(a,b;\calY)$ and $(a',b'; \calY')$ be $(N,\epsilon)$-admissible $\epsilon$-setups with $(\calY \cap \calY')$-stable $L$-compatible decompositions $T_s \subset T_e$ and $T'_s \subset T'_e$.   Let $\Delta:T \to \hT$ and $\Delta':T' \to \hT'$ denote the quotients obtained by collapsing each component of $T-T_s$ and $T'-T'_s$ to a point.  The following hold:
\begin{enumerate}
\item $\hT$ and $\hT'$ are simplicial intervals where each collapsed component of $T-T_s$ and $T'-T'_s$ is a vertex, and
\item There exists an isometry $\Phi:\hT \to \hT'$ which is induced from the bijections $\alpha:\pi_0(T_s) \to \pi_0(T'_s)$ and $\beta:\pi_0(T-T_s) \to \pi_0(T'-T'_s)$, and the isometries of pairs of stable components $i_{E,\alpha(E)}:E \to \alpha(E)$ for $E \subset T_s$.   Moreover, we have
\begin{enumerate}
\item $\Phi(\Delta(a)) = \Delta'(a')$ and $\Phi(\Delta(b)) = \Delta'(b')$,
\item If $y \in \calY \cap \calY'$ and $D_y \in \pi_0(T - T_s)$ and $D'_y \in \pi_0(T - T_s)$ contain $\mu(C_y)$ and $\mu(C'_y)$, respectively, then $\Phi(\Delta(D_y)) = \Delta'(D'_y)$. 
\end{enumerate}
\end{enumerate}
\end{corollary}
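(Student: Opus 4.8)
The plan is to derive this corollary directly from the structural content of Theorem \ref{thm:stable intervals}(1), treating it as essentially a bookkeeping statement about how stably compatible edge decompositions glue together after collapsing. First I would verify part (1): each component of $T - T_s$ is being collapsed to a point, and by item \eqref{item:integer length} of Definition \ref{defn:stable decomp} the components of $T_s$ have positive integer length with endpoints at vertices; hence the quotient $\hT$ is a simplicial interval in which each collapsed component appears as a vertex, and similarly for $\hT'$. This is immediate once one observes that collapsing a subinterval with endpoints at vertices in a simplicial interval yields a simplicial interval (this is the same phenomenon used in Corollary \ref{cor:simplicial version} and the ``converting to simplicial intervals'' discussion).

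Next, for part (2), I would build $\Phi:\hT \to \hT'$ piecewise. The order-preserving bijection $\alpha:\pi_0(T_s)\to\pi_0(T'_s)$ from item \eqref{item:stable bijection} together with the order-preserving isometries $i_{E,\alpha(E)}:E\to\alpha(E)$ from item \eqref{item:stable pairs} gives isometries on the stable pieces. The induced order-preserving bijection $\beta:\pi_0(T-T_s)\to\pi_0(T'-T'_s)$ (Remark \ref{rem:order preserving}) matches up the collapsed vertices. One then checks that these local isometries glue: moving from the $a$-side to the $b$-side, the sequence of pieces of $\hT$ is (collapsed component, stable component, collapsed component, $\dots$), and the order-preserving properties of $\alpha$ and $\beta$ guarantee that the $i_{E,\alpha(E)}$ send the $a$-end of $E$ to the $a$-end of $\alpha(E)$ and that adjacent pieces match up consistently across the collapse. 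Since $\hT,\hT'$ are intervals, an order-preserving family of isometries on a consistent decomposition into subintervals and points assembles into a single isometry $\Phi:\hT\to\hT'$. For (2a), item \eqref{item:endpoint condition} says $\beta(D_a) = D_{a'}$ and $\beta(D_b) = D_{b'}$, and since $\mu(C_a)\subset D_a$ has $a$ (up to $O(\epsilon)$, but the collapsed point is canonical) as its outermost point by Proposition \ref{prop:stable interval structure}(1), we get $\Phi(\Delta(a)) = \Delta'(a')$; likewise for $b$. For (2b), item \eqref{item:cluster identify} directly gives $\beta(D_y) = D'_y$, hence $\Phi(\Delta(D_y)) = \Delta'(\Delta'(D'_y))$—i.e. the collapsed vertices agree.

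I do not expect a serious obstacle here; the corollary is essentially a repackaging of Definition \ref{defn:stable decomp}'s conclusions into the form needed to invoke Proposition \ref{prop:HFI isomorphism} later. The one point requiring a little care is the gluing/coherence argument for $\Phi$: one must confirm that the endpoint-matching conventions of ``order-preserving'' are mutually consistent between the isometries on stable components and the bijection on collapsed components, so that no orientation reversal sneaks in at a junction. This follows from the fact (Remark \ref{rem:order preserving}) that an order-preserving bijection on the edge pieces automatically induces an order-preserving bijection on the cluster/complement pieces, both read off from the linear order of $T$ and $T'$ respectively, so all the local data is compatible with a single global orientation. I would phrase the write-up as: (i) the simplicial claim, one line; (ii) construction of $\Phi$ by gluing order-preserving local isometries along the alternating decomposition, with a sentence justifying coherence via the order-preserving hypotheses; (iii) the two ``moreover'' items by direct appeal to \eqref{item:endpoint condition} and \eqref{item:cluster identify}, noting that $\Delta(a),\Delta(b)$ are the endpoints of $\hT$ by Proposition \ref{prop:stable interval structure}(1). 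Leaving routine verifications to the reader is consistent with the expository tone of the surrounding sections.
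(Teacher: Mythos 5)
Your proposal is correct and follows essentially the same route as the paper: part (1) from the integer-length condition \eqref{item:integer length}, and part (2) by defining $\Phi$ piecewise via $\beta$ on collapsed vertices and $\Delta'\circ i_{E,\alpha(E)}$ on stable pieces, with coherence from the order-preserving hypotheses and the moreover items read off from \eqref{item:adjacency}. (The stray repetition ``$\Delta'(\Delta'(D'_y))$'' near the end is just a typo, not a gap.)
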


\begin{proof}
First, observe that $\hT$ and $\hT'$ are intervals because $T,T'$ are and each component of $T-T_s$ and $T'-T'_s$ is a subinterval.  They are simplicial intervals with the given description because Theorem \ref{thm:stable intervals} (specifically item (4a) of Definition \ref{defn:stable decomp}) provides that each component of $T_s,T'_s$ has positive integer length, and so the distance between the points corresponding to pairs of collapsed components of $T-T_s$ and $T'-T'_s$ is a positive integer.

For the second conclusion, we can define $\Phi:\hT\to \hT'$ as follows: For each vertex $v \in \hT$ corresponding to a component $C \in \pi_0(T-T_s)$, we define $\Phi(v) = v'$, where $v'$ is the vertex of $\hT'$ corresponding to $\beta(C) \in \pi_0(T'-T'_s)$.  If now $x \in \hT$ is not such a vertex, then $x \in \Delta(E)$ for some component $E \in \pi_0(T_s)$.  So we define $\Phi(x) = \Delta'(i_{E, \alpha(E)}(x))$.  The fact that these piece-wise maps glue coherently is a direct consequence of the fact that $\alpha$ is order-preserving.  The subitems of (2) are direct consequences of item \eqref{item:adjacency} of Definition \ref{defn:stable decomp}.  This completes the proof.
\end{proof}

\begin{remark}
Notably, the above corollary holds whenever we have a stable decomposition, so in particular when this is a stable decomposition arising from the thickened versions of our stable intervals.
\end{remark}

\subsection{Detailed sketch of the proof} \label{subsec:detailed sketch}

In lieu of a full proof (see Appendix \ref{app:SI}), we provide a detailed sketch of Theorem \ref{thm:stable intervals} with figures illustrating the key technical arguments, which are easier to understand at a fairly close level than to write down carefully.  Our proof strategy here follows more closely the proof of \cite[Theorem 10.23]{DMS_FJ} than the original \cite[Theorem 3.1]{DMS_bary}.

The goal is to show that the stable intervals (Definition \ref{defn:stable interval}) for pairs of admissible $\epsilon$-setups admit stable decompositions in the sense of Definition \ref{defn:stable decomp}.  Roughly, these are decompositions of the intervals into collections of subintervals, which are either identical (as in item (d)), or the same length and uniformly close (as in item (e)), or are one of boundedly-many unstable components we can ignore (as in item (f)).  We need to show how to constraints on the input data assumed in item (1) translates into control over the components of the stable intervals.

To begin the sketch, first observe that the two main assumptions of Theorem \ref{thm:stable intervals} on the setups $(a,b;\calY)$ and $(a',b';\calY')$ are that
\begin{itemize}
\item The sets of cluster points $\calY, \calY'$ are close to both $\lambda(a,b)$ and $\lambda(a',b')$ and have bounded symmetric difference, and 
\item The endpoints pairs $(a,b)$ and $(a',b')$ are within $\epsilon$ of each other.
\end{itemize}

This suggests the following iterated proof strategy: 
\begin{enumerate}
\item First, show that we can build a stable decomposition when we add a single nearby cluster point, while keeping the endpoints the same.
\item Next, show that we can iteratively combine stable decompositions to obtain a stable decomposition.
\item Finally, show that we can build a stable decomposition when we only wiggle a single endpoint and keep everything else (including the cluster sets) the same.

\end{enumerate}

Steps (1) and (3) can be thought of as \emph{moves}, where we only change one piece of data at a time, and we will see that type (3) moves are basically type (1) moves.  For step (2), we will want to iteratively perform type (1) moves by adding the extra cluster points of $\calY' - \calY$ to the setup $(a,b;\calY)$, thereby shifting from $(a,b;\calY)$ to $(a,b;\calY \cup \calY')$.  Then we make two type (3) moves to shift $(a,b;\calY \cup \calY') \to (a',b;\calY \cup \calY') \to (a',b';\calY \cup \calY')$.  Finally, we make another sequence of iterated type (1) moves to shift from $(a',b';\calY \cup \calY')$ down to $(a',b';\calY')$.  Since this total sequence of moves is bounded by $N+2$, we will be done.

For step (1), there are two main cases---the \emph{split cluster} case, where we add a point $z$ to $\calY$ far from all $\calY$-clusters; and the \emph{affected cluster} case, where $z$ is close to some existing $\calY$-cluster.  See Figures \ref{fig:split cluster} and \ref{fig:affected cluster}, respectively, for relevant schematics.  The philosophy here is simple: adding a cluster point only affects things locally, thus one only needs to make bounded local modifications.

Step (2) of the proof now involves iteratively applying step (1), and thus one must show that one can coherently combine two pairs of stable decompositions across a common setup.  Figure \ref{fig:iterated stable decomp} shows a schematic for the argument, in which one has three stable trees $T_1,T_2,T_3$ with associated stable decompositions, so that those for $T_1,T_2$ and $T_2,T_3$ are compatible.  The idea is to intersect the two decompositions on $T_2$ and pull them back to $T_1,T_3$.

The associated stability constants increase a controlled amount with each iteration, but we are assuming that we only need to perform boundedly-many such iterations in Theorem \ref{thm:stable intervals}.

\smallskip

Finally, step (3) is an application of steps (1) and (2).  To get the idea, we can consider the following restricted setup, which we highlight to focus on what happens when we perturb an endpoint.  Let $a,b,a' \in \calZ$ and $\calY \subset \calZ$ be finite with $\calY \subset \calN_{\epsilon/2}(\lambda(a,b)) \cap \calN_{\epsilon/2}(a',b)$.  The following collection of setups, each of which differs by adding or deleting one cluster point, or by switching the role that $a,a'$ or $b,b'$ play between cluster or endpoint:
\vspace{-.09in}
\begin{multicols}{3}
\begin{enumerate}
\item $(a,b;\calY)$,
\item $(a,b;\calY \cup \{a\})$,
\item $(a,b;\calY \cup \{a, a'\})$,
\item $(a',b;\calY \cup \{a,a'\})$,
\item $(a',b;\calY \cup \{a'\})$,
\item $(a',b;\calY)$.
\end{enumerate}
\end{multicols}

Hence we can use steps (1),(2), (3) plus iteration to change the roles of endpoints once we have added all of the necessary cluster points.  See Appendix \ref{app:SI} for the full argument.

\begin{figure}[b]
    \centering
    \includegraphics[width=1\textwidth]{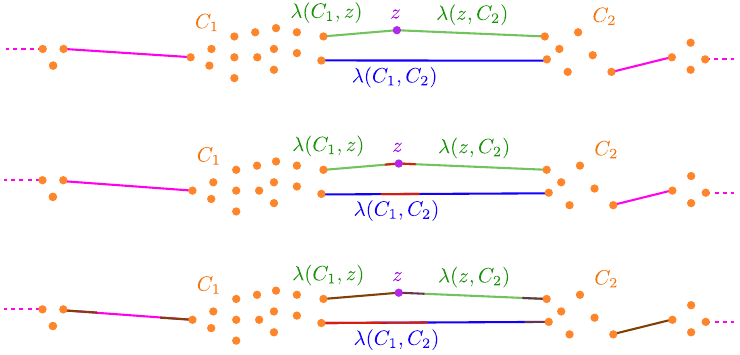}
    \caption{Step (1) of the proof of Theorem \ref{thm:stable intervals}, split cluster case:  In the first figure, the two edge components of the two stable intervals are shown.  The point $z$ forms an individual cluster and therefore must be inserted as a vertex in the graph $\calG$, between clusters $C_1, C_2$.  Every cluster to the left of $C_1$ is unchanged, as are all of the connecting segments (in pink), and same for those to the right of $C_2$.  The second figure shows the stable decomposition of the stable intervals.  Here, we need to include the shadow of the cluster $z$ on $\lambda(C_1,C_2)$ as an unstable component.  This cuts $\lambda(C_1,C_2)$ into two segments, which are paired with $\lambda(C_1,z)$ and $\lambda(z,C_2)$, though these need to be slightly truncated so guarantee they have the same lengths; the unstable segments accomplishing this are red.  The third figure shows the stable decomposition for their $(r_1,r_2)$-thickenings, which are subsets of the edge components $T_e \subset T$ and $T'_e \subset T$, are indicated in brown.  Note that this can result in collapsing an entire edge component if its diameter is less than $r_2-2r_1$.  These brown segments refine the stable decompositions $T_s \subset T_e$ and $T'_s \subset T'_e$.}    \label{fig:split cluster}
\end{figure}

\newpage
\begin{figure}[!htb]
    \centering
    \includegraphics[width=1\textwidth]{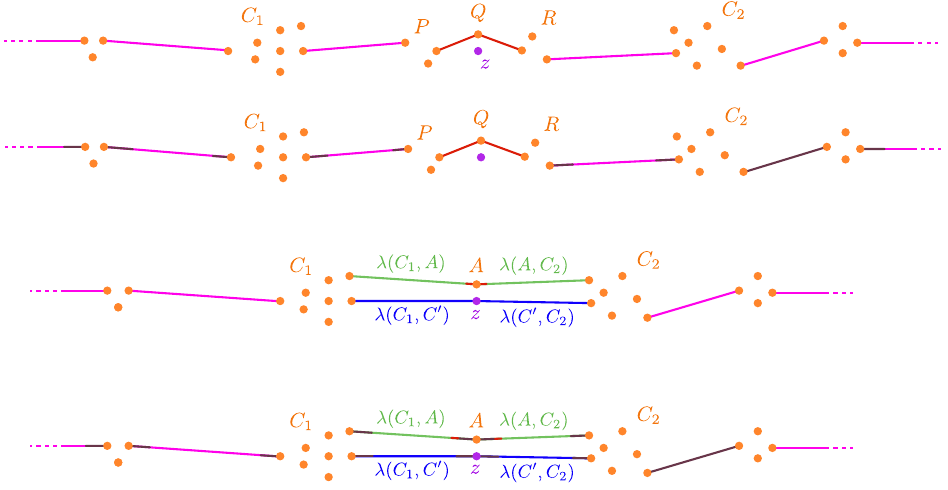}
    \caption{Step (1) of the proof of Theorem \ref{thm:stable intervals}, affected cluster case.  Two phenomena are shown.  In the top figure, there are three affected clusters, $P,Q,R$, for $(a,b;\calY)$ which combine with $z$ into a new cluster $C'$ for $(a,b;\calY \cup \{z\})$.  By definition, $P,Q,R$ must be $O(\epsilon)$ apart, and actually are adjacent in $\calG$.  The clusters $C_1,C_2$ appear in both setups, and the subgraph of $\calG$ to the left of $C_1$ coincides with the same subgraph for $\calG'$, and similarly for $C_2$.  So the corresponding (pink) components form (stable) components of $T_s \subset T_e$ and $T'_s \subset T'_e$.  The components of $T_e$ connecting $P,Q,R$ are declared unstable and, in this example, the components $\lambda(C_1,P)$ and $\lambda(R,C_2)$ of $T_e$ coincide with the components $\lambda(C_1,C')$ and $\lambda(C',C_2)$ of $T'_e$, and hence are stable.  The second figure shows the stable decomposition for the $(r_1,r_2)$-thickenings, which are just the thickenings of the components of $T_s = T'_s$, and hence are also exactly the same.  In the second example, the same discussion applies to the outside parts.  For the middle of the graph, $z$ combines with the cluster $A$, which results in pairs of segments $\lambda(C_1,A)$ and $\lambda(C_1, C')$, and $\lambda(A,C_2)$ and $\lambda(C',C_2)$, which have $O(\epsilon)$-close endpoints and are thus $O(\epsilon)$-Hausdorff close, with the same length up to error $O(\epsilon)$.  Small surgeries provide the required stable components for the stable intervals $T, T'$, shown in the third figure.  The fourth figure shows the stable decomposition for the $(R,r)$-thickenings of this example.}
    \label{fig:affected cluster}
\end{figure}

\begin{figure}[t]
    \centering
    \includegraphics[width=1\textwidth]{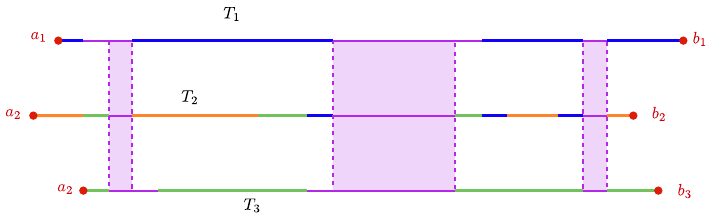}
    \caption{The proof of Theorem \ref{thm:stable intervals}, iterating stable decompositions: The idea for the iteration argument is to intersect the stable decompositions $T^1_{s,2} \cap T^3_{s,2} \subset T_2$ for $T_1,T_3$, respectively, and then use the isometries which identify the stable pairs for $T_1, T_2$ to give a refined stable decomposition for $(T_1,T_2)$, which can be combined with a refinement for $(T_2,T_3)$ via simple composition.  In the schematic (which is \emph{not} happening in $\calZ$), we have lined up the stable intervals $T_1,T_2,T_3$ to indicate how to refine the pairwise stable decompositions for $(T_1,T_2)$ and $(T_2,T_3)$ for one for $(T_1,T_3)$.  The orange segments correspond to cluster components of the intervals, while the blue and green segments correspond to the unstable components for the pairs $(T_1,T_2)$ and $(T_2,T_3)$, respectively.  The complement of the orange, blue, and green segments on $T_2$ form the refined stable decomposition on $T_2$, and the purple squares indicate how these segments induce refined stable decompositions on $T_1$ and $T_3$.}
    \label{fig:iterated stable decomp}
\end{figure}

\newpage
\section{Stable cubical models} \label{sec:stable cubes}

In this section, we explain how to combine the cubulation machinery described in Sections \ref{sec:interval systems}--\ref{sec:cubulation} with the Controlling Domains Theorem \ref{thm:controlling domains} established in Section \ref{sec:controlling domains} and the Stable Interval Theorem \ref{thm:stable intervals} from Section \ref{sec:stable intervals}.  Over the first two subsections, we prove that our stable intervals can be plugged into our interval system machinery to produce an HFI for any pair of points $a,b \in \calX$ in an HHS.

The heart of the section is then explaining how to relate two such HFIs for a pair of points $a,b$ and a perturbed pair $a',b'$.  We explain this below in detail in Subsection \ref{subsec:perturbed to stable}.

\subsection{Stable intervals form an interval system}

Our first goal is to explain how to take the family of stable intervals associated to a pair of points $a, b\in \calX$ and produce a hierarchical family of intervals.

First, we need to induce a collection of relative projections on the stable intervals which turn them into an $R$-interval system (Definition \ref{defn:interval axioms}) for a uniform $R = R(\mathfrak S)>0$.

Let $U \in \calU$.  Recall that the BGIA \ref{ax:BGIA} says that if $d_V(a,b)$ is sufficiently large for $V \in \mathfrak S$, then $\rho^V_U \subset \calN_{\ES}(\lambda(a,b))$ when $V \nest U$ and, even better, $\rho^V_U \subset \calN_{\ES}(a \cup b)$ when $V \pitchfork U$.  If we set $\calY_U= \{\rho^V_U|V \nest U \in \calU\}$, then $(\pi_U(a), \pi_U(b); \calY_U)$ is an $\epsilon$-setup in $\calC(U)$ for $\epsilon = 2\ES$ in the sense of Subsection \ref{subsec:stable setup}.  In particular, our stable interval construction (Proposition \ref{prop:stable interval structure}) applies.

For each $U \in \calU$, let $T_U = T_{U,e} \cup T_{U,c}$ denote the stable interval for the setup $(a,b;\calY_U)$ in $\calC(U)$.  Recall that each component of $T_{U,c}$ coincides with $\mu(C)$ for some cluster $C$ of points in $\calY_U \cup \{a,b\}$ (Subsection \ref{subsec:clusters}).  Importantly, item (2) of Proposition \ref{prop:stable interval structure} says that $\phi_U(\mu(C))$ is $O(\epsilon)$ (and hence $O(\mathfrak S)$) Hausdorff close to $C$ itself.  This property is what makes the following definition work:

\begin{definition}[Stable interval projections]\label{defn:stable interval projections}
For each $U \in \calU$, we define the following \emph{stable interval projections}:
\begin{itemize}
\item For each $V \in \calU$ with $V \nest U$, let $C$ be the cluster containing $\rho^V_U$.  Define 
$$\delta^V_U = \phi_U^{-1}(\hull_{\mu(C)}(p_{\phi_U(\mu(C))}(\rho^V_U))) \subset \mu(C),$$where $p_U:\calC(U) \to \phi_U(\mu(C))$ is the closest point projection to the geodesic segment $\phi_U(\mu(C)) = \mu(C)$.
\item For each $V \in \calU$ with $V \pitchfork U$, let $f \in \{a,b\}$ denote the point (provided by Definition \ref{defn:consistency}) for which $d_U(\pi_U(f), \rho^V_U)<\epsilon$, and let $C$ denote the cluster containing $f$.  Again we define $\delta^V_U = \phi_U^{-1}(\hull_{\mu(C)}(p_{\phi_U(\mu(C)}(\rho^V_U))) \subset \mu(C)$.
\item If $U \nest V$, then we define the \emph{interval projection map} $\delta^V_U:T_V \to T_U$ by $\delta^V_U = \phi^{-1}_U \circ p_{T_U} \circ \rho^V_U \circ \phi_V.$
\end{itemize}
\end{definition}

The next lemma says that these projections turn the family of stable intervals into an interval system.  The proof, which we leave to the reader, involves verifying the conditions of Definition \ref{defn:interval axioms} (and hence Lemma \ref{lem:interval control}, and one does this via Lemma \ref{lem:rho control} and Proposition \ref{prop:stable interval structure}.

\begin{lemma}\label{lem:stable interval system}
There exists $R_s = R_s(\mathfrak S)>0$ so that the stable intervals $\{T_U\}_{U \in \calU}$ endowed with the stable interval projections forms an $R_s$-interval system for $a,b$.
\end{lemma}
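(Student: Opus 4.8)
The strategy is to verify the three conditions of Definition~\ref{defn:interval axioms} for the family $\{T_U\}_{U\in\calU}$ of stable intervals with the stable interval projections $\delta^V_U$. The key inputs are the structural properties of stable intervals from Proposition~\ref{prop:stable interval structure}, the behavior of the $\rho$-projections recorded in Lemma~\ref{lem:rho control}, and the basic HHS axioms (especially BGIA~\ref{ax:BGIA}). Throughout we take $\epsilon = 2\ES$, so that by BGIA the points $\calY_U = \{\rho^V_U \mid V \nest U \in \calU\}$ form an $\epsilon$-setup in $\calC(U)$ in the sense of Subsection~\ref{subsec:stable setup}.

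\textbf{Step 1: verifying items (1) and (2) of Definition~\ref{defn:interval axioms}.} Item (1) asks that $\phi_U : T_U \to \calC(U)$ be an $(R,R)$-quasi-isometric embedding with image $R$-Hausdorff close to $\hull_U(a,b)$. Both halves follow directly from item (3) of Proposition~\ref{prop:stable interval structure}: the map $\phi_U$ is an $(L_0,L_0)$-quasi-isometric embedding with image $L_0$-Hausdorff close to $\lambda(a,b)$, and $\lambda(a,b)$ is a geodesic hence $\delta$-Hausdorff close to $\hull_U(a,b)$. The constant $L_0 = L_0(\delta,\epsilon)$ depends only on $\delta$ and $\epsilon = 2\ES$, hence only on $\mathfrak S$. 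For item (2), we take $a_U, b_U$ to be the marked points of $T_U$ lying in the endpoint clusters $C_a, C_b$; item (1) of Proposition~\ref{prop:stable interval structure} guarantees these endpoints have images within $O(\epsilon)$ of $\pi_U(a), \pi_U(b)$ in $\calC(U)$.

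\textbf{Step 2: verifying item (3), the $\delta^V_U$ properties.} This is the substantive part. We must check the five conclusions of Lemma~\ref{lem:interval control}. For the diameter and proximity bounds in (1) and (2): when $U \pitchfork V$, the relevant point $\rho^V_U$ is $\epsilon$-close to a marked point $\pi_U(f)$ by consistency (Lemma~\ref{lem:base consistency}), so the cluster $C$ containing $f$ has $\phi_U(\mu(C))$ Hausdorff close to $C$ by item (2) of Proposition~\ref{prop:stable interval structure}, and closest-point projecting $\rho^V_U$ into this bounded-diameter segment yields a bounded-diameter $\delta^V_U$ near a marked point; when $U \nest V$, the diameter bound on $\delta^U_V$ follows similarly since $\rho^U_V$ is a bounded-diameter set. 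Property (3)---that $\delta^U_W$ and $\delta^V_W$ are close when $U,V \nest W$ are non-transverse---follows from item (1) of Lemma~\ref{lem:rho control}, which says $d_W(\rho^U_W,\rho^V_W) < \ES$, combined with the fact that closest-point projection into a geodesic is coarsely Lipschitz and does not increase distances much; one must argue that $\rho^U_W$ and $\rho^V_W$ land in the same (or adjacent) cluster, which follows from the cluster constant $E$ being chosen larger than $O(\ES)$. Property (4) is handled the same way via item (2) of Lemma~\ref{lem:rho control}. Property (5), the bounded geodesic image statement for the interval projections, is the translation of the ambient BGIA~\ref{ax:BGIA}: for $U \nest V$, a component $C \subset T_V$ far from $\delta^U_V$ maps under $\phi_V$ to a subsegment of $\lambda(a,b)$ far (in $\calC(V)$) from $\rho^U_V$, and then Axiom~\ref{ax:BGIA} bounds the diameter of its image under $\rho^V_U$; pulling back via $\phi_U^{-1}$ and the definition of $\delta^V_U$ as the projection-composed-with-$\phi_U^{-1}$ gives the claim, with the bound depending only on $\ES$ and the quasi-isometry constant $L_0$.

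\textbf{Main obstacle.} The delicate point is property (3) (and analogously (4)) of Lemma~\ref{lem:interval control}: one needs $\diam_{T_W}(\delta^U_W \cup \delta^V_W)$ to be uniformly bounded, but $\delta^U_W$ and $\delta^V_W$ are defined by projecting into \emph{possibly different} cluster components $\mu(C)$ and $\mu(C')$ of $T_{W,c}$. If $\rho^U_W$ and $\rho^V_W$ lie in different clusters then their shadows are disjoint (Lemma~\ref{lem:cluster arrangement}), and one must use the $\ES$-bound from Lemma~\ref{lem:rho control}(1) together with the choice $E > O(\ES)$ to conclude they actually lie in the \emph{same} cluster, so that the projections land in a common bounded-diameter segment $\mu(C)$. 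Making the bookkeeping of constants consistent---ensuring that the cluster constant $E$, chosen in Proposition~\ref{prop:cluster sep graph} as a function of $\epsilon = 2\ES$ and $\delta$, is large enough to absorb all the $O(\ES)$-scale distances that appear---is the part that requires genuine care rather than routine verification. Once this is arranged, setting $R_s$ to be the maximum of all the constants produced ($L_0$, the various $O(\epsilon)$-bounds, and the BGIA constant) completes the proof.
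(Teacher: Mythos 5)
Your overall route is the one the paper intends: the paper leaves this proof to the reader, indicating only that one verifies Definition~\ref{defn:interval axioms} (hence the conclusions of Lemma~\ref{lem:interval control}) using Lemma~\ref{lem:rho control}, Proposition~\ref{prop:stable interval structure}, and the BGIA, and that is exactly what you do, with $\epsilon=2\ES$ and with the same case analysis.

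One point in your write-up is genuinely wrong as stated, though, and it appears twice: cluster components $\mu(C)$ are \emph{not} bounded-diameter segments. A cluster is a connected component of the $E$-proximity graph on $\calY_U\cup\{a,b\}$, so a chain of $\rho$-sets spaced slightly less than $E$ apart forms a single cluster of arbitrarily large diameter, and $\mu(C)$ is Hausdorff-close to $C$ (Proposition~\ref{prop:stable interval structure}(2)), hence can be arbitrarily long; this applies equally to the endpoint clusters containing $a$ or $b$. Consequently, in your treatment of items (1)--(2) (``closest-point projecting $\rho^V_U$ into this bounded-diameter segment yields a bounded-diameter $\delta^V_U$ near a marked point'') and in your resolution of the ``main obstacle'' for item (3) (``the projections land in a common bounded-diameter segment $\mu(C)$''), the stated reason does not give the required diameter bounds. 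The correct mechanism --- which you do invoke elsewhere in Step~2 --- is: each $\rho$-set has diameter at most $\ES$ and lies within $O(\ES)$ of $\mu(C)$ (for the transverse case, within $\epsilon$ of $\pi_U(f)$, which is essentially a point of $\mu(C)$); closest-point projection onto the geodesic $\phi_U(\mu(C))$ in the hyperbolic space $\calC(U)$ is coarsely Lipschitz with constants depending only on $\delta$; and $\phi_U$ restricted to a cluster component is an isometric embedding onto that geodesic, so closeness in $\calC(U)$ transfers to closeness in $T_U$. With that substitution (and your observation that $E\gg\ES$ forces $\rho^U_W,\rho^V_W$ into a common cluster in item (3)), the verification goes through and matches the paper's intended argument.
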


\subsection{Thickenings of stable intervals, their collapses, and HFIs}\label{subsec:collapsed stable intervals}

In this subsection, we explain why collapsed thickened interval systems of stable intevals form HFIs.

To begin, in Subsection \ref{subsec:thickenings}, we took an interval system $\{T_U\}_{U \in \calU}$ along with its relative projection data $\delta^V_U$, and then thickened the intervals along the components $B$ of a decomposition $T_U = A \cup B$, where $B$ consists of these $\delta^V_U$ along with the marked points $a_U,b_U$.  Then in Subsection \ref{subsec:collapsing}, we collapsed along this thickening which induced collapsed relative projections on the collapsed intervals.  On the other hand, the thickenings of stable intervals we considered in the Stable Intervals Theorem \ref{thm:stable intervals} were thickenings along the cluster components.

The following lemma, whose proof we leave to the reader, says that these thickenings coincide, and hence that the corresponding collapsed trees are identical:

\begin{lemma}\label{lem:same thickenings}
For any $U \in \calU$, let $T_U = A \cup B$ be the decomposition where $B$ is the union of all $\delta^V_U$ for $V \nest U$ along with $a_U,b_U$.  Let $T_U = T_{U,e} \cup T_{U,c}$ denote the decomposition provided by Proposition \ref{prop:stable interval structure}.  There exists $R_3 = R_3(\mathfrak S)>0$ so that for any $r_1>R_3$ and $r_2>0$, the $(r_1,r_2)$-thickenings of $T_U$ along $B$ and along $T_{U,c}$ are identical.
\end{lemma}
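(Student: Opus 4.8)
\textbf{Proof proposal for Lemma \ref{lem:same thickenings}.}

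The plan is to compare the two thickening recipes term-by-term and show their inputs differ only by a uniformly bounded amount, so that once $r_1$ is taken larger than that bound both produce the same decomposition. Recall the setup: on one side we thicken $T_U$ along $B = \{\delta^V_U : V \nest U\} \cup \{a_U, b_U\}$ (the relative-projection / collapsing recipe of Subsection \ref{subsec:collapsing}); on the other we thicken along the cluster forest components $T_{U,c}$, each of which is $\mu(C)$ for a cluster $C$ of points in $\calY_U \cup \{a,b\}$ with $\calY_U = \{\rho^V_U : V \nest U \in \calU\}$ (the Stable Intervals recipe). The key structural fact I would invoke is item (2) of Proposition \ref{prop:stable interval structure}, which gives $d^{Haus}_{\calC(U)}(\phi_U(\mu(C)), C) < L_0$ for a uniform $L_0 = L_0(\mathfrak S) > 0$, together with the definition of the stable interval projections (Definition \ref{defn:stable interval projections}): for $V \nest U$, if $C$ is the cluster containing $\rho^V_U$, then $\delta^V_U$ is the $\phi_U$-preimage of the closest-point projection of $\rho^V_U$ onto $\phi_U(\mu(C)) = \mu(C)$ itself, and likewise the marked points $a_U, b_U$ lie in their own cluster components by construction (the endpoint clusters contain $a,b$ by Proposition \ref{prop:stable interval structure}(1)).

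First I would establish the containment-up-to-$O(\mathfrak S)$ in one direction: every $\delta^V_U \subset \mu(C)$, so $\delta^V_U$ lies inside the cluster component $T_{U,c}$-piece $\mu(C)$; similarly $a_U \in \mu(C_a)$ and $b_U \in \mu(C_b)$. Hence $B$ is contained in $T_{U,c}$, and each component of $T_{U,c}$ has diameter bounded by some $D_0 = D_0(\mathfrak S, r_2)$ — actually for our purposes the relevant bound is that each $\mu(C)$ is within $O(\mathfrak S)$ Hausdorff distance of $C$, and $C$ itself, being a cluster, has bounded diameter depending only on the cluster constant $E = E(\mathfrak S)$; so $\diam_{T_U}(\mu(C)) = O(\mathfrak S)$. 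Second I would establish the reverse containment-up-to-$O(\mathfrak S)$: each cluster component $\mu(C)$ is Hausdorff-$O(\mathfrak S)$-close to $C \subset \calY_U \cup \{a,b\}$, and every point of $\calY_U$ is of the form $\rho^V_U$ for some $V \nest U \in \calU$ — which is exactly the point that the $\delta^V_U$ is built around, lying within $R_s$ of $\rho^V_U$ by Lemma \ref{lem:interval control}(1)--(2) / Lemma \ref{lem:stable interval system}. Thus each $\mu(C)$ is contained in the $O(\mathfrak S)$-neighborhood of $B$. Combining the two directions, $B$ and $T_{U,c}$ coarsely coincide: there is $R_3 = R_3(\mathfrak S) > 0$ with $d^{Haus}_{T_U}(B, T_{U,c}) < R_3$.

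Finally I would run the thickening procedure of Definition \ref{defn:thickening} with this comparison in hand. Step (1) of that definition takes $r_1$-neighborhoods; since $B$ and $T_{U,c}$ differ by Hausdorff distance $< R_3$, whenever $r_1 > R_3$ the $r_1$-neighborhood of $B$ equals the $r_1$-neighborhood of $T_{U,c}$ as subsets of $T_U$ (the $R_3$-slack is absorbed). Step (2) connects subintervals coming within $r_2$ of each other by the geodesic between them; since the inputs to Step (2) are now literally the same collection of subintervals, the outputs — the cluster components $\bT^c_U$ and edge components $\bT^e_U$ — are identical. Hence the $(r_1, r_2)$-thickening of $T_U$ along $B$ coincides with the $(r_1, r_2)$-thickening along $T_{U,c}$, for every $r_1 > R_3$ and every $r_2 > 0$, as claimed. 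The main obstacle I anticipate is bookkeeping the precise constant: one must check that the marked points $a_U, b_U$ are genuinely absorbed into the endpoint cluster components (so they do not create an extra piece of $B$ outside $T_{U,c}$), which follows from Proposition \ref{prop:stable interval structure}(1) but deserves an explicit line, and one must confirm that no cluster component fails to contain any $\delta^V_U$ or marked point — but such a component would have $\BM(C) = \emptyset$ and bounded diameter, and since it also contains at least one point of $\calY_U \cup \{a,b\}$ it still sits in the $O(\mathfrak S)$-neighborhood of $B$, so the argument goes through uniformly.
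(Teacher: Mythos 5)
Your reduction to ``$B$ and $T_{U,c}$ are Hausdorff-close, so for $r_1>R_3$ their $r_1$-neighborhoods coincide'' does not work: Hausdorff-closeness never upgrades to equality of neighborhoods, no matter how large $r_1$ is. If an endpoint $e$ of a cluster component $\mu(C)$ lay at distance $t>0$ from $B$, then $\calN_{r_1}(\mu(C))$ would reach $r_1$ past $e$ into the adjacent edge component while $\calN_{r_1}(B)$ would reach only $r_1-t$ past it, so the two thickenings (and the resulting collapsed intervals) would differ by a definite amount for every $r_1$ -- they would be coarsely equal, which is not what the lemma asserts. The exactness the lemma needs comes from a fact you never establish: the endpoints of each $\mu(C)$ lie \emph{in} $B$, not merely near it. Indeed $\mu(C)=\lambda(r(C))$, and $r(C)$ consists of the marked points $a_U,b_U$ lying in $C$ together with the closest points of $C$ to the adjacent clusters; each such closest point is some $\rho^V_U\in C$, and since it sits on (indeed at an end of) $\phi_U(\mu(C))$, Definition \ref{defn:stable interval projections} makes $\delta^V_U$ exactly that endpoint. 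Combined with $B\cap\mu(C)\subset\mu(C)$ and the fact that the $\delta$-sets of the points of $C$ are $c(\mathfrak S)$-dense along $\mu(C)$ (via item (2) of Proposition \ref{prop:stable interval structure} and the cluster constant $E$), one gets $\calN_{r_1}(B\cap\mu(C))=\calN_{r_1}(\mu(C))$ \emph{exactly} once $r_1$ exceeds the density constant, and then step (2) of Definition \ref{defn:thickening} merges the overlapping $B$-pieces inside each cluster into a single interval, so the two thickenings agree. This endpoint-containment step is the missing idea; your own ``main obstacle'' paragraph worries about different (and less essential) issues.

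A secondary error: you assert that a cluster $C$ ``has bounded diameter depending only on the cluster constant $E$.'' That is false -- clusters are connected components of the $E$-proximity graph on an arbitrarily large set $\calY_U$, and can be arbitrarily long chains; this is precisely why the paper's cluster-honing and collapsing machinery exists. Your Hausdorff bound survives because coarse density of the $\delta$-sets along $\mu(C)$ suffices, but the bounded-diameter claim should be removed, and in any case it cannot rescue the final step, which needs the exact endpoint containment described above.
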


The upshot of Lemmas \ref{lem:stable interval system} and \ref{lem:same thickenings} is that we can plug our stable intervals directly into our interval system machinery.  The next step is to explain how to produce an HFI (Definition \ref{defn:HFI}) from this system using the thickenings and collapsing procedures discussed in Subsections \ref{subsec:thickenings} and \ref{subsec:collapsing}.

Continuing with our notation, Lemma \ref{lem:stable interval system} says that the stable intervals $\{T_U\}_{U \in \calU}$ endowed with the interval projections via Definition \ref{defn:stable interval projections} form an $R_s$-interval system for $a,b\in \calX$, where $R_s = R_s(\mathfrak S)>0$.  Since $R_s= R_s(\mathfrak S)>0$, Proposition \ref{prop:hPsi defined} and Lemma \ref{lem:collapsed interval control} provide constants $R_1 = R_1(\mathfrak S)>0$ and $R_2(\mathfrak S)>0$ so that for any $r_1\geq R_1$ and $r_2 \geq R_2$, the conclusions of Lemma \ref{lem:collapsed interval control} hold for the family of collapsed intervals $\{\hT_U\}_{U \in \calU}$ obtained from the $(r_1,r_2)$-thickening of the system $\{T_U\}_{U \in \calU}$.  We also want to assume that $r_1>R_3 = R_3(\mathfrak S)>0$ so that Lemma \ref{lem:same thickenings} holds.

The next proposition states that as long as $r_1$ is a positive integer, the collapsed intervals $\hT_U$ are all simplicial intervals, where the collapsed points lie at vertices.  The proof is a straightforward application of the ideas in the preceding sections, so we leave it to the reader.

\begin{proposition}\label{prop:stable HFI}
As long as $r_1\geq \max\{R_1,R_3\}$ is a positive integer and $r_2\geq R_2$, the family of collapsed stable intervals $\{\hT_U\}_{U \in \calU}$ associated to an $(r_1,r_2)$-thickening of $\{T_U\}_{U \in \calU}$ along $T_{U,c}$ and endowed with their collapsed projections forms a hierarchical family of intervals.
\end{proposition}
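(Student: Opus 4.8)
The plan is to verify the seven conditions of Definition~\ref{defn:HFI} one at a time for the family $\{\hT_U\}_{U \in \calU}$ obtained from an $(r_1,r_2)$-thickening (with $r_1 \geq \max\{R_1, R_3\}$ a positive integer and $r_2 \geq R_2$) of the stable interval system $\{T_U\}_{U \in \calU}$. Most of these will follow directly by combining Lemma~\ref{lem:stable interval system} (which says the stable intervals form an $R_s$-interval system) with Lemma~\ref{lem:same thickenings} (which says the thickening along the cluster components agrees with the thickening along the $\delta$-sets and marked points) and Lemma~\ref{lem:collapsed interval control} (which records exactly the HFI-type properties of collapsed interval systems). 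The point is that, thanks to Lemma~\ref{lem:same thickenings}, the collapsed stable intervals \emph{are} the collapsed intervals of an honest interval system in the sense of Definition~\ref{defn:interval axioms}, so everything established in Sections~\ref{sec:interval systems}--\ref{sec:cubulation} applies verbatim.

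Concretely, I would proceed as follows. For item~(1) of Definition~\ref{defn:HFI} (the index set $\calU$ with relations $\nest, \pitchfork, \perp$): $\calU = \Rel_K(a,b)$ inherits the relations from $\mathfrak S$, which satisfy all the required axioms (anti-reflexivity, anti-symmetry, bounded chain length from complexity $\xi(\mathfrak S)$, unique $\nest$-maximal element — here one notes $S \in \calU$ since $d_S(a,b) > K$ is automatic for any large enough $K$ unless $a=b$, and trichotomy of relations) directly from Subsection~\ref{subsec:basic HHS}. Item~(2) (bounded $\perp$-antichains) is again the complexity bound. Item~(3) (finite simplicial intervals) is precisely the content of Proposition~\ref{prop:stable HFI}'s hypothesis that $r_1$ is a positive integer together with item~(4) of Proposition~\ref{prop:stable interval structure} (edge components have integer length) and the fact that collapsing clusters to points produces a simplicial interval; finiteness is because $\calY_U$ is finite (only finitely many $V \nest U$ in $\calU$ by Corollary~\ref{cor:rel sets are finite}) and $\lambda(a,b)$ has finite length. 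Item~(4) (marked points as endpoints) is the observation recorded in Subsection~\ref{subsec:collapsing} that $a_U, b_U \in \bT^c_U$, so they descend to the endpoints $\ha_U, \hb_U$ of $\hT_U$; combined here with the fact that the stable interval construction (item~(1) of Proposition~\ref{prop:stable interval structure}) places $a,b$ in the endpoint clusters, so after collapsing they genuinely sit at the two ends.

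For the projection data, items~(5)--(7) of Definition~\ref{defn:HFI} are exactly the conclusions of Lemma~\ref{lem:collapsed interval control}: item~(5) (cluster points $\hd^V_U \in \hT^{(0)}_U$ for $V \nest U$ or $V \pitchfork U$) is items~(2)--(3) of that lemma (plus the simplicial structure placing collapsed points at vertices); item~(6)(a) (consistency $\hd^U_V(\hf_U) = \hf_V$) and item~(6)(b) (bounded geodesic image: a component $C \subset \hT_U - \hd^V_U$ maps to the marked point $\hf_V$ with $\hf_U \in C$) are item~(7) of Lemma~\ref{lem:collapsed interval control}; and item~(7) of Definition~\ref{defn:HFI} ($\hd^U_V(\hd^W_U) = \hd^W_V$ when $V \pitchfork W$, $V,W \nest U$, $\hd^V_U \neq \hd^W_U$) follows from item~(4) of Lemma~\ref{lem:interval control} pushed through the collapse $q_U$, exactly as in the proof of Lemma~\ref{lem:HHS to HFI}. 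The only wrinkle — and the step I expect to require the most care — is checking that the stable interval projections of Definition~\ref{defn:stable interval projections} really do satisfy the hypotheses of Lemma~\ref{lem:interval control}/Definition~\ref{defn:interval axioms} so that Lemma~\ref{lem:collapsed interval control} can be invoked; but this is precisely the content of Lemma~\ref{lem:stable interval system}, whose proof is flagged as an exercise using Lemma~\ref{lem:rho control} and Proposition~\ref{prop:stable interval structure}. So in the end the proof is a bookkeeping argument: cite Lemma~\ref{lem:stable interval system} to get an $R_s$-interval system, cite Lemma~\ref{lem:same thickenings} to identify the two notions of thickening, cite Lemma~\ref{lem:collapsed interval control} and Proposition~\ref{prop:stable interval structure}(4) for the remaining structure, and observe that integrality of $r_1$ plus Corollary~\ref{cor:simplicial version}-style reasoning gives simpliciality with collapsed and marked points at vertices. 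The real obstacle is purely that of assembling the right citations in the right order; no genuinely new argument is needed beyond what Sections~\ref{sec:interval systems}--\ref{sec:stable intervals} already provide, which is why the paper is content to leave it to the reader.
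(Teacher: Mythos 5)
Your proposal is correct and is essentially the proof the paper intends: the paper explicitly leaves Proposition~\ref{prop:stable HFI} to the reader as a straightforward application of the preceding sections, and your assembly of Lemma~\ref{lem:stable interval system}, Lemma~\ref{lem:same thickenings}, Lemma~\ref{lem:collapsed interval control}, and Proposition~\ref{prop:stable interval structure}(4) (with integrality of $r_1$ yielding simpliciality with collapsed and marked points at vertices) is exactly that assembly. One slip worth fixing: your parenthetical claim that $S\in\calU$ because ``$d_S(a,b)>K$ is automatic for any large enough $K$ unless $a=b$'' is backwards --- enlarging $K$ only shrinks $\Rel_K(a,b)$, and $d_S(a,b)$ can be uniformly bounded even for far-apart $a,b$, so the $\nest$-maximal domain of $\mathfrak S$ need not be relevant; the unique-$\nest$-maximal-element clause of Definition~\ref{defn:HFI} is a point the paper itself glosses over (already in Lemma~\ref{lem:HHS to HFI}), and dropping your erroneous justification does not affect the rest of the bookkeeping.
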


We note the following immediate corollary of Proposition \ref{prop:stable HFI} and Theorem \ref{thm:Q CCC}:

\begin{corollary}\label{cor:stable HFI cube}
Let $a,b \in \calX$, $K > K_0$ for $K_0$ as in Theorem \ref{thm:DF} and $\calU = \Rel_K(a,b)$.  Let $\{\hT_U\}_{U \in \calU}$ be the associated family of stable intervals.  Then the hierarchical hull $H = \hull_{\calX}(a,b)$ is quasi-isometric to the dual cube complex of the associated HFI, with constants depending only on $\mathfrak S$ and $K$.
\end{corollary}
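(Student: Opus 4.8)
The plan is to assemble the statement from the machinery already in place, since the corollary is flagged as an immediate consequence of Proposition \ref{prop:stable HFI} and Theorem \ref{thm:Q CCC}, together with the quasi-isometry $\hPsi\colon H \to \calQ$ constructed in Section \ref{sec:hPsi qi}. First I would invoke Lemma \ref{lem:stable interval system} to record that the stable intervals $\{T_U\}_{U \in \calU}$, equipped with the stable interval projections of Definition \ref{defn:stable interval projections}, form an $R_s$-interval system for $a,b$ with $R_s = R_s(\mathfrak S)$. Then, applying Proposition \ref{prop:hPsi defined}, Lemma \ref{lem:same thickenings} and Lemma \ref{lem:collapsed interval control} with $R = R_s$, I would fix a positive integer $r_1 \geq \max\{R_1,R_3\}$ and a constant $r_2 \geq R_2$, where $R_1,R_2,R_3$ depend only on $\mathfrak S$, so that $r_1,r_2$ themselves depend only on $\mathfrak S$. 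By Proposition \ref{prop:stable HFI}, the family of collapsed stable intervals $\{\hT_U\}_{U\in\calU}$ obtained from the $(r_1,r_2)$-thickening, together with its collapsed projections, is a hierarchical family of intervals; let $\calQ$ denote its $0$-consistent subset. Theorem \ref{thm:Q CCC} then identifies $\calQ$ with a CAT(0) cube complex, namely the dual cube complex of this HFI.

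The remaining step is to connect $\calQ$ with the hierarchical hull $H = \hull_{\calX}(a,b)$. Here I would use that $\hPsi\colon H \to \calQ$ of Proposition \ref{prop:hPsi defined} is a quasi-isometry: Theorem \ref{thm:Q distance estimate} supplies the quasi-isometric embedding estimate (with constants depending only on $\mathfrak S$ and the largeness threshold), while Proposition \ref{prop:coarse surjectivity}, via the coarse inverse $\hO\colon \calQ \to H$ constructed in Subsection \ref{subsec:upgrade to HHS}, supplies coarse surjectivity. Composing these, $H$ and $\calQ$ are quasi-isometric. Since $\calQ$ is the dual cube complex of the HFI associated to the family of stable intervals, this is exactly the desired conclusion.

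The only genuinely non-mechanical part, and the point I would be most careful about, is the bookkeeping of constants. I need to check that $R_s$, the thickening parameters $r_1,r_2$, and all intermediate thresholds can indeed be taken to depend only on $\mathfrak S$, and that the sole way the relevant-set parameter $K$ feeds into the quasi-isometry constants is through the largeness/distance-formula threshold used in Theorem \ref{thm:Q distance estimate} and in the coarse-surjectivity argument (and through terms such as $K - 2r_1$ appearing in Lemma \ref{lem:collapsed interval control}). Granting this, the quasi-isometry constants depend only on $\mathfrak S$ and $K$, which is precisely the asserted dependence, and the proof is complete.
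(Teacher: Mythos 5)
Your proposal is correct and follows exactly the route the paper intends: the corollary is stated as an immediate consequence of Proposition \ref{prop:stable HFI} and Theorem \ref{thm:Q CCC}, combined with the quasi-isometry $\hPsi\colon H \to \calQ$ from Section \ref{sec:hPsi qi} (Theorem \ref{thm:Q distance estimate} plus Proposition \ref{prop:coarse surjectivity}), applied to the $R_s$-interval system of Lemma \ref{lem:stable interval system} with thickening constants depending only on $\mathfrak S$. Your attention to the constant bookkeeping matches the intended dependence on $\mathfrak S$ and $K$, so nothing further is needed.
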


\subsection{From a perturbed pair to a family of stable decompositions} \label{subsec:perturbed to stable}

In this subsection, we explain how the basic setup of this paper can be plugged into the machinery we have built so far.

For the rest of this section, assume that we have a colorable HHS $(\calX, \mathfrak S)$ endowed with the stable projections from Theorem \ref{thm:controlling domains}.  Let $K > K_0=K_0(\mathfrak S)>0$ be as in item (2) of that theorem.

Also for the rest of this section, fix $a,b,a',b' \in \calX$ so that $d_{\calX}(a,a'), d_{\calX}(b,b') \leq 1$.  We refer to the points $a',b'$ as a \emph{perturbed pair} from $a,b$.  Let $\calU = \Rel_{K}(a,b)$ and $\calU' = \Rel_{K}(a',b')$.  Let $\{T_U\}_{U \in \calU}$ and $\{T'_U\}_{U \in \calU'}$ denote the families of stable intervals produced as above. 

The first thing to observe is that item (2)(a) of Controlling Domains Theorem \ref{thm:controlling domains} provides that 
\begin{enumerate}
\item $|\calU \triangle \calU'|<N_0 = N_0(\mathfrak S)$, and
\item Each domain $V \in \calU \triangle \calU'$ satisfies $d_V(a,b) \leq K'$ and $d_V(a',b') \leq K'$, where $|K-K'|$ is bounded in terms of $\mathfrak S$.
\end{enumerate}

Item (2) plus Lemma \ref{lem:collapsed interval control} says that each of the associated collapsed intervals $\hT_V$ or $\hT'_V$ for $V\in \calU \triangle \calU'$ has bounded diameter.  For the rest of the domains, namely the domains $U \in \calU_0 = \calU \cap \calU'$, we want to obtain compatible stable decompositions between the intervals $T_U$ and $T'_U$, which we do as follows.

First, observe that if $V \nest U \in \calU_0$, then $d_V(a,b)\geq K>\ES$ and $d_V(a',b') \geq K> \ES$, so the BGIA \ref{ax:BGIA} implies that $\rho^V_U \subset \calN_{\ES}(\lambda_U(a,b)) \cap \calN_{\ES}(\lambda_U(a',b'))$, where $\lambda_U$ is the function which assigns a geodesic in $\calC(U)$ to any pair of points from Subsection \ref{subsec:stable setup}.  Hence taking $\epsilon = 2\ES$ in our stable tree setup suffices to make $(\pi_U(a),\pi_U(b);\calY_U)$ and $(\pi_U(a'),\pi_U(b');\calY'_U)$ into $\epsilon$-setups in $\calC(U)$ (item (1) of Definition \ref{defn:stable decomp}).

Moreover, for each $U \in \calU_0$, Theorem \ref{thm:controlling domains} provides that 
\begin{itemize}
\item[(a)] The $\rho$-sets for the domains in $\calU$ and $\calU'$ which nest into $U$---namely $\calY_U$ and $\calY'_U$ respectively---satisfy $|\calY_U \triangle \calY'_U| < N_0 = N_0(\mathfrak S)$, and
\item[(b)] If $\calU^*_0 \subset \calU_0$ is the set of domains $U \in \calU_0$ for which $\calY_U \neq \calY'_U$ or $\pi_U(a) \neq \pi_U(a')$ or $\pi_U(b) \neq \pi_U(b')$, then $\#\calU^*_0 < N_0$.
\end{itemize}

Hence, by item (a), these $\epsilon$-setups are $(N_0, \epsilon)$-admissible in the sense of item (2) of Definition \ref{defn:stable decomp} for all domains in $\calU_0$, while item (b) says that for domains in $\calU_0 - \calU^*_0$, these $\epsilon$-setups exactly coincide.  Thus item (2) of the Stable Intervals Theorem \ref{thm:stable intervals} implies that the $(r_1,r_2)$-thickenings of their stable intervals $T_U, T'_U$ admit $L_s$-compatible stable decompositions, where $L_s=L_s(\mathfrak S)>0$ by choosing $r_1 = \max\{R_1,R_3\}$ and $r_2 = R_2$, since the constants $R_i = R_i(\mathfrak S)>0$ depend only on $\mathfrak S$.  In particular, we have:

\begin{proposition}\label{prop:stable interval decomp}
For each $U \in \calU_0$, the $(r_1,r_2)$-thickenings of the stable intervals $T_U = T_{U,e} \cup T_{U,c}$ and $T'_U = T'_{U,e} \cup T'_{U,c}$ along $T_{U,c}, T'_{U,c}$ respectively admit $L_s$-compatible stable decompositions, where $L_s = L_s(\mathfrak S)>0$.
\begin{itemize}
\item Moreover, if $U \in \calU_0 - \calU^*_0$, then $T_U = T'_U$ and these thickenings coincide exactly.
\end{itemize}
\end{proposition}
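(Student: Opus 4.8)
\textbf{Proof plan for Proposition \ref{prop:stable interval decomp}.}

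The plan is to assemble the statement directly from three prior results: the Controlling Domains Theorem \ref{thm:controlling domains}, the structural facts about stable intervals from Subsections \ref{subsec:stable setup}--\ref{subsec:stable decomp}, and the Stable Intervals Theorem \ref{thm:stable intervals}. First I would verify that for each $U \in \calU_0 = \calU \cap \calU'$, the two pieces of projection data do assemble into $(N_0,\epsilon)$-admissible $\epsilon$-setups in $\calC(U)$. For the $\epsilon$-setup condition, since $U \in \calU_0$ means $d_V(a,b)>K>\ES$ and $d_V(a',b')>K>\ES$ for every $V\nest U$ in $\calU$ (resp.\ $\calU'$), the Bounded Geodesic Image Axiom \ref{ax:BGIA} places $\rho^V_U$ inside $\calN_{\ES}(\lambda_U(a,b))\cap \calN_{\ES}(\lambda_U(a',b'))$; likewise the distinguished points $\pi_U(a),\pi_U(b)$ are near both geodesics because $d_{\calX}(a,a')\le 1$ forces $\pi_U(a)$ and $\pi_U(a')$ to be uniformly close (the projections are coarsely Lipschitz). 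So with $\epsilon = 2\ES$ the tuples $(\pi_U(a),\pi_U(b);\calY_U)$ and $(\pi_U(a'),\pi_U(b');\calY'_U)$ satisfy items (1a)--(1b) of Definition \ref{defn:stable decomp}. Admissibility (item (1c)) then follows from item (a) preceding the proposition, which is precisely the consequence of Theorem \ref{thm:controlling domains} bounding $|\calY_U \triangle \calY'_U| < N_0$; and the endpoint-proximity requirement (1a) of admissibility follows from $d_{\calX}(a,a'),d_{\calX}(b,b')\le 1$ and coarse Lipschitzness of $\pi_U$, after adjusting $\epsilon$ by a bounded amount (which we may absorb into the constant since $\ES = \ES(\mathfrak S)$).

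Having checked admissibility, I would then simply invoke item (2) of the Stable Intervals Theorem \ref{thm:stable intervals}: for the fixed integer choices $r_1 = \max\{R_1,R_3\}$ and $r_2 = R_2$ (all depending only on $\mathfrak S$ by Proposition \ref{prop:hPsi defined}, Lemma \ref{lem:collapsed interval control}, and Lemma \ref{lem:same thickenings}), the theorem produces $(\calY_U \cap \calY'_U)$-stable $L_2$-compatible edge decompositions $\bT_s \subset \bT_e$ and $\bT'_s \subset \bT'_e$ of the $(r_1,r_2)$-thickenings of $T_U$, $T'_U$ along $T_{U,c}$, $T'_{U,c}$. Since $L_2 = L_2(\delta,\epsilon,N,r_1,r_2)$ and every one of those arguments is now a function of $\mathfrak S$ alone (with $\delta = \ES$, $\epsilon = 2\ES$, $N = N_0$), we get a uniform $L_s = L_s(\mathfrak S) > 0$. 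This gives the first assertion.

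For the ``moreover'' clause, I would use item (b) preceding the proposition: if $U \in \calU_0 - \calU^*_0$, then by definition $\calY_U = \calY'_U$ and $\pi_U(a) = \pi_U(a')$ and $\pi_U(b) = \pi_U(b')$, so the two $\epsilon$-setups $(\pi_U(a),\pi_U(b);\calY_U)$ and $(\pi_U(a'),\pi_U(b');\calY'_U)$ are literally identical. Since the stable interval $T_U = T_{U,e}\cup T_{U,c}$ is constructed purely from the data of an $\epsilon$-setup (Definition \ref{defn:stable interval}) via the fixed geodesic-choosing functions $\lambda, \lambda_0$, identical setups yield $T_U = T'_U$; and since the $(r_1,r_2)$-thickening is again a deterministic operation on $T_U$ along $T_{U,c}$ (using the fixed $r_1,r_2$), the thickenings coincide exactly. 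In this case the stable decomposition can be taken to be the trivial one (the identity identification, no unstable components), which is vacuously $L_s$-compatible.

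The only genuinely delicate point here is the bookkeeping in the first paragraph --- namely confirming that the endpoint-perturbation hypothesis $d_{\calX}(a,a'),d_{\calX}(b,b')\le 1$ in $\calX$ really does translate, after passing to $\calC(U)$ and invoking the various $O(\mathfrak S)$-style bounds (coarse Lipschitzness of $\pi_U$, the BGIA neighborhoods, the fixed $\epsilon$), into the precise inequality $d_{\calC(U)}(\pi_U(a),\pi_U(a')) \le \epsilon$ with the same $\epsilon = 2\ES$ used for the cluster points. If it does not hold on the nose, one simply enlarges $\epsilon$ to $\epsilon_0 = \epsilon_0(\mathfrak S)$ large enough to simultaneously absorb the BGIA constant and the projection constant; all downstream constants still depend only on $\mathfrak S$, so no harm is done. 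Everything else is a direct citation. This step is routine but is where the care is needed, so I would spell it out rather than leave it to the reader.
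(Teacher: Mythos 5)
Your proposal matches the paper's argument essentially verbatim: use BGIA and the Controlling Domains Theorem \ref{thm:controlling domains} to verify that $(\pi_U(a),\pi_U(b);\calY_U)$ and $(\pi_U(a'),\pi_U(b');\calY'_U)$ are $(N_0,\epsilon)$-admissible $\epsilon$-setups with $\epsilon$ depending only on $\mathfrak S$, then invoke item (2) of the Stable Intervals Theorem \ref{thm:stable intervals} with $r_1=\max\{R_1,R_3\}$, $r_2=R_2$, and observe that for $U\in\calU_0-\calU^*_0$ the setups coincide exactly so the construction yields identical intervals and thickenings. Your extra care about the endpoint-proximity condition (1a) via coarse Lipschitzness of $\pi_U$ is a point the paper leaves implicit, but it is the same argument.
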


For each $U \in \calU_0$, let $\Delta_U:\hT_U \to \hT_{U,0}$ and $\Delta'_U:\hT'_U \to \hT'_{U,0}$ denote the maps which collapse the unstable components these decompositions to points; we note that these maps are the identity when $U \in \calU_0- \calU^*_0$.  By Definition \ref{defn:stable decomp} of a stable decomposition, the subintervals of the $\hT_U$ and $\hT'_U$ being collapsed for $U \in \calU_0$ along with the whole intervals $\hT_V, \hT'_V$ (whichever is defined) for $V \in \calU \triangle \calU'$ collectively form $(L_s^2+2K'N_0)$-admissible simplicial trimming setups for $\{\hT_U\}_{U \in \calU}$ and $\{\hT'_U\}_{U \in \calU'}$, in the sense of Definition \ref{defn:simplicial trimming setup}.  This is because the set $(\calU \cup \calU') - \calU_0$ has at most $N_0$-many domains by (1) above, each of which has diameter bounded by $K'$, while the unstable pieces for the domains in $\calU_0$ consist of at most $L_2$-many pieces of diameter at most $L_2$.

Set $L_2^2 + 2K'N_0 = S_0$, which we note depends only on $\mathfrak S$ and our choice of $K$.  We now obtain the following from Theorem \ref{thm:simplicial tree trimming} and Proposition \ref{prop:stable interval decomp}:

\begin{proposition} \label{prop:stable collapse}
The following hold:
\begin{enumerate}
\item The collections $\{\hT_{U,0}\}_{U \in \calU_0}$ and $\{\hT'_{U,0}\}_{U \in \calU_0}$ are HFIs.  Let $\calQ_0, \calQ'_0$ denote their corresponding $0$-consistent subsets.
\item The combined maps $\Delta:\calQ \to \calQ_0$ and $\Delta':\calQ' \to \calQ'_0$ satisfy $\Delta(\calQ) = \calQ_0$ and $\Delta'(\calQ') = \calQ'_0$ and are both $(1, S_0^2)$-quasi-isometries.
\item There exists maps $\Xi:\calQ_0 \to \calQ$ and $\Xi':\calQ'_0 \to \calQ'$ with $\Delta \circ \Xi = id_{\calQ_0}$ and $\Delta' \circ \Xi' = id_{\calQ'_0}$.
\item At the level of cube complexes, the maps $\Delta\calQ \to \calQ_0$ and $\Delta':\calQ' \to \calQ'_0$ delete at most $S_0^2$-many hyperplanes.
\end{enumerate}
\end{proposition}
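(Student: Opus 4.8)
The plan is to deduce all four items as direct applications of the Simplicial Tree Trimming Theorem~\ref{thm:simplicial tree trimming} to the trimming setups assembled in the discussion preceding the proposition. First I would record precisely what is being collapsed on the unprimed side: a collection $\calB$ of subintervals of the simplicial intervals $\{\hT_U\}_{U\in\calU}$ consisting of (a) for each $U\in\calU_0$, the unstable components of the $L_s$-compatible stable decomposition of the $(r_1,r_2)$-thickening of $T_U$ provided by Proposition~\ref{prop:stable interval decomp}, together with (b) for each $V\in\calU-\calU_0=\calU\setminus\calU'$, the entire interval $\hT_V$. I would then check that $\calB$ is an $S_0$-admissible simplicial trimming setup in the sense of Definition~\ref{defn:simplicial trimming setup}: the components in (a) are subintervals of positive integer length with endpoints at vertices, since the $\hT_U$ are simplicial by Proposition~\ref{prop:stable HFI} and the stable components of the decomposition have integer length with vertex endpoints by the stability properties in Definition~\ref{defn:stable decomp}, and there are at most $L_s$ of them per edge component, each of length at most $L_s$; the intervals in (b) have bounded diameter by item~(2)(a) of the Controlling Domains Theorem~\ref{thm:controlling domains} (which bounds $d_V(a,b)$ and $d_V(a',b')$ by $K'$ for $V\in\calU\triangle\calU'$) together with item~(4) of Lemma~\ref{lem:collapsed interval control}, and there are at most $N_0$ of them. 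Hence the number and sizes are all controlled by $S_0$, as recorded above. The mirror-image construction gives an $S_0$-admissible simplicial trimming setup $\calB'$ for $\{\hT'_U\}_{U\in\calU'}$.

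With the setups in place, I would apply Theorem~\ref{thm:simplicial tree trimming} with $B=S_0$ to $\calB$ and to $\calB'$ separately. Item~(1) of that theorem yields that $\{\hT_{U,0}\}_{U\in\calU}$ and $\{\hT_{U,0}\}_{U\in\calU_0}$ are both HFIs, and likewise on the primed side; since for $U\in\calU-\calU_0$ the interval $\hT_{U,0}$ is a single point and thus contributes trivially to the $0$-consistent product (its relative projections to and from other factors being automatically consistent), the $0$-consistent set of the $\calU$-indexed HFI is canonically identified with $\calQ_0$, the $0$-consistent set of the $\calU_0$-indexed one, and similarly for $\calQ'_0$. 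This is item~(1). Items~(2)--(3) of Theorem~\ref{thm:simplicial tree trimming} provide $\Delta(\calQ)=\calQ_0$ and a section $\Xi\colon\calQ_0\to\calQ$ with $\Delta\circ\Xi=\id_{\calQ_0}$, which is item~(3); item~(4) of the theorem makes $\Delta|_{\calQ}$ a $(1,B^2)=(1,S_0^2)$-quasi-isometry, giving item~(2); and item~(5) of the theorem says $\Delta$ is a hyperplane deletion map deleting at most $B^2=S_0^2$ hyperplanes, which is item~(4). Running the identical argument for $\Delta'$ and $\calB'$ completes the proof.

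The only step that is not purely formal is the assembly of the trimming setup in the first paragraph, specifically verifying the uniform bound on the number and size of its components across all domains; but this bookkeeping has essentially been carried out already in the paragraph immediately preceding the proposition (combining the symmetric-difference bounds from Theorem~\ref{thm:controlling domains} with the compatible stable decompositions of Proposition~\ref{prop:stable interval decomp}), so I do not expect a genuine obstacle here — everything else is a transcription of the conclusions of Theorem~\ref{thm:simplicial tree trimming}.
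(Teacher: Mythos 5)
Your proposal is correct and follows essentially the same route as the paper: the paper also presents this proposition as a direct consequence of Theorem \ref{thm:simplicial tree trimming} applied to the trimming setup consisting of the unstable components for $U \in \calU_0$ together with the whole intervals $\hT_V$, $\hT'_V$ for $V \in \calU \triangle \calU'$, with admissibility checked exactly via Theorem \ref{thm:controlling domains} and Proposition \ref{prop:stable interval decomp}. The only point to phrase carefully is that Definition \ref{defn:simplicial trimming setup} bounds the \emph{total} number of trimming intervals, not the number per edge component; this still holds because only the boundedly many involved domains of $\calU^*_0$ (plus the at most $N_0$ domains of $\calU \triangle \calU'$) contribute any trimming intervals at all, since for $U \in \calU_0 - \calU^*_0$ the decompositions coincide and nothing is collapsed.
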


Using this proposition, we can define maps $\hO_0:\calQ_0 \to \calX$ and $\hO'_0:\calQ'_0 \to \calX$ by $\hO_0 = \hO \circ \Xi$ and $\hO'_0 = \hO' \circ \Xi'$, where $\hO:\calQ \to \calX$ and $\hO':\calQ \to \calX$ are the quasi-isometric embeddings defined Subsection \ref{subsec:upgrade to HHS}.

For the final piece of setup, we want to observe that these HFIs $\{\hT_{U,0}\}_{U \in \calU_0}$ and $\{\hT'_{U,0}\}_{U \in \calU_0}$ admit an isomorphism in the sense of Proposition \ref{prop:HFI isomorphism}.  For this, we require the following lemma, whose proof is a straightforward application of the definitions and Corollary \ref{cor:interval isometry}.  

\begin{lemma}\label{lem:interval isometries}
For each $U \in \calU^*_0$, the isometry $i_U:\hT_{U,0} \to \hT'_{U,0}$ provided by Corollary \ref{cor:interval isometry} satisfies the conditions of Proposition \ref{prop:HFI isomorphism}.
\begin{itemize}
\item Thus these component-wise isometries combine to give an isometry $I_0:\calQ_0 \to \calQ'_0$. 
\end{itemize}

\end{lemma}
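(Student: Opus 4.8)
The plan is to verify the three bulleted hypotheses of Proposition~\ref{prop:HFI isomorphism} for the component-wise isometries $i_U \colon \hT_{U,0} \to \hT'_{U,0}$ coming from Corollary~\ref{cor:interval isometry}, and then invoke that proposition to obtain the global isometry $I_0 \colon \calQ_0 \to \calQ'_0$. Recall that both HFIs $\{\hT_{U,0}\}_{U \in \calU_0}$ and $\{\hT'_{U,0}\}_{U \in \calU_0}$ are built from the \emph{same} index set $\calU_0 = \calU \cap \calU'$, and since the HHS relations $\nest, \pitchfork, \perp$ on $\calU_0$ are intrinsic to the ambient $\calX$, they agree for both families. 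For domains $U \in \calU_0 - \calU^*_0$, Proposition~\ref{prop:stable interval decomp} tells us $T_U = T'_U$ and their thickenings coincide, so $\hT_{U,0} = \hT'_{U,0}$ and we simply take $i_U = \id$; the conditions of Proposition~\ref{prop:HFI isomorphism} are then trivially satisfied since all relative projection data are literally equal. So the real content is the domains in $\calU^*_0$, of which there are at most $N_0 = N_0(\mathfrak S)$.

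First I would fix $U \in \calU^*_0$ and record what Corollary~\ref{cor:interval isometry} gives us: the isometry $i_U = \Phi \colon \hT_{U,0} \to \hT'_{U,0}$ is induced by the order-preserving bijections $\alpha_U \colon \pi_0(T_s) \to \pi_0(T'_s)$ and $\beta_U \colon \pi_0(T-T_s) \to \pi_0(T'-T'_s)$ together with the order-preserving isometries $i_{E,\alpha(E)}$ of stable pairs, and it satisfies $\Phi(\Delta_U(a)) = \Delta'_U(a')$, $\Phi(\Delta_U(b)) = \Delta'_U(b')$ (condition~(2a) of the corollary). This immediately gives condition~(1) of Proposition~\ref{prop:HFI isomorphism}, namely $i_U(\ha_U) = \ha'_U$ and $i_U(\hb_U) = \hb'_U$, since the marked points of the collapsed intervals are exactly the images of $a,b$ (resp.\ $a',b'$) under the collapse maps. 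Next I would verify condition~(2): for $V \in \calU_0$ with $V \nest U$ or $V \pitchfork U$, we must check $i_U(\hd^V_U) = (\hd^V_U)'$ in $\hT'_{U,0}$. The cluster point $\hd^V_U$ in $\hT_{U,0}$ is the collapsed image of the component $D_y$ of $T-T_s$ containing $\mu(C_y)$, where $y = \rho^V_U \in \calY_U$. Since $V \in \calU_0 = \calU \cap \calU'$, we have $y \in \calY_U \cap \calY'_U = \calY_0$, so condition~(2b) of Corollary~\ref{cor:interval isometry} applies: $\Phi(\Delta_U(D_y)) = \Delta'_U(D'_y)$, and $\Delta'_U(D'_y) = (\hd^V_U)'$ by construction of the collapsed projections on the $\calU'$-side. (A small bookkeeping point: one must check that $\rho^V_U$ as computed for the perturbed pair $a',b'$ lands in the cluster $C'_y$ matched to $C_y$; this follows from part~(a) of the displayed facts in Subsection~\ref{subsec:perturbed to stable}, that $\calY_U \cap \calY'_U = \calY_0$ consists of exactly the relative projections of the common nested domains, together with the BGIA~\ref{ax:BGIA} estimate placing $\rho^V_U$ uniformly close to both geodesics.)

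Finally I would verify condition~(3) of Proposition~\ref{prop:HFI isomorphism}: for $V \in \calU_0$ with $U \nest V$ and any $\hx_V \in \hT_{V,0}$, we need $(\hd^V_U)'(i_V(\hx_V)) = i_U(\hd^V_U(\hx_V))$. This is where one traces through how the collapsed projection maps $\hd^V_U \colon \hT_U \to \hT_V$ descend through the trimming collapse $\Delta_U, \Delta_V$ to maps $\hT_{U,0} \to \hT_{V,0}$, as recorded in the definition of projection data on trimmed HFIs in Subsection~\ref{subsec:tt cube} (namely $(\hd^V_U)_0 = \Delta_U \circ \hd^V_U \circ \Delta^{-1}_V$). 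The required identity then reduces to the BGI property (item~\eqref{item:BGI HFI} of Definition~\ref{defn:HFI}): away from the cluster point $\hd^U_V$, the map $\hd^V_U$ is locally constant with value a marked point $\hf_V$, and the marked points are matched by $i_V$ via condition~(1); on the cluster point itself, both sides output the matched cluster point by condition~(2). In each component the order-preserving isometries $i_{E,\alpha(E)}$ conjugate the two projection maps, and the commuting square closes. I expect this last step to be the main obstacle --- not conceptually, but because carefully tracking the four-fold composition of collapse maps, thickening collapse maps, and the piecewise-defined $\Phi$ across all the edge and cluster components, and confirming everything respects the order-preserving structure, is the most delicate bit of bookkeeping. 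Once conditions~(1)--(3) are checked, Proposition~\ref{prop:HFI isomorphism} delivers the cubical isomorphism $I_0 \colon \calQ_0 \to \calQ'_0$, completing the proof.
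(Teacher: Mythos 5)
Your overall route coincides with the paper's: the paper gives no written argument beyond declaring the lemma a straightforward application of the definitions and Corollary \ref{cor:interval isometry}, and your verification of conditions (1)--(3) of Proposition \ref{prop:HFI isomorphism} via Corollary \ref{cor:interval isometry}(2a), (2b), the identity maps on $\calU_0-\calU^*_0$, and the BGI/consistency axioms is exactly that verification. Two local points in your write-up do not work as stated, though both are easily repaired.

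First, your argument for condition (2) only covers $V \nest U$. When $V \pitchfork U$, the set $\rho^V_U$ is \emph{not} an element of $\calY_U$ (by construction $\calY_U$ collects only the $\rho$-sets of domains nested in $U$), so Corollary \ref{cor:interval isometry}(2b) does not apply to it. Instead, by Definition \ref{defn:stable interval projections} and Lemma \ref{lem:collapsed interval control}(2), $\hd^V_U$ is one of the endpoints $\ha_U,\hb_U$, and the point to check is that the marked point $f \in \{a,b\}$ with $d_U(\pi_U(f),\rho^V_U)$ small is the \emph{same} for the perturbed pair; this holds because $d_{\calX}(a,a'), d_{\calX}(b,b') \leq 1$ moves the projections a uniformly bounded amount while $d_U(a,b), d_U(a',b') > K$, after which condition (1) finishes the transverse case. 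Second, in condition (3) your assertion that at $\hx_V = \hd^U_V$ ``both sides output the matched cluster point by condition (2)'' is not something the axioms provide: BGI and consistency control $\hd^V_U$ only on the components of $\hT_{V,0} - \hd^U_V$ and at marked points, and the value of the collapsed projection map on the cluster point itself is not pinned down. The harmless fix is to observe that this value is immaterial: $0$-consistency (Definition \ref{defn:Q defined}) only invokes $\hd^V_U(\hx_V)$ when $\hx_V \neq \hd^U_V$ (when $\hx_V = \hd^U_V$, condition (2) alone gives consistency of the image), so Proposition \ref{prop:HFI isomorphism} only needs condition (3) off the cluster point, where your BGI argument is correct once the index slip is fixed (the value of $\hd^V_U$ is the marked point $\hf_U \in \hT_{U,0}$, matched by $i_U$, not $\hf_V$ and $i_V$).
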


\subsection{Statement of the stable cubulations theorem}

We are now ready to state the stable cubical intervals theorem:

\begin{theorem}[Stable cubical intervals]\label{thm:stable cubes}
Continuing with our standing notation, there exists $\Sigma = \Sigma(\mathfrak S, K)>0$ so that the following diagram commutes up to error $\Sigma$:
\begin{equation}\label{Phi diagram}
  \begin{tikzcd}
   \calQ \arrow[ddrr,"\hO", bend left=40] \arrow[dr,"\Delta_0 \hspace{.075in}" left] &  \\
    &\calQ_{0} \arrow[dr,"\hO_0 \hspace{.2in}" below]\arrow[dd, "I_0"] \\
    & & \calX\\
    & \calQ'_{0} \arrow[ur,"\hO'_0"] \\
    \calQ'\arrow[uurr,"\hspace{.2in} \vspace{.1in} \hO'" below, bend right=40] \arrow[ur,"\Delta'_0"] & \\
  \end{tikzcd}
  \end{equation}

\begin{itemize}
\item In the above, $I_0:\calQ_0 \to \calQ'_0$ is the isomorphism from Lemma \ref{lem:interval isometries}.
\item In particular, any colorable hierarchically hyperbolic space is $\Sigma$-stably weakly locally quasi-cubical (Definition \ref{defn:stable cubes}). 
\end{itemize}
\end{theorem}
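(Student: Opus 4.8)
The plan is to assemble Theorem \ref{thm:stable cubes} directly from the pieces established in Sections \ref{sec:interval systems}--\ref{sec:stable intervals}, as organized in Subsection \ref{subsec:perturbed to stable}. All the objects in diagram \eqref{Phi diagram} have already been constructed: $\calQ, \calQ'$ are the $0$-consistent sets of the collapsed stable HFIs for $a,b$ and $a',b'$ (Proposition \ref{prop:stable HFI}, Theorem \ref{thm:Q CCC}); $\hO, \hO'$ are the quasi-isometric embeddings of Subsection \ref{subsec:upgrade to HHS}; $\calQ_0, \calQ'_0$ are the further collapses coming from the stable decompositions, together with $\Delta_0, \Delta'_0$ and the sections $\Xi, \Xi'$ of Proposition \ref{prop:stable collapse}; $\hO_0 = \hO\circ\Xi$, $\hO'_0 = \hO'\circ\Xi'$; and $I_0$ is the cubical isomorphism of Lemma \ref{lem:interval isometries}. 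So the theorem is really the assertion that this assembled diagram commutes up to a uniform additive error depending only on $\mathfrak S$ and $K$. First I would restate precisely which of the numbered bullets in Definition \ref{defn:stable cubes} each named map realizes: $\hO, \hO_0, \hO', \hO'_0$ are $(B_0,B_0)$-quasi-isometric embeddings with $B_0$ depending on $\mathfrak S, K$ (this is the content of Proposition \ref{prop:cubical hp}, Corollary \ref{cor:stable HFI cube}, and Proposition \ref{prop:stable collapse}(2)); $I_0$ is a cubical isomorphism (Lemma \ref{lem:interval isometries}); $\Delta_0, \Delta'_0$ delete at most $S_0^2$ hyperplanes (Proposition \ref{prop:stable collapse}(4)); and the distinguished-vertex compatibility is built in because the stable decompositions identify the endpoint clusters $C_a, C_b$ (item \eqref{item:endpoint condition} of Definition \ref{defn:stable decomp}), so $\Delta_0$ carries $\ha,\hb$ to $\ha_0,\hb_0$, giving requirement (4).

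The two genuine verifications are commutativity of the top/bottom triangles ($\hO \approx \hO_0 \circ \Delta_0$, resp. $\hO' \approx \hO'_0 \circ \Delta'_0$) and commutativity of the middle triangle ($\hO_0 \approx \hO'_0 \circ I_0$). For the top triangle: by definition $\hO_0 = \hO \circ \Xi$, so $\hO_0 \circ \Delta_0 = \hO \circ \Xi \circ \Delta_0$, and since $\Delta_0 \circ \Xi = \id_{\calQ_0}$ but $\Xi \circ \Delta_0$ need not be the identity, I would argue that $\Xi \circ \Delta_0$ moves every point of $\calQ$ a bounded $\ell^1$-distance. This is precisely because $\Delta_0:\calQ \to \calQ_0$ is a $(1,S_0^2)$-quasi-isometry with section $\Xi$ (Proposition \ref{prop:stable collapse}(2)--(3)): for any $\hx \in \calQ$, $\Delta_0(\Xi\Delta_0 \hx) = \Delta_0 \hx$, so $\Xi\Delta_0 \hx$ and $\hx$ lie in a common fiber of $\Delta_0$, hence are within the quasi-isometry constant in $\calQ$; applying the $(B_0,B_0)$-quasi-isometric embedding $\hO$ gives $d_{\calX}(\hO\hx, \hO_0\Delta_0\hx) \leq B_0 S_0^2 + B_0 = O(\mathfrak S, K)$. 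The bottom triangle is identical with primes. For the middle triangle: $\hO_0 = \hO\Xi$ and $\hO'_0 = \hO'\Xi'$, and $I_0$ is built coordinate-wise from the interval isometries $i_U:\hT_{U,0}\to\hT'_{U,0}$ of Corollary \ref{cor:interval isometry}, which by construction (Corollary \ref{cor:interval isometry}(2a)) send the marked points $\ha_{U,0},\hb_{U,0}$ to $\ha'_{U,0},\hb'_{U,0}$ and, by item \eqref{item:cluster identify}, match up the cluster components carrying common relative-projection labels $\calY_U \cap \calY'_U$. The point here is that $\hO_0(\hx)$ and $\hO'_0(I_0\hx)$ are built by the same recipe (push forward to $\prod\calC(U)$ via the $\phi_U$, realize, retract to $H$ resp.\ $H'$) applied to tuples whose coordinates agree up to bounded error in all but boundedly many domains $U \in \calU_0^*$ (and in those exceptional domains the coordinates are $L_s$-close by item \eqref{item:close pairs} of Definition \ref{defn:stable decomp}), while the domains in $\calU \triangle \calU'$ contribute only bounded-diameter intervals by Proposition \ref{prop:stable collapse} and the remarks preceding it. Feeding this coordinate-wise boundedness through the Realization Theorem \ref{thm:realization} (whose output is coarsely determined by the input tuple) and the coarse retraction $\ret_H$ (Lemma \ref{lem:gate retract}, noting $d^{Haus}_{\calX}(H, H') = O(1)$ since $a\approx a'$, $b\approx b'$), one gets $d_{\calX}(\hO_0\hx, \hO'_0 I_0\hx) = O(\mathfrak S, K)$.

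The main obstacle I expect is the middle-triangle argument, specifically controlling the composition $\ret_H \circ \Real_\beta \circ \Phi \circ \Omega$ along the honing/cluster-choice procedure of Subsection \ref{subsec:hO defined} simultaneously on both the $a,b$ side and the $a',b'$ side, so that the two honings make compatible choices. The subtlety is that $\Omega$ involves picking a point in each collapsed cluster $B_U(\hx) \subset T_U$ via the intersection of the "half-intervals" $C^V_U$, and one must check that $I_0$ intertwines these choices up to bounded error — i.e.\ that the bounded-diameter sets $B_U(\hx)$ and $B'_U(I_0\hx)$ correspond under the interval isometries. This follows because the half-intervals $C^V_U$ are determined by the labels $l_V \in \{a,b\}$ attached to $\nest_{\calU}$-minimal domains, and the stable decomposition (via item \eqref{item:cluster identify}) guarantees these labels are preserved; but writing it out cleanly requires carefully matching $\BM(C)$ for $C \subset T_U$ with $\BM(C')$ for the corresponding $C' \subset T'_U$. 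For an expository paper I would present the core estimate (the coordinate-wise boundedness plus Realization plus retraction) in full and defer the honing-compatibility bookkeeping to the reader with a pointer to the analogous argument in \cite[Subsection 11.2]{Dur_infcube} and the coordinate-control Proposition \ref{prop:interval coordinate control}. Finally, I would collect all the constants — $B_0$ from the cubical models, $S_0$ from the trimming, $L_s$ from the stable decompositions, $J$ from the retraction, $\theta_e$ from Realization — into a single $\Sigma = \Sigma(\mathfrak S, K)$ and conclude, noting that the last bullet (colorable HHSes are $\Sigma$-stably wLQC) is then immediate from Definition \ref{defn:stable cubes} since Proposition \ref{prop:stable interval decomp} and Theorem \ref{thm:controlling domains} supply exactly the data required there for any perturbed pair with $d_{\calX}(a,a'), d_{\calX}(b,b') \leq 1$.
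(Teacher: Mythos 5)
Your handling of the top and bottom triangles, and of the bookkeeping for Definition \ref{defn:stable cubes} (hyperplane counts, distinguished vertices via item \eqref{item:endpoint condition}), matches the paper's proof. The problem is the middle triangle, which is the crux, and there your route has a genuine gap. You propose to compare $\hO_0(\hx)$ and $\hO'_0(I_0\hx)$ directly through their constructions, i.e.\ through the honing maps $\Omega$ and $\Omega'$ of Subsection \ref{subsec:hO defined}, asserting that the two honings produce tuples that are coordinate-wise boundedly close, and you defer the ``honing-compatibility bookkeeping'' to the reader with a pointer to \cite[Subsection 11.2]{Dur_infcube}. That reference (the argument behind Proposition \ref{prop:coarse surjectivity}) proves $\hPsi\circ\hO\approx \mathrm{id}$ for a \emph{single} interval system; it never compares the honing procedures of two different stable-interval systems built from two perturbed setups, so it does not supply the step you are deferring. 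And that step is not routine: clusters $C\subset T_U$ have unbounded diameter, the families $\BM(C)$ and $\BM(C')$ can differ at involved domains (since $\calU\neq\calU'$ and the cluster structure can split or merge), and the honed sets $B_U(\hx)=\bigcap_{V\in\BM(C)}C^V_U$ are pinned only by how consistency of the labels $l_V$ forces a ``switch point'' along the cluster. Showing that $\phi_U(B_U(\hx))$ and $\phi'_U(B'_U(I_0\hx))$ land coarsely at the same spot in $\calC(U)$ is essentially as hard as the theorem itself, and nothing in the material you cite does it.

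The paper avoids this entirely. It observes that $\hO_0$ and $\Delta_0\circ\hPsi$ are quasi-inverses (and likewise with primes), and that $H=\hull_{\calX}(a,b)$ and $H'=\hull_{\calX}(a',b')$ are uniformly Hausdorff-close, so commutativity of the middle triangle reduces to showing
$$d_{\calQ'_0}\bigl(\Delta'_0\circ\hPsi'(x),\, I_0\circ\Delta_0\circ\hPsi(x)\bigr)<\Sigma \quad\text{for } x\in H\cap H'.$$
The maps $\hPsi,\hPsi'$ involve only closest-point projection to the stable intervals followed by collapsing --- no honing --- so the comparison becomes a coordinate-wise hyperbolic-geometry statement: after discarding the boundedly many domains in $\calU\triangle\calU'$ and the boundedly many involved domains (Theorem \ref{thm:controlling domains}), and noting uninvolved domains give identical trees, one bounds $d_{\hT'_{U,0}}(\hx'_{U,0},\hy'_{U,0})$ for each involved $U$ by a two-case argument: a single long separating stable component would coarsely separate $p_U(\pi_U(x))$ from $p'_U(\pi_U(x))$ (impossible by quasiconvexity of the paired components), and many short separating stable components would force a cluster containing a domain $V$ that is $\nest$-minimal in both $\calU$ and $\calU'$, contradicting BGIA \ref{ax:BGIA}. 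If you want to keep your direct approach, you would need to prove the honing-compatibility statement yourself; as written, the proposal's key estimate is asserted rather than established, and the cited source does not contain it.
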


\begin{proof}

It remains to prove that the upper, lower, and middle triangles coarsely commute.  The proof for the upper (and hence the lower) is essentially automatic by construction.  In particular, by item (2) of Proposition \ref{prop:stable collapse}, the map $\Delta_0:\calQ \to \calQ_0$ is a $(1,S^2_0)$-quasi-isometry for $S_0 = S_0(\mathfrak S, K)>0$.  On the other hand, item (3) of that proposition implies that $\Delta_0 \circ \Xi = id_{\calQ_0}$ while $\hO_0:\calQ_0 \to \calX$ is defined by $\hO_0 = \hO \circ \Xi$.  Hence the top triangle commutes up to error $S_0$, and the same holds for the bottom triangle.

For the middle triangle, we want to show that $d_{\calX}(\hO_0(\hx), \hO'_0 \circ I_0(\hx))$ is bounded in terms of $\mathfrak S, K$ for any $\hx \in \calQ_0$.  However, we note that the maps $\hO_0:\calQ_0 \to H$ and $\Delta_0 \circ \hPsi:H \to \calQ_0$ are quasi-inverses (with errors depending on $\mathfrak S, K$), and similarly for $\hO'_0:\calQ'_0 \to H'$ and $\Delta_0 \circ \hPsi:H' \to \calQ'_0$.  Here, $H = \hull_{\calX}(a,b)$ and $H' = \hull_{\calX}(a',b')$.  But since $d_{\calX}(a,a')\leq 1$ and $d_{\calX}(b,b')\leq 1$, it follows from the Distance Formula \ref{thm:DF} and Definition \ref{defn:hier hull} that $d^{Haus}_{\calX}(H,H')$ is bounded in terms of $\mathfrak S$.

Hence it suffices to show that for $x \in H \cap H'$ (so that $\hPsi(x)$ and $\hPsi'(x)$ are both defined), we have
$$d_{\calQ'_0}(\Delta'_0 \circ \hPsi'(x), I_0 \circ \Delta_0 \circ \hPsi(x))< \Sigma,$$
for $\Sigma = \Sigma(\mathfrak S, K)$.  Set $\Delta'_0 \circ \hPsi'(x) = \hx'_0 = (\hx'_{U,0})$ and $I_0 \circ \Delta_0 \circ \hPsi(x) = \hy'_0= (\hy'_{U,0})$, where we note that both $\hx'_0,\hy'_0 \in \calQ'_0$.  

We argue component-wise that $d_{\calQ'_0}(\hx'_0,\hy'_0)$ is bounded.  Recall that the map $\hPsi:H \to \calQ$ is defined component-wise (over the set $\calU$) by $\hpsi_U = q_U \circ \phi_U^{-1} \circ p_U \circ \pi_U$, where $\pi_U:\calX \to \calC(U)$ is the HHS projection, $p_U:\calC(U) \to \phi_U(T_U)$ is closest point projection, $\phi_U:T_U \to \calC(U)$ is the $(L_0,L_0)$-quasi-isometric embedding provided by Proposition \ref{prop:stable interval structure} (here $L_0=L_0(\mathfrak S)$), and $q_U:T_U \to \hT_U$ is the map which collapses a uniform thickening of $T_U$ along the cluster components, with constants depending only on $\mathfrak S$.  The map $\hPsi':H' \to \calQ'$ is defined similarly.

First, we can reduce to involved domains in $\calU_0 = \calU \cap \calU'$.  If $U \in \calU \triangle \calU'$, then $a,b$ and $a',b'$ have bounded diameter projections to $U$ in terms of $K$ and $\mathfrak S$, while also each such domain is involved in the sense of Definition \ref{defn:involved} and hence there are only boundedly-many by Theorem \ref{thm:controlling domains}. Next observe that if $U \in \calU_0$ is not involved, then the stable trees $T_U, T'_U$ for the two setups are exactly the same, and hence have identical thickenings, and thus the stable decompositions the pair are the (identical) collection of edge components for each, and their corresponding collapsed trees $\hT_U$ and $\hT'_U$ are identical.  Hence, for uninvolved $U \in \calU_0$, we have $\hx'_{U,0} = \hy'_{U,0}$.

Let $U \in \calU_0$ be involved.  Since there are boundedly-many (in $\mathfrak S$) such domains by Theorem \ref{thm:controlling domains}, it suffices to bound the diameter $d_{\hT'_{U,0}}(\hx'_{U,0}, \hy'_{U,0})$ in terms of $\mathfrak S$.

  The distance $d_{\hT'_{U,0}}(\hx'_{U,0}, \hy'_{U,0})$ is measured in the edge components of $\hT'_{U,0}$ which separate them, and these edge components are stable components of $T'_U$.  This means that if the distance $d_{\hT'_{U,0}}(\hx'_{U,0}, \hy'_{U,0})$ is very large, then there must be either 
\begin{enumerate}
\item some long stable component $E' \subset T'_{U,s}$ so that some long segment of $\Delta'_U(q'_U(E'))$ separates $\hx'_{U,0}$ from $\hy'_{U,0}$, or
\item if no such long component exists, there must be some large number of such stable components separating them.
\end{enumerate}

In case (1), let $E \subset T_{U,s}$ be the stable component paired with $E' \subset T'_{U,s}$.  Note that $\phi_U(E) \cup \phi'_U(E')$ is uniformly quasi-convex (in $\mathfrak S$), and hence $\pi_U(x)$ has a uniformly bounded projection to it.  However the fact that a long segment of $\Delta'_U(q_U(E')) \subset \hT'_{U,0}$ separates $\hx'_{U,0}$ from $\hy'_{U,0}$ means that some large diameter portion of $\phi_U(E) \cup \phi'_U(E')$ coarsely (in $\mathfrak S$) separates $p_U(\pi_U(x))$ from $p'_U(\pi_U(x))$, which is impossible.

In case (2), we can assume that no such long stable component exists, so that the distance $d_{\hT'_{U,0}}(\hx'_{U,0}, \hy'_{U,0})$ corresponds to some sequence $E'_1, \dots, E'_n \subset T'_{U,s}$ of short stable components of length bounded in terms of $\mathfrak S$.  Let $E_i \subset T_{U,s}$ denote the stable component paired with $E'_i$.  By Proposition \ref{prop:stable interval structure} and the fact that $d_{\calX}(a,a') \leq 1$ and $d_{\calX}(b,b')\leq 1$, we have that $\phi_U(T_U)$ and $\phi'_U(T'_U)$ are within bounded (in $\mathfrak S$) Hausdorff distance in $\calC(U)$.  Since each of the $E'_i$ are uniformly close (in $\mathfrak S$) to $E_i$ and the bijection $\alpha:\pi_0(T_{U,s}) \to \pi_0(T'_{U,s})$ is order-preserving, it follows that the hulls $\hull_{\phi_U(T_U)}(E_1, E_n)$ and $\hull_{\phi'_U(T'_U)}(E'_1,E'_n)$ are within uniform (in $\mathfrak S$) Hausdorff distance of each other, and hence their union is uniformly quasi-convex (in $\mathfrak S$).  Thus, if $\Delta'_U(q'_U(E'_1)), \dots, \Delta'_U(q'_U(E'_n))$ separate $\hx'_{U,0}$ from $\hy'_{U,0}$ in $\hT'_{U,0}$, then the union of these hulls coarsely (in $\mathfrak S$) separates $p_U(\pi_U(x))$ from $p'_U(\pi_U(x))$.

Since each of the segments $E'_i$ is bounded length, the union of these hulls is long if the number of segments $n$ is very large.  Moreover, each pair of segments $E'_i, E'_{i+1}$ is separated by some complementary component $C_i \subset T'_U - T'_{U,s}$.  By construction, all but boundedly-many (in $\mathfrak S$) of these components corresponds to some cluster component of $T'_U$.  Since there are only boundedly-many (in $\mathfrak S$) involved domains, by choosing $n$ to be sufficiently large, there exists some $C_j$ which contains a $\nest_{\calU_0}$-minimal domain $V \nest U$ which is both $\nest_{\calU}$-minimal and $\nest_{\calU'}$-minimal.  But this is impossible, for the BGIA \ref{ax:BGIA} would imply that $d_V(\rho^U_V(p_U(\pi_U(x)), \rho^U_V(p'_U(\pi_U(x)))) \succ K$, and this cannot happen as we can also arrange $\pi_U(x)$ to be as far from $\rho^V_U$ as necessary.  Hence no such long sequence $E'_1, \dots E'_n$ can exist, completing the proof.

\end{proof}

\appendix
\section{Proof of the Tree Trimming Theorem \ref{thm:tree trimming}}\label{app:TT}

Below is a re-statement of Theorem \ref{thm:tree trimming}.  See Subsection \ref{subsec:tree trimming} for the contextual definitions and notation.

\begin{theorem}[Tree Trimming]
There exists $B_{tt} = B_{tt}(\mathfrak S, K)>0$ so that for any $B < B_{tt}$, there exists $L =L(\mathfrak S, B)>0$ so that the following hold:

\begin{enumerate}
\item We have $\Delta(\calQ) \subset \calQ'$.
\item There is a map $\Xi:\calQ' \to \calQ$ so that $\Delta \circ \Xi = id_{\calQ'}$.  In particular, $\Delta(\calQ) = \calQ'$.
\item The map $\Delta:\calQ \to \calQ'$ is an $(L,L)$-quasi-isometry.
\end{enumerate}
\end{theorem}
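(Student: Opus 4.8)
The plan is to reduce everything to coordinate-wise statements about the collapsing maps $\Delta_U\colon\hT_U\to\hT_{U,0}$ and then reassemble via the $0$-consistency structure, mimicking the cluster-honing argument of Subsection~\ref{subsec:hO defined}. Item (1) is essentially formal: the maps $\Delta_U$ only collapse subintervals of the edge components, so a coordinate that was equal to a collapsed projection $\hd^V_U$ remains equal to its image $(\hd^V_U)_0$, and a coordinate lying in $\hd^V_U(\hx_V)\subset\hT_U$ maps into the image of that component under the (collapsed) projection map; hence $0$-consistency of a tuple $\hx$ is preserved by $\Delta$, giving $\Delta(\calQ)\subseteq\calQ'$.

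\textbf{Constructing the section $\Xi$.} The heart is item (2): given $\hy=(\hy_U)\in\calQ'$, we must choose, for each $U\in\calU$, a point $\hx_U\in C_U:=\Delta_U^{-1}(\hy_U)$ so that $\hx=(\hx_U)$ is $0$-consistent; then $\Delta(\hx)=\hy$ by construction. The preimage $C_U$ is either a single vertex (when $\hy_U$ is not in the image of a collapsed trimming interval, in which case there is no choice to make), or a subinterval of $\hT_U$ of diameter at most $B$. In the latter case we proceed by induction on $\nest$-level exactly as in the honing procedure: for $\nest$-minimal $U$ we pick $\hx_U\in C_U$ to be a marked point $\ha_U$ or $\hb_U$ if one of those lies in $C_U$, and otherwise pick the endpoint of $C_U$ dictated by the (already-chosen) coordinates—but in fact for $\nest$-minimal domains, since $\hT_U$ has no internal cluster points, the only trimming intervals sit near the endpoints, so the choice is essentially forced. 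For non-minimal $U$, the domains $V\nest U$ with $\hd^V_U\in C_U$ (equivalently, with $(\hd^V_U)_0=\hy_U$) each come with an already-determined coordinate $\hx_V$, which via the BGI and consistency properties of Definition~\ref{defn:HFI}(item~\ref{item:BGI HFI}) picks out a half-subinterval of $C_U$; by a Helly-for-trees argument (Remark~\ref{rem:Helly}) these half-subintervals intersect in a nonempty subinterval of bounded diameter, and we choose $\hx_U$ inside it. Verifying $0$-consistency of the resulting tuple is then a routine check using the HFI axioms together with the fact that $\Delta_U$ is distance-non-increasing; this is where one must be careful, and it parallels the proof of Proposition~\ref{prop:Q-consistent} closely enough that we can cite \cite[Section~10]{Dur_infcube} for the details. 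Surjectivity $\Delta(\calQ)=\calQ'$ is then immediate.

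\textbf{The quasi-isometry estimate.} For item (3), since $\Delta|_{\calQ}$ is surjective and $1$-Lipschitz (it collapses subintervals coordinate-wise, hence is distance-non-increasing in the $\ell^1$ metric), it suffices to show $d_{\calQ}(\hx,\hy)\prec d_{\calQ'}(\Delta\hx,\Delta\hy)+L$ for all $\hx,\hy\in\calQ$. Writing this $\ell^1$-distance as $\sum_{U\in\calU}d_{\hT_U}(\hx_U,\hy_U)$, the concern is that collapsing has destroyed distance in some coordinates. Fix $U$ and look at the geodesic $[\hx_U,\hy_U]$; it passes through a sequence of edge components of $\hT_U$, and within each such component at most $B$ trimming intervals of length $\leq B$ have been collapsed, so the distance lost in that component is at most $B^2$. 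Crucially, all but at most two of the edge components met by $[\hx_U,\hy_U]$ are bounded by cluster points, which (by the structure of the HFI coming from collapsed stable intervals) are labelled $\hd^V_U$ for $\nest$-minimal $V$; the number of such components is therefore controlled, via Proposition~\ref{prop:bounding containers} and the Covering Lemma~\ref{lem:covering}, by $\sum_{V\ \nest\text{-min}} d_{\hT_V}(\hx_V,\hy_V)$, and for each such $\nest$-minimal $V$ the total length collapsed in $\hT_V$ is again at most $B^2$. Summing, the total distance lost is bounded by a multiplicative constant times $d_{\calQ'}(\Delta\hx,\Delta\hy)$ plus an additive error depending on $B$ and the passing-up constants, i.e. on $\mathfrak{S}$; this produces the constant $L=L(\mathfrak{S},B)$. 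The constant $B_{tt}$ enters only to ensure the trimming setup is admissible relative to the largeness constant $K$, so that $\nest$-minimal domains retain diameter coarsely bounded below by $K$ (Lemma~\ref{lem:collapsed interval control}(6)) and can absorb the lost distance. I expect the bookkeeping in this last step—tracking which $\nest$-minimal domains account for which lost edge components without double-counting—to be the main obstacle, but it is a direct adaptation of the end of the proof of Proposition~\ref{prop:lower bound}; the full argument is carried out in Appendix~\ref{app:TT}.
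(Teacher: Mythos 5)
Your proposal follows essentially the same route as the paper's proof in Appendix~\ref{app:TT}: item (1) by the formal observation that collapsing preserves collapsed projections, item (2) by a coordinate-wise section built inductively on $\nest$-level via the cluster-honing/Helly argument (with the consistency verification deferred to \cite[Section 10]{Dur_infcube}, as the paper also does), and item (3) by charging the distance lost in each collapsed edge component to the $\nest$-minimal domains labelling its bounding cluster points, using the Covering Lemma and the requirement $B_{tt}^2 \ll K$ so that those minimal domains register distance at least $1$ after trimming. The argument is correct as sketched and matches the paper's proof in structure and in all key mechanisms.
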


\begin{proof}
The fact that $\Delta(\calQ) \subset \calQ'$, namely that $\Delta$ preserves $0$-consistency, is immediate from the fact that the $\Delta_U$ just collapse subintervals, so coordinates which were collapsed projections remain collapsed projections, showing item (1).  Items (2) and (3) require more work and we provide a detailed sketch of both.
\medskip

\textbf{\underline{Item (2): Surjectivity of $\Delta:\calQ \to \calQ'$}}: Let $\hy \in \calQ'$.  We will construct a point $\hx \in \calQ$ with $\Delta(\hx) = \hy$ by a coordinate-wise argument, thereby defining a map $\Xi:\calQ' \to \calQ$ with $\Delta \circ \Xi = id_{\calQ'}$.

For each $U \in \calU$, let $C_U = \Delta^{-1}_U(\hy_U)$.  Our goal is to pick, for each $U \in \calU$, a point $\hx_U \in C_U$ and then prove $0$-consistency of the tuple $\hx = (\hx_U)$.  Note that $\Delta(\hx) =\hy$ for such a tuple, by construction.  The proof is by induction on the nesting level in $\calU$, and is similar to the construction of the map $\hO:\calQ \to H$ as in Proposition \ref{prop:coarse surjectivity}.  The main difference is that the collapsing subintervals are not automatically associated to a collection of domains, and this is where the work lies.  The proof is basically the proof of the analogous fact in \cite[Theorem 10.2]{Dur_infcube}, except that our setup is much simpler.  What follows is a very detailed sketch, which leaves some small bits to the reader.

For the base case, let $U \in \calU$ be $\nest_{\calU}$-minimal.   If $\hy_U \in \hT_{U,0}$ is not contained in $\Delta_U(B)$ for some $B \in \calB$, then $C_U\subset \hT_U$ is a point and we set $\hx_U = C_U$.  If $\hy_U \in \Delta_U(B)$ for some $B \in \calB$ and $B \subset \hT_U$ does not contain an endpoint, then let $\hx_U \in C_U$ be any point.  If instead $C_U$ does contain an endpoint, then it is possible that it contains both endpoints, and we need to choose one.

The following claim, which is exactly \cite[Claim 10.3]{Dur_infcube}, does not require $U$ to be $\nest_{\calU}$-minimal:

\begin{claim} \label{claim:choose one}
If $W, Z \in \{V \in \calU| V\pitchfork U, \hspace{.1in} \text{and} \hspace{.1in} \hy_V \neq (\hd^U_V)_0\}$, then $\hd^W_U = \hd^Z_U$.
\end{claim}

With Claim \ref{claim:choose one} in hand, we can define $\hx_U$ for $\nest_{\calU}$-minimal $U$ as follows:

\begin{itemize}
\item If $\calW_U^{\pitchfork} = \{V \in \calU| V \pitchfork U \hspace{.1in} \mathrm{ and } \hspace{.1in} \hy_W \neq (\hd^V_U)_0\} \neq \emptyset$, then we set $\hx_U = \hd^W_U$ for any (and hence all, by the claim) $W \in \calW_U^{\pitchfork}$.
\item If $\calW_U^{\pitchfork} = \emptyset$, then we let $\hx_U$ be any marked point in $C_U$.
\end{itemize}

At this point, we also want to set $l_U$ to be equal to the endpoint $\hx_U$ when $U$ is $\nest_{\calU}$-minimal, where $l_U = \emptyset$ when $C_U$ does not contain an endpoint of $\hT_U$.  These labels will play a similar role as in the cluster honing process in Subsection \ref{subsec:hO defined}.

For the inductive step, suppose that $U$ is not $\nest_{\calU}$-minimal and that we have defined $\hx_V$ for all $V \nest U$ and, in particular, for all $\nest_{\calU}$-minimal $V \in \calU$.  As before, if $C_U$ does not contain a marked or a projection $\hd^V_U$ for some $V \nest U$, then we let $\hx_U \in C_U$ be any point.  If $C_U$ happens to contain one of the endpoints but no $\hd^V_U$ for $V \nest U$, then $C_U$ contains only that one endpoint (since $U$ is not $\nest_{\calU}$-minimal), and we let $\hx_U \in C_U$ be any point

Otherwise, we set $C=C_U$ and let $\calV_U$ denote the set of $\nest_{\calU}$-minimal domains $V \nest U$ so that $\hd^V_U \in C$ (or equivalently, $(\hd^V_U)_0 = \hy_U$).  For each $V \in \calV_U$, set $C^V_U = \hull_C(\hd^V_U \cup \pi_C(\hx_V))$, namely the convex hull in $C$ of $\hd^V_U$ and the projections of the marked point labeled by $l_V$ down to the convex subset $C$.  The following claim, which is exactly \cite[Claim 10.4]{Dur_infcube}, helps us choose a point in such a $C_U$:

\begin{claim}\label{claim:cluster overlap}
For every $V, W \in \calV_U$, we have $C^V_U \cap C^W_U \neq \emptyset$.  Moreover, at least one of $\hd^V_U \in C^W_U$ or $\hd^W_U \in C^V_U$ holds.

\end{claim}

\begin{proof}
We may assume that $\hd^V_U \neq \hd^W_U$, and hence that $V \pitchfork W$.  If $C^V_U \cap C^W_U = \emptyset$, then $\hd^V_U \notin C^W_U$ and $\hd^W_U \notin C^V_U$, and it follows that $l_V \neq l_W$.  The BGI property in (7) of Lemma \ref{lem:collapsed interval control} plus item (4) of Lemma \ref{lem:interval control} imply that  $\hd^W_V \notin \Delta^{-1}_V(\hy_V)$ and $\hd^V_W \notin \Delta^{-1}_W(\hy_W)$, and hence $\hy_V \neq (\hd^W_V)_0$ and $\hy_W \neq (\hd^V_W)_0$, which is a contradiction of $0$-consistency of $\hy$.  This proves the claim.
\end{proof}

Hence the Helly property for trees implies that $\bigcap_{V \in \calV_U} C^V_U \subset \hT_U$ is nonempty.  This intersection, nonetheless, may contain many $\hd^V_U$ for $V \in \calV_U$.  The following claim, which is exactly \cite[Claim 10.6]{Dur_infcube} helps us make a consistent choice:

\begin{claim}\label{claim:choose two}
Suppose $V, W \in \calW_U^{\nest}$, then $\hd^V_U = \hd^W_U$.
\end{claim}

\begin{proof}[Proof of Claim \ref{claim:choose two}]
For a contradiction, suppose that $\hd^W_U \neq \hd^V_U$, so that $V \pitchfork W$.  Then again the BGI property in item (7) of Lemma \ref{lem:collapsed interval control} implies that $\hd^V_W = \hd^U_W(\hd^V_U)$ and $\hd^W_V = \hd^U_V(\hd^W_U)$.  But then $\hy_W \neq (\hd^V_W)_0 = (\hd^U_W)_0((\hd^V_U)_0)$ and $\hy_V \neq (\hd^W_V)_0 = (\hd^U_V)_0((\hd^W_U)_0)$, which contradicts $0$-consistency of $\hy$, completing the proof.
\end{proof}

Finally, this last claim helps us make a consistent choice between $\hd^V_U$ sets and marked points when $C_U$ contains both:

\begin{claim}\label{claim:good choice}
The following holds:
\begin{enumerate}
\item If there exists $V \pitchfork U$ so that $\hy_V \neq (\hd^U_V)'$, then for all $W \in \calV_U$ with $\hd^W_U \neq \hd^V_U$, we have $\hx_W = \hd^V_W$.
\item If there exists $W \in \calV_U$ with $\hy_W \neq (\hd^U_W)'(\hy_U)$, then for all $V \pitchfork U$ with $\hd^W_U \neq \hd^V_U$, we have $\hy_V = (\hd^U_V)'$.
\end{enumerate}
\begin{itemize}
\item In particular, if $V \in \calW_U^{\pitchfork}$ and $W \in \calW_U^{\nest}$, then $\hd^V_U = \hd^W_U$.
\end{itemize}
\end{claim}

\begin{proof}[Proof of Claim \ref{claim:good choice}]
Suppose $V \pitchfork U$ as in item (1) of the claim, and let $W \in \calV_U$ with $\hd^W_U \neq \hd^V_U$.  Then $W \pitchfork V$, and since $W \nest U$, we have $(\hd^W_V)_0 = (\hd^U_V)_0 \neq \hy_V$, and hence $\hx_W = \hd^V_W$, by definition of $\hx_W$.

Now suppose $W \in \calV_U$, as in item (2), and that $V \pitchfork U$ with $\hd^W_U \neq \hd^V_U$. Then $W \pitchfork V$.  By $0$-consistency of $\hy$, we have that $\hy_U = (\hd^W_U)_0$.  But since $\hd^V_U \neq \hd^W_U$, we have $(\hd^V_U)_0 \neq (\hd^W_U)_0$, and so $\hy_V = (\hd^U_V)_0 = (\hd^W_V)_0$ since $W \nest U$.  This completes the proof.
\end{proof}

We are finally ready to define our tuple $\hx$, which we do domain-wise for each $U \in \calU$:

\begin{enumerate}
\item If $\hy_U$ is not a marked or cluster point of $\hT_{U,0}$, then we let $\hx_U \in C_U$ be any point in $\Delta^{-1}_U(\hy_U)$.
\item For any $U$ of type (1): If $V \pitchfork U$ or $U \nest V$, we set $\hx_V = \hd^U_V$, and if $U \nest V$, we set $\hx_V = \hd^U_V(\hx_U)$.
\item If $U$ is $\nest_{\calU}$-minimal and $\hy_U$ is a marked point of $\hT_{U,0}$, then there are three subcases: \label{item:minimal choice}
\begin{enumerate}
\item The set $\calW_U^{\pitchfork} = \{W \in \calU| W \pitchfork U \hspace{.1in}  \textrm{and } \hspace{.1in} (\hd^U_W)' \neq \hy_W\}$ is nonempty, and we set $\hx_U = \hd^W_U$ for any (and hence all by Claim \ref{claim:choose one}) $W \in \calW_U^{\pitchfork}$;
\item $\calW_U^{\nest} = \{V \nest U| (\hd^U_V)'(\hy_U) \neq \hy_V \hspace{.1in} \textrm{and} \hspace{.1in} (\hd^U_W)'(\hy_U) \neq \hy_W\}$ is nonempty, and we set $\hx_U = \hd^V_U$ for any (and hence all by Claim \ref{claim:choose two}) $V \in \calW_U^{\nest}$; or 
\item $\calW_U^{\pitchfork}$ and $\calW_U^{\nest}$ are both empty, and we set $\hx_U$ to be any point of $\hT^e_U$ contained in $C_U$, in particular a point which is not a cluster or marked point. 
\end{enumerate}
\item Proceeding inductively, if $U$ is not in cases (1)--(3) above, then $\bigcap_{V \in \calV_U} C^V_U \neq \emptyset$ and has finite diameter.  There are three subcases: \label{item:general choice}
\begin{enumerate}
\item $\hx_U = \hd^W_U$ for any $W \in \calW_U^{\pitchfork}$ if $ \calW_U^{\pitchfork} \neq \emptyset$, \label{item:choose transverse}
\item $\hx_U = \hd^V_U$ for any $V \in \calW_U^{\nest}$ when $\calW_U^{\nest} \neq \emptyset$, or \label{item:choose nested}
\item Otherwise, both $\calW_U^{\pitchfork}$ and $\calW_U^{\nest}$ are empty, and we set $\hx_U$ to be any point in $\hT^e_U \cap C_U$.
\end{enumerate}

\end{enumerate}

Finally, we prove that $\hx = (\hx_U)$ is $0$-consistent by a case-wise analysis, similar to the proof of Proposition \ref{prop:Q-consistent}.  Showing this will complete the proof of surjectivity of $\Delta: \calQ \to \calQ'$.  The proof closely follows the analogous part of the proof from \cite[Theorem 10.2]{Dur_infcube}.

\vspace{.1in}

\underline{$U \pitchfork V$:} We may assume that $\hx_V \neq \hd^U_V$, for we are done otherwise.  Then $\hy_V \neq (\hd^U_V)_0$, and so $\hy_U = (\hd^V_U)_0$ by $0$-consistency of $\hy$.  It follows that $\hd^V_U \in C_U = \Delta^{-1}_U(\hy_U)$, and necessarily $\hd^V_U  \in \{\ha_V, \hb_V\}$, so we may assume without loss of generality that $\hd^V_U = \ha_V$.  If $U$ is $\nest_{\calU}$-minimal, then our choice of $\hx_U$ in \eqref{item:minimal choice} says that $\hx_U = \hd^V_U$, as required.

Now suppose that $U$ is not $\nest_{\calU}$-minimal.  Set $\calV_U$ to be the set of $\nest_{\calU}$-minimal $W \nest U$ with $\hd^W_U \in C_U$.  Let $W \in \calV_U$ be so that $\hd^W_U \neq \hd^V_U$.  Then $W \pitchfork V$ and $(\hd^W_V)_0 = (\hd^U_V)_0 \neq \hy_V$, and so $\hx_W = \hd^V_W = \ha_W$, with the first equality following from our choice in \eqref{item:minimal choice} and the last equality by $0$-consistency of $a$.  But this says that $a = l_W$, and hence $\ha_U \in C^W_U$ for all $W \in \calV_U$ (including those for which $\hd^W_U = \hd^V_U = \ha_U$).

It follows then that $\ha_U \in \bigcap_{W \in \calV_U}C^W_U$ and so $\hx_U = \ha_U$ by definition of $\hx$ in item \eqref{item:choose transverse} above.

\vspace{.1in}

\underline{$V \nest U$:} We may assume that $\hx_V \neq \hd^U_V(\hx_U)$, for we are done otherwise.  Then $\hy_V \neq (\hd^U_V)_0(\hy_U)$, and $0$-consistency of $\hy$ implies that $\hy_U = (\hd^V_U)'$, and thus $\hd^V_U \in C_U = \Delta^{-1}_U(\hy_U)$.  We claim that for all $W \in \calV_U$, we have $\hd^V_U \in C^W_U$.

To see this, is suffices to consider $W \in \calV_U$ with $\hd^W_U \neq \hd^V_U$.  Then $W \pitchfork V$.  But $\hx_V \neq \hd^U_V(\hx_U)$, so if also $\hd^V_U \notin C^W_U$, then $\hx_V \neq \hd^U_V(\hd^W_U) = \hd^W_V$, while also $\hx_W \neq \hd^U_W(\hd^V_U) = \hd^V_W$, which both use the BGI property in item (7) of Lemma \ref{lem:collapsed interval control}.  But then $\hy_V \neq (\hd^W_V)_0$ and $\hy_W \neq (\hd^V_W)_0$, which contradicts $0$-consistency of $\hy$.

Hence $\hd^V_U \in C^W_U$ for all $W \in \calV_U$.  It follows that $\hd^V_U \in \bigcap_{W \in \calV_U} C^W_U$, and then \eqref{item:choose nested} says that $\hx_U = \hd^V_U$.

\vspace{.1in}

Hence $(\hx)$ is $0$-consistent.  Since $\Delta(\hx) = \hy$ because $\hx_U \in \Delta^{-1}_U(\hy_U)$ for all $U \in \calU$, we are done with the proof of surjectivity of $\Delta: \calQ \to \calQ'$, and this also defines the map $\Xi:\calQ' \to \calQ$ by $\Xi(\hy) = \hx$, as defined above.  This completes the proof of item (2).

\medskip

\textbf{\underline{Item (3): $\Delta:\calQ \to \calQ'$ is a quasi-isometry:}} The proof of this part proceeds similarly to its analogue in \cite[Theorem 10.2]{Dur_infcube}, though with some modest simplifications (see especially \cite[Subsection 9.1]{Dur_infcube}).

Since $\Delta|_{\calQ}$ is surjective and distance non-increasing, it suffices to prove that $d_{\calQ} \prec d_{\calQ'}$, and for this it suffices to bound the distances in each $U \in \calU$.

Let $\hx, \hy \in \calQ$ with $\hx' = \Delta(\hx)$ and $\hy' = \Delta(\hy)$.  We may assume that $\hx_U \neq \hy_U$ for any $U\in \calU$ that we will consider.  We begin by making a useful reduction.

Observe that item (9) of Lemma \ref{lem:collapsed interval control} implies that there are at most boundedly-many domains $U \in \calU$ for which $\hx_U,\hy_U$ are in the same component of $\hT^e_U$, where the bound is controlled by $\mathfrak S$.  For such $U$, we have $d_{\hT_U}(\hx_U, \hy_U) \leq d_{\hT'_U}(\hx'_U, \hy'_U) + B^2$ because at most $B$-many subtrees of diameter $B$ have been deleted from that edge.  Hence we may deal with these boundedly-many domains with an additive constant in the distance estimate comparing $d_{\calQ}(\hx,\hy)$ and $d_{\calQ'}(\hx',\hy')$. Hereafter, we assume that at least one of $\hx_U, \hy_U$ coincides with a cluster, or they are separated by at least one cluster.

\smallskip

There are two cases, (1) $U$ is $\nest_{\calU}$-minimal and (2) otherwise.

\smallskip

\underline{Case (1)}: Then $\hT^e_U$ has one component.  So there are at most $B$-many subtrees of diameter at most $B$ in $\hT^e_U$ being collapsed.  By item (9) of Lemma \ref{lem:collapsed interval control} there are at most boundedly-many (in $\mathfrak S$) such $\nest_{\calU}$-minimal $U$ for which one of $\hx_U, \hy_U$ is not a cluster point.  We may deal with such $U$ by increasing the additive constant in the distance estimate.

The general case is where $\hx_U, \hy_U$ are the distinct endpoint clusters of the interval $\hT_U$.  In this case, 
$$d_{\hT'_U}(\hx'_U,\hy'_U)  \geq d_{\hT_U}(\hx_U,\hy_U) - B^2 \geq K - B^2 - 2r_1,$$ by item (6) of Lemma \ref{lem:collapsed interval control}.   Hence by choosing $K = K(\mathfrak S)>0$ sufficiently large and requiring that $B^2_{tt}-2r_1<K$, we can guarantee that $d_{\hT'_U}(\hx'_U,\hy'_U) >1$ allowing the estimate
$$d_{\hT_U}(\hx_U,\hy_U) \leq d_{\hT'_U}(\hx'_U,\hy'_U) + B^2 \leq (1+B^2)d_{\hT'_U}(\hx'_U,\hy'_U).$$
Thus we can account for these domains via a bounded multiplicative error.

\smallskip
\underline{Case (2)}:  Suppose that $U \in \calU$ is not $\nest_{\calU}$-minimal.  Since $\hx_U \neq \hy_U$, there is some collection of edge components $E_1, \dots, E_k\subset \hT^e_U$ separating $\hx_U$ from $\hy_U$, i.e. intersecting $[\hx_U, \hy_U]_{\hT_U}$.  By assumption, each such component contains at most $B$-many subintervals in $\calB$ of diameter at most $B$ which are collapsed by $\Delta_U:\hT_U \to \hT'_U$.  Moreover, adjacent intervals $E_i, E_{i+1}$ are attached at some collapsed cluster $C_i$.  For each such collapsed cluster $C_i$, choose a $\nest_{\calU}$-minimal domain $V_i \nest U$ with $\hd^{V_i}_U \in C_i$.  Observe that
\begin{eqnarray*}
d_{\hT_U}(C_i,C_{i+1}) &\leq& d_{\hT'_U}(\Delta_U(C_i), \Delta_U(C_{i+1})) + B^2\\
&\leq& d_{\hT'_U}(\Delta_U(C_i), \Delta_U(C_{i+1})) + B^2d_{\hT'_{V_i}}(\hx'_{V_i}, \hy'_{V_i}).
\end{eqnarray*}

Hence we can account for the lost distance along $[\hx_U, \hy_U]$ between each pair of successive collapsed cluster points $C_i, C_{i+1}$ (i.e., the distance lost in the edge $E_i$) by distances in the $\nest_{\calU}$-minimal domains $V_i$.  Similarly, we can use $V_1, V_k$ to account for any lost distance between $\hx_U, C_1$ and $C_n, \hy_U$, respectively.  Since each domain in $\calU$ nests into at most boundedly-many domains in $\calU$ by the Covering Lemma \ref{lem:covering}, the proof of case (2) is complete.  This completes the proof of the theorem.
\end{proof}

\section{Proof of the Stable Intervals Theorem \ref{thm:stable intervals}} \label{app:SI}

We will give a fairly complete accounting of the two main arguments involved in Theorem \ref{thm:stable intervals}.  See Subsection \ref{subsec:detailed sketch} for a detailed sketch and outline.  The two key arguments involve showing that we can stably add one cluster point and then that we can iterate this process.

\subsection{Adding a cluster point}

In this subsection, we explain how to build a stable decomposition when adding a cluster point to the setup.  This is the main bit of work for Theorem \ref{thm:stable intervals}.

\begin{proposition} \label{prop:add cluster}
Let $\calZ$ be a geodesic $\delta$-hyperbolic space, $\epsilon>0$, and $r_1,r_2>0$ be positive integers.  Let $(a,b;\calY)$ be an $\epsilon$-setup and further fix a point $z \in \calN_{\epsilon/2}(\lambda(a,b))$.  Then
\begin{enumerate}
\item $(a,b;\calY \cup \{z\})$ is an $\epsilon$-setup;
\item The two setups $(a,b;\calY)$ and $(a,b;\calY \cup \{z\})$ are $(1,\epsilon)$-admissible;
\item There exists $L_1=L_1(\delta, \epsilon)>0$ so that the stable intervals $T$ and $T'$ for the setups $(a,b;\calY)$ and $(a,b;\calY \cup \{z\})$ admit $\calY$-stable $L_1$-compatible decompositions;
\item There exists $L_2 = L_2(\delta, \epsilon, r_1,r_2)>0$ so that the $(r_1,r_2)$-thickenings of $T,T'$ admit $\calY$-stable $L_2$-compatible decompositions.
\end{enumerate}
\end{proposition}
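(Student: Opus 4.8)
The plan is to prove Proposition \ref{prop:add cluster} by a careful case analysis on how the new point $z$ interacts with the existing clusters of $\calY \cup \{a,b\}$, following the \emph{split cluster} versus \emph{affected cluster} dichotomy sketched in Subsection \ref{subsec:detailed sketch} (see Figures \ref{fig:split cluster} and \ref{fig:affected cluster}). Items (1) and (2) are essentially immediate: $(a,b;\calY \cup \{z\})$ is an $\epsilon$-setup because $z \in \calN_{\epsilon/2}(\lambda(a,b))$ by hypothesis, and the two setups are $(1,\epsilon)$-admissible since they share the same geodesic $\lambda(a,b)$, have endpoints exactly coinciding (so $d_{\calZ}(a,a') = d_{\calZ}(b,b') = 0 \leq \epsilon$), and $|\calY \triangle (\calY\cup\{z\})| = 1 < 1$... wait, this requires $N > 1$; the point is $|\calY \triangle (\calY \cup \{z\})| \leq 1$, so admissibility holds with $N = 2$ (or one simply notes $|\calY\triangle\calY'|\le 1$, which is the relevant bound). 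So the real content is items (3) and (4).

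For item (3), first I would fix the cluster constant $E = E(\epsilon,\delta)$ from Proposition \ref{prop:cluster sep graph} (possibly enlarging it finitely many times as the argument dictates). Adding $z$ affects the cluster structure of $\calC_E$ in one of two ways. In the \textbf{split cluster case}, $z$ is more than $E$ from every point of $\calY \cup \{a,b\}$, so it forms a singleton cluster. By Lemma \ref{lem:cluster arrangement} its shadow $s(\{z\})$ is disjoint from all other cluster shadows, so by Proposition \ref{prop:cluster sep graph} it inserts as a single new vertex into the cluster separation interval $\calG$, between two consecutive old clusters $C_1, C_2$. Everything in $\calG$ strictly to the left of $C_1$ and strictly to the right of $C_2$ is untouched: those clusters, their cluster components $\mu(\cdot)$, and their connecting edge components $\lambda_0(\cdot,\cdot)$ are literally identical in $T$ and $T'$, hence form stable pairs identified by literal equality (item \eqref{item:identical pairs} of Definition \ref{defn:stable decomp}), with the obvious order-preserving isometries. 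The only surgery is in the old edge component $\lambda_0(C_1,C_2)$: we declare the shadow of $z$ on $\lambda(C_1,C_2)$ an unstable component, which cuts $\lambda_0(C_1,C_2)$ into two subintervals; these are paired with $\lambda_0(C_1,\{z\})$ and $\lambda_0(\{z\},C_2)$ in $T'$, and by Lemma \ref{lem:cluster close}(1)(2) they have endpoints within $O(\epsilon)$ of each other, hence are $O(\epsilon)$-Hausdorff close with lengths agreeing up to $O(\epsilon)$; small truncations (unstable components of diameter $O(\epsilon)$) at the ends produce genuine order-preserving isometries onto a common length. In the \textbf{affected cluster case}, $z$ merges with one or more existing clusters $P, Q, R, \dots$; since these all become one cluster $C'$ of $\calY'$, Lemma \ref{lem:cluster arrangement} forces them to be consecutive in $\calG$ and Lemma \ref{lem:cluster close} forces them to be pairwise $O(\epsilon)$-close. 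Again the subgraph of $\calG$ outside the affected region is untouched, giving literally-identical stable pairs; inside, the edge components linking $P,Q,R,\dots$ become unstable components (boundedly many, of diameter $O(\epsilon)$), the flanking edge components $\lambda_0(C_1,P)$ and $\lambda_0(R,C_2)$ are compared to $\lambda_0(C_1,C')$ and $\lambda_0(C',C_2)$ — again $O(\epsilon)$-close by Lemma \ref{lem:cluster close} — and small surgeries yield the isometries. In both cases the number of unstable components and the bound on their diameters, the number of non-identical stable pairs, and the fellow-traveling constant in item \eqref{item:close pairs} are all $O(\epsilon)$, hence bounded by some $L_1 = L_1(\delta,\epsilon)$; the order-preserving bijection $\alpha$ is read off from the left-to-right order along $\calG$, and the endpoint and cluster-identifying conditions of item \eqref{item:adjacency} hold because $a, b$ and every $y \in \calY$ lie in clusters that are either untouched or (in the affected case) merged, and in either event their containing components of $T - T_s$ correspond under $\beta$ to the corresponding ones in $T' - T'_s$.

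For item (4), the point is that a stable decomposition of $T, T'$ essentially passes to the $(r_1,r_2)$-thickenings along the cluster forests with only bounded extra damage, as indicated in the bottom panels of Figures \ref{fig:split cluster} and \ref{fig:affected cluster}. Concretely, thickening replaces each cluster component and each marked point by its $r_1$-neighborhood and merges those that come within $r_2$, which only shrinks the edge forest: $\bT_e \subset T_e$ and $T_c \subset \bT_c$ as noted before the statement of Theorem \ref{thm:stable intervals}. A stable pair $(E, E')$ of edge components of $T, T'$ with $\phi(E) = \phi'(E')$ (or $O(\epsilon)$-close) restricts, after thickening, to a stable pair $\bE \subset E$, $\bE' \subset E'$ of the thickened edge forests with the same identification — possibly after trimming off the at most $r_1$ at each end that got absorbed into a thickened cluster, and possibly after collapsing $E$ entirely if its diameter was less than $r_2 - 2r_1$ (producing one more bounded-diameter unstable component). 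Since the thickening constants $r_1, r_2$ are fixed integers, the number of newly-created unstable pieces and their diameters are bounded in terms of $\delta, \epsilon, r_1, r_2$, which gives $L_2 = L_2(\delta,\epsilon,r_1,r_2)$; the "moreover" inclusions $\bT_s \subset T_s$ and $\bT'_s \subset T'_s$ are automatic since the thickened stable components are subintervals of the original ones. I would also record here that we trim so each surviving stable component has positive integer length with endpoints at vertices (using $\lambda_0$ rather than $\lambda$, and the fact that $r_1, r_2$ are integers), establishing item \eqref{item:integer length} of the stable-compatibility definition in both (3) and (4).

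The main obstacle I expect is the affected cluster case of item (3): one has to check that when $z$ absorbs several old clusters, the resulting edge components $\lambda_0(C_1,C')$, $\lambda_0(C',C_2)$ of $T'$ can be matched with the old $\lambda_0(C_1,P)$, $\lambda_0(R,C_2)$ by genuine order-preserving isometries after only bounded surgery — this requires the full strength of Lemma \ref{lem:cluster close} (in particular the fact that closest-point pairs between clusters land uniformly close to the shadow endpoints, so that the connecting segments do not move when the clusters are enlarged) together with the Morse-type fellow-traveling of Lemma \ref{lem:Morse close}, and bookkeeping to ensure the various $O(\epsilon)$ corrections do not accumulate beyond a single $L_1$. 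Everything else is either immediate or a direct transcription of the corresponding arguments in \cite[Theorem 3.1]{DMS_bary} / \cite[Theorem 10.23]{DMS_FJ}, with the simplification that here we only ever move one piece of data at a time.
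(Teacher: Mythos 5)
Your proposal is correct and follows essentially the same route as the paper's proof in Appendix \ref{app:SI}: the split-cluster/affected-cluster dichotomy, the use of Lemmas \ref{lem:cluster arrangement}, \ref{lem:cluster close} and Proposition \ref{prop:cluster sep graph} to match edge components up to $O(\epsilon)$ surgeries, and the observation that the $(r_1,r_2)$-thickening only truncates or deletes edge components, so the decomposition passes through with constants depending on $r_1,r_2$. The only detail the paper makes explicit that you leave implicit is the counting step (Claim \ref{claim:bounded affect}) showing there are at most three affected clusters and that they are adjacent in $\calG$, which is what justifies your ``boundedly many unstable components'' in the affected-cluster case; your $O(\epsilon)$-closeness observation contains the needed ingredients for it.
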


\begin{proof}
Since the points $a,b$ have not changed, the first two conclusions are automatic from the assumptions.

Choosing $E \gg \epsilon$, Proposition \ref{prop:cluster sep graph} implies that both $\calG = \calG_E(a,b;\calY)$ and $\calG' = \calG_E(a,b;\calY \cup \{z\})$ are intervals.  There are two main cases: (a) $d(z,C)>E$ for all clusters $C$, or (b) otherwise.

We will deal with these cases separately, though the arguments are similar.  Moreover, for each case, we will want to deal both with the original decompositions of the stable tree $T = T_e \cup T_c$, as well as its $(r_1,r_2)$-thickening $T = \bT_{e} \cup \bT_{c}$.  The compatible decompositions for the latter will be refined for the decomposition for the former, with possibly a slight increase in the error depending on $r_1,r_2$.

\medskip

\underline{\textbf{Case (a), $z$ forms a split cluster}}: In this case, $z$ forms its own cluster and, in particular, does not affect membership of any other cluster.  Moreover, it cannot be an endpoint cluster of $\calG'$, as those necessarily contain $a$ and $b$.  Hence there exist two clusters $C_1,C_2$ which are adjacent in $\calG$ but are separated by $z$ in $\calG'$, say with $C_1$ on the side of $a$ and $C_2$ on the side of $b$.  In this sense, we think of $z$ as splitting the clusters $C_1,C_2$.

Lemma \ref{lem:cluster arrangement} implies that $z$ separates two clusters $C,C'$ for $(a,b;\calY)$ if and only if $s(z)$ separates $s(C), s(C')$ along $\lambda(a,b)$.  It follows then that the subgraph of $\calG$ connecting the cluster containing $a$ to $C_1$ is identical to the subgraph of $\calG_E(a,b;\calY \cup \{z\})$, and similarly for the subgraphs connecting $C_2$ to the cluster containing $b$.  This says that for any pair $C,C'$ on the $a$-side of $z$ in $\calG'$, $\lambda_0(C,C')$ is an edge component of both $T$ and $T'$, and similarly for such pairs  on the $b$-side of $z$.  Thus we can include each such component of $E \subset T_e$ and $E \subset T'_e$ in the respective stable decomposition $T_s \subset T_e$ and $T'_s \subset T'_e$, where the identity map is the map described in parts \eqref{item:stable pairs} and \eqref{item:identical pairs} of Definition \ref{defn:stable decomp}.  By construction, each of these pieces has positive integer length, thus verifying property \eqref{item:integer length}.

To construct the rest of the stable decompositions, namely stable pieces of type \eqref{item:close pairs} in Definition \ref{defn:stable decomp}, we need to compare the component $\lambda_0(C_1, C_2) \subset T_e$ to the two components $\lambda_0(C_1,z) \cup \lambda_0(z, C_2) \subset T'_e$.  First, Lemma \ref{lem:cluster close} says that the endpoints of $\lambda(C_1,C_2)$ and $\lambda(C_1,z)$ on $C_1$ are $14\epsilon$-close, and hence the endpoints of $\lambda_0(C_1,C_2)$ and $\lambda_0(C_1,z)$ are within $14\epsilon+2$, and similarly for $C_2$.  It follows from $\delta$-hyperbolicity of $\calZ$ that $\lambda_0(C_1, C_2)$ and $ \lambda_0(C_1,z) \cup \lambda_0(z, C_2)$ are $O(\epsilon)$-Hausdorff close.  Now, unless $C_1,C_2$ are $O(\epsilon)$-close in $\calZ$---in which case we can declare all three of these geodesics to be unstable---then the shadow $s_T(z)$ of $z$ on $T$ will be properly contained in $\lambda_0(C_1,C_2)$ and have $O(\epsilon)$-bounded diameter.    Hence we can decompose these geodesics into $O(\epsilon)$-close stable pieces and unstable pieces labeled by $z$, as described in property \eqref{item:unstable components}, while satisfying the length requirement of \eqref{item:integer length} and proximity requirements of \eqref{item:identical pairs} and \eqref{item:close pairs}; see Figure \ref{fig:split cluster} for a heuristic picture.  For the bijection $\alpha:\pi_0(T_s) \to \pi_0(T'_s)$, since $T_s,T'_s$ have the same number of components, we can take let $\alpha$ denote the natural order-preserving bijection, i.e. the one obtained by moving along the geodesics $[a,b]_{T}$ and $[a,b]_{T'}$.  By construction, this bijection identifies the components we want so that the related conditions \eqref{item:identical pairs} and \eqref{item:close pairs} are satisfied.

Finally, we need to confirm the properties in \eqref{item:adjacency} for the order-preserving bijection $\beta:\pi_0(T - T_s) \to \pi_0 (T'-T'_s)$ induced by $\alpha$.  Since $\beta$ is order-preserving (with the order being ``from $a$ to $b$'', as in Definition \ref{defn:order preserving}), \eqref{item:endpoint condition}, i.e. that the components of $T-T_s$ and $T' - T'_s$ associated to $a$ and $b$ are identified, is automatically satisfied.  For \eqref{item:cluster identify}, note that $\calY \cap (\calY \cup \{z\}) = \calY$, and, by construction again, if $y \in \calY$, then $\beta$ identifies the component $D_y$ of $T- T_s$ corresponding to $y$ with $D'_y$, the component of $T'-T'_s$ corresponding to $y$.  In this case, this is because all such components are exactly the same.   This proves part (1) of the statement.

For part (2) of the statement, we want to consider how the $(r_1,r_2)$-thickenings of $T,T'$ along $T_c,T'_c$ respectively affect the decompositions $T_s \subset T_e$ and $T'_s \subset T'_e$.  First, observe that $\bT_e \subset T_e$ and $\bT'_e \subset T'_e$ by definition.  In particular, consider a component $D \subset T_e$, where $D = \lambda_0(C,C')$ for adjacent clusters $C,C'$.  Then the $(r_1,r_2)$-thickening of $D$ either truncates $D$ to $D_{thick} = D - (\mu_{r_1}(C) \cup \mu_{r_1}(C'))$, or possibly removes $D$ entirely from $\bT_e$ if $d_T(\mu(C), \mu(C'))<r_2-2r_1$.  A similar statement holds for components of $T'_e$.

By the above, most components of $T_s$ and $T'_s$ are identical components of $T_e, T'_e$, and hence their thickenings are exactly the same by the above.  For the remaining components, namely $\lambda_0(C_1,z), \lambda_0(z,C_2)$ of $T'_e$ and $\lambda_0(C_1,C_2)$ of $T_e$, each is either truncated or removed as above.  The remaining two components of the stable decomposition $T_s$ correspond to $\lambda_0(C_1,z)$ and $\lambda_0(z,C_2)$, and so one can construct the corresponding components of $\bT_s$ by applying the same procedure to the $(r_1,r_2)$-thickening of $\lambda_0(C_1,z)$ and $\lambda_0(z,C_2)$.  Note that this can inflate the proximity constants, but only in terms of $r_1,r_2$.  The rest of the proof goes through unchanged; see Figure \ref{fig:split cluster}.

\medskip

\underline{\textbf{Case (b), $z$ affects other clusters}}: Suppose now instead that there exists some cluster $C$ for $(a,b;\calY)$ so that $d(z,C)<E$; we say $C$ is an  \emph{affected cluster}.

\begin{claim}\label{claim:bounded affect}
If $3E - 56\epsilon > 2E$, then there are at most three affected clusters, and they are adjacent in $\calG$.
\end{claim}

\begin{proof}
Suppose that $C_1,\dots, C_4$ are four clusters appearing in that order along $\lambda(a,b)$.  By item (1) of Lemma \ref{lem:cluster close}, since $d(C_i, C_{i+1})\geq E$, we have $d(s(C_i), s(C_{i+1}))> E - 14\epsilon$, for $i = 1,2,3$.

Since the shadows are disjoint segments of the geodesic $\lambda(a,b)$, we have that
$$d(s(C_1), s(C_4)) \geq d(s(C_1), s(C_2)) + d(s(C_2), s(C_3))+d(s(C_3), s(C_4)) \geq 3E -42\epsilon.$$
Hence if $3E-56\epsilon \geq 2E$, then item (1) of Lemma \ref{lem:cluster close} says that 
$$d(C_1,C_4) \geq d(s(C_1),s(C_4)) - 14\epsilon \geq 3E-56\epsilon > 2E.$$

This is impossible if $z$ is $E$-close to both $C_1,C_4$.  Hence there are at most three affected clusters and they are adjacent vertices in $\calG$.
\end{proof}

Let $C'$ denote the cluster formed by $z$ and the affected clusters from $(a,b;\calY)$.  Note that the membership of all other $(a,b;\calY)$-clusters are unchanged by the addition of $z$.  Let $C_1$ and $C_2$ be the clusters adjacent to the affected clusters in $\calG$ on the side of $a$ and $b$, respectively.  As in the split cluster case above, Lemma \ref{lem:cluster arrangement} implies that $C_1$ separates all clusters on the $a$-side of $\calG$ from $C'$, and similarly for $C_2$ and clusters on the $b$-side.  Hence the subgraph of $\calG$ connecting the cluster containing $a$ to $C_1$ is the subgraph of $\calG'$ connecting the (same) cluster containing $a$ to $C_1$, and similarly for the $b$-side of $\calG$ and $\calG'$.  As such, all of the corresponding components of $T_e$ and $T'_e$ are exactly the same, and have positive integer length by construction, verifying properties \eqref{item:integer length}, \eqref{item:stable pairs}, and \eqref{item:identical pairs} of Definition \ref{defn:stable decomp} for these components.

The forests $T_e$ and $T'_e$ have some remaining components which we need to decompose.  In $T_e$, there are the at-most $2$ components of $T_e$ which connect the affected clusters.  By assumption, these segments are at most $O(\epsilon)$ long, so we can declare them to be unstable components.

We now explain how to handle the two remaining components.  Let $A$ be the affected cluster adjacent to $C_1$ in $\calG$.  It is enough to show that the geodesic segments $\lambda(C_1,A)$ and $\lambda(C_1,C')$ are $O(\epsilon)$-Hausdorff close, since then the lengths of $\lambda_0(C_1,A)$ and $\lambda_0(C_1,C')$ are the same up to error $O(\epsilon)$, and we can declare a small end segment of each unstable, so that their (stable) complements have the same (positive integer) length and are $O(\epsilon)$-close.  But this endpoint proximity statement is again a consequence of Lemma \ref{lem:cluster close}, since the endpoints of $\lambda(C_1,A)$ and $\lambda(C_1,C')$ are within $7\epsilon$ of the endpoint of $s(C_1)$ on $\lambda(a,b)$, while $s(C')$ contains $s(A)$ and its endpoint is at most $(E+\epsilon)$-away.  Finally, the bijection $\beta:\pi_0(T-T_s) \to \pi_0(T'-T'_s)$ required for \eqref{item:adjacency} is defined similar to the split cluster case (a), with the proof of condition \eqref{item:endpoint condition} being basically the same.  For condition \eqref{item:cluster identify}, observe again the $\calY \cap (\calY \cup \{z\}) = \calY$.  Assuming that $y \in \calY$ is contained in an affected cluster in the $(\calY; \{a,b\})$ setup---since otherwise, it is contained in an identical cluster in the $(\calY \cup \{z\};\{a,b\})$ setup---then it corresponds to the single component which gets identified with the component of $T'- T'_s$ corresponding to the single cluster containing both $y$ and $z$.  This completes the proof of (1) in the affected cluster case.  See Figure \ref{fig:affected cluster} for a heuristic picture.

The argument for (2) in the affected cluster case is essentially identical to the split cluster version: All but a few components of $T_s \subset T_e$ and $T'_s \subset T'_e$ are identical and thus have identical $(r_1,r_2)$-thickenings.  And the remaining components of $T_s, T'_s$ are constructed by pairing the corresponding components of $T_e, T'_e$ adjacent to or connecting the affected clusters, and then either declaring them unstable or modifying them pairwise to achieve the length bound.  As above, this can be accomplished post-thickening; see Figure \ref{fig:affected cluster}.  This completes the proof of the theorem.

\end{proof}

\subsection{Iterating stable decompositions}

Next we move to proving our iteration statement (Proposition \ref{prop:stable iteration}).  The main work is the base case of combining two pairs of stable decompositions across a common setup.

For this, suppose we have three setups $(a_i,b_i;\calY_i)$ for $i=1,2,3$, satisfying that $\calY_i \subset \calN_{\epsilon/2}(\lambda(a_j,b_k))$ for all $i,j,k$, and that $\calY_0 \subset \calY_i$ for each $i$.  Suppose that the stable intervals associated to $(a_1,b_1;\calY_1)$ and $(a_2,b_2;\calY_2)$ admits $\calY_0$-stable $L$-compatible decompositions, and the same for $(a_2,b_2;\calY_2)$ and $(a_3,b_3;\calY_3)$.  Our goal is to induce new stable decompositions on $T_1,T_2$ and $T_2,T_3$ which encode the stable decompositions from $T_2,T_3$ and $T_1,T_2$, respectively.  The following describes the properties we want for these induced decompositions:

\begin{definition}[Refined stable decompositions]\label{defn:refined decomp}
Let $(a,b;\calY)$ and $(a',b';\calY')$ denote two admissible $\epsilon$-setups and let $\calY_0 = \calY \cap \calY'$.  Suppose we have two compatible $\calY_0$-stable decompositions $T_s \subset T$ and $T'_s \subset T'$.  We say another pair of stable decompositions $T_{s,0} \subset T$ and $T'_{s,0} \subset T'$ \emph{refine} $T_s \subset T$ and $T'_s \subset T'$ if the following hold:
\begin{enumerate}
\item We have $T_{s,0} \subset T_s$ and $T'_{s,0} \subset T'_s$, and
\item Let $\beta:\pi_0(T - T_s) \to \pi_0(T'-T'_s)$ and $\beta_0:\pi_0(T-T_{s,0}) \to \pi_0(T' - T'_{s,0})$ denote the associated order-preserving bijections.  Suppose $C_1 \in \pi_0(T - T_s)$  and let $C_{1,0} \in \pi_0(T-T_{s,0})$ denote the component containing $C_1$ (which exists because $T_{s,0} \subset T_s$).  Then $\beta(C_1) \subset \beta_0(C_{1,0})$.
\end{enumerate}
\end{definition}

\begin{remark}\label{rem:purpose of refined}
Condition (2) in particular is used to verify the properties in \eqref{item:adjacency} of Definition \ref{defn:stable decomp} during our iteration procedure.  For item \eqref{item:endpoint condition}, since $\beta$ identifies the component of $T-T_s$ containing the cluster component associated to $a$ to the component of $T' - T'_s$ containing the cluster component associated to $a'$, the refinement condition (2) forces the same to hold for $\beta_0$.  A similar observation works to verify item \eqref{item:cluster identify}.
\end{remark}

Following our notation from before the above definition, let $\phi_i:T_i \to \calZ$ denote the maps into $\calZ$.  Moreover, assume that we have
\begin{itemize}
\item a $\calY_0$-stable decomposition $T^1_{s,2} \subset T_{e,2}$ which is $L$-compatible with $T^2_{s,1} \subset T_{e,1} $ for the setups $(a_1,b_1;\calY_1)$ and $(a_2,b_2;\calY_2)$, and
\item a $\calY_0$-stable decomposition $T^3_{s,2} \subset T_{e,2}$ which is $L$-compatible with $T^2_{s,3} \subset T_{e,3}$ for the setups $(a_2,b_2;\calY_2)$ and $(a_3,b_3;\calY_3)$.
\end{itemize}

\begin{proposition}\label{prop:stable iteration, pair}
There exists $\calY_0$-stable $4L^2$-compatible decompositions $T^{2,3}_{s,1} \subset T_{e,1}$ and $T^{2,1}_{s,3} \subset T_{e,3}$ which refine $T^2_{s,1}$ and $T^2_{s,3}$. 
\end{proposition}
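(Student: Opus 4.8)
\textbf{Proof plan for Proposition \ref{prop:stable iteration, pair}.}

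The plan is to construct the refined decomposition on $T_2$ first, by intersecting the two given stable decompositions there, and then to transport it to $T_1$ and $T_3$ via the order-preserving isometries that witness the two stable pairings. Concretely, first I would set $T_{s,2}^{0} = T^1_{s,2} \cap T^3_{s,2}$, the common refinement inside $T_{e,2}$ of the two edge decompositions. Since both $T^1_{s,2}$ and $T^3_{s,2}$ are unions of subintervals whose complements in $T_{e,2}$ consist of at most $L$ unstable components of diameter at most $L$ (item \eqref{item:unstable components} of Definition \ref{defn:stable decomp}), the complement $T_{e,2} - T_{s,2}^{0}$ has at most $2L$ components, each of diameter at most $L$; this bound will feed into the $4L^2$ in the conclusion. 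Each component $E$ of $T_{s,2}^{0}$ lies inside a unique stable component $E_1 \in \pi_0(T^1_{s,2})$ and a unique $E_3 \in \pi_0(T^3_{s,2})$; using the order-preserving isometries $i_{E_1, \alpha_1(E_1)} : E_1 \to \alpha_1(E_1) \subset T^2_{s,1}$ and $i_{E_3, \alpha_3(E_3)} : E_3 \to \alpha_3(E_3) \subset T^2_{s,3}$ from item \eqref{item:stable pairs}, I would push $E$ forward to a subinterval of $T_{e,1}$ and a subinterval of $T_{e,3}$ respectively, defining $T^{2,3}_{s,1}$ and $T^{2,1}_{s,3}$ as the unions of these images.

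Next I would verify that this construction satisfies all the clauses of Definition \ref{defn:stable decomp}, item by item. The integer-length condition \eqref{item:integer length} follows because restricting isometries to subintervals with integer-length endpoints at vertices preserves that property, provided I cut $T_{s,2}^{0}$ along vertices (which I can arrange since the unstable components of the two original decompositions already have vertex endpoints). The bijection $\alpha$ of \eqref{item:stable bijection} is the composite of the restrictions of $\alpha_1$ and $\alpha_3$ to the common refinement, hence order-preserving; the isometries of \eqref{item:stable pairs} for the pair $(T_1, T_3)$ are the obvious composites $i_{E_3,\cdot} \circ (i_{E_1,\cdot})^{-1}$, which are order-preserving since each factor is. For the proximity clauses \eqref{item:identical pairs} and \eqref{item:close pairs}: a stable pair in the new decomposition is ``identical'' whenever it came from identical pairs in both original decompositions, and otherwise one picks up the errors additively, so at most $2L$ pairs are merely $2L$-close — giving the $4L^2$-compatibility after absorbing the complement-component count. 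The unstable-component bound \eqref{item:unstable components} is the $2L$ estimate above, with diameters bounded by the $L$'s carried through the isometries. Finally, the adjacency conditions in \eqref{item:adjacency} — that $\beta$ sends the endpoint components of $a$ to those of $a'$ and identifies common-label clusters — hold because the original $\beta_1, \beta_3$ do, and the new $\beta$ is their common refinement; this is exactly the content of Remark \ref{rem:purpose of refined}, and establishes clause (2) of Definition \ref{defn:refined decomp} simultaneously.

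The last step is to check the refinement property (Definition \ref{defn:refined decomp}): condition (1), that $T^{2,3}_{s,1} \subset T^2_{s,1}$ and $T^{2,1}_{s,3} \subset T^2_{s,3}$, is immediate since we only shrank the stable parts by intersecting; condition (2), that the induced $\beta_0$ coarsens $\beta$ in the sense stated, follows because each complementary component of the refined decomposition contains a complementary component of the original one and the bijections are all order-preserving, so nesting of components is preserved under $\beta$.

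\textbf{Main obstacle.} I expect the genuinely fiddly part to be clause \eqref{item:adjacency}, specifically \eqref{item:cluster identify}: one must track how a common label $y \in \calY_0$, which sits in some complementary component $D_y^{(2)}$ of the refined decomposition of $T_2$, is matched under the two original $\beta$'s and then confirm that the refined $\beta$ on $(T_1, T_3)$ matches $D_y^{(1)}$ with $D_y^{(3)}$. Because the refinement is obtained by \emph{intersecting} decompositions on $T_2$, a single original complementary component can be cut into several pieces, and one has to argue that the piece containing $\mu(C_y^{(2)})$ is the one that propagates correctly in both directions — this requires carefully using that $\mu(C_y)$ is itself not an edge component and so lies entirely within one complementary piece, together with the fact that the original decompositions were already $\calY_0$-stable. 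The argument is routine once set up, but it is where the bookkeeping is densest, and it is the step where Remark \ref{rem:purpose of refined} is doing real work.
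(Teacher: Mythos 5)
Your proposal is correct and follows essentially the same route as the paper: intersect the two decompositions on $T_2$, transport the intersection to $T_1$ and $T_3$ via the order-preserving isometries of the stable pairs, compose the bijections $\alpha$ and $\beta$, and use the refinement property (exactly as in Remark \ref{rem:purpose of refined}) to recover the adjacency/cluster-identification clause — which is also the step the paper treats with the most care, via an inductive march along the intervals. One small bookkeeping caveat: when the two complements on $T_{e,2}$ are unioned, adjacent or overlapping unstable components can merge, so the resulting components have diameter up to roughly $2L^{2}$ rather than $L$ as you state; this is precisely why the paper's compatibility constant is $4L^{2}$, and your final constant absorbs it, so the argument is unaffected.
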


\begin{proof}
Note that the stable decompositions $T^1_{s,2}$ and $T^3_{s,2}$ both live on $T_2$.  Set $T^{1,3}_{s,2} = T^1_{s,2} \cap T^3_{s,2}$.  Observe that $T_{e,2} - T^{1,3}_{s,2} = (T_{e,2} - T^1_{s,2}) \cup (T_{e,2} - T^3_{s,2})$ has at most $2L$-many components each of diameter at most $4L^2$; note that the larger diameter bound accounts for unstable components to combine, but there are at most $2L$-many of them.  Moreover, by item \eqref{item:simplicial} of Definition \ref{defn:stable decomp}, the endpoints of each component of $T^1_{s,2}$ and $T^3_{s,2}$ are at vertices of the components of $T_{e,2}$ which contain them, and hence this property holds for each component of $T^{1,3}_{s,2}$

We first produce refined decompositions of $T^{2,3}_{s,1} \subset T^2_{s,1}$ and $T^{1,3}_{s,2} \subset T^1_{s,2}$, and the same argument produces the refined decompositions $T^{1,3}_{s,2}  \subset T^3_{s,2}$ and $T^{2,1}_{s,3} \subset T^2_{s,3}$.  We will then see the properties of Definition \ref{defn:refined decomp} allow us to combine these, using $T^{1,3}_{s,2}$ as a bridge to connect them.

We can induce a decomposition $T^{2,3}_{s,1}$ on $T_1$ by taking the collection of all $i^{-1}_{E,E'}(E' \cap T^{1,3}_{s,2})$ over all stable pairs $(E,E')$ of $T_1,T_2$ as identified by the bijection $\alpha_{1,2}:\pi_0(T_{s,1}) \to \pi_0(T_{s,2})$.  Observe that any component $V \subset T^{2,3}_{s,1}$ is contained in some component $V \subset E \subset T_{s,1} \subset T_{e,1}$, and the map $i_{E,E'}|_V:V \to i_{E,E'}(V)$ is an isometry, where $E' = \alpha_{1,2}(E)$, and vice versa.  Thus the components of $T^{2,3}_{s,1}$ are in bijective correspondence with the components of $T^{1,3}_{s,2}$, with identified components being isometric.  Moreover,  the natural order-preserving bijection $\alpha^3_{1,2}:\pi_0(T^{2,3}_{s,1}) \to \pi_0(T^{1,3}_{s,2})$ (as in Definition \ref{defn:order preserving}) coincides with the bijective correspondence induced from $\alpha_{1,2}$ and the various order-preserving isometries $i_{E,E'}$.  We also note that the various restrictions $i_{E,E'}|V$ are order-preserving since $i_{E,E'}$ is.  Next observe that the images $\phi_1(V)$ and $\phi_2(i_{E,E'}(V))$ are either exactly the same or are $L$-Hausdorff close in $\calZ$ as items \eqref{item:identical pairs} and \eqref{item:close pairs} of Definition \ref{defn:stable decomp}.  Thus the decompositions $T^{2,3}_{s,1} \subset T_1$ and $T^{1,3}_{s,2} \subset T_2$ satisfy all the properties of Definition \ref{defn:stable decomp} except possibly \eqref{item:adjacency}.  We also want to show that they give a refinement as in Definition \ref{defn:refined decomp}.

To prove \eqref{item:adjacency}, it will suffice to show the refinement property (Definition \ref{defn:refined decomp}), as explained in Remark \ref{rem:purpose of refined}.  Property (1), namely that $T^{2,3}_{s,1} \subset T^2_{s,1}$ and $T^{1,3}_{s,2} \subset T^1_{s,2}$, of Definition \ref{defn:refined decomp} holds by construction.  For property (2), we argument one component at a time.  In particular, let $\beta^3_{1,2}:\pi_0(T_1 - T^{2,3}_{s,1}) \to \pi_0(T_2 - T^{1,3}_{s,2})$ be the natural order-preserving bijections (which exists via Remark \ref{rem:order preserving} because $\alpha^3_{1,2}$ does).

First, since the (unstable) components of $T_1 - T^2_{s,1}$ only get expanded when passing to $T_1 - T^{2,3}_{s,1}$, it follows that the first component $C_0 \subset T_1 - T^2_{s,1}$ one encounters when moving from $a_1$ to $b_1$ along the geodesic $[a_1,b_1]_{T_1}$ is contained in the first such component of $C_{0,3} \subset T_1 - T^{2,3}_{s,1}$, and similarly for the setup on $T_2$.  Hence these expanded clusters are identified by $\beta^3_{1,2}$, which we note also confirms item \eqref{item:endpoint condition} of Definition \ref{defn:stable decomp}.

We now proceed inductively along $T_1$ and $T_2$.  Let $C_1, C'_1$ denote the second pairs of components of $T_1 - T^2_{s,1}$ and $T_2 - T^1_{s,2}$ encountered after $C_0, C'_0$.  Let $E, E'$ denote the respective components of $T^2_{s,1}$ and $T^1_{s,2}$ connecting $C_0,C'_0$ to $C_1,C'_1$.  The components of $(T_1 - T^{2,3}_{s,1}) \cap E$ and $(T_2 - T^{1,3}_{s,2}) \cap E'$ are identified by the order-preserving isometry $i_{E,E'}$ in an order-preserving way.  In particular, the two sets of components are in bijective correspondence.  As a consequence, if $C^3_1 \subset T_1 - T^{2,3}_{s,1}$ is  the component containing $C_1$, then $C'_1 \subset \beta^{3}_{1,2}(C^3_1)$, as required.  One can repeat this procedure on subsequent components of $T_1 - T^2_{s,1}$ and their paired components of $T_2 - T^1_{s,2}$ to see that $T^{2,3}_{s,1}$ and $T^{1,3}_{s,2}$ refine $T^{2}_{s,1}$ and $T^1_{s,2}$, respectively.

\medskip

\textbf{\underline{Combining the decompositions}}: We now want to see that $T^{2,3}_{s,1}$ and $T^{2,1}_{s,3}$ are $\calY_0$-stable $4L^2$-compatible decompositions for $(a_1,b_1;\calY_1)$ and $(a_3,b_3;\calY_3)$.  For this, define $\alpha_{1,3}:\pi_0(T^{2,3}_{s,1}) \to \pi_0(T^{2,1}_{s,3})$ to be $\alpha_{1,3} = \alpha^{1}_{2,3} \circ \alpha^3_{1,2}$, and similarly set $\beta_{1,3} = \beta^1_{2,3} \circ \beta^3_{1,2}$.  These compositions are order-preserving bijections because their constituent maps are, and moreover we can similarly induce order-preserving isometries on the components of $T^{2,3}_{s,1}$ and $T^{2,1}_{s,3}$ identified by $\alpha_{1,3}$, which satisfy the proximity conditions of \eqref{item:identical pairs} and \eqref{item:close pairs} of Definition \ref{defn:stable decomp} with a constant of $2L$.  In particular, it remains to show the properties in \eqref{item:adjacency}.

For this, we use the refinement property (2) of Definition \ref{defn:refined decomp}.  This is basically automatic.  For (1), the refinement property implies that $\beta^3_{1,2}$ identifies the component of $T_1 - T^{2,3}_{s,1}$ associated to $a_1$ to the component of $T_2 - T^{1,3}_{s,2}$ associated to $a_2$, and similarly for $\beta^1_{2,3}$.  Hence $\beta_{1,3}$ identifies the components associated to $a_1, a_3$ and $b_1,b_3$.  Similarly, if $y \in \calY_0 \subset \calY_1 \cap \calY_2 \cap \calY_3$, then the refinement property implies that $\beta^3_{1,2}$ identifies the component of $T_1 - T^{2,3}_{s,1}$ associated to $y$ to the component of $T_2 - T^{1,3}_{s,2}$ associated to $y$, and similarly for $\beta^1_{2,3}$.  Hence $\beta_{1,3}$ associates components associated to $y$ in both of $T_1 - T^{2,3}_{s,1}$ and $T_3 - T^{2,1}_{s,3}$.  This completes the proof of the proposition.

\end{proof}

The following proposition is now a simple iterative application of Proposition \ref{prop:stable iteration, pair}:

\begin{proposition}\label{prop:stable iteration}
For every $n\geq 2$, there exists $L_n = L_n(L,n)>0$ so that the following holds.  Suppose $(a_1,b_1;\calY_1), \dots, (a_n,b_n;\calY_n)$ is a sequence of pairwise admissible $\epsilon$-setups and suppose that $\calY_0 \subset \bigcap_i \calY_i$.  If each pair of consecutive setups admits $\calY_0$-stable $L$-compatible stable decompositions, then $(a_1,b_1;\calY_1)$ and $(a_n,b_n;\calY_n)$ admit $\calY_0$-stable $L_n$-compatible stable decompositions. 
\end{proposition}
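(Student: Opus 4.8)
\textbf{Proof of Proposition \ref{prop:stable iteration}.} The plan is to induct on $n$, with Proposition \ref{prop:stable iteration, pair} providing both the base case and the inductive step. For $n = 2$ there is nothing to prove, and $n = 3$ is precisely Proposition \ref{prop:stable iteration, pair} with $L_3 = 4L^2$. So suppose $n \geq 4$ and that the result holds for $n - 1$ with constant $L_{n-1} = L_{n-1}(L, n-1)$. Given setups $(a_1, b_1; \calY_1), \dots, (a_n, b_n; \calY_n)$ as in the statement, apply the inductive hypothesis to the first $n-1$ of them: this yields $\calY_0$-stable $L_{n-1}$-compatible stable decompositions $T^{n-1}_{s,1} \subset T_{e,1}$ and $T^{1}_{s,n-1} \subset T_{e,n-1}$ for $(a_1, b_1; \calY_1)$ and $(a_{n-1}, b_{n-1}; \calY_{n-1})$.

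Next, we record that the pair $(a_{n-1}, b_{n-1}; \calY_{n-1})$ and $(a_n, b_n; \calY_n)$ admits $\calY_0$-stable $L$-compatible decompositions by hypothesis. We are now in the situation of Proposition \ref{prop:stable iteration, pair}, but we must be slightly careful: that proposition was stated with both input decompositions living at the common constant $L$, whereas here one side carries constant $L_{n-1}$ and the other carries $L$. Inspecting the proof of Proposition \ref{prop:stable iteration, pair}, the only use of the common bound is in counting components (at most $2L$-many components of diameter at most $4L^2$ in the intersection $T^{1,3}_{s,2}$) and in the proximity constants; replacing one bound by $L_{n-1}$ and the other by $L$ simply replaces these by the $\max$, i.e. the conclusion holds with constant $4L_{n-1}^2$ (absorbing $L \leq L_{n-1}$). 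Applying this with the three setups $(a_1, b_1; \calY_1)$, $(a_{n-1}, b_{n-1}; \calY_{n-1})$, $(a_n, b_n; \calY_n)$ in the roles of setups $1, 2, 3$ produces $\calY_0$-stable $L_n$-compatible decompositions $T_{s,1} \subset T_{e,1}$ and $T_{s,n} \subset T_{e,n}$ for $(a_1, b_1; \calY_1)$ and $(a_n, b_n; \calY_n)$, where we may set $L_n = 4 L_{n-1}^2$. This closes the induction, and unwinding gives $L_n = L_n(L, n)$ as a function of $L$ and $n$ alone.

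One should also verify the hypotheses of Proposition \ref{prop:stable iteration, pair} are met by this choice of three setups, namely that $\calY_i \subset \calN_{\epsilon/2}(\lambda(a_j, b_k))$ for all relevant $i, j, k$ among the indices $1, n-1, n$, and that $\calY_0$ is contained in each of $\calY_1, \calY_{n-1}, \calY_n$. The first follows because pairwise admissibility of all the original setups (Definition \ref{defn:stable decomp}(1)) gives exactly these proximity conditions, and the second is part of the standing assumption $\calY_0 \subset \bigcap_i \calY_i$. The genuine content, and the step I expect to require the most care, is not this bookkeeping but rather confirming that the output of the inductive hypothesis ($T^{n-1}_{s,1}$, $T^1_{s,n-1}$) is a legitimate input to Proposition \ref{prop:stable iteration, pair}---that is, that the refinement bridging argument of that proposition works regardless of which of the two paired decompositions being combined arose from a previous iteration versus directly. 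Since Proposition \ref{prop:stable iteration, pair} uses only the abstract properties in Definitions \ref{defn:stable decomp} and \ref{defn:refined decomp} of its inputs, and these are preserved under the construction, this is automatic; the only real point is tracking how the stability constant grows, which the recursion $L_n = 4L_{n-1}^2$ handles. $\hfill\qed$
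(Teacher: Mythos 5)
Your proof is correct and is exactly the iteration the paper has in mind: the paper states this proposition as ``a simple iterative application of Proposition \ref{prop:stable iteration, pair}'' without writing it out, and your induction with the recursion $L_n = 4L_{n-1}^2$ is a faithful implementation of that, with the asymmetric-constant step handled legitimately (indeed, since $\calY_0$-stable $L$-compatibility trivially implies $\calY_0$-stable $L'$-compatibility for $L' \geq L$, you could simply promote the $L$-compatible pair to $L_{n-1}$-compatible and apply Proposition \ref{prop:stable iteration, pair} verbatim).
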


\subsubsection{Completing the proof} To prove Theorem \ref{thm:stable intervals}, we have two remaining tasks:
\begin{enumerate}
\item We need to explain how to build a stable decomposition when perturbing an endpoint but keeping the rest of the setup the same, and
\item Show how to stick this into an iterative setup.
\end{enumerate}

\textbf{\underline{Perturbing an endpoint}:} This is an iterated application of  Proposition \ref{prop:add cluster}.

Consider the following restricted setup, which we highlight to focus on what happens when we perturb an endpoint.  Let $a,b,a' \in \calZ$ and $\calY \subset \calZ$ be finite with $\calY \subset \calN_{\epsilon/2}(\lambda(a,b)) \cap \calN_{\epsilon/2}(a',b)$.  Consider the following collection of setups, each of which differs by adding or deleting one cluster point, or by switching the role that $a,a'$ or $b,b'$ play between cluster or endpoint:
\vspace{-.09in}
\begin{multicols}{3}
\begin{enumerate}
\item $(a,b;\calY)$,
\item $(a,b;\calY \cup \{a\})$,
\item $(a,b;\calY \cup \{a, a'\})$,
\item $(a',b;\calY \cup \{a,a'\})$,
\item $(a',b;\calY \cup \{a'\})$,
\item $(a',b;\calY)$.
\end{enumerate}
\end{multicols}

First observe that $\calY$ is contained in each of the above sets of cluster points, so we want to see how to build $\calY$-stable decompositions for each, which implies the existence of $\calY_0$-stable decompositions for any $\calY_0 \subset \calY$.  Next, observe that in the construction of the stable interval $T(a,b;\calY)$, the roles of points of the endpoints $a,b$ and the cluster points $\calY$ are equivalent in cluster formation process.  The only special role that endpoints play in the Definition \ref{defn:stable interval} of $T(a,b;\calY)$ is in the segments defined \emph{within} clusters, namely the $\mu(C)$ components.  In particular, the edge components (i.e., the components of $T_e$) treat endpoints and cluster points equivalently.  As the desired stable decompositions from Theorem \ref{thm:stable intervals} are subsets of the edge components, we are free to treat the points $a,a',b,b'$ as cluster points in applying Proposition \ref{prop:add cluster} to obtain stable decompositions for each of the steps $(1) \to (2) \to (3)$ and $(4) \to (5) \to (6)$.  The only exception is the step $(3) \to (4)$, where we are switching the roles of $a$ and $a'$.  Here Proposition \ref{prop:add cluster} does not directly apply, but, by the above discussion, these two setups have the exact same clusters and thus the exact same edge components.  Hence we get the desired bijections (with their proximity constraints) by identifying the edge components as stable components---which, again, are all exactly the same---and taking the bijection between the complementary components to be the obvious map.

In other words, provided an explanation for how to iteratively combine a sequence of stable decompositions, we can apply this process to move from setup (1) above to setup (6).

\medskip

\textbf{\underline{Putting it all together}}: In the more general setting of the theorem, the above sequence only appears in the middle of a longer sequence we need to consider.  In particular, let $\calY' - \calY = \{x_1, \dots, x_n\}$ and $\calY - \calY' = \{z_1, \dots, z_m\}$, where $n,m < N$ by assumption.  Set $\calY_i = \calY \cup \{x_1, \dots, x_i\}$ and $\calY'_i = \calY' \cup \{z_1, \dots, z_i\}$, and let $\calY_0 = \calY \cap \calY'$ and $\calY_{all} = \calY \cup \calY'$.  Then, by the assumptions in our setup, we have two sequences of pairwise admissible $\epsilon$-setups
$$(a,b;\calY) \to (a,b;\calY_1) \to \cdots \to (a,b;\calY_{all}) \to (a,b;\calY_{all} \cup \{a'\}) \to (a,b;\calY_{all} \cup \{a',b'\})$$
and
$$(a',b';\calY') \to \cdots \to (a',b'; \calY_n) \to (a',b';\calY_{all}) \to (a',b';\calY_{all} \cup \{a\}) \to (a',b';\calY_{all} \cup \{a,b\}),$$
which we note are connected in the middle by the above discussion, where we need to switch the roles of both $a,a'$ and $b,b'$.  In particular, the stable trees for the setups $(a,b;\calY_{all} \cup \{a',b'\})$ and $(a',b';\calY_{all} \cup \{a,b\})$ have identical edge components.

By Proposition \ref{prop:stable iteration}, we get $\calY_0$-stable $L_{n+2}$-compatible stable decompositions for $(a,b;\calY)$ and $(a,b;\calY_{all} \cup \{a',b'\})$, and $\calY_0$-stable $L_{m+2}$-compatible stable decompositions for $(a',b';\calY)$ and $(a',b';\calY_{all} \cup \{a,b\})$.  Since $(a,b;\calY_{all} \cup \{a',b'\})$ and $(a',b';\calY_{all} \cup \{a,b\})$ determine identical edge components, we can use an identical argument to the proof of Proposition \ref{prop:stable iteration, pair} to induce refined $\calY_0$-stable $(L_n \cdot L_m)$-compatible stable decompositions for $(a,b;\calY)$ and $(a',b';\calY)$.  We leave the details to the reader.  This completes the proof. \qed

\bibliography{references}{}
\bibliographystyle{alpha}

 \end{document}